\documentclass[11pt]{article}

\oddsidemargin=-0.5cm 
\topmargin=-1.5cm   
\textwidth=17cm     
\textheight=24cm    
\parindent=0.5cm    
\parskip=2mm        
\columnsep=7mm      

\addtolength{\voffset}{-1cm}

\usepackage{amssymb}
\usepackage{amsmath}
\usepackage{bbm,bm}
\usepackage{multirow}	
\usepackage{enumerate}
\usepackage{graphicx}
\usepackage{color}
\usepackage[dvipsnames,table]{xcolor}
\usepackage{hyperref}
\usepackage{stmaryrd}
\usepackage{amsthm}
\usepackage{tikz}
\usepackage{enumitem}
\usepackage{array}
\newcolumntype{C}[1]{>{\centering\arraybackslash}m{#1}}
\usepackage{rotating}
\usepackage{mathrsfs}

\newcommand\qan{{\quad\hbox{and}\quad}}
\newcommand\qanf{{\quad\hbox{and}}}
\newcommand\qin{{\quad\hbox{in}\quad}}
\newcommand\qon{{\quad\hbox{on}\quad}}
\renewcommand\div{{\mathrm{div}}}
\newcommand\bdiv{{\mathbf{div}}}
\newcommand\bta{{\boldsymbol\tau}}
\newcommand\bze{{\boldsymbol\zeta}}
\newcommand\bet{{\boldsymbol\eta}}
\newcommand\bchi{{\boldsymbol\chi}}

\newcommand\bgam{{\boldsymbol\gamma}}
\newcommand\bom{{\boldsymbol\omega}}
\newcommand\bdel{{\boldsymbol\delta}}
\newcommand\tr{{\mathrm{tr}}}
\newcommand\bsi{{\boldsymbol\sigma}}
\newcommand\bxi{{\boldsymbol\xi}}
\newcommand\brho{{\boldsymbol\rho}}
\newcommand\bn{{\mathbf n}}
\newcommand\bu{{\mathbf u}}
\newcommand\bz{{\mathbf z}}
\newcommand\bv{{\mathbf v}}

\newcommand\bw{{\mathbf w}}
\newcommand\R{{\mathrm R}}
\newcommand\disp{\displaystyle}
\newcommand\Pcal{{\mathcal P}}
\newcommand{\Pcalbf}{{\boldsymbol{\mathcal P}}}
\newcommand{\Pcalbb}{\mathcal{P}\hspace{-1.5ex}\mathcal{P}}
\renewcommand\L{{\mathrm L}}

\newcommand\bg{{\mathbf g}}
\newcommand\by{{\mathbf y}}
\newcommand\dist{{\mathrm{dist}}}

		\usepackage{xcolor}
		\hypersetup{
			colorlinks,
			linkcolor={red!50!black},
			citecolor={blue!50!black},
			urlcolor={blue!80!black}
		}

		\numberwithin{equation}{section}

		\newtheorem{theorem}{Theorem}[section]
		
		\newtheorem{lemma}[theorem]{Lemma}
		
		\theoremstyle{remark}


		
		\begin{document}
			
					\title{Mixed virtual element approximation for the five-ﬁeld formulation of the steady Boussinesq problem with temperature-dependent parameters			
							}
					
	         \author{
										{\sc Zeinab Gharibi}\thanks{CI$^2$MA and GIMNAP-Departamento de Matem\'atica, Universidad del B\'io-B\'io, Casilla 5-C, Concepci\'on, Chile, email: {\tt zgharibi@ubiobio.cl}.}
									}
					
					\date{ }
					
\maketitle
\begin{abstract}
						
\noindent
In this work we develop recent research on the fully mixed virtual element method (mixed-VEM) based on the Banach space for the stationary Boussinesq equation to suggest and analyze a new mixed-VEM for the stationary two-dimensional Boussinesq equation with temperature-dependent parameters in terms of the pseudostress, vorticity, velocity, pseudoheat vector and temperature fields. The well-posedness of continuous formulation is analyzed utilizing a fixed-point strategy, a smallness assumption on the data, and some additional regularities on the solution.  The discretization for the mentioned variables is based on the coupling $ \mathbb{H}(\bdiv_{6/5}) $- and $ \mathbf{H}(\div_{6/5}) $-conforming virtual element techniques.
The proposed scheme is rewritten as an equivalent fixed point operator equation, so that its existence and stability estimates have been proven. In addition, an a priori convergence analysis is established by utilizing the Céa estimate and a suitable assumption on data for all variables in their natural norms showing an optimal rate of convergence. Finally, several numerical examples are presented to illustrate the performance of the proposed method.

\end{abstract}
					
    \smallskip\noindent
	{\bf Key words}: steady Boussinesq problem, temperature-dependent parameters, mixed virtual element method, five-ﬁeld formulation,  theoretical analysis.
					
	\smallskip\noindent
	{\bf Mathematics Subject Classifications (2020)}: 65N30, 65N12, 65N15, 65N99, 76M25, 76S05

\section{Introduction}\label{sec.1}
Various types of free convection are present in nature and in industry, including mantle convection, stratified oceanic flows, and onboard cooling systems for electronic devices. These processes can be modeled by coupling the continuity and momentum equations (Navier--Stokes) with the energy equations using the Boussinesq approximation, assuming that the fluid density is constant in all terms of the equations except for the buoyancy term in the momentum equation, where linearity dependency is taken into account. Despite this, other properties of the fluid may also be affected by temperature, such as viscosity and thermal conductivity, which are significant factors in the flow of oils and nanofluids. In this regard, a short list of a variety of numerical approximations for the generalizations of Boussinesq equations with temperature-dependent parameters, include finite element method (FEM) \cite{bmp-MAN-1995,da-JSC-2016,sl-JCP-2017,tt-NM-2005,wsf-JCP-2017,hylh-CMA-2022,oz-CAM-2017}, mixed and augmented-mixed FEM approaches \cite{agor-NHM-2020,ag-CMAM-2020,ago-JSC-2019,agr-ESAIM-2015,oqs-IMA-2014}, and virtual element method \cite{avv-IMA-2022}.

Specifically, the authors in \cite{oz-CAM-2017,oqs-IMA-2014} studied finite element methods for solving the generalized Boussinesq equation based on primal formulations. First, the problem is addressed in its primitive variables, while in the second, to achieve conformity, an extra variable called normal heat flux is introduced through the boundary.
A recent study in \cite{ago-JSC-2019} discusses primal and mixed formulations of the energy (with a space-dependent thermal conductivity) and the momentum and continuity (with temperature-dependent viscosity) equations, respectively. This study introduces pseudostress and vorticity tensors as auxiliary variables. In this way, the authors showed the optimally convergent method, using the fixed-point strategy proposed in \cite{agr-ESAIM-2015,cgo-NMPDE-2016}, and employing Raviart--Thomas, Lagrange, and discontinuous elements to approximate the pseudostress, the velocity, and temperature, and discontinuous elements for the vorticity and normal heat flux, respectively, whenever the exact solution is smooth enough, and the data is sufficiently small.
Nonetheless, the analysis of the method is restricted to the two-dimensional case due to the variable viscosity. This drawback has been overcome in the recent works \cite{agor-NHM-2020,ag-CMAM-2020}, by adding the rate of strain tensor to the previous variables of fluid, i.e., pseudostress, the velocity and vorticity. The first one deals with the primal formulation of the energy equation, while the second one introduces an augmented fully-mixed finite element method for the problem. In both cases, the analysis is based on the introduction of a pseudostress tensor relating the diffusive and convective terms with the pressure and it is proven optimal convergence. However, this approach is not without its drawbacks. One such drawback is that it can be computationally expensive.

In recent years, a significant number of researchers have focused on extending the virtual element method (VEM) \cite{bfm-M2AN-2014,bbcmmr-MMMAS-2013,cg-IMA.NA-2017,cgs-SIAM.NA-2018,gms-C-2018} and mixed VEMs to solve linear and nonlinear problems in fluid mechanics using velocity-pressure or primal formulations \cite{blv-MMNA-2017,blv-SIAM.NA-2018} and pseudostress-velocity-based formulations \cite{cg-IMA.NA-2017,cgs-SIAM.NA-2018,gs-M3AS-2021,gms-JCM-2021,gms-C-2018}.
Here, we specifically focus on the second category. Especially, a mixed-VEM based on the pseudostress-velocity formulation for the Stokes equation was developed in \cite{cg-IMANUM-2017} as a natural extension of previously proposed dual-mixed-FEMs for elliptic equations in divergence form. The discussion in \cite{cg-IMANUM-2017} includes the virtual finite element subspaces to be employed, the associated interpolation operators, and their respective approximation properties. Moreover, the uniform boundedness of the resulting family of projectors and their corresponding approximation properties are established there.
In this manner, the classical Babuska--Brezzi theory is employed to establish the solvability of the discrete scheme and obtain the corresponding a priori error estimates for both the virtual solution and its fully computable projection.
 Later on, this approach was extended to other models in fluid mechanics, such as quasi-Newtonian Stokes \cite{cgs-SIAM.NA-2018}, linear and nonlinear Brinkman \cite{cgs-MMMAC-2017,gms-C-2018}, Navier--Stokes \cite{gms-M3AS-2018,gs-M3AS-2021}, and Boussinesq \cite{gms-JCM-2021}, where different formulations were considered. 
In particular, the Banach spaces-based approach, usually employed for solving various nonlinear problems in continuum mechanics via primal and mixed finite element methods, is extended for the first time in \cite{gs-M3AS-2021} to the VEM framework and its respective applications. More precisely, an $\mathrm L^p$ spaces-based mixed VEM for a pseudostress--velocity formulation of the two-dimensional Navier-Stokes equations with Dirichlet boundary conditions is proposed and analyzed there.
The simplicity of the resulting mixed-VEM scheme is reflected by the absence of augmented terms, in contrast to previous work on this model \cite{gms-M3AS-2018}. Additionally, only a virtual element space for the pseudostress tensor is required since the non-virtual but explicit subspace, given by the piecewise polynomial vectors of degree $\le k$, is employed to approximate the velocity.
Later on,
as an extension of \cite{gs-M3AS-2021}, the authors \cite{gg-Pre-2023} introduced and analyzed a fully mixed VEM for the Boussinesq problem.
Specifically, they elected the equations and the associated variational formulation from \cite{gs-M3AS-2021}, adapting the approach to propose a novel $\mathrm L^p$ spaces-based fully mixed-VEM for the Boussinesq problem, which, to the best of our knowledge, is introduced for the first time.

According to the above discussion and aiming to broaden the scope of mixed-VEM applicability to nonlinear models in fluid mechanics featuring variable coefficients, we extend the approach outlined in \cite{gg-Pre-2023} to solve the Boussinesq problem with parameters dependent on temperature.
 To achieve this, we consider the mixed formulations of the Navier--Stokes and energy equations by defining the pseudostress and vorticity tensors, and pseudoheat vector as additional variables. These, together with the temperature and velocity, comprise the unknowns of our problem. We then adapt the approach from \cite{gg-Pre-2023} to propose a fully mixed-VEM for the Boussinesq problem with temperature-dependent parameters.
In this way, we first present the continuous formulation of the problem. Then, we discretize the corresponding problem by applying tools proposed in \cite{gg-Pre-2023}. 
More precisely, the pseudostress and pseudoheat vector are approximated by the virtual element subspaces of $ \mathbb{H}(\bdiv_{6/5};\Omega) $ and $ \mathbf{H}(\div_{6/5};\Omega) $, respectively, whereas the non-virtual element subspaces of $ \mathbf{L}^{6}(\Omega) $, $ \mathbb{L}_{\mathtt{skew}}^{2}(\Omega) $, and $ \mathrm{L}^{6}(\Omega) $ are employed to approximate the velocity, vorticity tensor and the temperature, respectively. Thus, we develop the corresponding solvability analysis for discrete problem and derive the convergence analysis with optimal error estimates.\\ 
	
\textbf{Outline.}
The rest of this work is organized as follows. 
In Sec. \ref{sec.2}, we set the model of interest, define the auxiliary
unknowns to be considered, and introduce the corresponding variational formulation. The solvability is analyzed in Sec. \ref{sec.3} within
a Banach framework. In fact, apart from establishing the boundedness properties of the bilinear (and trilinear) forms, we demonstrate the equivalence of the continuous formulation with a fixed-point equation and prove the well-definedness of the corresponding operator using Banach-Nečas-Babuška Theorem. Finally, we apply the Banach fixed-point theorem to confirm the existence of a unique solution.
Next, in Sec. \ref{sec.4} we present the VE discretization, introducing the mesh entities, the degrees
of freedom, the construction of mixed VE spaces, and introducing the discerte multilinear forms.
In Sec. \ref{sec.5}, we prove the existence of the discrete solution by utilizing a discrete version of the fixed point strategy developed in Section \ref{sec.3} for the continuous case, providing the usual estimates concerning the bilinear and trilinear forms involved, and establishing the discrete inf-sup condition. 
In addition, in Sec. \ref{sec.6} the Strang-type lemmas in Banach spaces are applied to derive a priori error estimates, and corresponding rates of convergence are established. Finally, the performance of the method is illustrated in Section \ref{sec.7} with several numerical examples.


\subsection{Notations}
					
For any vector fields $ \mathbf{v}=(v_1, v_2)^{\tt t} $ and $ \mathbf{w}=(w_1, w_2)^{\tt t} $, we set the gradient, divergence,
and tensor product operators as
\[
\boldsymbol{\nabla}\mathbf{v}:=(\nabla v_1, \nabla v_2), \quad \div(\mathbf{v}):=\partial_x v_1 + \partial_y v_2, 
\qan \mathbf{v}\otimes\mathbf{w}\,:=\, (v_i \, w_j)_{i,j=1,2} \,,
\]
respectively. In addition, denoting by $\mathbb{I}$ the identity matrix of $\R^2$, for any tensor fields 
$\bta=(\tau_{ij}),~\boldsymbol{\zeta}=(\zeta_{ij})\in \mathrm{R}^{2\times 2}$, we write as usual
\[
\bta^{\mathtt{t}}:=(\tau_{ji}), \quad \operatorname{tr}(\bta):=\tau_{11}+\tau_{22},\quad 
\bta^{\mathtt{d}}:= \bta-\dfrac{1}{2}\operatorname{tr}(\bta)\mathbb{I}\,,
\qan \bta: \boldsymbol{\zeta}:= \sum_{i,j=1}^{2}\tau_{ij}\zeta_{ij} \,,
\]
which corresponds, respectively, to the transpose, the trace, and the deviator tensor of $ \bta $, and to the
tensorial product between $ \bta $ and $ \boldsymbol{\zeta} $. Next, given a bounded domain $ \mathcal{D}\subset \mathrm{R}^{2} $ 
with boundary $ \partial \mathcal{D} $, we let $ \mathbf{n} $ be the outward unit normal vector on $ \partial \mathcal{D} $. Also, 
given $ r\geq 0 $ and $ 1< p\leq \infty $, we let $\mathrm W^{r,p}(\mathcal{D}) $ be
the standard Sobolev space with norm $ \Vert\cdot\Vert_{r,p;\mathcal{D}} $ and seminorm $ \vert\cdot\vert_{r,p;\mathcal{D}} $. In particular, for $ r=0 $ we let
$\mathrm L^{p}(\mathcal{D}) := \mathrm W^{0,p}(\mathcal{D}) $ be the usual Lebesgue space, and for $ p=2 $ we let 
$\mathrm H^{s}(\mathcal{D}) := \mathrm W^{r,2}(\mathcal{D})$ be the classical
Hilbertian Sobolev space with norm $ \Vert\cdot\Vert_{s,\mathcal{D}} $ and seminorm $ \vert\cdot\vert_{s,\mathcal{D}} $. 
Furthermore, given a generic scalar functional space M, we let $\mathbf{M}$ and $ \mathbb{M} $ be its vector and 
tensorial counterparts, respectively, whose norms and seminorms are denoted exactly as those of M. 
On the other hand, given $t\in (1,+\infty)$, and letting $ \bdiv $ (resp. $ \operatorname{\mathbf{rot}} $) 
be the usual divergence operator $ \div $ (resp. rotational operator $ \operatorname{rot} $) acting along the 
rows of a given tensor, we introduce the non-standard Banach spaces
\[
\mathbf{H}(\Diamond_t;\Omega)~:=~\Big\{\mathbf{v}\in \mathbf{L}^{2}(\Omega):\quad  
\Diamond(\mathbf{v})\in \mathrm{L}^t(\Omega)\Big\}\qquad\forall\,\Diamond\in \{\div,\operatorname{rot}\}\,,
\]
and
\[
\mathbb{H}(\blacklozenge_t;\Omega)~:=~\Big\{\bta\in \mathbb{L}^{2}(\Omega):\quad  
\blacklozenge(\bta)\in \mathbf{L}^t(\Omega)\Big\}\qquad\forall\, \blacklozenge\in \{\bdiv,\operatorname{\textbf{rot}}\}\,,
\]
equipped with the usual norms
\[
\|\circ\|_{\div_t; \Omega} ~:=~\|\circ\|_{0, \Omega} + 
\|\div(\circ)\|_{0,t; \Omega}, \quad\quad \forall\, \circ\in \mathbf{H}(\div_t; \Omega),
\]
and
\[
\|\bullet\|_{\bdiv_t ; \Omega}~:=~\|\bullet\|_{0,\Omega} +
\|\bdiv(\bullet)\|_{0,t; \Omega}, \quad\quad 
\forall\,\bullet\in \mathbb{H}(\bdiv_t ; \Omega),
\]
respectively. 
					
Then, proceeding as in \cite[eq. (1.43), Section 1.3.4]{g-Springer-2014}, it is easy to show that for each $ s\geq \frac{2n}{n+2} $ there holds 
\begin{subequations}
\begin{align}
\langle \bta\cdot\bn , v\rangle \, &=\, \int_{\Omega}\Big\{\bta\cdot\nabla v\, +\, v \,\div(\bta) \Big\}\qquad \forall\, (\bta , v)\in \mathbf{H}(\div_s;\Omega)\times \mathrm{H}^{1}(\Omega)\,,\label{eq:int.part_1}\\[1mm]
\langle \bta\bn , \bv\rangle \, &=\, \int_{\Omega}\Big\{\bta\cdot\boldsymbol{\nabla} \bv\, +\, \bv\cdot\bdiv(\bta) \Big\}\qquad \forall\, (\bta , \bv)\in \mathbb{H}(\bdiv_s;\Omega)\times \mathbf{H}^{1}(\Omega)\,,\label{eq:int.part_2}
\end{align}
\end{subequations}
where $\langle \cdot,\cdot\rangle$ stands for both the duality pairings
$(\mathrm H^{-1/2}(\Gamma),\mathrm H^{1/2}(\Gamma)\big)$ and $(\mathbf H^{-1/2}(\Gamma),\mathbf H^{1/2}(\Gamma)\big)$.
\section{The continuous formulation}\label{sec.2}
In this section we introduce the model problem and derive its corresponding weak formulation.
\subsection{The model problem}						
In this paper, we consider and study the stationary Boussinesq
problem with varying viscosity and
thermal conductivity. This problem involves a system of equations where the incompressible Navier--Stokes equation is coupled with the transport equation through a convective term and a buoyancy term, usually exerting force in the opposite direction to gravity. More precisely, given an external force per unit mass $ \bg\in \textbf{L}^{\infty}(\Omega) $, we focus on finding a velocity field $ \bu:\Omega\rightarrow\mathbf{R} $, a pressure field $ p:\Omega\rightarrow\mathrm{R}  $ and the temperature field $ \varphi:\Omega\rightarrow\mathrm{R}  $, such that
\begin{subequations}
\begin{align}
-\bdiv\big(\mu(\varphi)\,\mathbf{e}(\bu)\big) + (\boldsymbol{\nabla}\bu)\bu+\nabla p - \varphi\, \bg&\,=\,\mathbf{0}\quad\qin\Omega\,,\label{eq:prob1}\\[1mm]
\div(\bu)&\,=\,0\quad\qin\Omega\,,\label{eq:prob2}\\[1mm]
-\div(\kappa(\varphi)\nabla\varphi) +\bu\cdot\nabla\varphi &\,=\,0\quad \qin\Omega\,,\label{eq:prob3}\\[1mm]
\bu&\,=\,\mathbf{0}\quad \qon\Gamma\,,\label{eq:prob4}\\[1mm]
\varphi&\,=\,\varphi_{D}\quad \qon\Gamma_{D}\,,\label{eq:prob5}\\[1mm]
\kappa(\varphi)\nabla\varphi\cdot \mathbf{n}&\,=\,0\quad \qon\Gamma_{N}\,,\label{eq:prob5a}
\end{align}
\end{subequations}
					
where $ \Omega $ is a bounded polygonal domain in $ \mathrm{R}^{2} $, with boundary $ \Gamma $  which split as $ \Gamma = \Gamma_{D}\,\cup\,\Gamma_{N}  $, $ \varphi_{D}\in H^{1/2}(\Gamma_{D}) $ is a prescribed temperature and $ \mathbf{e}(\bu) $ is the symmetric part of the velocity gradient tensor, which defined as $ \mathbf{e}(\bu)=(\boldsymbol{\nabla}(\bu)+\boldsymbol{\nabla}(\bu)^{\mathtt{t}})/2 $.
In addition, $ \mu , \kappa :\mathrm{R}\rightarrow\mathrm{R}^{+} $ are the temperature-dependent viscosity and thermal conductivity functions, respectively, which are assumed to be bounded above and below by positive constants, that is, there exist the positive constants $ \mu_1 $, $ \mu_2 $ and $ \kappa_1 $, $ \kappa_2 $ 	such that
\begin{equation}\label{eq:bound.m.k}
	  \mu_1 \,\leq\, \mu(w)\,\leq\, \mu_2 \qan \kappa_1 \,\leq\, \kappa(w)\,\leq\, \kappa_2
	 \qquad\forall\, w \in \mathrm{R}\,.	
\end{equation}
In what follows, we assume that these functions are Lipschitz continuous, that is, there exist the positive constants $ \mathcal{L}_{\mu} $	and $ \mathcal{L}_{\kappa} $ such that
\begin{equation}\label{eq:lip.mu.kap}
\big|\mu(w)-\mu(v)\big|\,\leq\, \mathcal{L}_{\mu}\,\big| w-v\big|\qan
\big|\kappa(w)-\kappa(v)\big|\,\leq\, \mathcal{L}_{\kappa}\,\big|w-v\big|\qquad\forall\, w \,, v\in \mathrm{R}\,.	
\end{equation}
					
Finally, considering \eqref{eq:prob1} and to ensure the uniqueness of the pressure, we will seek this unknown within the space 
\[
L_{0}^{2}(\Omega)\,:=\, \Big\{ q\in L^{2}(\Omega):\quad \int_{\Omega} q \,=\, 0  \Big\}\,.
\]

As our focus lies in utilizing a fully-mixed variational formulation for the coupled model \eqref{eq:prob1}-\eqref{eq:prob5a}, we initially take the approach presented in \cite{ago-Calcolo-2018} for the fluid.
  More precisely, we now introduce as further unknown the pseudostress of fluid equation, by 
\begin{equation}\label{eq:defNewV2}
\bsi\,:=\,\mu(\varphi)(\boldsymbol{\nabla}\bu-\bgam)-\bu\otimes\bu-p \,\mathbb{I}
\qin \Omega	\quad \text{where}\quad \bgam\, =\,\dfrac{1}{2}\left(\boldsymbol{\nabla}\bu-(\boldsymbol{\nabla}\bu)^{\mathtt{t}}\right)\,.
\end{equation}

In this way, applying the matrix trace to the above equation, and utilizing \eqref{eq:prob2} as $$\tr\big(\mu(\varphi)\boldsymbol{\nabla}  \bu\big)\,=\,0\,,$$
one arrives at
\begin{equation}\label{eq:postpr}
	p \,=\,-\dfrac{1}{2}\,\Big\{ \tr(\bsi)+\tr(\bu\otimes\bu)\Big\}
	  \qin \Omega \,.
\end{equation}
					
Subsequently, the momentum and continuity equations \eqref{eq:prob1} and \eqref{eq:prob2}, along with boundary condition \eqref{eq:prob4}, can be reformulated as follows:
\begin{equation}\label{sys1}
	\begin{array}{rcl}
	\frac{1}{\mu(\varphi)}\, \bsi^{\mathtt{d}}+\frac{1}{\mu(\varphi)}(\bu\otimes\bu)^{\mathtt{d}}&=&\boldsymbol{\nabla}\bu-\bgam\qin \Omega\,,\\[1.8ex]
	\bdiv (\bsi)+\varphi\, \mathbf{g}&=&0\qin \Omega\,,\\[1.8ex]
	\mathbf{u}\,=\,\mathbf{0}\qon \Gamma\quad &\text{and}& \quad 
	\displaystyle \int_\Omega \tr\big(\bsi+\bu\otimes\bu\big) \,=\, 0\,.
   \end{array}
\end{equation}
Notice that the first row of \eqref{sys1} suggests us the vorticity tensor $ \bgam $ must be sought in space with the following condition
\begin{equation}\label{eq:cond1}
\bgam\,=\,-\bgam^{\mathtt{t}}\qin\Omega\,.
\end{equation}
					
On the other hand, in order to construct a mixed formulation for the energy equation, we  define unknow the pseudoheat as
\begin{equation*}
\brho\,:=\,\kappa(\varphi)\nabla \varphi-\bu\, \varphi  \qin\Omega\,.	
\end{equation*}
Therefore, thanks to the no-slip condition of velocity, the energy equation \eqref{eq:prob3}, and the boundary condition \eqref{eq:prob5}-\eqref{eq:prob5a}, can be rewritten in the following terms
\begin{equation}\label{sys2}
	\begin{array}{rcl}
	\frac{1}{\kappa(\varphi)}\,\brho + 	\frac{1}{\kappa(\varphi)}\,\bu\,\varphi &=& \nabla \varphi \qin \Omega\,,\\[1.5ex]
		\div(\brho)&=& 0 \qin \Omega\,,\\[1.5ex]
	\varphi|_{\Gamma_D }\,=\,\varphi_{D}\quad &\text{and}& \quad 
	\brho\cdot\mathbf{n}|_{\Gamma_N } \,=\, 0\,.
	\end{array}
\end{equation}
					
As a result, Boussinesq problem \eqref{eq:prob1}-\eqref{eq:prob5a} becomes equations \eqref{sys1} and \eqref{sys2}.
\subsection{The variational formulation}
In this section we use a Banach framework for the continuous weak formulation of \eqref{sys1} and \eqref{sys2}.
Initially, to derive a five-field mixed formulation for \eqref{sys1}, we directly apply \eqref{eq:int.part_2} with $ r\geq \frac{2n}{n+2} $ and $ \bta \in \mathbb{H}(\bdiv_r;\Omega)$ by testing the first equation of \eqref{sys1} to obtain:
\begin{equation}\label{EE0}
		\int_{\Omega} \dfrac{1}{\mu(\varphi)}\, \bsi^{\mathtt{d}}:\bta^{\mathtt{d}}+\int_{\Omega} \dfrac{1}{\mu(\varphi)}\,(\bu\otimes\bu)^{\mathtt{d}}:\bta+\int_{\Omega}\bgam:\bta+\int_{\Omega}\bu\cdot\bdiv(\bta)\,=\,\textbf{0}\qquad \forall~ \bta\in \mathbb{H}(\bdiv_r;\Omega)\,.
\end{equation}
It is easy to see that the first term of \eqref{EE0} makes sense for $ \bsi, \bta\in\mathbb{L}^{2}(\Omega) $ due to Cauchy--Schwarz's inequality and the boundedness of $ \mu $ (cf. first column of \eqref{eq:bound.m.k}), whereas thanks to \eqref{eq:cond1} and Cauchy--Schwarz’s inequality again, the third term of \eqref{EE0} makes sense for $ \bgam\in \mathbb{L}_{\mathtt{skew}}^{2}(\Omega) $ where 
\[
\mathbb{L}_{\mathtt{skew}}^{2}(\Omega)\,:=\,\Big\{\bta\in \mathbb{L}^{2}(\Omega):\quad \bta+\bta^{\mathtt{t}}=\mathbf{0}\Big\}\,.
\]

In turn, given the knowledge of $ \bdiv(\bta)\in \mathbf{L}^{r}(\Omega) $ and employing H\"older's inequality, we infer from the fourth term of \eqref{EE0} that we seek $ \bu\in \mathbf{L}^{r^{'}}(\Omega) $, where $ r^{'} $ is the conjugate of $ r $. 
Alternatively, when using the vector functions in $ \mathbf{L}^{r^{'}}(\Omega) $ as a test function for the second row of \eqref{sys1}, we arrive at	
\begin{equation}\label{EE1}
		\int_{\Omega}\bdiv (\bsi)\cdot \bv +\int_{\Omega}\varphi\,\bg\cdot\bv\,=\,0 \qquad \forall\, \bv\in \mathbf{L}^{r^{'}}(\Omega)\,,
\end{equation}		
Observe from the earlier deduction and the defined spaces for $ \bsi $ and $ \bv $ that the first term of \eqref{EE1} is well-defined. Regarding the second term, which relies on the choice of $ \varphi $, we will address it subsequently.
Lastly, according to \eqref{eq:defNewV2}, the symmetry of $ \bsi $ is weakly imposed as 
\begin{equation*}
	\disp\int_{\Omega}\bsi:\bom \,=\,0\qquad\forall\,\bom\in\mathbb{L}_{\mathtt{skew}}^{2}(\Omega)\,.
\end{equation*}
								
Similarly, letting now $\mathbf{X}$ and $\mathrm{Y}$ be corresponding test spaces for the energy equation \eqref{sys2}, and 
multiplying the first and second equations of \eqref{sys2} by $\bet \in \mathbf{X}$ and
$\psi \in \mathrm{Y}$ respectively, we obtain
\begin{equation}\label{EE0a}
	\int_{\Omega}	\frac{1}{\kappa(\varphi)}\,\brho\cdot\bet + \int_{\Omega}	\frac{1}{\kappa(\varphi)}\,\bu\,\varphi\cdot\bet \,=\,\int_{\Omega}\nabla \varphi\cdot\bet
						\qquad \forall\, \bet\in \mathbf{X}\,,
\end{equation}
and
\begin{equation}\label{EE2a}
\int_{\Omega} \psi\, \div(\brho) \,=\,
		0 \qquad\forall\, \psi\in \mathrm{Y}\,.
\end{equation}
					
Now, concerning the particular selection of the mentioned spaces, it is noteworthy that due to the boundedness of $ \kappa $ (cf. second columns of \eqref{eq:bound.m.k}), the first term of \eqref{EE0a} is well defined when the space $ \mathbf{L}^{2}(\Omega)$ is chosen for $ \brho, \, \bet $. Assuming the initially that $ \varphi \in \mathrm{H}^{1}(\Omega) $ and $ s,s'\in (1,\infty) $ be conjugate to each other, and utilizing \eqref{eq:int.part_1} with $\bet \in \mathbf{H}_{N} (\div_s;\Omega)$, we apply the boundary condition (cf. the third row of \eqref{sys2}), to get
 \begin{equation}\label{EE3-otra}
\int_{\Omega}\nabla \varphi\cdot \bet \,=\,-\int_{\Omega}\varphi\, \div(\bet)+
\langle \bet \cdot\bn,\varphi_D\rangle_{\Gamma_D} \quad\forall\, \bet\in \mathbf{H}_{N} (\div_s;\Omega) \,,
\end{equation}
where $ \mathbf{H}_{N}(\div_s;\Omega) $ is defined by
\[
\mathbf{H}_{N}(\div_s;\Omega)\, :=\, \Big\{  \bet \in \mathbf{H}(\div_s;\Omega):\quad \bet\cdot\bn = 0 \qon\Gamma_{N} \Big \}\,.
\]
Furthermore, we remark that to study the continuity property of functions $ \mu $ and $ \kappa $ within the
definition of the second bilinear forms existing in \eqref{EE0} and \eqref{EE0a}, which will be required for the solvability analysis of the fixed-point operator equation to be proposed afterwards, we need to be able to control the expressions
\begin{equation*}
\int_{\Omega}\left(\dfrac{1}{\mu(\varpi)}-\dfrac{1}{\mu(\phi)}\right)(\bw\otimes\bu)^{\mathtt{d}}:\bta \qan 
\int_{\Omega}\left(\dfrac{1}{\kappa(\varpi)}-\dfrac{1}{\kappa(\phi)}\right)\bu\, \varphi\cdot\bet\,,
\end{equation*}
where $ \varpi $ and $ \phi $ are arbitrary scalars that belong to the identical space where we will search for the unknown $ \varphi $.
Now, utilizing the boundedness and Lipschitz-continuity properties of $ \mu $ and $ \kappa $ (cf. \eqref{eq:bound.m.k} and \eqref{eq:lip.mu.kap}), straightforward applications of the Cauchy--Schwarz and H\"older inequalities yield
\begin{equation}\label{eq:hh}
\begin{split}
\Big|\int_{\Omega}\left(\dfrac{1}{\mu(\varpi)}-\dfrac{1}{\mu(\phi)}\right)(\bw\otimes\bu)^{\mathtt{d}}:\bta\Big|\, \leq \, \dfrac{\mathcal{L}_{\mu}}{\mu_1^{2}}\,\Vert \varpi-\phi\Vert_{0,6;\Omega}\, \Vert\bw\Vert_{0,6;\Omega}\, \Vert\bu\Vert_{0,6;\Omega}\,  \Vert\bta\Vert_{0,\Omega}\,,\\[2ex]
\Big|	\int_{\Omega}\left(\dfrac{1}{\kappa(\varpi)}-\dfrac{1}{\kappa(\phi)}\right)\bu\, \varphi\cdot\bet\Big|\, \leq \, \dfrac{\mathcal{L}_{\kappa}}{\kappa_1^{2}}\,\Vert \varpi-\phi\Vert_{0,6;\Omega}\, \Vert\bu\Vert_{0,6;\Omega}\, \Vert\varphi\Vert_{0,6;\Omega}\,  \Vert\bet\Vert_{0,\Omega}\,,
\end{split}
\end{equation}
					
In this way, the above leads us to look for $ \bu $ and $ \varphi $ in the spaces $ \mathbf{L}^{6}(\Omega) $ and $ \mathrm{L}^{6}(\Omega) $, respectively, and well-definedness of the second terms of \eqref{EE0} and \eqref{EE0a} by setting $ r^{'}:=6 $, $ s^{'}:=6 $ and therefore $ r=6/5 $, $ s=6/5 $.
According to the foregoing discussion, we now define 			
\begin{equation*}
\begin{array}{c}
\mathbb{X}\,:=\,\mathbb{H}_0(\bdiv_{6/5};\Omega)\,, \quad \mathbf{Y}\,:=\,\mathbf{L}^{6}(\Omega)\,,\quad \mathbb{Z}\,:=\, \mathbb{L}_{\mathtt{skew}}^{2}(\Omega)\,,\qan\\[2ex]
\mathbf{X}\,:=\,\mathbf{H}_N(\bdiv_{6/5};\Omega)\,, \quad \mathrm{Y}\,:=\,\mathrm{L}^{6}(\Omega)\,.
\end{array}
\end{equation*}
Next, to rewrite the aforementioned formulation in a more suitable manner for the forthcoming analysis, we proceed by defining product space 
\[
\mathcal{Z} \, :=\, \mathbf{Y} \times \mathbb{Z}\,, 
\]			
and notations
\[
\vec{\bu}\,:=\,(\bu,\bgam)\,,\,	\vec{\bv}\,:=\,(\bv,\bom)\,,\, \vec{\bw}\,:=\,(\bw,\bdel)\in \mathcal{Z}\,,
\]
with equipping $ \mathcal{Z} $ with corresponding norm given by
\[
\Vert \vec{\bw}\Vert_{\mathcal{Z}}\, =\, \Vert(\bw,\bdel)\Vert\,:=\, \Vert \bw\Vert_{0,6;\Omega}+\Vert \bdel\Vert_{0,\Omega}\,.
\]

Consequently, replacing \eqref{EE3-otra} back into 
\eqref{EE0a}, and gathering the resulting equation with \eqref{EE2a}, \eqref{EE0} and \eqref{EE1}, we deduce 
that the variational formulation of \eqref{sys1} and \eqref{sys2} becomes: 
Find $ (\bsi, \vec{\bu})\in\mathbb{X}\times\mathcal{Z} $ and $ (\brho, \varphi)\in \mathbf{X}\times \mathrm{Y}$  such that 
\begin{subequations}
\begin{align}
\mathscr{A}_{\varphi}^{S}(\bsi, \bta)+\mathscr{O}_{\varphi}^{S}(\bu;\bu,\bta)+\mathscr{B}^{S}(\bta,\vec{\bu})&\,=\,0  
\qquad\qquad\quad\,\,\,\, \forall\,\bta\in \mathbb{X} \,,\label{eq:v1a-eq:v4-a}\\[1ex]
\mathscr{B}^{S}(\bsi,\vec{\bv})&\,=\, -\disp\int_{\Omega}\varphi\,\bg\cdot\bv
 \quad\,\, \forall\, \bv\in \mathbf{Y} \,,\label{eq:v1a-eq:v4-b}
\\[1ex]
\mathscr{A}^{T}_{\varphi}(\boldsymbol{\rho},\boldsymbol{\eta})+\mathscr{O}^{T}_{\varphi}(\bu;\varphi,\boldsymbol{\eta})+\mathscr{B}^{T}(\varphi,\boldsymbol{\eta})&\,=\ \mathscr{F}^{T}(\bet)  \qquad\quad\,\, \forall\, \bet\in\mathbf{X} \,,\label{eq:v1a-eq:v4-c}
\\[1ex]
\mathscr{B}^{T}(\psi,\brho)&\,=\ 0  \qquad\qquad\qquad\,\forall\, \psi\in \mathrm{Y} \,,\label{eq:v1a-eq:v4-d}
\end{align}
\end{subequations}
					
where the bilinear forms 
$ \mathscr{B}^S: \mathbb{X}\times \mathcal{Z}\rightarrow \mathrm{R} $  and
$\mathscr{B}^{T}: \mathbf{X}\times \mathrm{Y}\rightarrow \mathrm{R} $ are defined as
\begin{equation}\label{def-a-b-tilde-a-tilde-b}
\begin{array}{rcll}
\mathscr{B}^{S}(\bta ,\vec{\bv})&:=& \disp  \int_{\Omega}\bv\cdot\bdiv(\bta)+\int_{\Omega}\bom:\bta & \quad \forall\,(\bta,\vec{\bv}) \in \mathbb{X} \times \mathcal{Z}\,, 	 \\[2ex]
\mathscr{B}^{T}(\bet ,\psi)&:=& \disp  \int_{\Omega}\psi\,\div(\bet) & \quad \forall\,(\bet,\psi) \in \mathrm{Y} \times \mathbf{X}\,,
\end{array}
\end{equation}
whereas the linear functional  $\mathscr{F}^{T} : \mathbf X \rightarrow \mathrm R$
are given by
\begin{equation}\label{def-f-tilde-f}
\begin{array}{rcll}
	\mathscr{F}^{T}(\bet) &:=& \langle \bet \cdot\bn,\varphi_D\rangle_{\Gamma_D} & \quad\forall\,\bet \in \mathbf X\,.
\end{array}
\end{equation}
					
In turn, for each $ \mathbf{z}\in\mathbf{Y} $ and $\phi\in \mathrm{Y} $ we set the bilinear forms $\mathscr{A}_{\phi}^{S}: \mathbb{X} \times \mathbb{X} \rightarrow \mathrm R$, $\mathscr{O}_{\phi}^{S}(\bz;\cdot,\cdot) : \mathbf{Y}\times\mathbb{X}
\rightarrow \R $, and $\mathscr{A}^{T}_{\phi}: \mathbf{X} \times \mathbf{X} \rightarrow \mathrm R$, $\mathscr{O}^{T}_{\phi}(\bz;\cdot,\cdot) : \mathrm{Y}\times\mathbf{X}
\rightarrow \R $ as
					
\begin{equation}\label{def-c-tilde-c}
\begin{array}{rcll}
\mathscr{A}_{\phi}^{S}(\bze,\bta)&:=& \disp \int_{\Omega}\dfrac{1}{\mu(\phi)}\,\bze^{\mathtt{d}}:\bta^{\mathtt{d}}&\quad \forall\,\bze,\bta\in \mathbb{X}\,,\\[2ex]
	\mathscr{O}_{\phi}^{S}(\bz;\bw,\bta) &:=& \disp \int_{\Omega}\dfrac{1}{\mu(\phi)}\,(\bz\otimes\bw)^{\mathtt{d}}: \bta&
		\quad \forall\, (\bw,\bta) \in \mathbf Y \times \mathbb X\,, \qan \\[2ex]
		\mathscr{A}^{T}_{\phi}(\bxi,\bet)&:=& \disp \int_{\Omega}\dfrac{1}{\kappa(\phi)}\,\bxi\cdot\bet&\quad \forall\,\bxi,\bet\in \mathbf{X}\,,\\[2ex]
	 \mathscr{O}^{T}_{\phi}(\bz;\omega,\bet) &:=& \disp \int_{\Omega}\dfrac{1}{\kappa(\phi)}\,\mathbf{z}\,\omega\cdot \boldsymbol{\eta} &
	\quad \forall\, (\omega,\bet) \in \mathrm Y \times \mathbf X\,.
\end{array}
\end{equation}
\section{The continuous solvability analysis}\label{sec.3}
In this section, we investigate the solvability of \eqref{eq:v1a-eq:v4-a}-\eqref{eq:v1a-eq:v4-d} by utilizing abstract findings pertaining to the stability characteristics of the associated bilinear forms, as compiled below. Then, we proceed by introducing resolvent operators corresponding to each of the individual problems that constitute \eqref{eq:v1a-eq:v4-a}-\eqref{eq:v1a-eq:v4-d}, subsequently transforming the latter into an equivalent fixed-point equation. Ultimately, we demonstrate the well-defined nature of the aforementioned operators and use the classical Banach theorem to prove the solvability of \eqref{eq:v1a-eq:v4-a}-\eqref{eq:v1a-eq:v4-d}.
					
\subsection{Preliminaries}\label{sub.sec.3.3.1}
We begin by stating a slight adaptation of the abstract result established in \cite[Theorem 3.4]{cg-CMA-2022}.
					
\begin{theorem}\label{t1:wel}
Let $ X $ and $ Y $ be reflexive Banach spaces, and let $ a: X\times X \rightarrow\mathrm{R} $ and $ b: X\times Q \rightarrow\mathrm{R} $ be given bounded bilinear forms. Moreover, let $ \mathbf{B}: X\rightarrow Y' $ be the bounded linear operator induced by $ b $, and let $ \mathcal{H}:=N(\mathbf{B}) $ be the respective null space. Assume that:
\begin{itemize}[leftmargin=.6in]
\item[i)] $ a $ is positive semi-definite, that is
\begin{equation*}
a(v,v)\, \geq\, 0\quad\forall\, v\in X \,,
\end{equation*}
\item[ii)] there exists a constant $ \alpha $ such that
\begin{equation*}
\sup_{0\neq v\in \mathcal{H}}\dfrac{a(w,v)}{\Vert v\Vert_{X}}\,\geq\,\alpha\,\Vert w\Vert_{X}\qquad\forall\, w\in \mathcal{H} \,,
\end{equation*}
and
\begin{equation*}
\sup_{0\neq w\in \mathcal{H}}\dfrac{a(w,v)}{\Vert w\Vert_{X}}\,\geq\,\alpha\,\Vert v\Vert_{X}\qquad\forall\, v\in \mathcal{H} \,,
\end{equation*}
\item[iii)] and there exists a constant $ \beta $ such that
\begin{equation*}
\sup_{0\neq v\in X}\dfrac{b(v,q)}{\Vert v\Vert_{X}}\,\geq\,\beta\,\Vert q\Vert_{Y}\qquad\forall\, q\in Y \,,
\end{equation*}
\end{itemize}
Then, for each pair $ (f , g)\in X' \times Y' $ there exists a unique $ (u,p)\in X\times Y $ such that
\begin{equation*}
\begin{array}{rcll}
a(u,v)+b(v,p) &=& f(v) &\quad\forall\, v\in X \\[1ex]
b(u,q)&=& g(q) &\quad\forall\, q\in Y
\end{array}
\end{equation*}
Moreover, there exists a constant $ C_{st} $, depending only on $ \| a\| $, $ \| c\| $, $ \alpha $, and $ \beta $, such that
\begin{equation*}
\Vert (u,p)\Vert_{X\times Y}\, \leq \, C_{st}\,\Big\{\Vert f\Vert_{X'}+\Vert g\Vert_{Y'} \Big\}\,.
\end{equation*}
\end{theorem}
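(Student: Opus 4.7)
The plan is to establish Theorem \ref{t1:wel} by adapting the classical Brezzi saddle-point framework to the reflexive Banach space setting, which amounts to two successive applications of the Banach--Ne\v{c}as--Babu\v{s}ka lemma: one on the kernel $\mathcal{H} = N(\mathbf{B})$ to recover $u$, and one on the quotient to recover $p$. The central structural ingredient is that the inf-sup condition (iii), combined with reflexivity, yields via the closed range theorem the bijectivity of $\mathbf{B}$ onto $Y'$ and the identification of the range of its adjoint with the annihilator $\mathcal{H}^\circ$.

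First, I would invoke the inf-sup condition (iii): since $X$ and $Y$ are reflexive, this condition is equivalent to the surjectivity of $\mathbf{B} : X \to Y'$ together with the existence of a bounded right inverse with norm controlled by $\beta^{-1}$. Consequently, for the given $g \in Y'$ one obtains $u_g \in X$ with $b(u_g, q) = g(q)$ for every $q \in Y$ and $\Vert u_g \Vert_X \le \beta^{-1} \Vert g \Vert_{Y'}$. Next, I would recast the original system as an equation posed on the kernel: find $u_0 \in \mathcal{H}$ satisfying
\begin{equation*}
a(u_0, v) \,=\, f(v) - a(u_g, v) \qquad \forall\, v \in \mathcal{H}\,.
\end{equation*}
The two inf-sup hypotheses in (ii) are precisely the Banach--Ne\v{c}as--Babu\v{s}ka conditions for the bilinear form $a$ restricted to $\mathcal{H} \times \mathcal{H}$, so they yield a unique $u_0 \in \mathcal{H}$ with $\Vert u_0 \Vert_X \le \alpha^{-1} \bigl( \Vert f \Vert_{X'} + \Vert a \Vert\, \Vert u_g \Vert_X \bigr)$. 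Setting $u := u_0 + u_g$ delivers the first component of the solution and, by construction, satisfies $b(u,q) = g(q)$ for every $q \in Y$.

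To recover $p$, I would consider the functional $\ell \in X'$ defined by $\ell(v) := f(v) - a(u,v)$. By construction $\ell$ vanishes on $\mathcal{H} = N(\mathbf{B})$, hence belongs to the annihilator $\mathcal{H}^\circ$. The inf-sup condition (iii) is equivalent, in the reflexive setting, to the fact that the adjoint $\mathbf{B}^{\ast} : Y \to X'$ is a topological isomorphism from $Y$ onto $\mathcal{H}^\circ$ with inverse bounded by $\beta^{-1}$; hence there exists a unique $p \in Y$ with $b(v,p) = \ell(v)$ for every $v \in X$, i.e.\ $a(u,v) + b(v,p) = f(v)$, together with $\Vert p \Vert_Y \le \beta^{-1} \Vert \ell \Vert_{X'} \le \beta^{-1}\bigl( \Vert f \Vert_{X'} + \Vert a \Vert\, \Vert u \Vert_X \bigr)$. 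Uniqueness of the pair $(u,p)$ follows by testing the homogeneous system (with $f = g = 0$) against $v \in \mathcal{H}$ to deduce $u = 0$ via the second inf-sup in (ii), and then applying (iii) to $b(v,p) = 0$ to conclude $p = 0$. Chaining the successive norm bounds produces the stability constant $C_{st}$ depending only on $\Vert a \Vert$, $\alpha$ and $\beta$.

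The main technical point---and the only step where reflexivity is genuinely indispensable---is the identification of $\operatorname{Range}(\mathbf{B}^{\ast})$ with the annihilator $\mathcal{H}^\circ$ required to produce $p$ from $\ell$. In a Hilbert space this is immediate from the Riesz representation, but in the present Banach setting it rests on the closed range theorem, for which the reflexivity of $X$ and $Y$ cannot be dispensed with. I observe that the positive semi-definiteness hypothesis (i) plays no direct role in the existence or uniqueness argument sketched above; it is retained to echo the classical Brezzi framework, while the entire weight of the analysis lies in the bijectivity properties encoded by the inf-sup conditions (ii) and (iii).
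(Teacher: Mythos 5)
Your argument is correct and complete in its essentials: the reduction $u=u_0+u_g$ with $u_g$ obtained from the surjectivity of $\mathbf{B}$ (equivalent, via the closed range theorem and reflexivity, to the inf-sup condition (iii)), the Banach--Ne\v{c}as--Babu\v{s}ka solve on the reflexive closed subspace $\mathcal{H}$ using the two conditions in (ii), and the recovery of $p$ from the identification $\operatorname{Range}(\mathbf{B}^{\ast})=\mathcal{H}^{\circ}$ is exactly the classical Banach-space Babu\v{s}ka--Brezzi proof, and the norm bounds you chain give the stated stability constant. Note, however, that the paper does not prove this theorem at all: it is quoted as a slight adaptation of \cite[Theorem 3.4]{cg-CMA-2022}, which treats the \emph{perturbed} saddle-point problem containing an additional bilinear form $c(\cdot,\cdot)$ on $Y\times Y$ and is proved there by establishing a global inf-sup condition for the full coupled form rather than by your kernel decomposition; this explains the vestigial $\Vert c\Vert$ in the statement of the constant and also the hypothesis (i), which in the perturbed setting is genuinely used but, as you correctly observe, is superfluous in the unperturbed case where both kernel inf-sup conditions in (ii) are assumed. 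Two cosmetic points: in your uniqueness paragraph it is the \emph{first} inequality of (ii) (supremum over $v\in\mathcal{H}$) that forces $u=0$ from $a(u,v)=0$ for all $v\in\mathcal{H}$, not the second; and uniqueness of the pair also follows directly from the uniqueness in your constructive steps, so that paragraph could be compressed. Otherwise your route is sound and arguably more self-contained than the paper's citation.
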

Next, we establish the stability properties of some of the forms involved in \eqref{eq:v1a-eq:v4-a}-\eqref{eq:v1a-eq:v4-d}.
We begin by state the boundedness of all the variational forms involved \eqref{eq:v1a-eq:v4-a}-\eqref{eq:v1a-eq:v4-d}. Direct applications of the Cauchy--Schwarz and H\"older inequalities, the Sobolev embedding $  \mathrm{H}^{1}(\Omega)\subset\mathrm{L}^{6}(\Omega) $
and the continuity of the normal trace operator in $ \mathbf{H}(\div_{6/5};\Omega) $ implies that there exist constants, denoted and given by
\[
\|\mathscr{A}^{S}\| = 1/\mu_1,\quad \|\mathscr{B}^S\| =1, \quad \| \mathscr{O}^{S} \| = |\Omega|^{1/6}/\mu_1\,,
\]
\[
\|\mathscr{A}^{T}\| = 1/\kappa_1,\quad \|\mathscr{B}^{T}\|  = 1, \quad \| \mathscr{O}^{T}_\phi \| = |\Omega|^{1/6}/\kappa_1,,\quad \| \mathscr{F}^{T}\| = \| i_6\|\,, 
\]
					
such that for each $ \bz\in \mathbf{Y} $ and $ \phi\in \mathrm{Y} $ there hold 
\begin{equation}\label{eq:bound}
\begin{array}{rcll}
\big|\mathscr{A}_{\phi}^{S}(\bze , \bta)\big|&\leq &  \|\mathscr{A}^S\|\, \Vert \bze\Vert_{\div_{6/5},\Omega}\, \Vert \bta\Vert_{\div_{6/5},\Omega} &\quad\forall \, \bze , \bta\in \mathbb{X}\,, \\[1ex]
\big|\mathscr{B}^S(\bta,\vec{\bv})\big|&\leq &  \|\mathscr{B}^S\|\, \Vert \bta\Vert_{\div_{6/5},\Omega}\, \Vert
\vec{\bv}\Vert &\quad\forall\, (\bta,\vec{\bv})\in \mathbb{X}\times\mathcal{Z}\,,\\[1ex]
\big| \mathscr{O}_{\phi}^{S}(\bz; \bw, \bta)\big| &\leq &\|\mathscr{O}^{S}\|\,\Vert \bz\Vert_{0,6;\Omega}\, \Vert\bw\Vert_{0,6;\Omega}\, \Vert\bta\Vert_{0,\Omega}&\quad\forall\, (\bw, \bta)\in \mathbf{Y}\times\mathbb{X}\,, \\[1ex]
\big|\mathscr{A}^{T}_\phi(\bxi , \bet)\big|&\leq & \|\mathscr{A}^{T}\|\, \Vert \bxi\Vert_{\div_{6/5},\Omega}\,\Vert \bet\Vert_{\div_{6/5},\Omega} &\quad\forall\,\bxi , \bet \in\mathbf{X}\,,\\[1ex]
\big|\mathscr{B}^{T} (\bet ,\psi)\big| &\leq & \|\mathscr{B}^{T}\|\, \Vert \bet\Vert_{\div_{6/5},\Omega}\, \Vert\psi\Vert_{0,6;\Omega}&\quad\forall\, (\bet ,\psi)\in\mathbf{X}\times\mathrm{Y}\,, \\[1ex]
\big|\mathscr{O}^{T}_\phi(\bz; \omega, \bet)\big| &\leq & \| \mathscr{O}^{T}\| \, \Vert \bz\Vert_{0,6;\Omega}\, \Vert \omega\Vert_{0,6;\Omega}\, \Vert\bet\Vert_{0,\Omega} &\quad\forall\, (\omega, \bet)\in \mathrm{Y}\times\mathbf{X}\,.
\end{array}
\end{equation}
Moreover, given arbitrary $ \bz,\by\in \mathbf{Y} $ and $ \phi,\varpi\in \mathrm{Y} $ noting from the definitions of $ \mathscr{O}_{\phi}^{S} $ (cf. first row of \eqref{def-c-tilde-c}) and $ \mathscr{O}^{T}_\phi $ (cf. second row of \eqref{def-c-tilde-c}), we readily deduce from \eqref{eq:hh} that there exist constants
\[
\ell_{\mathtt{lip},\mathscr{O}^S}\,:=\, \dfrac{\mathcal{L}_\mu}{\mu_1 ^{2}}\,, \quad \ell_{\mathtt{lip},\mathscr{O}^{T}}\,:=\, \dfrac{\mathcal{L}_\kappa}{\kappa_1 ^{2}}\,,
\]
such that
\begin{equation}\label{eq:bound.c.ct}
\begin{split}
	\big| \mathscr{O}_{\phi}^{S}(\bz; \bw, \bta)-\mathscr{O}_\varpi^S(\bz; \bw, \bta)\big| &\,\leq\, \ell_{\mathtt{lip},\mathscr{O}^S}\,\Vert \phi-\varpi\Vert_{0,6;\Omega}\,\Vert \bz\Vert_{0,6;\Omega}\, \Vert\bw\Vert_{0,6;\Omega}\, \Vert\bta\Vert_{0,\Omega}\,,\\[1ex]	
	\big|\mathscr{O}^{T}_\phi(\bz; \omega, \bet)-\mathscr{O}^{T}_\varpi(\bz; \omega, \bet)\big|& \, \leq \, \ell_{\mathtt{lip},\mathscr{O}^{T}}\,\Vert \phi-\varpi\Vert_{0,6;\Omega}\,\Vert \bz\Vert_{0,6;\Omega}\, \Vert \omega\Vert_{0,6;\Omega}\, \Vert\bet\Vert_{0,\Omega}\,,\\[1ex]
		\big|\mathscr{O}_{\phi}^{S}(\bz;\bw,\bta)-\mathscr{O}_{\phi}^{S}(\by;\bw,\bta)\big|&\,\leq\,\| \mathscr{O}^{S}\| \, \Vert\bz-\by\Vert_{0,6;\Omega}\, \Vert\bw\Vert_{0,6;\Omega}\, \Vert \bta\Vert_{\bdiv_{6/5},\Omega}\,,\\[1ex]
		\big|\mathscr{O}^{T}_\phi(\bz;\omega,\bet)-\mathscr{O}^{T}_\phi(\by;\omega,\bet)\big|&\,\leq\,\| \mathscr{O}^{T}\| \, \Vert\bz-\by\Vert_{0,6;\Omega}\, \Vert\omega\Vert_{0,6;\Omega}\, \Vert \bet\Vert_{\div_{6/5},\Omega}\,,
	\end{split}
\end{equation}
for all $ (\bw, \bta)\in \mathbf{Y}\times\mathbb{X} $ and $ (\omega,\bet)\in \mathrm{Y}\times\mathbf{X} $.
\subsection{A fixed point strategy}\label{sub.sec.3.3.2}
We begin by rewriting the first part of variational formulation \eqref{eq:v1a-eq:v4-a}-\eqref{eq:v1a-eq:v4-d} as an equivalent fixed point equation. To this end, we first let $ \mathscr{L}^S:  \mathbf{Y} \times \mathrm{Y}\rightarrow \mathbb{X}\times \mathcal{Z}$ be the operator defined by
\begin{equation}\label{eq:opr1}
\mathscr{L}^S(\bz, \phi)\, =\,\big(\mathscr{L}^S_1(\bz, \phi), \mathscr{L}^S_2(\bz, \phi),\mathscr{L}^S_3(\bz, \phi)\big)\, :=\,
(\bsi_{\star}, \vec{\bu}_{\star})\,, 
\end{equation}
	for each $ (\bz, \phi)\in \mathbb{X} \times \mathbf{Y} $, where $ (\bsi_{\star}, \vec{\bu}_{\star})=(\bsi_{\star}, (\bu_{\star},\bgam_{\star}))\in \mathbb{X}\times \mathcal{Z} $ is the unique solution of \eqref{eq:v1a-eq:v4-a}-\eqref{eq:v1a-eq:v4-b} when $ \mathscr{A}_{\varphi}^{S} $ and $ \mathscr{O}_{\varphi}^{S}(\bu;\cdot,\cdot) $ are replaced by $ \mathscr{A}_{\phi}^{S} $ and $ \mathscr{O}_{\phi}^{S}(\bz;\cdot,\cdot) $, respectively, that is
\begin{equation}\label{eq:subp1}
\begin{array}{rcll}
\mathscr{A}_{\phi}^{S}(\bsi_\star, \bta)+\mathscr{O}_{\phi}^{S}(\bz;\bu_\star,\bta)+\mathscr{B}^{S}(\bta ,\vec{\bu}_\star )&=&0 & 
\quad \forall\,\bta\in \mathbb{X} \,,
\\[1.5ex]
\mathscr{B}^{S}(\bsi_\star ,\vec{\bv})&=& \disp\int_{\Omega}\phi\,\bg\cdot\bv
& \quad \forall\, \vec{\bv}\in \mathcal{Z} \,,
\end{array}
\end{equation}
equivalently, defining $ \mathcal{A}_{\phi}^S: (\mathbb{X}\times\mathcal{Z}) \times (\mathbb{X}\times\mathcal{Z})\rightarrow\mathrm{R}$, for each $ \phi\in \mathrm{Y} $, arising from \eqref{eq:subp1} after adding the terms involving $ \mathscr{A}_{\phi}^{S} $ and $ \mathscr{B}^S $, that is,
	\begin{equation}\label{def.A}
	\mathcal{A}_{\phi}^S\big((\bze,\vec{\bw}), (\bta,\vec{\bv})\big)\,:=\, \mathscr{A}_{\phi}^{S}(\bze,\bta)+\mathscr{B}^S (\bta,\vec{\bw})+\mathscr{B}^{S}(\bze ,\vec{\bv})\,,
	\end{equation}
	and functional $ \mathcal{F}_{\phi}^S\in (\mathbb{X}\times\mathcal{Z})^{'} $ as
	\begin{equation}\label{eq:def.F}
\mathcal{F}_{\phi}^S(\bta,\,\vec{\bv})\,=\, \disp\int_{\Omega}\phi\,\bg\cdot\bv\qquad\forall\,(\bta,\vec{\bv})\in \mathbb{X}\times\mathcal{Z}\,,
	\end{equation}
	we can restate \eqref{eq:subp1} as
\begin{equation}\label{eq:sub_1}
	\begin{array}{rcll}
	\mathcal{A}_{\phi}^S \big((\bsi_{\star}\,,\vec{\bu}_{\star}), (\bta,\, \vec{\bv})\big)\,+\,\mathscr{O}_{\phi}^{S}(\bz;\bu_\star,\bta)\,&=&\, \mathcal{F}_{\phi}^S(\bta,\,\vec{\bv})&\quad \forall\,(\bta,\, \vec{\bv})\in \mathbb{X}\times\mathcal{Z}\,.
	\end{array}	
\end{equation}
				
	In turn, we let $ \mathscr{L}^T: \mathbf{Y}\times\mathrm{Y}\rightarrow  \mathbf{X}\times\mathrm{Y} $ be the operator given by
\begin{equation}\label{eq:opr2}
	\mathscr{L}^T(\bz , \phi)\, =\, \big(\mathscr{L}^T_1(\bz , \phi), \mathscr{L}^T_2(\bz , \phi)\big)\,:=\,
(\brho_{\star}, \varphi_{\star})\,, 
\end{equation}
for  each $ (\bz, \phi)\in\mathbf{Y}\times\mathrm{Y} $	where $ (\brho_{\star}, \varphi_{\star})\in  \mathbf{X}\times\mathrm{Y} $ is the unique solution of  \eqref{eq:v1a-eq:v4-c}-\eqref{eq:v1a-eq:v4-d} when $\mathscr{A}^{T}_{\varphi}$ and $ \mathscr{O}^{T}_\varphi(\bu;\cdot,\cdot) $ are replaced by $\mathscr{A}^{T}_{\phi}$ and $ \mathscr{O}^{T}_\phi(\bz;\cdot,\cdot) $, respectively, that is
\begin{equation}\label{eq:subp2}
\begin{array}{rcll}
\mathscr{A}^{T}_{\phi}(\brho_\star,\bet)+\mathscr{O}^{T}_{\phi}(\bz;\varphi_\star,\bet)+\mathscr{B}^{T}(\bet ,\varphi_\star )&=& \mathscr{F}^{T}(\bet) & \quad \forall\, \bet\in\mathbf{X} \,,
\\[1.5ex]
\mathscr{B}^{T}(\brho_\star ,\psi )&=& 0 & \quad\forall\, \psi\in \mathrm{Y} \,,	
\end{array}
\end{equation}
Analogously, defining $ \mathcal{A}_{\phi}^T: (\mathbf{X}\times\mathrm{Y})\times (\mathbf{X}\times\mathrm{Y})\rightarrow \mathrm{R} $, involving bilinear forms $ \mathscr{A}^{T}_\phi $ and $ \mathscr{B}^{T} $, that is,
\begin{equation}\label{def.At}
\mathcal{A}_{\phi}^T\big((\bxi,\omega), (\bet,\psi)\big)\,:=\, \mathscr{A}^{T}_\phi(\bxi,\bet)+\mathscr{B}^{T} (\bet ,\omega)+\mathscr{B}^{T}(\bxi,\psi)\,,
\end{equation}
we realize that problem \eqref{eq:subp2} can be re-stated as:
\begin{equation}\label{eq:sub_2}
\begin{array}{rcll}
	\mathcal{A}_{\phi}^T \big((\brho_{\star}\,,\varphi_{\star}), (\bet,\, \psi)\big)+\mathscr{O}^{T}_\phi(\bz;\varphi_\star,\bet)\,&=&\, \mathscr{F}^{T}(\bet)&\quad \forall\, (\bet,\psi)\in \mathrm{Y} \,.
\end{array}	
\end{equation}
Thus, defining the operator $ \mathscr{L} :\mathbf{Y}\times \mathrm{Y}\rightarrow \mathbf{Y}\times \mathrm{Y} $ as
\begin{equation}\label{eq:def.opr.E}
\mathscr{L}\big(\bz,\phi\big)\,:=\, \big(\mathscr{L}_2^S(\bz,\mathscr{L}_2^T(\bz,\phi)), \mathscr{L}_2^T(\bz,\phi)\big)\quad\forall\,(\bz,\phi)\in \mathbf{Y}\times \mathrm{Y}\,,
\end{equation}

it follows that \eqref{eq:v1a-eq:v4-a}-\eqref{eq:v1a-eq:v4-d} can be rewritten as the fixed-point equation: Find $ (\bu , \varphi)\in  \mathbf{Y}\times \mathrm{Y}$ such that
\begin{equation}\label{eq:fixp}
\mathscr{L}(\bu , \varphi)\, =\,(\bu , \varphi)\,.  
\end{equation}
\subsection{Well-posedness of the uncoupled problems}
In what follows we show that the uncoupled problems \eqref{eq:sub_1} and \eqref{eq:sub_2} are well-posed, which means that $ \mathscr{L}^S $ and $ \mathscr{L}^T $ (cf. \eqref{eq:opr1} and \eqref{eq:opr2}) are indeed well-defined, respectively. We observe that, given $ \phi\in \mathrm{Y} $ the structures of $ \mathcal{A}_{\phi}^S $ and $  \mathcal{A}_{\phi}^T $ appeared in 
the
pair problems \eqref{eq:sub_1} and \eqref{eq:sub_2} are same as the one in Theorem \ref{t1:wel}.
To this end, first we will utilize in turn Theorem \ref{t1:wel} to establish the needed assumptions.
We readily observe from the definition of  $ \mathscr{A}_{\phi}^{S} $ (res. $ \mathscr{A}^{T}_\phi $) given by \eqref{def-c-tilde-c}, that the bilinear form $ \mathscr{A}_{\phi}^{S} $ (res. $ \mathscr{A}^{T}_\phi $) is elliptic with the constant $ \alpha_S := 1/\mu_2 $ (res. $ \alpha_T := 1/\kappa_2 $), that is, 
\begin{subequations}
\begin{align}
	\mathscr{A}_{\phi}^{S}(\bta , \bta)\, \geq\, \alpha_S \, \Vert \bta^{\mathtt{d}}\Vert_{0,\Omega}^{2}\quad\forall\,\bta\in \mathbb{X}\label{eq:ell.a}\qan\\[2ex]
\mathscr{A}^{T}_\phi(\bet , \bet)\, \geq\, \alpha_T \, \Vert \bet\Vert_{0,\Omega}^{2}\quad\forall\,\bet\in \mathbf{X}\,. \label{eq:ell.at}
\end{align}
\end{subequations}
Now, let us look at the kernel of kernels of the operators induced by $ \mathscr{B}^{S} |_{\mathbb{X}\times\mathcal{Z}}$ and $ \mathscr{B}^{T}|_{\mathbf{X}\times\mathrm{Y}}$ (cf. $\eqref{def-a-b-tilde-a-tilde-b}$), that is
\begin{equation*}
	\begin{array}{c}
	\mathrm{K}\, :=\, \Big\{\bta\in\mathbb{X}: \quad \mathscr{B}^{S}(\bta,\vec{\bv})\,=\, 0\quad\forall\,\bv\in \mathcal{Z}\Big\}\,,\qan\\[2ex]
	\widetilde{\mathrm{K}}\, :=\, \Big\{\bet\in\mathbf{X}: \quad  \mathscr{B}^{T}(\bet,\psi)=0\quad\forall\,\psi\in \mathrm{Y} \Big\}\,,
	\end{array}
\end{equation*}
which, proceeding similarly to \cite{gors-CMA-2021} reduce to					
\begin{subequations}
	\begin{align}
	\mathrm{K}&\, =\, \Big\{\bta\in\mathbb{X}: \quad\bta \,=\,\bta^{\mathtt{t}}\qan \bdiv(\bta)\,=\, \mathbf{0}\qin \Omega\Big\}\,,\qan\label{eq:def.Ki}\\[2ex]
\widetilde{\mathrm{K}}&\, =\, \Big\{\bet\in\mathbf{X}: \quad  \div (\bet)=0\qin\Omega  \Big\}\,,\label{eq:def.Kit}
	\end{align}
\end{subequations}
				
Thus,  using \eqref{eq:ell.a}, \eqref{eq:def.Ki} and that
there exists a constant $ c_{\Omega} $, depending only on $ \Omega $, such that (cf. \cite[Lemma 2.3]{cgo-NMPDE-2021})
\begin{equation}\label{eq:div.ta}
	c_{\Omega}\, \Vert\bta\Vert_{0,\Omega}^{2}\, \leq\, \Vert\bta^{\mathtt{d}}\Vert_{0,\Omega}^{2}+ \Vert\bdiv(\bta)\Vert_{0,6/5;\Omega}^{2}\qquad\forall\, \bta\in \mathbb{X}\,,
	\end{equation}
 it is seen that $ \mathscr{A}_{\phi}^{S} $ is $ \mathrm{K} $-elliptic with constant $ \alpha_S\, c_{\Omega} $, whereas thanks to \eqref{eq:ell.at} and \eqref{eq:def.Kit} $ \mathscr{A}^{T}_\phi $ is $ \widetilde{\mathrm{K}} $-elliptic with the same constant $ \alpha_T $ given by \eqref{eq:ell.at}, that is,
\begin{subequations}
	\begin{align}
\mathscr{A}_{\phi}^{S}(\bta , \bta)&\, \geq\, \alpha_S\, c_{\Omega} \, \Vert \bta\Vert_{\bdiv_{6/5},\Omega}^{2}\quad\forall\,\bta\in \mathrm{K}\,,\qan\label{eq:K.ell.a}\\[2ex]
	\mathscr{A}^{T}_\phi(\bet , \bet)&\, \geq\, \alpha_T \, \Vert \bet\Vert_{\div_{6/5},\Omega}^{2}\quad\forall\,\bet\in \widetilde{\mathrm{K}}\,.\label{eq:K.ell.at}
\end{align}
\end{subequations}
					
Having established that $ \mathscr{A}_{\phi}^{S} $ (res. $ \mathscr{A}^{T}_\phi $) satisfies hypotheses i) and ii) of Theorem \ref{t1:wel}, the next step is to demonstrate the corresponding assumption iii), namely the continuous inf-sup condition for $ \mathscr{B}^{S} $ (res. $ \mathscr{B}^{T} $). 
For this purpose, we recall from a slight
modification of \cite[Lemma 3.5]{gors-CMA-2021} and \cite[Lemma 3.1]{cov-CALCOLO-2020} that there exist constants $ \beta_S $ and $ \beta_T $ such that there hold
\begin{equation}\label{eq:inf.bi}
\sup_{\mathbf{0}\neq \bta\in \mathbb{X}}\dfrac{\mathscr{B}^{S}(\bta ,\vec{\bv})}{\Vert \bta\Vert_{\bdiv_{6/5},\Omega}}\,\geq\,\beta_S \,\Vert \vec{\bv}\Vert_{0,6;\Omega}\qquad\forall\, \vec{\bv}\in \mathcal{Z} \,,
\end{equation}
and
\begin{equation}\label{eq:inf.bit}
\sup_{\mathbf{0}\neq \bet\in \mathbf{X}}\dfrac{\mathscr{B}^{T}(  \bet ,\psi)}{\Vert \bet\Vert_{\div_{6/5},\Omega}}\,\geq\,\beta_T\,\Vert \psi\Vert_{0,6;\Omega}\qquad\forall\, \psi\in \mathrm{Y} \,,
\end{equation}

Consequently, according to \eqref{eq:K.ell.a}, \eqref{eq:inf.bi} and \eqref{eq:K.ell.at}, \eqref{eq:inf.bit}, the needed assumptions of Theorem \ref{t1:wel} are fulfilled, leading to a direct application of this abstract theorem, which guarantees the existence of positive constants $ \alpha_{\mathcal{A}^S} $ and $ \alpha_{\mathcal{A}^T} $, depending only on $ \alpha_S, c_{\Omega}, \beta_S, \| \mathscr{A}_{\phi}^{S}\| $ and $ \alpha_T, \beta_T, \| \mathscr{A}^{T}_\phi\| $, such that
\begin{equation}\label{eq:inf.A}
	\sup\limits_{\mathbf{0}\neq (\bta,\vec{\bv})\in \mathbb{X}\times\mathbf{Y}}\dfrac{\mathcal{A}_{\phi}^S 
	\big((\bze, \vec{\bw}), (\bta,\vec{\bv})\big)}{\Vert (\bta,\vec{\bv})\Vert}\,\geq\,\alpha_{\mathcal{A}^S}\,\Vert(\bze, \vec{\bw})\Vert\qquad\forall\, (\bze, \vec{\bw})\in \mathbb{X}\times\mathcal{Z} \,,
\end{equation}
and
\begin{equation}\label{eq:inf.At}
	\sup\limits_{\mathbf{0}\neq (\bet, \psi)\in \mathbf{X}\times\mathrm{Y}}\dfrac{\mathcal{A}_{\phi}^T\big((\bxi, \omega), (\bet, \psi)\big)}{\Vert (\bet, \psi)\Vert}\,\geq\,\alpha_{\mathcal{A}^T}\,\Vert(\bxi, \omega)\Vert\qquad\forall\, (\bxi, \omega)\in \mathbf{X}\times\mathrm{Y} \,.
\end{equation}
In turn, it follows from \eqref{eq:inf.A} and \eqref{eq:inf.At}, and the boundedness of $ \mathscr{O}_{\phi}^{S} $ and $ \mathscr{O}^{T}_\phi $ (cf. \eqref{eq:bound.c.ct}) that for each $ (\bze,\vec{\bw})\in \mathbb{X}\times\mathcal{Z} $ and $ (\bxi,\omega)\in \mathbf{X}\times\mathrm{Y} $ there holds
\begin{equation*}
	\sup\limits_{\mathbf{0}\neq (\bta,\vec{\bv})\in \mathbb{X}\times\mathcal{Z}}\dfrac{\mathcal{A}_{\phi}^S 
	\big((\bze, \vec{\bw}), (\bta,\vec{\bv})\big)+\mathscr{O}_{\phi}^{S}(\bz;\bw,\bv)}{\Vert (\bta,\vec{\bv})\Vert}\,\geq\,\left(\alpha_{\mathcal{A}^S}-\dfrac{|\Omega|^{1/6}}{\mu_1}\Vert\bz\Vert_{0,6;\Omega}\right)\,\Vert(\bze, \vec{\bw})\Vert\,,
\end{equation*}
and
\begin{equation*}
	\sup\limits_{\mathbf{0}\neq (\bet, \psi)\in \mathbf{X}\times\mathrm{Y}}\dfrac{\mathcal{A}_{\phi}^T\big((\bxi, \omega), (\bet, \psi)\big)+\mathscr{O}^{T}_\phi(\bz;\omega,\psi)}{\Vert (\bet, \psi)\Vert}\,\geq\,\left(\alpha_{\mathcal{A}^T}-\dfrac{|\Omega|^{1/6}}{\kappa_1}\Vert\bz\Vert_{0,6;\Omega}\right)\,\Vert(\bxi, \omega)\Vert\,,
\end{equation*}
					
and thus, under the assumptions that
\begin{equation}\label{eq:as.z1}
\Vert\bz\Vert_{0,6;\Omega}\, \leq \, r_1\quad \text{with}\quad r_1 \,:=\, \dfrac{\mu_1 \,\alpha_{\mathcal{A}^S}}{2|\Omega|^{1/6}}\,,
\end{equation}
and
\begin{equation}\label{eq:as.z2}
\Vert\bz\Vert_{0,6;\Omega}\, \leq \, r_2 \quad\text{with}\quad r_2\,:=\, \dfrac{\kappa_1 \,\alpha_{\mathcal{A}^T}}{2|\Omega|^{1/6}}\,,
\end{equation}
we arrive at
\begin{equation}\label{eq:inf.A.1}
\sup\limits_{\mathbf{0}\neq (\bta,\vec{\bv})\in \mathbb{X}\times\mathcal{Z}}\dfrac{\mathcal{A}_{\phi}^S 
\big((\bze, \vec{\bw}), (\bta,\vec{\bv})\big)+\mathscr{O}_{\phi}^{S}(\bz;\bw,\bv)}{\Vert (\bta,\vec{\bv})\Vert}\,\geq\,\dfrac{\alpha_{\mathcal{A}^S}}{2}\,\Vert(\bze, \bw)\Vert\quad \forall\, (\bze, \vec{\bw})\in \mathbb{X}\times\mathcal{Z}\,,
\end{equation}
and
\begin{equation}\label{eq:inf.At.1}
\sup\limits_{\mathbf{0}\neq (\bet, \psi)\in \mathbf{X}\times\mathrm{Y}}\dfrac{\mathcal{A}_{\phi}^T\big((\bxi, \omega), (\bet, \psi)\big)+\mathscr{O}^{T}_\phi(\bz;\omega,\psi)}{\Vert (\bet, \psi)\Vert}\,\geq\,\dfrac{\alpha_{\mathcal{A}^T}}{2}\,\Vert(\bxi, \omega)\Vert\quad\forall\, (\bxi, \omega)\in \mathbf{X}\times\mathrm{Y}\,.
\end{equation}
									
As a result of the above analysis, we are in the position of establishing the well-definedness of $ \mathscr{L}^S $ and $ \mathscr{L}^T $ (cf. \eqref{eq:opr1} and \eqref{eq:opr2}), or equivalently, the solvability of uncoupled problems \eqref{eq:subp1} and \eqref{eq:subp2}.
\begin{lemma}\label{l_wel.S.T}
For each $ \bz\in \mathbf{Y} $ satisfying \eqref{eq:as.z1} and $ \phi\in \mathrm{Y} $ there exists a unique $ (\bsi_{\star},\vec{\bu}_{\star})\in \mathbb{X}\times\mathcal{Z} $ solution of \eqref{eq:subp1}, and hence one can define $ \mathscr{L}^S(\bz,\phi) :=(\bsi_{\star},\vec{\bu}_{\star})$. Alternatively, for each $ \bz\in \mathbf{Y} $ satisfying \eqref{eq:as.z2} and $ \phi\in \mathrm{Y} $ there exists a unique $ (\brho_{\star},\varphi_{\star})\in \mathbf{X}\times\mathrm{Y} $ solution of \eqref{eq:subp2}, and hence one can define $ \mathscr{L}^T(\bz,\phi) :=(\brho_{\star},\varphi_{\star})$.	Moreover, there hold
\begin{equation}\label{eq:apr1}
	\Vert\mathscr{L}^S(\bz,\phi)\Vert\,=\,\Vert\bsi_{\star}\Vert_{\bdiv_{6/5},\Omega}+
	\Vert\vec{\bu}_{\star}\Vert_{0,6;\Omega}\,\leq\,  \dfrac{2}{\alpha_{\mathcal{A}^S}}\,\Vert\phi\Vert_{0,6;\Omega}\,\Vert\bg\Vert_{0,3/2;\Omega}\,,
\end{equation}
and
\begin{equation}\label{eq:apr2}
	\Vert\mathscr{L}^T(\bz,\phi)\Vert\,=\,\Vert\brho_{\star}\Vert_{\div_{6/5},\Omega}+\Vert\varphi\Vert_{0,6;\Omega}\,\leq\, \dfrac{2}{\alpha_{\mathcal{A}^T}}\, \Vert\varphi_{D}\Vert_{1/2,\Gamma_{D}}\,.
\end{equation}
\end{lemma}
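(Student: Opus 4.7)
The plan is to apply the Banach-Nečas-Babuška (BNB) theorem to each of the two perturbed saddle-point problems \eqref{eq:sub_1} and \eqref{eq:sub_2}, and to derive the stability bounds from the corresponding inf-sup constants. I would treat \eqref{eq:sub_1} first, the argument for \eqref{eq:sub_2} being entirely analogous with $\kappa$ in place of $\mu$ and $\mathscr{F}^T$ replacing $\mathcal{F}_\phi^S$. The crucial observation is that \eqref{eq:inf.A.1} already furnishes the primal (test-side) global inf-sup condition for the full bilinear form $((\bze,\vec{\bw}),(\bta,\vec{\bv})) \mapsto \mathcal{A}_\phi^S((\bze,\vec{\bw}),(\bta,\vec{\bv})) + \mathscr{O}_\phi^S(\bz;\bw,\bta)$ on $(\mathbb{X}\times\mathcal{Z})^2$, with constant $\alpha_{\mathcal{A}^S}/2$, as soon as \eqref{eq:as.z1} holds. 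To fulfill the remaining BNB hypothesis I would verify injectivity of the transpose, equivalently the dual inf-sup. Since $\mathcal{A}_\phi^S$ is symmetric in its two arguments (as $\mathscr{A}_\phi^S$ is symmetric and $\mathscr{B}^S$ appears on both sides in \eqref{def.A}), the transposed analogue of \eqref{eq:inf.A} is immediate, and the asymmetric perturbation $\mathscr{O}_\phi^S(\bz;\bw,\bta)$ can be absorbed by the same computation that led from \eqref{eq:inf.A} to \eqref{eq:inf.A.1}, using the boundedness of $\mathscr{O}_\phi^S$ from \eqref{eq:bound} together with the smallness \eqref{eq:as.z1}.

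Once BNB is applicable, it delivers a unique $(\bsi_\star,\vec{\bu}_\star)\in\mathbb{X}\times\mathcal{Z}$ solving \eqref{eq:sub_1} together with the stability estimate $\|(\bsi_\star,\vec{\bu}_\star)\| \leq (2/\alpha_{\mathcal{A}^S})\,\|\mathcal{F}_\phi^S\|_{(\mathbb{X}\times\mathcal{Z})'}$. To conclude \eqref{eq:apr1}, I would bound the linear functional \eqref{eq:def.F} by Hölder's inequality with exponents satisfying $1/6 + 2/3 + 1/6 = 1$, obtaining $|\mathcal{F}_\phi^S(\bta,\vec{\bv})| \leq \|\phi\|_{0,6;\Omega}\,\|\bg\|_{0,3/2;\Omega}\,\|\bv\|_{0,6;\Omega}$, and hence $\|\mathcal{F}_\phi^S\|_{(\mathbb{X}\times\mathcal{Z})'} \leq \|\phi\|_{0,6;\Omega}\,\|\bg\|_{0,3/2;\Omega}$, which yields \eqref{eq:apr1} after plugging in. For \eqref{eq:apr2} the same path applies to \eqref{eq:sub_2}, using \eqref{eq:inf.At.1} and the continuity of the normal-trace mapping from $\mathbf{H}(\div_{6/5};\Omega)$ into $\mathrm{H}^{-1/2}(\Gamma_D)$ to estimate $|\mathscr{F}^T(\bet)| \leq \|\bet\|_{\div_{6/5},\Omega}\,\|\varphi_D\|_{1/2,\Gamma_D}$, giving $\|\mathscr{F}^T\|_{\mathbf{X}'} \leq \|\varphi_D\|_{1/2,\Gamma_D}$.

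The main technical point is the transposed inf-sup: although \eqref{eq:inf.A.1} and \eqref{eq:inf.At.1} are explicitly stated only in the primal direction, the saddle-point symmetry of $\mathcal{A}_\phi^S$ and $\mathcal{A}_\phi^T$, plus the same perturbation-absorption argument that produced \eqref{eq:inf.A.1} and \eqref{eq:inf.At.1}, supplies the dual inf-sup with the same constants under \eqref{eq:as.z1} and \eqref{eq:as.z2}, respectively. This is expected to be routine rather than a real obstacle, but it is the only ingredient not already written as a separate estimate in Section \ref{sub.sec.3.3.2}.
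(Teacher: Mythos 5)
Your proposal is correct and follows essentially the same route as the paper: a direct application of the Banach--Ne\v{c}as--Babu\v{s}ka theorem based on the global inf-sup conditions \eqref{eq:inf.A.1} and \eqref{eq:inf.At.1}, the boundedness of the forms, and the H\"older/trace bounds $\Vert\mathcal{F}_{\phi}^S\Vert \leq \Vert\phi\Vert_{0,6;\Omega}\Vert\bg\Vert_{0,3/2;\Omega}$ and $\Vert\mathscr{F}^{T}\Vert \lesssim \Vert\varphi_D\Vert_{1/2,\Gamma_D}$ to get \eqref{eq:apr1} and \eqref{eq:apr2}. Your explicit verification of the transposed inf-sup via the symmetry of $\mathcal{A}_{\phi}^S$ (resp. $\mathcal{A}_{\phi}^T$) and the same absorption of $\mathscr{O}_{\phi}^{S}$ (resp. $\mathscr{O}^{T}_{\phi}$) under \eqref{eq:as.z1} (resp. \eqref{eq:as.z2}) is precisely the detail the paper leaves implicit in its ``straightforward application'' of BNB, and it is carried out correctly.
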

\begin{proof}
Given $ (\bz,\phi)\in \mathbf{Y}\times\mathrm{Y} $ with $ \bz $ satisfying \eqref{eq:as.z1} (res. \eqref{eq:as.z2}), bearing in mind the boundedness of $ \mathcal{A}_{\phi}^S $ and $ \mathscr{O}_{\phi}^{S} $ (res. $ \mathcal{A}_{\phi}^T $ and $ \mathscr{O}^{T}_{\phi} $), the global inf-sup condition \eqref{eq:inf.A.1} (res. \eqref{eq:inf.At.1}), a straightforward
application of the Banach-Nečas-Babuška Theorem (cf. \cite[Theorem 2.6]{eg-SPRINGER-2004}) guarantees the existence of a unique solution $ (\bsi_{\star},\vec{\bu}_{\star})\in \mathbb{X}\times\mathcal{Z} $ to \eqref{eq:subp1} (res. $ (\brho_{\star},\varphi_{\star})\in \mathbf{X}\times\mathrm{Y} $ to  \eqref{eq:subp2}). Furthermore, the corresponding a priori
estimate (cf. \cite[Theorem 2.6, eq. (2.5)]{eg-SPRINGER-2004}) and the boundedness of $ \mathcal{F}_{\phi}^S $ and $ \mathscr{F}^{T} $
provide \eqref{eq:apr1} and \eqref{eq:apr2}, respectively.
\end{proof}
\subsection{Solvability analysis of the fixed-point equation}
It follows from Lemma \ref{l_wel.S.T} that $ \mathscr{L} $ is well-defined, and thus, an application of \eqref{eq:def.opr.E}
and the estimates \eqref{eq:apr1}, \eqref{eq:apr2}, yields
\begin{equation}\label{eq:apr.E}
\begin{array}{c}
\big\Vert\mathscr{L}(\bz,\phi)\big\Vert \,=\, \big\Vert \big( \mathscr{L}_2^S(\bz , \mathscr{L}_2^T(\bz, \phi)), \mathscr{L}_2^T(\bz, \phi)\big) \big\Vert\\[2ex]
\,=\, \big\Vert \mathscr{L}_2^S(\bz , \mathscr{L}_2^T(\bz, \phi))\Vert+ \Vert\mathscr{L}_2^T(\bz, \phi) \big\Vert
\,\leq\,\left(1+\dfrac{2}{\alpha_{\mathcal{A}^S}}\Vert\bg\Vert_{0,3/2;\Omega}\right)\Vert\mathscr{L}_2^T(\bz, \phi) \big\Vert\\[2ex]
\,  \leq \, \left(1+\dfrac{2}{\alpha_{\mathcal{A}^S}}\Vert\bg\Vert_{0,3/2;\Omega}\right)\dfrac{2}{\alpha_{\mathcal{A}^T}}\,\Vert\varphi_{D}\Vert_{1/2,\Gamma_{D}}\,.
\end{array}
\end{equation}

We proceed with the analysis by determining conditions that are adequate for $ \mathscr{L} $ to map a closed ball of $ \mathbf{Y} \times\mathrm{Y} $ back into itself.  Indeed, given a radius $ r $ as
\begin{equation}\label{eq:def.r}
r \,:=\, \min\{r_1,r_2\}\,,
\end{equation}
where $ r_1 $ and $ r_2 $ are defined in \eqref{eq:as.z1} and \eqref{eq:as.z2}, respectively, 
we first set
\begin{equation}\label{de:ball}
\mathbf{W}(r)\,:=\, \Big\{(\bz,\phi)\in \mathbf{Y} \times\mathrm{Y} :\quad \Vert (\bz,\phi)\Vert\,:=\, \Vert\bz\Vert_{0,6;\Omega}+\Vert\phi\Vert_{0,6;\Omega}\,\leq \, r  \Big\}\,.
\end{equation}
Then, we have the following result.
\begin{lemma}\label{l_cons}
Assume that the data are sufficiently small so that
\begin{equation}\label{eq:as1.data}
\left(1+\dfrac{2}{\alpha_{\mathcal{A}^S}}\Vert\bg\Vert_{0,3/2;\Omega}\right)\dfrac{2}{\alpha_{\mathcal{A}^T}}\,\Vert\varphi_{D}\Vert_{1/2,\Gamma_{D}}\, \leq \, r  \,.
\end{equation}
Then, given $ r $ in \eqref{eq:def.r}, there holds $ \mathscr{L} ( \mathbf{W}(r))\subseteq \mathbf{W}(r)$.
\end{lemma}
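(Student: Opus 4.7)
The assertion is essentially a consolidation of material already derived just above the statement; indeed, the composite estimate \eqref{eq:apr.E} almost immediately yields the conclusion once one has verified that $\mathscr{L}$ can actually be evaluated on elements of $\mathbf{W}(r)$. My plan has three short steps: (i) check well-definedness of $\mathscr{L}(\bz,\phi)$ on every $(\bz,\phi)\in\mathbf{W}(r)$ by falling back on Lemma \ref{l_wel.S.T}; (ii) recover \eqref{eq:apr.E} by chaining the a priori bounds \eqref{eq:apr1}--\eqref{eq:apr2}; (iii) invoke the smallness hypothesis \eqref{eq:as1.data} to close.

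For step (i), given $(\bz,\phi)\in\mathbf{W}(r)$, the definition \eqref{de:ball} combined with $r=\min\{r_1,r_2\}$ (cf. \eqref{eq:def.r}) would give
\[
\Vert\bz\Vert_{0,6;\Omega}\,\leq\,\Vert(\bz,\phi)\Vert\,\leq\, r\,\leq\,\min\{r_1,r_2\},
\]
so both smallness thresholds \eqref{eq:as.z1} and \eqref{eq:as.z2} are met. Lemma \ref{l_wel.S.T} then ensures the existence of $\mathscr{L}^T(\bz,\phi)\in\mathbf{X}\times\mathrm{Y}$ and, using the resulting second component $\mathscr{L}_2^T(\bz,\phi)\in\mathrm{Y}$ as the scalar input to the fluid resolvent, also of $\mathscr{L}^S(\bz,\mathscr{L}_2^T(\bz,\phi))\in\mathbb{X}\times\mathcal{Z}$. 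Consequently, $\mathscr{L}(\bz,\phi)$ is well-defined through \eqref{eq:def.opr.E}.

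For step (ii), I would apply \eqref{eq:apr2} to bound $\Vert\mathscr{L}_2^T(\bz,\phi)\Vert_{0,6;\Omega}$ by $\frac{2}{\alpha_{\mathcal{A}^T}}\Vert\varphi_D\Vert_{1/2,\Gamma_D}$, and then \eqref{eq:apr1} with $\phi$ replaced by $\mathscr{L}_2^T(\bz,\phi)$ to control $\Vert\mathscr{L}_2^S(\bz,\mathscr{L}_2^T(\bz,\phi))\Vert_{0,6;\Omega}$. Summing these two estimates reproduces exactly \eqref{eq:apr.E}, and the hypothesis \eqref{eq:as1.data} immediately forces $\Vert\mathscr{L}(\bz,\phi)\Vert\leq r$, as required. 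No serious obstacle arises; the only point worth checking carefully is that in \eqref{eq:def.opr.E} the fluid resolvent $\mathscr{L}^S$ receives $\mathscr{L}_2^T(\bz,\phi)$ only in its temperature slot while still taking the original $\bz$ in its velocity slot, so that the single small-data bound $\Vert\bz\Vert_{0,6;\Omega}\leq r$ covers every smallness requirement simultaneously and no self-consistency argument on the image is needed.
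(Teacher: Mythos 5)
Your proof is correct and follows essentially the same route as the paper: the paper's one-line proof cites the composite bound \eqref{eq:apr.E} (itself obtained, just as you do, by chaining \eqref{eq:apr2} into \eqref{eq:apr1}) together with the smallness assumption \eqref{eq:as1.data}, and your additional check that $\Vert\bz\Vert_{0,6;\Omega}\leq r=\min\{r_1,r_2\}$ guarantees both \eqref{eq:as.z1} and \eqref{eq:as.z2}, hence well-definedness via Lemma \ref{l_wel.S.T}, is exactly the implicit content of that argument.
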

\begin{proof}
It is a direct consequence of the a priori estimate \eqref{eq:apr.E} and the assumption \eqref{eq:as1.data}.
\end{proof}
Next, our objective is to demonstrate the Lipschitz continuity of the operator $  \mathscr{L} $, for which, as indicated in \eqref{eq:def.opr.E}, it is adequate to establish appropriate continuity properties for $ \mathscr{L}^S $ and $ \mathscr{L}^T $.
To that end, we need to assume further regularities on the solutions of the problems defining these operators.
More precisely, from now on we suppose that there exists $ \epsilon\geq\frac{n}{6} $ and positive constants $ C_{\epsilon} $, $ \widetilde{C}_{\epsilon} $ such that\\[1ex]
($ \textbf{RA} $)  for each $ (\bz,\phi)\in\mathbf{Y} \times\mathrm{Y} $ there holds $ \mathscr{L}^S(\bz,\phi)=(\bsi_{\star},\vec{\bu}_{\star}) $, where $ (\bsi_\star , \vec{\bu}_\star)=(\bsi_\star, (\bu_\star, \bgam_\star))  $ is the solution of \eqref{eq:sub_1} and $ (\bsi_\star, \bu_\star )\in (\mathbb{X} \cap \mathbb{H}^{\epsilon}(\Omega))\times \mathbf{W}^{\epsilon,6}(\Omega) $ and
\begin{equation}\label{eq:reg.t}
\Vert \bsi_\star\Vert_{\epsilon,\Omega}+\Vert\bu_\star\Vert_{\epsilon,6;\Omega}\, \leq\, C_{\epsilon}\, \Vert\bg\Vert_{0,3/2;\Omega}\,\Vert\phi\Vert_{0,6;\Omega}\,.
\end{equation}
($ \widetilde{\textbf{RA} }$)  for each $ (\bz,\phi)\in\mathbf{Y} \times\mathrm{Y} $ there holds $ \mathscr{L}^T(\bz,\phi)=(\brho_{\star},\varphi_{\star}) $, where $ (\brho_{\star},\varphi_{\star})  $ is the solution of \eqref{eq:sub_2} and $ (\brho_{\star},\varphi_{\star})\in (\mathbf{X} \cap \mathbf{H}^{\epsilon}(\Omega))\times \mathrm{W}^{\epsilon,6}(\Omega) $ and
\begin{equation}\label{eq:reg.ze}
\Vert \brho_\star\Vert_{\epsilon,\Omega}+\Vert\varphi_\star\Vert_{\epsilon,6;\Omega}\, \leq\, \widetilde{C}_{\epsilon}\, \Vert\varphi_D\Vert_{1/2+\epsilon,\Gamma_{D}}
\end{equation}
In this regard, we recall now from \cite[Theorem 1.4.5.2, part e)]{g-MA-1985} that for each $ \epsilon<\frac{n}{2} $ there hold $ \mathbb{H}^{\epsilon}(\Omega)\subseteq\mathbb{L}^{\epsilon^*}(\Omega) $ and $ \mathbf{H}^{\epsilon}(\Omega)\subseteq\mathbf{L}^{\epsilon^*}(\Omega) $, with continuous injections $ \textbf{i}_{\epsilon}: \mathbb{H}^{\epsilon}(\Omega)\rightarrow\mathbb{L}^{\epsilon^*}(\Omega)$ and $ i_{\epsilon}: \mathbf{H}^{\epsilon}(\Omega)\rightarrow\mathbf{L}^{\epsilon^*}(\Omega)$, respectively, where $ \epsilon^{*}=\frac{2n}{n-2\epsilon} $. Note that the indicated lower and upper bounds for the additional regularity $ \epsilon $, which turn out to require that $ \epsilon\in [\frac{n}{6}, \frac{n}{2}) $.
					
We now apply $ (\textbf{RA}) $ to establish the announced property of $ \mathscr{L}^S $.

\begin{lemma}\label{l_lip.S}
There exists a positive constant $ \mathcal{L}_{\mathscr{L}^S} $, depending on $ \mathcal{L}_{\mu} $, $ \mu_1 $, $ r_1 $, $ \mathcal{C}_{\epsilon} $, $ \alpha_{\mathcal{A}^S} $, such that
\begin{equation}\label{eq:lip.S}
\begin{array}{l}
\big\Vert \mathscr{L}^S(\bz,\phi)-\mathscr{L}^S(\by,\varpi)\big\Vert_{\mathbb{X}\times\mathcal{Z}}\, \leq \, \mathcal{L}_{\mathscr{L}^S}\, \Big\{\Vert\mathscr{L}^S_2(\by,\varpi)\Vert_{0,6;\Omega}\Vert\bz-\by\Vert_{0,6;\Omega}
\\[2ex]
\,+\,\Big(\Vert\bg\Vert_{0,3/2;\Omega}(1+\Vert\varpi\Vert_{0,6;\Omega})+\Vert\mathscr{L}^S_2(\by,\varpi)\Vert_{0,6;\Omega}\Big)\Vert\phi-\varpi\Vert_{0,6;\Omega}	
\Big\}\,.
\end{array}
\end{equation}
\end{lemma}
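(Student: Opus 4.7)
The plan is to apply the global inf-sup condition \eqref{eq:inf.A.1} to the difference of the two instantiations of \eqref{eq:sub_1} and bound the resulting right-hand side term by term, calling on the additional-regularity hypothesis $(\textbf{RA})$ only to handle one delicate term. Concretely, setting $(\bsi_1, \vec{\bu}_1) := \mathscr{L}^S(\bz,\phi)$ and $(\bsi_2, \vec{\bu}_2) := \mathscr{L}^S(\by,\varpi)$, both well-defined since $\bz$ and $\by$ satisfy \eqref{eq:as.z1}, I would subtract the two instances of \eqref{eq:sub_1} tested against an arbitrary $(\bta,\vec{\bv})\in\mathbb{X}\times\mathcal{Z}$, obtaining
\begin{equation*}
\mathcal{A}_{\phi}^S\bigl((\bsi_1 - \bsi_2, \vec{\bu}_1 - \vec{\bu}_2), (\bta, \vec{\bv})\bigr) + \mathscr{O}_{\phi}^{S}(\bz; \bu_1 - \bu_2, \bta) \,=\, \mathcal{R}(\bta, \vec{\bv}),
\end{equation*}
with
\begin{equation*}
\mathcal{R}(\bta,\vec{\bv}) \,:=\, (\mathcal{F}_{\phi}^S - \mathcal{F}_{\varpi}^S)(\bta,\vec{\bv}) + (\mathcal{A}_{\varpi}^S - \mathcal{A}_{\phi}^S)\bigl((\bsi_2, \vec{\bu}_2),(\bta,\vec{\bv})\bigr) + \mathscr{O}_{\varpi}^{S}(\by;\bu_2,\bta) - \mathscr{O}_{\phi}^{S}(\bz;\bu_2,\bta).
\end{equation*}
Then \eqref{eq:inf.A.1} directly yields $\tfrac{\alpha_{\mathcal{A}^S}}{2}\,\|\mathscr{L}^S(\bz,\phi) - \mathscr{L}^S(\by,\varpi)\|_{\mathbb{X}\times\mathcal{Z}} \leq \sup_{(\bta,\vec{\bv})\neq\mathbf{0}}|\mathcal{R}(\bta,\vec{\bv})|/\|(\bta,\vec{\bv})\|$.

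Next I would estimate the three contributions to $\mathcal{R}$ individually. The functional piece reduces to $\int_\Omega (\phi-\varpi)\bg\cdot\bv$, which Hölder with exponents $(6,3/2,6)$ bounds by $\|\phi-\varpi\|_{0,6;\Omega}\|\bg\|_{0,3/2;\Omega}\|\bv\|_{0,6;\Omega}$. The convective part is split by inserting $\pm\mathscr{O}_{\phi}^{S}(\by;\bu_2,\bta)$ and controlled by the first and third estimates in \eqref{eq:bound.c.ct}, which, after absorbing $\|\by\|_{0,6;\Omega}\leq r_1$ into the generic constant, produces contributions of the form $\|\phi-\varpi\|_{0,6;\Omega}\|\bu_2\|_{0,6;\Omega}$ and $\|\bz-\by\|_{0,6;\Omega}\|\bu_2\|_{0,6;\Omega}$ --- the latter being the only source of the $\|\bz-\by\|_{0,6;\Omega}$ factor in the announced estimate.

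The delicate term is $(\mathcal{A}_{\varpi}^S - \mathcal{A}_{\phi}^S)((\bsi_2,\vec{\bu}_2),(\bta,\vec{\bv}))$, which, since only the $\mathscr{A}^S$ block of $\mathcal{A}^S$ depends on the scalar parameter, collapses to $\int_\Omega\bigl(\tfrac{1}{\mu(\varpi)} - \tfrac{1}{\mu(\phi)}\bigr)\bsi_2^{\mathtt{d}}:\bta^{\mathtt{d}}$. Pointwise use of \eqref{eq:bound.m.k} and \eqref{eq:lip.mu.kap} bounds the integrand by $(\mathcal{L}_\mu/\mu_1^2)|\phi-\varpi||\bsi_2||\bta|$, and the natural triple Hölder with exponents $(6,3,2)$ demands $\bsi_2\in\mathbb{L}^3(\Omega)$, a regularity not available from membership in $\mathbb{X}$ alone. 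This is where hypothesis $(\textbf{RA})$ is invoked: since $\epsilon\geq n/6 = 1/3$, the embedding $\mathbb{H}^{\epsilon}(\Omega)\hookrightarrow\mathbb{L}^{\epsilon^*}(\Omega)$ (with $\epsilon^*\geq 3$) together with \eqref{eq:reg.t} delivers $\|\bsi_2\|_{0,3;\Omega}\leq C_\epsilon\|\bg\|_{0,3/2;\Omega}\|\varpi\|_{0,6;\Omega}$. Compensating the low integrability of $\bsi_2$ is the main technical obstacle and is the only step that uses the extra regularity.

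Finally, gathering these bounds, dividing by $\|(\bta,\vec{\bv})\|$, and regrouping the coefficients of $\|\bz-\by\|_{0,6;\Omega}$ (coming solely from the second half of the convective term) and of $\|\phi-\varpi\|_{0,6;\Omega}$ (combining the functional piece, the viscosity-Lipschitz piece scaled by $\|\bg\|_{0,3/2;\Omega}\|\varpi\|_{0,6;\Omega}$, and the first half of the convective term scaled by $\|\bu_2\|_{0,6;\Omega}$) produces exactly \eqref{eq:lip.S}, with $\mathcal{L}_{\mathscr{L}^S}$ expressible in terms of $\mathcal{L}_\mu$, $\mu_1$, $r_1$, $C_\epsilon$, and $\alpha_{\mathcal{A}^S}$.
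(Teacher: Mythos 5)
Your proposal is correct and follows essentially the same route as the paper: subtract the two instances of \eqref{eq:sub_1}, apply the perturbed inf-sup condition \eqref{eq:inf.A.1}, bound the data, convective and viscosity-difference terms separately, and invoke $(\textbf{RA})$ with the Sobolev embedding only for the term $\big(\mathscr{A}_{\varpi}^S-\mathscr{A}_{\phi}^{S}\big)(\bsi_\circ,\bta)$. The only deviations are cosmetic: you insert $\pm\mathscr{O}_{\phi}^{S}(\by;\cdot,\cdot)$ instead of $\pm\mathscr{O}_{\varpi}^{S}(\bz;\cdot,\cdot)$ in the convective split and use the fixed Hölder triple $(6,3,2)$ with $\epsilon^{*}\geq 3$ rather than the paper's exponents $2p=\epsilon^{*}$, $2q=n/\epsilon$, which lead to the same estimate.
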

\begin{proof}
Given $ (\bz , \phi), (\by , \varpi)\in \mathbf{Y} \times\mathrm{Y} $, we let $ \mathscr{L}^S(\bz,\phi):=(\bsi_{\star},\vec{\bu}_{\star}) \in \mathbb{X}\times\mathcal{Z}$ and $ \mathscr{L}^S(\by,\varpi):=(\bsi_{\circ},\vec{\bu}_{\circ})\in \mathbb{X}\times\mathcal{Z} $, where $ (\bsi_{\star},\vec{\bu}_{\star})=(\bsi_{\star},(\bu_{\star},\bgam_{\star}))\in \mathbb{X}\times\mathcal{Z} $ and
$ (\bsi_{\circ},\vec{\bu}_{\circ})=(\bsi_{\circ},(\bu_{\circ},\bgam_{\circ}))\in \mathbb{X}\times\mathcal{Z} $ are the respective solutions of \eqref{eq:sub_1}, that is, 
\begin{equation*}
\begin{array}{c}
\mathcal{A}_{\phi}^S \big((\bsi_{\star}\,,\vec{\bu}_{\star}), (\bta,\, \vec{\bv})\big)+\mathscr{O}_{\phi}^{S}(\bz;\bu_\star,\bta)\,=\, \mathcal{F}_{\phi}^S(\bta,\,\vec{\bv})\quad\forall\, (\bta,\, \vec{\bv})\in \mathbb{X}\times\mathcal{Z}\,,\qan\\[2ex]
\mathcal{A}_{\varpi}^S \big((\bsi_{\circ}\,,\vec{\bu}_{\circ}), (\bta,\, \vec{\bv})\big)+\mathscr{O}_\varpi^S(\by;\bu_\circ,\bta)\,=\, \mathcal{F}_{\varpi}^S(\bta,\,\vec{\bv})\quad\forall\, (\bta,\, \vec{\bv})\in \mathbb{X}\times\mathcal{Z}\,.
\end{array}
\end{equation*}
It follows from the foregoing identities and the definition of $ \mathcal{A}_{\phi}^S $ (cf. first row of \eqref{def.A}) that
\begin{equation}\label{eq:re1}
\begin{array}{c}
\mathcal{A}_{\phi}^S \big((\bsi_\star-\bsi_\circ,\, \vec{\bu}_\star-\vec{\bu}_\circ), (\bta,\, \vec{\bv})\big)\,+\, \mathscr{O}_{\phi}^{S}(\bz;\bu_\star-\bu_{\circ} , \bta)\\[2ex]
\, =\, 
\Big[\mathcal{A}_{\phi}^S \big((\bsi_\star,\, \vec{\bu}_\star), (\bta,\, \vec{\bv})\big)+\mathscr{O}_{\phi}^{S}(\bz;\bu_\star , \bta)\Big]-
\Big[\mathcal{A}_{\phi}^S \big((\bsi_\circ,\, \vec{\bu}_\circ), (\bta,\, \vec{\bv})\big)+\mathscr{O}_{\phi}^{S}(\bz;\bu_\circ , \bta)\Big]\\[2.5ex]
\, =\, 	\mathcal{A}_{\varpi}^S \big((\bsi_\circ,\, \vec{\bu}_\circ), (\bta,\, \vec{\bv})\big)+\mathscr{O}_\varpi^S(\by;\bu_\circ,\bta)+(\mathcal{F}_{\phi}^S-\mathcal{F}_{\varpi}^S)(\bta,\, \vec{\bv})\\[2ex]
					-
\Big[\mathcal{A}_{\phi}^S \big((\bsi_\circ,\, \vec{\bu}_\circ), (\bta,\, \vec{\bv})\big)+\mathscr{O}_{\phi}^{S}(\bz;\bu_\circ , \bta)\Big]\\[2.5ex]
\, =\,\big(\mathscr{A}_{\varpi}^S-\mathscr{A}_{\phi}^{S}\big)(\bsi_{\circ},\bta)\,+\,
\big(\mathcal{F}_{\phi}^S-\mathcal{F}_{\varpi}^S\big)(\bta,\, \vec{\bv})\,+\,\mathscr{O}_{\varpi}^S(\by-\bz;\bu_{\circ},\bta)\,+\,\big(\mathscr{O}_{\varpi}^S-\mathscr{O}_{\phi}^{S}\big)(\bz;\bu_{\circ},\bta)\,,
\end{array}
\end{equation}
			
and hence, applying the global inf-sup condition \eqref{eq:inf.A.1} to $ (\bze,\vec{\bw}):=(\bsi_\star-\bsi_\circ,\, \vec{\bu}_\star-\vec{\bu}_\circ) $ , and employing \eqref{eq:re1}, we find that
\begin{equation}\label{eq0:l.lip.S}
\begin{array}{c}
 \dfrac{\alpha_{\mathcal{A}^S}}{2}\,\Vert(\bsi_\star-\bsi_\circ,\, \vec{\bu}_\star-\vec{\bu}_\circ)\Vert_{\mathbb{X}\times\mathcal{Z}}\\[2ex]
\, \leq \, \sup\limits_{\mathbf{0}\neq (\bta,\vec{\bv})\in \mathbb{X}\times\mathcal{Z}}\dfrac{\big(\mathscr{A}_{\varpi}^S-\mathscr{A}_{\phi}^{S}\big)(\bsi_{\circ},\bta)}{\Vert (\bta,\vec{\bv})\Vert_{\mathbb{X}\times\mathcal{Z}}}\\[3ex]
\, + \, \sup\limits_{\mathbf{0}\neq (\bta,\vec{\bv})\in \mathbb{X}\times\mathcal{Z}}\dfrac{
	\big(\mathcal{F}_{\phi}^S-\mathcal{F}_{\varpi}^S\big)(\bta,\, \vec{\bv})\,+\,\mathscr{O}_{\varpi}^S(\by-\bz;\bu_{\circ},\bta)\,+\,\big(\mathscr{O}_{\varpi}^S-\mathscr{O}_{\phi}^{S}\big)(\bz;\bu_{\circ},\bta)}{\Vert (\bta,\vec{\bv})\Vert_{\mathbb{X}\times\mathcal{Z}}}\,.
\end{array}
\end{equation}
								
Thus, bearing in mind the definitions of $ \mathscr{A}_{\phi}^{S} $ and $ \mathscr{A}_{\varpi}^S $ (cf. first row of \eqref{def-c-tilde-c}), and utilizing the Lipschitz-continuity of $ \mu $ (cf. first column of \eqref{eq:lip.mu.kap}) along with the Cauchy--Schwarz and H\"older inequalities, we have
\begin{equation}\label{eq2:l.lip.S}
\begin{array}{c}
\big|\big(\mathscr{A}_{\varpi}^S-\mathscr{A}_{\phi}^{S}\big)(\bsi_{\circ},\bta)\big|\,=\,\Big|\disp\int_{\Omega}\left(\dfrac{\mu(\phi)-\mu(\varpi)}{\mu(\phi)\,\mu(\varpi)}\right)\bsi_{\circ}^{\mathtt{d}}:\bta^{\mathtt{d}} \Big|\\[2.4ex]
\,\leq\, \dfrac{\mathcal{L}_{\mu}}{\mu_1^{2}}\, \big\Vert (\phi-\varpi)\,\bsi_{\circ}^{\mathtt{d}}\big\Vert_{0,\Omega}\, \big\Vert\bta\big\Vert_{0,\Omega}\\[2ex]
\,\leq\, \dfrac{\mathcal{L}_{\mu}}{\mu_1 ^{2}}\, \Vert \phi-\varpi\Vert_{0,2q;\Omega}\,\Vert\bsi_{\circ}\Vert_{0,2p;\Omega}\, \Vert\bta\Vert_{0,\Omega}\,,
\end{array}
\end{equation}		
								
where $ p,q\in (1,\infty) $ are conjugate to each other. Now, choosing $ p $ such that $ 2p=\epsilon^{\star} $ (cf. (3.23)), we
get $ 2q=\frac{n}{\epsilon} $, which, according to the range stipulated for $ \epsilon $, yields $ 2q<6 $, and thus the norm of the
embedding of $ \mathrm{L}^{6}(\Omega) $ into $ \mathrm{L}^{2q}=\mathrm{L}^{\frac{n}{\epsilon}}(\Omega) $ is given by $ \mathcal{C}_{\epsilon}=|\Omega|^{\frac{6\epsilon-n}{6n}} $. In this way, using additionally the
continuity of $ i_{\epsilon} $ along with the regularity estimate \eqref{eq:reg.t}, the inequality \eqref{eq2:l.lip.S} becomes		
\begin{equation}\label{eq3:l.lip.S}
\begin{array}{c}
\big|\big(\mathscr{A}_{\varpi}^S-\mathscr{A}_{\phi}^{S}\big)(\bsi_{\circ},\bta)\big|\,\leq\,\dfrac{\mathcal{L}_{\mu}}{\mu_1 ^{2}}\,\mathcal{C}_{\epsilon}\,\Vert\phi-\varpi\Vert_{0,6;\Omega}\,\Vert i_{\epsilon}\Vert\, \Vert\bsi_{\circ}\Vert_{\epsilon,\Omega}\, \Vert\bta\Vert_{\bdiv_{6/5},\Omega}\\[2ex]
\,\leq\, \dfrac{\mathcal{L}_{\mu}}{\mu_1 ^{2}}\,\mathcal{C}_{\epsilon}\,\Vert i_{\epsilon}\Vert\, C_{\epsilon}\, \Vert\bg\Vert_{0,3/2;\Omega}\,\Vert\phi-\varpi\Vert_{0,6;\Omega}\,\Vert\varpi\Vert_{0,6;\Omega}\, \Vert\bta\Vert_{\bdiv_{6/5},\Omega}\,.
\end{array}		
\end{equation}
On the other hand, according the definition of $ \mathscr{O}_{\phi}^{S} $, using the first inequality of \eqref{eq:hh}, and the boundedness of $ \Vert\bz\Vert_{0,6;\Omega} $ by $ r_1 $, we obtain
\begin{equation*}
\begin{array}{c}
\big|\big(\mathscr{O}_{\varpi}^S-\mathscr{O}_{\phi}^{S}\big)(\bz;\bu_{\circ},\bta)\big|\,\leq\, \dfrac{\mathcal{L}_{\mu}}{\mu_1^{2}}\,\Vert\phi- \varpi\Vert_{0,6;\Omega}\, \Vert\bz\Vert_{0,6;\Omega}\, \Vert\bu_{\circ}\Vert_{0,6;\Omega}\,  \Vert\bta\Vert_{0,\Omega}\\[2ex]
\,\leq\, \dfrac{\mathcal{L}_{\mu}}{\mu_1^{2}}\,r_1\,\Vert\phi- \varpi\Vert_{0,6;\Omega}\, \Vert\bu_{\circ}\Vert_{0,6;\Omega}\,  \Vert\bta\Vert_{0,\Omega}\,.
\end{array}
\end{equation*}
					
In turn, the boundedness of $ \mathscr{O}_{\varpi}^S(\bz;\cdot,\cdot) $ (cf. third row of \eqref{eq:bound.c.ct}), and the definition of $ \mathcal{F}_{\phi}^S $ (cf. \eqref{eq:def.F}) and H\"older's inequality, yield
\begin{equation}\label{eq3a:l.lip.S}
\big|\mathscr{O}_{\varpi}^S(\bz-\by;\bu_{\circ},\bta)\big|\,\leq\, \dfrac{1}{\mu_1}\,\Vert	\bz-\by\Vert_{0,6;\Omega}\,\Vert\bu_{\circ}\Vert_{0,6;\Omega}\,\Vert\bta\Vert_{0,\Omega}\,,
\end{equation}
and
\begin{equation}\label{eq4:l.lip.S}
\big|(\mathcal{F}_{\phi}^S-\mathcal{F}_{\varpi}^S)(\bta,\, \vec{\bv})\big|\, \leq \,	\Vert\bg\Vert_{0,3/2;\Omega}\, \Vert \phi-\varpi\Vert_{0,6;\Omega}\, \Vert\bv\Vert_{0,6;\Omega}\,.
\end{equation}

Hence, replacing \eqref{eq2:l.lip.S}-\eqref{eq4:l.lip.S} back into \eqref{eq0:l.lip.S}, we deduce the existence of a positive constant $ \mathcal{L}_{\mathscr{L}^S} $, depending on $ \mathcal{L}_{\mu} $, $ \mu_1 $, $ r_1 $, $ \mathcal{C}_{\epsilon} $, $ \alpha_{\mathcal{A}^S} $,
such that
\begin{equation*}
\begin{array}{c}
\Vert(\bsi_\star-\bsi_\circ,\, \vec{\bu}_\star-\vec{\bu}_\circ)\Vert_{\mathbb{X}\times\mathcal{Z}}\\[2ex]
\, \leq \, \mathcal{L}_{\mathscr{L}^S}\, \Big\{\Big(\Vert\bg\Vert_{0,3/2;\Omega}(1+\Vert\varpi\Vert_{0,6;\Omega})+\Vert\bu_{\circ}\Vert_{0,6;\Omega}\Big)\Vert\phi-\varpi\Vert_{0,6;\Omega}+\Vert\bz-\by\Vert_{0,6;\Omega}\,\Vert\bu_{\circ}\Vert_{0,6;\Omega}	\Big\}\,,
\end{array}
\end{equation*}
which recalling that $ \bu_{\circ} \, =\, \mathscr{L}_2^S(\by,\varpi) $, we arrive a the required inequality 	\eqref{eq:lip.S} with a positive
constant $ \mathcal{L}_{\mathscr{L}^S} $ as indicated.
\end{proof}
Here, we use $ (\widetilde{\textbf{RA}}) $ to prove the continuity of $ \mathscr{L}^T $.
\begin{lemma}\label{l_lip.T}
There exists a positive constant $ \mathcal{L}_{\mathscr{L}^T} $, depending on $ \mathcal{L}_{\kappa} $, $ \kappa_1 $, $ r_2 $, $ \widetilde{\mathcal{C}}_{\epsilon} $, $ \alpha_{\mathcal{A}^T} $, such that
\begin{equation}\label{eq:lip.S.t}
\begin{array}{l}
\big\Vert\mathscr{L}^T(\bz,\phi)-\mathscr{L}^T(\by,\varpi)\big\Vert\\[2ex]
\, \leq\, \mathcal{L}_{\mathscr{L}^T}\, \Big\{\Vert\mathscr{L}^T_2(\by,\varpi)\Vert_{0,6;\Omega}\,\Vert\bz-\by\Vert_{0,6;\Omega}\,+\,\left(1+\Vert\varphi_{D}\Vert_{1/2+\epsilon,\Gamma_{D}}\right)\Vert\phi-\varpi\Vert_{0,6;\Omega}\Big\}\,.
\end{array}
\end{equation}
\end{lemma}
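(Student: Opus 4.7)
The plan is to mirror the argument used in the proof of Lemma \ref{l_lip.S}, adapting it to the temperature--pseudoheat pair instead of the velocity--pseudostress pair. Given $(\bz,\phi),(\by,\varpi)\in \mathbf{Y}\times\mathrm{Y}$ both inside $\mathbf{W}(r)$ (so that \eqref{eq:as.z2} holds for both $\bz$ and $\by$), I set $\mathscr{L}^T(\bz,\phi)=:(\brho_\star,\varphi_\star)$ and $\mathscr{L}^T(\by,\varpi)=:(\brho_\circ,\varphi_\circ)$, which by Lemma \ref{l_wel.S.T} are the unique solutions of the respective instances of \eqref{eq:sub_2}. Note that, unlike in the Navier--Stokes step, the functional $\mathscr{F}^{T}$ depends neither on $\phi$ nor on $\bz$, so the subtraction will produce \emph{three} right-hand side terms rather than four.

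Subtracting the two realizations of \eqref{eq:sub_2}, and adding and subtracting $\mathcal{A}_{\phi}^T((\brho_\circ,\varphi_\circ),(\bet,\psi))+\mathscr{O}^{T}_\phi(\bz;\varphi_\circ,\bet)$, I obtain
\begin{equation*}
\begin{array}{c}
\mathcal{A}_{\phi}^T\big((\brho_\star-\brho_\circ,\varphi_\star-\varphi_\circ),(\bet,\psi)\big)+\mathscr{O}^{T}_\phi(\bz;\varphi_\star-\varphi_\circ,\bet)\\[1.5ex]
=\ \big(\mathscr{A}^{T}_\varpi-\mathscr{A}^{T}_\phi\big)(\brho_\circ,\bet)\,+\,\mathscr{O}^{T}_\varpi(\by-\bz;\varphi_\circ,\bet)\,+\,\big(\mathscr{O}^{T}_\varpi-\mathscr{O}^{T}_\phi\big)(\bz;\varphi_\circ,\bet)\,.
\end{array}
\end{equation*}
Then I would apply the global inf-sup condition \eqref{eq:inf.At.1} with $(\bxi,\omega):=(\brho_\star-\brho_\circ,\varphi_\star-\varphi_\circ)$ to bound $\Vert(\brho_\star-\brho_\circ,\varphi_\star-\varphi_\circ)\Vert$ by (twice the supremum of) the three terms on the right divided by $\alpha_{\mathcal{A}^T}$.

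The hard part will be the estimate of $\big(\mathscr{A}^{T}_\varpi-\mathscr{A}^{T}_\phi\big)(\brho_\circ,\bet)$, because $\brho_\circ$ only lives in $\mathbf{L}^2$ a priori, which is where the regularity assumption $(\widetilde{\textbf{RA}})$ comes in. Following the template of \eqref{eq2:l.lip.S}--\eqref{eq3:l.lip.S}, I would rewrite the integrand as $\tfrac{\kappa(\varpi)-\kappa(\phi)}{\kappa(\phi)\kappa(\varpi)}\,\brho_\circ\cdot\bet$, apply the Lipschitz bound \eqref{eq:lip.mu.kap} and Hölder's inequality with conjugate exponents $p,q\in(1,\infty)$ satisfying $2p=\epsilon^\star$ and $2q=n/\epsilon\leq 6$, then use the continuous embedding $\mathrm{L}^6(\Omega)\hookrightarrow \mathrm{L}^{2q}(\Omega)$ with norm $\widetilde{\mathcal{C}}_\epsilon=|\Omega|^{(6\epsilon-n)/(6n)}$, the continuity of $i_\epsilon:\mathbf{H}^\epsilon(\Omega)\to\mathbf{L}^{\epsilon^\star}(\Omega)$, and finally the regularity estimate \eqref{eq:reg.ze} to replace $\Vert\brho_\circ\Vert_{\epsilon,\Omega}$ by $\widetilde{C}_\epsilon\Vert\varphi_D\Vert_{1/2+\epsilon,\Gamma_D}$. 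This yields a term $\lesssim \Vert\varphi_D\Vert_{1/2+\epsilon,\Gamma_D}\Vert\phi-\varpi\Vert_{0,6;\Omega}\Vert\bet\Vert_{\div_{6/5},\Omega}$.

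The two remaining terms are routine: for $\big(\mathscr{O}^{T}_\varpi-\mathscr{O}^{T}_\phi\big)(\bz;\varphi_\circ,\bet)$ I would invoke the second estimate of \eqref{eq:bound.c.ct} together with the bound $\Vert\bz\Vert_{0,6;\Omega}\leq r_2$ to get a term $\lesssim r_2\,\Vert\phi-\varpi\Vert_{0,6;\Omega}\Vert\varphi_\circ\Vert_{0,6;\Omega}\Vert\bet\Vert_{0,\Omega}$; for $\mathscr{O}^{T}_\varpi(\by-\bz;\varphi_\circ,\bet)$ the fourth estimate of \eqref{eq:bound.c.ct} (equivalently, the sixth line of \eqref{eq:bound}) gives $\lesssim \Vert\bz-\by\Vert_{0,6;\Omega}\Vert\varphi_\circ\Vert_{0,6;\Omega}\Vert\bet\Vert_{0,\Omega}$. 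Collecting everything, recalling that $\varphi_\circ=\mathscr{L}^T_2(\by,\varpi)$, and absorbing all constants into a single $\mathcal{L}_{\mathscr{L}^T}$ depending on $\mathcal{L}_\kappa,\kappa_1,r_2,\widetilde{\mathcal{C}}_\epsilon,\alpha_{\mathcal{A}^T}$ (and $\widetilde{C}_\epsilon,\|i_\epsilon\|$), the claimed bound \eqref{eq:lip.S.t} follows.
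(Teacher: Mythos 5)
Your proposal is correct and follows essentially the same route as the paper: the same three-term decomposition obtained by subtracting the two instances of \eqref{eq:sub_2}, the global inf-sup condition \eqref{eq:inf.At.1} applied to the difference, the Lipschitz/H\"older argument with $2p=\epsilon^{\star}$, $2q=n/\epsilon$ combined with $(\widetilde{\textbf{RA}})$ and \eqref{eq:reg.ze} for the $\big(\mathscr{A}^{T}_{\varpi}-\mathscr{A}^{T}_{\phi}\big)$ term, and \eqref{eq:bound.c.ct} for the two convective terms. The only point to make explicit is that the factor $\Vert\varphi_{\circ}\Vert_{0,6;\Omega}$ in your estimate of $\big(\mathscr{O}^{T}_{\varpi}-\mathscr{O}^{T}_{\phi}\big)(\bz;\varphi_{\circ},\bet)$ must be controlled via the a priori bound \eqref{eq:apr2} so that this contribution can be absorbed into the coefficient $\big(1+\Vert\varphi_{D}\Vert_{1/2+\epsilon,\Gamma_{D}}\big)$ multiplying $\Vert\phi-\varpi\Vert_{0,6;\Omega}$, exactly as the paper does (implicitly) in \eqref{eq2:lip.St}.
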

\begin{proof}
Given $ (\bz , \phi), (\by , \varpi)\in \mathbf{Y} \times\mathrm{Y} $ as indicated, we proceed similarly to the proof of Lemma \ref{l_lip.S} and let $ \mathscr{L}^T(\bz,\phi):=(\brho_{\star},\varphi_{\star}) \in \mathbf{X}\times\mathrm{Y} $ and $ \mathscr{L}^T(\by,\varpi):=(\brho_{\circ},\varphi_{\circ}) \in \mathbf{X}\times\mathrm{Y} $, where $ (\brho_{\star},\varphi_{\star})\in \mathbf{X}\times\mathrm{Y} $ and $ (\brho_{\circ},\varphi_{\circ})\in \mathbf{X}\times\mathrm{Y} $ are the respective solutions of \eqref{eq:sub_2}, that is
\begin{equation}\label{eq1:lip.T}
\mathcal{A}_{\phi}^T \big((\brho_{\star}\,,\varphi_{\star}), (\bet,\, \psi)\big)+\mathscr{O}^{T}_\phi(\bz;\varphi_\star,\bet)\,=\, \mathcal{F}^T(\bet) \quad \forall\,(\bet,\, \psi)\in\mathbf{X}\times\mathrm{Y} \,,
\end{equation}
and
\begin{equation}\label{eq2:lip.T}
\mathcal{A}_{\varpi}^T \big((\brho_{\circ}\,,\varphi_{\circ}), (\bet,\, \psi)\big)+\mathscr{O}^{T}_{\varpi}(\by;\varphi_\circ,\bet)\,=\, \mathcal{F}^T(\bet)\quad\forall\,(\bet,\, \psi)\in\mathbf{X}\times\mathrm{Y}\,.
\end{equation}
Next, proceeding analogously to the derivation of \eqref{eq:re1}, we deduce from the identities \eqref{eq1:lip.T} and \eqref{eq2:lip.T}, along with the definition of $ \mathcal{A}_{\phi}^T $ (cf. \eqref{def.At})  that
\begin{equation}\label{eq0:lip.T}
\begin{array}{c}
\mathcal{A}_{\phi}^T \big((\brho_{\star}-\brho_{\circ}\,,\varphi_{\star}-\varphi_{\circ}), (\bet,\, \psi)\big)+\mathscr{O}^{T}_\phi(\bz;\varphi_\star-\varphi_\circ,\bet)\\[2ex]
\, =\,\big(\mathscr{A}^{T}_{\varpi}-\mathscr{A}^{T}_{\phi}\big)(\brho_{\circ},\bet)\,+\,\mathscr{O}^{T}_{\varpi}(\by-\bz;\varphi_{\circ},\bet)\,+\,\big(\mathscr{O}^{T}_{\varpi}-\mathscr{O}^{T}_{\phi}\big)(\bz;\varphi_{\circ},\bet)\,.
\end{array}
\end{equation}
							
Next, using the definition of $ \mathscr{A}^{T}_{\phi} $ (cf. third row of \eqref{def-c-tilde-c}) and the Lipschitz-continuity of $ \kappa $ (cf. second column of \eqref{eq:lip.mu.kap}), along with H\"older's inequality, we easily get
\begin{equation*}
\big|\big(\mathscr{A}^{T}_{\varpi}-\mathscr{A}^{T}_{\phi}\big)(\brho_{\circ},\bet)\big|\,\leq\, \dfrac{\mathcal{L}_{\kappa}}{\kappa_1^{2}}\, \Vert \phi-\varpi\Vert_{0,2q;\Omega}\,\Vert\brho_{\circ}\Vert_{0,2p;\Omega}\, \Vert\bet\Vert_{0,\Omega}\,.
\end{equation*}
Following a similar approach to the proof of Lemma \ref{l_lip.S} (cf. \eqref{eq2:l.lip.S}), we choose $ p=\frac{\epsilon^{\star}}{2} $ , so that $ q=\frac{n}{2\epsilon} $, and hence
\begin{equation}\label{eq1:lip.St}
\big|\big(\mathscr{A}^{T}_{\varpi}-\mathscr{A}^{T}_{\phi}\big)(\brho_{\circ},\bet)\big|\,\leq\,\dfrac{\mathcal{L}_{\kappa}}{\kappa_1^{2}}\,\widetilde{\mathcal{C}}_{\epsilon}\,\Vert i_{\epsilon}\Vert\, \Vert\varphi_{D}\Vert_{1/2+\epsilon,\Gamma_{D}}\, \Vert \phi-\varpi\Vert_{0,6;\Omega}\, \Vert\bet\Vert_{0,\Omega}\,,
\end{equation}

whereas employing the second inequality of \eqref{eq:hh} and the boundedness of $ \Vert\bz\Vert_{0,6;\Omega} $ by $ r_2 $, yields
\begin{equation}\label{eq2:lip.St}
\begin{array}{c}
\big|\big(\mathscr{O}^{T}_{\varpi}-\mathscr{O}^{T}_{\phi}\big)(\bz;\varphi_{\circ},\bet)\big|\,\leq\, \dfrac{\mathcal{L}_{\kappa}}{\kappa_1^{2}}\, \Vert\phi-\varpi\Vert_{0,6;\Omega}\,\Vert\bz\Vert_{0,6;\Omega}\, \Vert\varphi_{\circ}\Vert_{0,6;\Omega}\,\Vert\bet\Vert_{0,\Omega}\\[2ex]
\,\leq\, \dfrac{\mathcal{L}_{\kappa}}{\kappa_1^{2}}\, r_2 \, \Vert\phi-\varpi\Vert_{0,6;\Omega}\,\Vert\bet\Vert_{0,\Omega}\,.
\end{array}
\end{equation}
In turn, thanks to the boundedness of $ \mathscr{O}_\varpi^{T} $ (cf. fourth row of \eqref{eq:bound.c.ct}), we get
\begin{equation}\label{eq3:lip.St}
\big|\mathscr{O}^{T}_{\varpi}(\by-\bz;\varphi_{\circ},\bet)\big|\,\leq\, \dfrac{1}{\kappa_1}\, \Vert\bz-\by\Vert_{0,6;\Omega}\, \Vert\varphi_{\circ}\Vert_{0,6;\Omega}\,\Vert\bet\Vert_{0,\Omega}\,.	
\end{equation}
								
In this way, replacing \eqref{eq1:lip.St}- \eqref{eq3:lip.St} in \eqref{eq0:lip.T}, then employing the global inf-sup conditions \eqref{eq:inf.At.1}, and proceeding analogously as for the proof of Lemma \ref{l_lip.S} yields the desired result \eqref{eq:lip.S.t}.
\end{proof}
After demonstrating Lemmas \ref{l_lip.S} and \ref{l_lip.T}, our current objective is to prove the continuity property of the main operator $ \mathscr{L} $ within the closed ball $ \mathbf{W}(r) $ (cf. \eqref{de:ball}). In fact, given $ (\bz,\phi), (\by,\varpi)\in \mathbf{W}(r) $, initially, we note from the definition of $ \mathscr{L} $ (cf. \eqref{eq:def.opr.E}) that
\begin{equation*}
\begin{array}{c}
\mathscr{L}\big(\bz,\phi\big)-\mathscr{L}\big(\by,\varpi\big)\,:=\, \big(\mathscr{L}^S_2(\bz,\mathscr{L}^T_2(\bz,\phi))-\mathscr{L}^S_2(\by,\mathscr{L}^T_2(\by,\varpi)), \mathscr{L}^T_2(\bz,\phi)-\mathscr{L}^T_2(\by,\varpi)\big)\,.
\end{array}
\end{equation*}
Hence, 
employing the continuity properties of $ \mathscr{L}^S $ (cf. Lemma \ref{l_lip.S}, \eqref{eq:lip.S}) and $ \mathscr{L}^T $ (cf. Lemma \ref{l_lip.T}, \eqref{eq:lip.S.t}), we obtain
\begin{equation*}
\begin{array}{c}
\big\Vert	\mathscr{L}\big(\bz,\phi\big)-\mathscr{L}\big(\by,\varpi\big)\big\Vert\, \leq\, 		\big\Vert\mathscr{L}^S_2(\bz,\mathscr{L}^T_2(\bz,\phi))-\mathscr{L}^S_2(\by,\mathscr{L}^T_2(\by,\varpi))\big\Vert\,+\, \big\Vert \mathscr{L}^T_2(\bz,\phi)-\mathscr{L}^T_2(\by,\varpi)\big\Vert\\[2ex]
\,\leq\, \mathcal{L}_{\mathscr{L}^S}\, \Vert\mathscr{L}^S_2(\by,\mathscr{L}^T_2(\by,\varpi))\Vert_{0,6;\Omega}\Vert\bz-\by\Vert_{0,6;\Omega}
\\[2ex]
\,+\,\bigg( \mathcal{L}_{\mathscr{L}^S}\Big(\Vert\bg\Vert_{0,3/2;\Omega}\,(1+\Vert\mathscr{L}^T_2(\by,\varpi)\Vert_{0,6;\Omega})\\[2ex]
\,+\,\Vert\mathscr{L}^S_2(\by,\mathscr{L}^T_2(\by,\varpi))\Vert_{0,6;\Omega}\Big)+1\bigg)\,\big\Vert \mathscr{L}^T_2(\bz,\phi)-\mathscr{L}^T_2(\by,\varpi)\big\Vert_{0,6;\Omega}	\\[2ex]
\,\leq\, 
\mathcal{L}_{\mathscr{L}^S}\, \Vert\mathscr{L}^S_2(\by,\mathscr{L}^T_2(\by,\varpi))\Vert_{0,6;\Omega}\Vert\bz-\by\Vert_{0,6;\Omega}
\\[2ex]
\,+\,\mathcal{L}_{\mathscr{L}^T}\,\bigg( \mathcal{L}_{\mathscr{L}^S}\left(\Vert\bg\Vert_{0,3/2;\Omega}\,(1+\Vert\mathscr{L}^T_2(\by,\varpi)\Vert_{0,6;\Omega})+\Vert\mathscr{L}^S_2(\by,\mathscr{L}^T_2(\by,\varpi))\Vert_{0,6;\Omega}\right)+1\bigg)\\[2ex]
\, \times\,\Big(\Vert\mathscr{L}^T_2(\by,\varpi)\Vert_{0,6;\Omega}\,\Vert\bz-\by\Vert_{0,6;\Omega}\,+\,\left(1+\Vert\varphi_{D}\Vert_{1/2+\epsilon,\Gamma_{D}}\right)\Vert\phi-\varpi\Vert_{0,6;\Omega}\Big)\,,
\end{array}
\end{equation*}
from which, applying the a priori estimate of $ \mathscr{L}^S $ and $ \mathscr{L}^T $ (cf. Lemma \ref{l_wel.S.T}, eqs. \eqref{eq:apr1} and \eqref{eq:apr2}),
we infer the
existence of a positive constant $ \mathcal{L}_{\mathscr{L}} $, depending only on $ \mathcal{L}_{\mathscr{L}^S} $, $ \mathcal{L}_{\mathscr{L}^T} $, $ \alpha_{\mathcal{A}^S} $, $\alpha_{\mathcal{A}^T} $, $ r $, such that
\begin{equation}\label{eq:lip.con.Xi}
\begin{array}{c}
\big\Vert	\mathscr{L}\big(\bz,\phi\big)-\mathscr{L}\big(\by,\varpi\big)\big\Vert\\[2ex]
\, \leq\, 	\mathcal{L}_{\mathscr{L}}\Big\{
\Big(\Vert\bg\Vert_{0,3/2;\Omega}\,+\,\left(1+\Vert\bg\Vert_{0,3/2;\Omega}(1+\Vert\varphi_{D}\Vert_{1/2,\Gamma_{D}})\right)\Vert\varphi_{D}\Vert_{1/2,\Gamma_{D}}\Big)\Vert\bz-\by\Vert_{0,6;\Omega}\\[2ex]
\,+\,\Big(1+\Vert\bg\Vert_{0,3/2;\Omega}(1+\Vert\varphi_{D}\Vert_{1/2,\Gamma_{D}})\Big)\big(1+\Vert\varphi_{D}\Vert_{1/2+\epsilon,\Gamma_{D}}\big)\Vert\phi-\varpi\Vert_{0,6;\Omega}
\Big\}\,,		
\end{array}
\end{equation}
for all $ (\bz,\phi), (\by,\varpi)\in \mathbf{W}(r) $.
						
Below is a summary of the main result of this section.
\begin{theorem}\label{t:exis}
Assume that the data are sufficiently small so that \eqref{eq:as1.data} hold. In addition, suppose that
\begin{equation}\label{eq:as2.data}
\begin{array}{c}
	\mathcal{L}_{\mathscr{L}}\Big\{
\Big(\Vert\bg\Vert_{0,3/2;\Omega}\,+\,\left(1+\Vert\bg\Vert_{0,3/2;\Omega}(1+\Vert\varphi_{D}\Vert_{1/2,\Gamma_{D}})\right)\Vert\varphi_{D}\Vert_{1/2,\Gamma_{D}}\Big)\\[2ex]
\,+\,\Big(1+\Vert\bg\Vert_{0,3/2;\Omega}(1+\Vert\varphi_{D}\Vert_{1/2,\Gamma_{D}})\Big)\big(1+\Vert\varphi_{D}\Vert_{1/2+\epsilon,\Gamma_{D}}\big)
\Big\}\, <\, 1\,.
\end{array}
\end{equation}
								
Then, the operator $ \mathscr{L} $ has a unique fixed point $ (\bu , \varphi)\in \mathbf{W}(r) $. Equivalently, the coupled problem \eqref{eq:v1a-eq:v4-a}-\eqref{eq:v1a-eq:v4-d} has a unique solution $ (\bsi,\vec{\bu})\in \mathbb{X}\times\mathcal{Z} $ and $ (\brho,\varphi)\in  \mathbf{X}\times\mathrm{Y} $ with $ (\bu , \varphi)\in \mathbf{W}(r) $. Moreover, there hold the following a priori estimates
\begin{equation}\label{eq:apr1.prob1}
\begin{array}{c}
\Vert (\bsi,\vec{\bu})\Vert\, \leq\, \dfrac{4}{\alpha_{\mathcal{A}^S}\,\alpha_{\mathcal{A}^T}}\,\Vert\varphi_{D}\Vert_{1/2,\Gamma_{D}}\Vert\bg\Vert_{0,3/2;\Omega}\,,
	\end{array}
\end{equation}
and
\begin{equation}\label{eq:apr2.prob2}
\begin{array}{c}
\Vert(\brho,\varphi)\Vert\,\leq\, \dfrac{2}{\alpha_{\mathcal{A}^T}}\,\Vert\varphi_{D}\Vert_{1/2,\Gamma_{D}}\,.	\end{array}
\end{equation}
\end{theorem}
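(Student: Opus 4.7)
The plan is to apply the Banach fixed-point theorem to the operator $\mathscr{L}:\mathbf{W}(r)\to\mathbf{W}(r)$ defined in \eqref{eq:def.opr.E}, using the two assumptions \eqref{eq:as1.data} and \eqref{eq:as2.data} to verify the hypotheses, and then translate the fixed point back into a solution of the coupled variational problem \eqref{eq:v1a-eq:v4-a}--\eqref{eq:v1a-eq:v4-d}. The heavy machinery (well-posedness of the uncoupled problems via Banach--Ne\v{c}as--Babu\v{s}ka, Lipschitz estimates for $\mathscr{L}^{S}$ and $\mathscr{L}^{T}$, and the combined Lipschitz bound \eqref{eq:lip.con.Xi}) is already available, so this proof reduces to assembling these ingredients.

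First, I would observe that Lemma \ref{l_cons}, whose sole hypothesis is precisely \eqref{eq:as1.data}, guarantees that $\mathscr{L}$ maps the closed ball $\mathbf{W}(r)$ into itself. Next, the global Lipschitz estimate \eqref{eq:lip.con.Xi} already derived in the paragraph preceding the theorem bounds $\Vert \mathscr{L}(\bz,\phi)-\mathscr{L}(\by,\varpi)\Vert$ by the sum of the two $\mathcal{L}_{\mathscr{L}}$-weighted data expressions times $\Vert \bz-\by\Vert_{0,6;\Omega}$ and $\Vert\phi-\varpi\Vert_{0,6;\Omega}$, respectively. Hypothesis \eqref{eq:as2.data} states exactly that the maximum of those two weighted coefficients is strictly less than $1$, so $\mathscr{L}$ is a strict contraction on $\mathbf{W}(r)$ in the norm of $\mathbf{Y}\times \mathrm{Y}$. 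Since $\mathbf{W}(r)$ is a closed subset of the Banach space $\mathbf{Y}\times\mathrm{Y}$, the classical Banach fixed-point theorem yields a unique fixed point $(\bu,\varphi)\in\mathbf{W}(r)$ with $\mathscr{L}(\bu,\varphi)=(\bu,\varphi)$.

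I would then argue the equivalence with the original problem: by the very definition \eqref{eq:def.opr.E}, setting $(\brho,\varphi):=\mathscr{L}^{T}(\bu,\varphi)$ and $(\bsi,\vec{\bu}):=\mathscr{L}^{S}(\bu,\varphi)$ — recalling that the second component of $\mathscr{L}^{S}$ recovers $\bu$ — the pair $(\bsi,\vec{\bu})\in\mathbb{X}\times\mathcal{Z}$, $(\brho,\varphi)\in\mathbf{X}\times\mathrm{Y}$ solves \eqref{eq:subp1} with $\bz=\bu$, $\phi=\varphi$ and \eqref{eq:subp2} with $\bz=\bu$, $\phi=\varphi$. This is exactly \eqref{eq:v1a-eq:v4-a}--\eqref{eq:v1a-eq:v4-d}. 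Conversely, any solution in $\mathbf{W}(r)$ produces a fixed point of $\mathscr{L}$, so uniqueness of the fixed point transfers to uniqueness of the coupled solution.

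Finally, for the a priori bounds, I would substitute the fixed point into the estimates of Lemma \ref{l_wel.S.T}. The bound \eqref{eq:apr2} applied with $\phi=\varphi$ immediately yields \eqref{eq:apr2.prob2}, namely $\Vert(\brho,\varphi)\Vert\le \tfrac{2}{\alpha_{\mathcal{A}^{T}}}\Vert\varphi_{D}\Vert_{1/2,\Gamma_{D}}$. Plugging this into \eqref{eq:apr1} (whose right-hand side depends linearly on $\Vert\varphi\Vert_{0,6;\Omega}$) gives
\[
\Vert(\bsi,\vec{\bu})\Vert \,\leq\, \dfrac{2}{\alpha_{\mathcal{A}^{S}}}\,\Vert\varphi\Vert_{0,6;\Omega}\,\Vert\bg\Vert_{0,3/2;\Omega}\,\leq\,\dfrac{4}{\alpha_{\mathcal{A}^{S}}\,\alpha_{\mathcal{A}^{T}}}\,\Vert\varphi_{D}\Vert_{1/2,\Gamma_{D}}\Vert\bg\Vert_{0,3/2;\Omega},
\]
which is \eqref{eq:apr1.prob1}. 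The main subtlety — essentially bookkeeping rather than analysis — is only to verify that the composite Lipschitz expression appearing in \eqref{eq:lip.con.Xi} is controlled entry-wise by the left-hand side of \eqref{eq:as2.data}; the rest is a direct application of the contraction mapping principle.
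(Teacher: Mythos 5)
Your proposal is correct and follows essentially the same route as the paper: self-mapping via Lemma \ref{l_cons} under \eqref{eq:as1.data}, contractivity from the Lipschitz bound \eqref{eq:lip.con.Xi} together with \eqref{eq:as2.data}, the Banach fixed-point theorem, and the a priori estimates obtained by feeding the fixed point into \eqref{eq:apr1} and \eqref{eq:apr2} (your chaining of the two bounds to get the factor $4/(\alpha_{\mathcal{A}^S}\alpha_{\mathcal{A}^T})$ is exactly what the paper means by ``straightforwardly''). The only cosmetic slip is calling the condition in \eqref{eq:as2.data} a bound on the \emph{maximum} of the two coefficients when it is in fact their sum, which is stronger and so does not affect the argument.
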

\begin{proof}
It is clear from Lemma \ref{l_cons} and the assumption \eqref{eq:as2.data} that $ \mathscr{L} $ is a contraction that maps the ball
$ \mathbf{W}(r) $ into itself, and hence a straightforward application of the classical Banach fixed--point theorem
 implies the existence of a unique solution  $ (\bsi,\vec{\bu})\in \mathbb{X}\times\mathcal{Z} $, $ (\brho,\varphi)\in  \mathbf{X}\times\mathrm{Y} $ to \eqref{eq:v1a-eq:v4-a}-\eqref{eq:v1a-eq:v4-d} with $ (\bu , \varphi)\in \mathbf{W}(r) $.
Furthermore, the a priori estimates given in \eqref{eq:apr1.prob1} and \eqref{eq:apr2.prob2} follow straightforwardly from \eqref{eq:apr1} and \eqref{eq:apr2}, respectively.
\end{proof}									
\section{Mixed virtual element approximation}\label{sec.4}
										
In this section, we describe the virtual element discretization for the 2D version of  \eqref{eq:v1a-eq:v4-a}-\eqref{eq:v1a-eq:v4-d} on general polygonal meshes. 
										
\subsection{Preliminaries}
										
The aim of this part is to present some preliminary concepts and results to be employed in what follows. For more details, 
we refer to \cite{bfm-M2AN-2014,bbcmmr-MMMAS-2013,cg-IMANUM-2017}. We begin by letting $\{\Omega_{h}\}_{h > 0}$ 
be a sequence of partitions of $\Omega$ into general polygons $E$ 
with diameter and number of edges denoted by $h_{E}$ and $d_E$, respectively, and set, 
as usual, $h \,:=\, \max_{E \in \Omega_h} h_E$.
Then, we let
$ N_h^{\mathtt{ed}} $ and $ N_h^{\mathtt{el}} $ be the number of edges and elements, respectively, and
$\mathbf{n}_{e}^{E}$ be the unit outward normal on
edge $e \subset\partial E$. Also, we denote the edges of $ \partial E $ by $ e $, its length by $ h_{e}:=|e| $ and the set of edges $ e $ of $ \Omega_h $ by $ \Gamma_h $.
For any $ l\in N $ and
any mesh object $ \Theta\in \Omega_{h}\cup\Gamma_h $,  let $ \mathrm{P}_{l}(\Theta) $, $ \mathbf{P}_{l}(\Theta) $, $ \mathbb{P}_{l}(\Theta) $ be the space of scalar, vectorial and matrix polynomials deﬁned
on $ \Theta $ of degree less than or equal to $ l $, respectively (with the extended notation $ \mathrm{P}_{-1}(\Theta)=\{0\} $).
The dimension of such spaces, for each $ E\in\Omega_h $ and $ e\in \Gamma_h $, are
\[
\dim\big(\mathrm{P}_{l}(E)\big)\, =\, \pi_l^{\mathtt{el}} :=\dfrac{(l+1)(l+2)}{2}\quad \dim\big(\mathbf{P}_{l}(E)\big)\, =\,2\,\pi_l^{\mathtt{el}}\quad \dim\big(\mathbb{P}_{l}(E)\big)\, =\,4\,\pi_l^{\mathtt{el}}
\]
and
\[
\dim\big(\mathrm{P}_{l}(e)\big)\, =\, \pi_l^{\mathtt{ed}} :=l+1\quad \dim\big(\mathbf{P}_{l}(e)\big)\, =\,2\,\pi_l^{\mathtt{ed}}\quad \dim\big(\mathbb{P}_{l}(e)\big)\, =\,4\,\pi_l^{\mathtt{ed}}
\]
										
Also, for any  $ l\in N $ we consider the broken space
\begin{equation}\label{def-broken-P-ell-h}
\mathrm{P}_{\ell}(\Omega_h)~:=~\Big\{ v\in L^{2}(\Omega): 
\quad v|_{E}\in\mathrm{P}_{\ell}(E),\quad\forall~ E\in\Omega_h \Big\} \,.
\end{equation} 
										
In addition, a family of meshes $\{\Omega_{h}\}_{h > 0}$  is said to be regular if there exists a non-negative real number $ \rho $ independent of $ h $, such that										
\begin{itemize}[leftmargin=.6in]
\item[$ \mathbf{(A1)} $] 
every polygonal cell $ E\in \Omega_{h} $ is star-shaped with respect to every point of a
disk with radius $ \rho h_E $.
\item[$ \mathbf{(A2)} $] every edge $ e\subset\partial E $ of 
cell $E\in \Omega_h$, satisfies $h_e \ge \, \rho\, h_{E}$.
\end{itemize}
										
The regularity assumptions $ \mathbf{(A1)} $-$ \mathbf{(A2)} $ allow us to use meshes with cells having quite general geometric shapes. 
										
A key ingredient in the VEM construction is represented by the polynomial projections that will play a fundamental
role in the construction of the approximated virtual elements form. 
To this end, let us introduce $ \mathrm{L}^{1} $-projection operator $\Pcal_{\ell}^E: \L^{1}(E)\rightarrow\mathrm{P}_{\ell}(E) $, which is the (unique) solution of the following variational problem, for any function $ v \in \L^{1}(E) $:
\begin{equation}\label{def:l2proj}
\int_{E}\big(\mathcal P_{\ell}^E(v)-v\big)\, q \,=\,0\,,\qquad \forall~ q\in \mathrm{P}_{\ell}(E). 
\end{equation}									
We note that the definition of scaled projector $ \mathrm{P}_{\ell}^E $ can be extended to the vectorial and tensorial versions, and denoted by $\Pcalbf_{\ell}^E : \mathbf{L}^{1}(E)\rightarrow\mathbf{P}_{\ell}(E)$ 
and $\Pcalbb_{\ell}^E : \mathbb{L}^{1}(E)\rightarrow\mathbb{P}_{\ell}({K})$, respectively.
Under the assumption $ \mathbf{(A1)} $-$ \mathbf{(A2)} $, the following estimates can be obtained for the projections $\Pcal_{\ell}^E$, $\Pcalbf_{\ell}^E$, and $\Pcalbb_{\ell}^E$ (cf. \cite[Lemma 3.1]{gs-M3AS-2021}).
\begin{lemma}\label{l1}
Assume that Assumption $ \mathbf{(A1)} $-$ \mathbf{(A2)} $ is satisfied. Then, for all $ v\in \mathrm W^{s,p}(E) $, $ \bet\in \mathbf W^{s,p}(E) $, $ \bta\in \mathbb W^{s,p}(E) $  and $ E\in \Omega_{h} $, there
holds
\begin{equation}\label{eq_Q0}
\begin{array}{rcll}
|v-\Pcal_{\ell}^E(v)|_{m,p;E} &\le& C_{\ell}\,h^{s-m}_K \,|v|_{s,p;E} & \quad \forall\, m,s,p\in N, \, p>1, \, \, m \le s \le \ell+1\,,\\[2ex]
|\bet-\Pcalbf_{\ell}^E(\bet)|_{m,p;E} &\le& C_{\ell}\,h^{s-m}_K \,|\bet|_{s,p;E} & \quad \forall\, m,s,p\in N, \, p>1, \, \, m \le s \le \ell+1\,,  \\[2ex]
|\bta-\Pcalbb_{\ell}^E(\bta)|_{m,p;E} &\le& C_{\ell}\,h^{s-m}_K \,|\bta|_{s,p:E} & \quad \forall\, m,s,p\in N, \, p>1, \, \, m \le s \le \ell+1 \,. 
\end{array}
\end{equation}
\end{lemma}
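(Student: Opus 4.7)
The plan is to combine classical polynomial approximation theory on star-shaped domains (Bramble--Hilbert / Dupont--Scott estimates) with the polynomial-preservation and stability properties of the projectors, exploiting the shape-regularity hypotheses $\mathbf{(A1)}$--$\mathbf{(A2)}$. Since the three operators $\mathcal P_{\ell}^E$, $\boldsymbol{\mathcal P}_{\ell}^E$ and $\Pcalbb_{\ell}^E$ are defined componentwise from the scalar case, it suffices to prove the first inequality and then argue analogously for the vectorial and tensorial versions.

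First I would invoke the classical Dupont--Scott result for star-shaped domains: under $\mathbf{(A1)}$, for every $v\in \mathrm W^{s,p}(E)$ with $0 \le s \le \ell+1$ and $p>1$ there exists a polynomial $q^{\star}\in \mathrm P_{\ell}(E)$ (e.g.\ the averaged Taylor polynomial on the ball of radius $\rho\,h_E$) such that
\begin{equation*}
|v-q^\star|_{m,p;E} \,\le\, C\,h_E^{s-m}\,|v|_{s,p;E}\qquad\forall\,0\le m\le s\le \ell+1\,,
\end{equation*}
with $C$ depending only on $\ell$, $\rho$ and $p$. Next, using the polynomial-preservation property $\mathcal P_{\ell}^E(q^\star)=q^\star$, which follows directly from \eqref{def:l2proj}, I would split
\begin{equation*}
v-\mathcal P_{\ell}^E(v) \,=\, (v-q^\star)\,-\,\mathcal P_{\ell}^E(v-q^\star)\,,
\end{equation*}
and take the $\mathrm W^{m,p}$-seminorm on $E$ to reduce matters to estimating $|\mathcal P_{\ell}^E(v-q^\star)|_{m,p;E}$.

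To control this last term I would establish, on each cell $E\in\Omega_h$, the $\mathrm L^p$-stability of $\mathcal P_{\ell}^E$, namely $\|\mathcal P_{\ell}^E(w)\|_{0,p;E}\le C\,\|w\|_{0,p;E}$ with $C$ independent of $h_E$. This can be obtained by a standard scaling argument: one rescales $E$ to a cell of unit diameter containing a ball of radius $\rho$, where the projector norm depends only on $\ell$ and $\rho$ via a compactness/equivalence-of-norms argument on the finite-dimensional polynomial space. For $m\ge 1$, I would then combine this $\mathrm L^p$-stability with the inverse inequality $|q|_{m,p;E}\le C\,h_E^{-m}\|q\|_{0,p;E}$, valid for $q\in\mathrm P_{\ell}(E)$ under $\mathbf{(A1)}$--$\mathbf{(A2)}$ and obtained by the same scaling principle. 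These two ingredients yield
\begin{equation*}
|\mathcal P_{\ell}^E(v-q^\star)|_{m,p;E} \,\le\, C\,h_E^{-m}\,\|v-q^\star\|_{0,p;E}\,\le\, C\,h_E^{s-m}\,|v|_{s,p;E}\,,
\end{equation*}
and the triangle inequality delivers the claimed bound. Finally, the vectorial and tensorial estimates follow by applying the scalar case componentwise, since $\boldsymbol{\mathcal P}_{\ell}^E$ and $\Pcalbb_{\ell}^E$ act on each component independently and the Sobolev seminorms decompose accordingly.

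The main obstacle is ensuring that all constants in the stability and inverse estimates are uniform with respect to the geometry of $E$, because unlike in standard FEM there is no fixed reference element for polygonal cells. Assumptions $\mathbf{(A1)}$--$\mathbf{(A2)}$ are precisely what guarantee this: star-shapedness with a controlled disk makes the norm-equivalence constants on $\mathrm P_{\ell}(E)$ depend only on $\rho$ and $\ell$, while $\mathbf{(A2)}$ prevents degenerate edges. Once this uniform control is secured, the remainder of the proof is a routine chain of Bramble--Hilbert plus stability plus inverse inequalities.
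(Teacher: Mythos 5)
Your proposal is correct: the chain Bramble--Hilbert (averaged Taylor polynomial on the star-shaped cell) $\rightarrow$ polynomial invariance of $\Pcal_{\ell}^E$ $\rightarrow$ uniform $\mathrm L^p$-stability of the projector $\rightarrow$ inverse inequality on $\mathrm P_{\ell}(E)$, applied componentwise for $\Pcalbf_{\ell}^E$ and $\Pcalbb_{\ell}^E$, is exactly the standard argument for this lemma. The paper itself offers no proof and simply cites \cite[Lemma 3.1]{gs-M3AS-2021}, so your write-up reconstructs the argument behind that citation; the only cosmetic caveat is that the uniformity of the stability and inverse constants over the polygonal family comes from explicit scaling plus the star-shapedness in $\mathbf{(A1)}$ (e.g.\ via scaled monomial bases and Gram-matrix bounds) rather than a genuine compactness argument, which you essentially acknowledge.
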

										
\medskip
										
It is easy to see that these projection operators are continuous.
According to Lemma \ref{l1}, by taking  $m = s$ in \eqref{eq_Q0} we can deduce that there exist constant $M_{\ell}$, depending only on $\ell$ and $C_{\Omega}$, such that for each $E \in \Omega_h$ there holds
\begin{equation}\label{eq:bound.Pi}
\begin{array}{rcll}
|\Pcal_{\ell}^E(v)|_{s,p;E} &\le& M_{\ell}\,|v|_{s,p;E} & \quad \forall\, v\in \mathrm W^{s,p}(E) \,, \\[2ex]
|\Pcalbf_{\ell}^E(\bet)|_{s,p;E} &\le& M_{\ell}\,|\bet|_{s,p;E} & \quad \forall\, \bet\in \mathbf W^{s,p}(E)\,,  \\[2ex]
|\Pcalbb_{\ell}^E(\bta)|_{s,p;E} &\le& M_{\ell}\,|\bta|_{s,p:E} & \quad \forall\, \bta\in \mathbb W^{s,p}(E) \,. 
\end{array}
\end{equation}
										
\medskip
Finally, for any element $E \in \Omega_h$ and functions $ v\in \L^1(\Omega) $, $ \bet\in \mathbf L^1(\Omega) $, $ \bta\in \mathbb L^{1}(\Omega) $, the global projection operators $ \Pcal^h_\ell $, $ \Pcalbf^h_\ell $ and $ \Pcalbb^h_\ell $ 
are defined by
\begin{equation*}
\begin{array}{c}
\disp
\Pcal^h_\ell(v)|_E \,=\, \Pcal^E_\ell(v|_K)  \quad
\Pcalbf^h_\ell(\bet)|_E \,=\, \Pcalbf^E_\ell(\bet|_E) \quad \hbox{and}\quad \Pcalbb^h_\ell(\bta)|_E \,=\, \Pcalbb^E_\ell(\bta|_E)\,.
\end{array}
\end{equation*}
										
\subsection{Discrete spaces}\label{section32}
Here we discuss appropriate choices for both virtual and non-virtual approximation spaces for pairs $\big(\mathbb{X},
\mathbf{X}\big)$ and $\big(\mathbf{Y},\mathrm{Y}\big)$, respectively. We set notations
\[
\operatorname{rot}(\bta)\,:=\,\partial_x \eta_{2}-\partial_y\eta_1 \qan\operatorname{\mathbf{rot}}(\bta)~:=~(\partial_x\tau_{12}-\partial_y\tau_{11}, \partial_x\tau_{22}-\partial_y\tau_{21})^{\mathtt{t}}\,,
\]
for all sufficiently smooth vector $\boldsymbol{\eta}$ and tensor $ \bta$,
and begin by defining the virtual element spaces including vectorial and tensorial functions for approximation of the pseudostress and pseudoheat ﬁelds, for each $E \in \Omega_h$ and $ r\geq 0 $ (see e.g., \cite{cg-IMANUM-2017}):  
\begin{equation}\label{eq:4a}
\begin{array}{ll}
\mathbf{X}_r^{E}\,:=\,\Big\{ \bet\in \mathbf{H}(\div_{6/5};E)\cap \mathbf{H}(\operatorname{rot};E):
\quad  
&\div(\bet)\in\mathrm{P}_r(E)\,,\\[1ex] &\operatorname{rot}(\bet)\in\mathrm{P}_{r-1}(E)\qan\\[1ex]
&(\boldsymbol{\eta}\cdot\mathbf{n}_{e}^{E})|_e \in\mathrm{P}_r(e)\,\,\,\forall\, e\subset \partial E\Big\}\,,
\end{array}
\end{equation}
and
\begin{equation}\label{eq:4}
\begin{array}{ll}
\mathbb{X}_r^{E}\,:=\,\Big\{ \bta\in \mathbb{H}(\bdiv_{6/5};E)\cap 
\mathbb{H}(\operatorname{\mathbf{rot}};E):
\quad
&\bdiv(\bta)\in\mathbf{P}_r(E)\,,\\[1ex] &\operatorname{\mathbf{rot}}(\bta)\in\mathbf{P}_{r-1}(E)\qan\\[1ex]
&(\bta\mathbf{n}_{e}^{E})|_e \in\mathbf{P}_r(e)\,\,\,\forall\, e\subset \partial E
\Big\}\,.
\end{array}
\end{equation} 
												
In addition, according to \cite{cg-IMANUM-2017}, it is easy to see that the dimension of spaces	$ \mathbf{X}_r^{E} $ and $ \mathbb{X}_r^{E} $ is equal to
\[
\pi_r^{\mathtt{ed}}+(\pi_r^{\mathtt{el}}-1)+\big((r-1)(r+2)+2\big)/2 \qan
2\pi_r^{\mathtt{ed}}+2(\pi_r^{\mathtt{el}}-1)+\big((r-1)(r+2)+2\big)\,,
\]
respectively. This leads to introducing the following degrees of freedom; obviously, these degrees of freedom (DoF) are unisolvent for 
$ \mathbf{X}_r^{E} $ and $ \mathbb{X}_r^{E} $ (see \cite{cg-IMANUM-2017,cgs-SIAM.NA-2018,gs-M3AS-2021}):
\begin{itemize}
\item[•]  the edge moments	
\begin{equation}\label{DOF-m-E.1}
\begin{array}{l}
\widetilde{\mathbf{D1}}(\boldsymbol{\eta})\,:=\, \disp \int_{e}(\boldsymbol{\eta}\cdot\mathbf{n}_e^E)\, q 
\qquad\forall\, q\in \mathrm{P}_{r}(e) \qan\\[2ex]
\mathbf{D1}(\bta)\,:=\, \disp \int_{e}\bta\mathbf{n}_e^E \cdot \mathbf{q} 
\qquad \forall\, \mathbf{q}\in \mathbf{P}_{r}(e)\,,
\end{array}
\end{equation}
\item[•]  the element moments of the gradient
\begin{equation}\label{DOF-m-E.2}
\begin{array}{l}
\widetilde{\mathbf{D2}}(\bet) \,:=\,\disp \int_{ E}\bet\cdot\nabla q 
\qquad \forall\, q\in \mathrm{P}_{r}(E)\backslash\{1\}\qan\\[2ex]
\mathbf{D2}(\bta):= \disp \int_{ E}\bta:\boldsymbol{\nabla} \mathbf{q} 
\qquad \forall\, \mathbf{q}\in \mathbf{P}_{r}(E)\backslash\{(1,0)^{\mathtt{t}},(0,1)^{\mathtt{t}}\}\,,
\end{array}
\end{equation}
\item[•]  the element moments
\begin{equation}\label{DOF-m-E.3}
\begin{array}{l}												\widetilde{\mathbf{D3}}(\boldsymbol{\eta})\,:=\, \disp\int_{E}\boldsymbol{\eta}\cdot \mathbf q 
\qquad\forall\, \mathbf q\in {\mathbf P}_{r}^{\bot}(E)\qan\\[2ex]
\mathbf{D3}(\bta)\,:=\, \disp \int_{E}\bta: \bxi
\qquad \forall\, \bxi \in {\mathbb P}_{r}^{\bot}(E)\,,
\end{array}
\end{equation}
\end{itemize}
												
where ${\mathbf P}_{r}^{\bot}(E)$ and ${\mathbb P}_{r}^{\bot}(E)$ are subspaces of
${\mathbf P}_{r}(E)$ and ${\mathbb P}_{r}(E)$, respectively, such that satisfy
\[
{\mathbf P}_{r}(E) = \nabla \mathrm P_{r+1}(E) \oplus {\mathbf P}_{r}^{\bot}(E) \qan
{\mathbb P}_{r}(E) = \nabla \mathbf P_{r+1}(E) \oplus {\mathbb P}_{r}^{\bot}(E)\,.
\]
Thanks to these DoFs, it is possible to compute the following divergence and projection operators:
	\[
\div : \mathbf{X}_r^{E}\rightarrow \mathrm{P}_r(E)\,,\quad 
\bdiv: \mathbb{X}_r^{E}\rightarrow \mathbf{P}_r(E)\,,
\]
and
\[
\Pcalbf^E_r: \mathbf{X}_r^{E}\rightarrow \mathbf{P}_r(E)\,,\quad
\Pcalbb^E_r: \mathbb{X}_r^{E}\rightarrow \mathbb{P}_r(E)\,.
\]
The global virtual element subspaces of $\mathbf X$, $\mathbb X$ are obtained by gluing such local spaces $ \mathbf X_r^{E} $ and $ \mathbb X_r^{E} $, respectively, that is,
\begin{subequations}
\begin{align}
\mathbf X_{h}\,:=\, \Big\{ \bet\in \mathbf X: \quad \bet|_E \,\in\, \mathbf X_r^{E} \quad\forall\, E \in \Omega_h \Big\}\,, \label{def-mathbf-X-h}\\[2ex]
\mathbb X_{h} \,:=\, \Big\{ \bta\in \mathbb X: \quad \bta|_E \,\in\, \mathbb X_r^{E} \quad\forall\, E \in \Omega_h \Big\}\,.\label{def-mathbb-X-h}
\end{align}
\end{subequations}
On the other hand, for approximating the tempreture, velocity and vorticity unknowns we simply consider the piecewise polynomial spaces of degree $ r $ (cf. \eqref{def-broken-P-ell-h}), that is
\begin{equation}\label{eq:5a}
\begin{array}{c}
\mathrm{Y}_{h} \,=\, \mathrm P_r(\Omega_h)\,,\quad	\mathbf{Y}_{h}\,=\, \mathbf P_r(\Omega_h)\,,\qan
\mathbb{Z}_{h}\,:=\,\mathbb P_r(\Omega_h)  \cap\mathbb{L}_{\mathtt{skew}}^{2}(\Omega)\,.
\end{array}
\end{equation}
												
We remark from the foregoing definitions that there hold (cf. \cite{cgs-MMMAC-2017})
\begin{equation}\label{eq:propA}
\div(\mathbf{X}_{h}) \,\subseteq\, \mathrm{Y}_{h} \qan
\bdiv(\mathbb{X}_{h}) \,\subseteq\, \mathbf{Y}_{h}\,.
\end{equation}
												
\subsection{Discrete virtual forms and the discrete problem}\label{sub.sec.4.3} 
In order to solve problem \eqref{eq:prob1}-\eqref{eq:prob5a} via the virtual element method, we need to define a suitable set of discrete forms for the problem at hand. Such forms are constructed element-by-element and depend only on the local degrees of freedom $ \mathbf{D1} $-$ \mathbf{D3} $ and $ \widetilde{\mathbf{D1}} $-$ \widetilde{\mathbf{D3}} $, also via the projection operators $ \Pcalbb_{r} $ and $ \Pcalbf_{r} $.
In this regard, we first notice from the definitions of the discrete spaces (cf. \eqref{def-mathbf-X-h} - \eqref{def-mathbb-X-h}, \eqref{eq:5a}), for each  $ (\bet_h , \psi_h)\in \mathbf{X}_h\times\mathrm{Y}_h $, the quantity $\mathscr{B}^{T}(\bet_h,\psi_h)$
is fully computable as such, without further modifications. The same is valid for the linear functionals $\mathcal{F}_{\varphi_h}^F\in (\mathbb{X}_h \times\mathcal{Z}_h)^{'}$
and $\mathscr{F}^{T}\in \mathbf{X}_h^{'}$.
Whereas, given $\bz_h \in \mathbf Y_{h}$ and $ \varphi_h\in \mathrm{Y}_h $,  for the arbitrary vectorial and tensorial functions in $ \mathbf{X}_h $ and $ \mathbb{X}_h $ the bilinear forms 
\[
\mathscr{A}_{\varphi_h}^{S}(\cdot,\cdot)\,,\quad \mathscr{B}^S(\cdot,\cdot)\,, \quad \mathscr{O}_{\varphi_h}^S(\bz_h;\cdot,\cdot)\qan 
\mathscr{A}^{T}_{\varphi_h}(\cdot,\cdot)\,,\quad \mathscr{O}^{T}_{\varphi_h}(\bz_h;\cdot,\cdot)\,,
\]
 are not computable, respectively since the discrete functions are not known in closed form.
In order to overcome this difficulty, and thinking first of $\mathscr{A}_{\phi_h}^{S}$ and $\mathscr{A}^{T}_{\phi_h}$, we now resort to the set of degrees of freedom $\big\{\mathscr{M}_j^{S,E}\big\}_{j=1}^{n^E_r}$ and $\big\{\mathscr{M}^{T,E}_j\big\}_{j=1}^{\widetilde n^E_r}$,
and for each $ \phi_h\in \mathrm{Y}_h $ let $\mathscr{S}_{\phi_h}^{S;E} : \mathbb X_r^{E} \times \mathbb X_r^{E} \rightarrow \R$ and $\mathscr{S}_{\phi_h}^{T,E} : \mathbf X_r^{E} \times \mathbf X_r^{E} \rightarrow \R$
be the bilinear forms defined by
\begin{equation*}\label{s-K-widetilde-s-E}
\begin{array}{c}
	\disp
	\mathscr{S}_{\phi_h}^{S,E}\left(\bze_h,\bta_h\right)\,:=\,\dfrac{1}{\big|\mu(\mathcal P_{0}^E(\phi_h))\big|}\, \sum_{k=1}^{n^E_r} 
	\mathscr{M}^{S,E}_k(\bze_h) \, \mathscr{M}^{S,E}_k(\bta_h) \qquad\forall\, \bze_h, \, \bta_h \in \mathbb X_r^{E}\,, \\[2.5ex]
\disp
\mathscr{S}_{\phi_h}^{T,E}\left(\bxi_h,\bet_h\right)\,:=\, \dfrac{1}{\big|\kappa(\mathcal P_{0}^E(\phi_h))\big|}\,\sum_{k=1}^{\widetilde n^E_r} 
\mathscr{M}^{T,E}_k(\bxi_h) \, \mathscr{M}^{T,E}_k(\bet_h) \qquad\forall\, \bxi_h, \, \bet_h \in \mathbf X_r^{E} \,.
\end{array}
\end{equation*}

Then, recalling the continuous global forms $ \mathscr{A}_{\phi_h}^{S} $, $\mathscr{B}^S$, $\mathscr{O}_{\phi_h}^S$, $ \mathscr{A}^{T}_{\phi_h} $ and $\mathscr{O}^{T}_{\phi_h}$ (cf. \eqref{def-c-tilde-c}), and letting $ \mathscr{A}_{\phi_h}^{S,E} $, $\mathscr{B}^{S,E}$, $\mathscr{O}_{\phi_h}^{S,E}$, $ \mathscr{A}^{T,E}_{\phi_h} $ and $\mathscr{O}^{T,E}_{\phi_h}$ be the associated local versions, respectively for each $E \in \Omega_h$, we define their computable versions as
\begin{subequations}													\begin{align}
\mathscr{A}_{h,\phi_h}^{S,E}(\bze_h,\bta_h) &\,:=\, \mathscr{A}_{\phi_h}^{S,E}\left(\Pcalbb_{r}^{E}(\bze_h),\Pcalbb_{r}^{E}(\bta_h)\right)
\,+\,  \mathscr{S}_{\phi_h}^{S,E}\left(\bze_h - \Pcalbb_{r}^{E}(\bze_h),\bta_h - \Pcalbb_{r}^{E}(\bta_h)\right)\,,\label{eq:defAF.h}\\[2ex]													\mathscr{A}^{T,E}_{h,\phi_h}(\bxi_h,\bet_h) &\,:=\, \mathscr{A}^{T,E}_{\phi_h}\left(\Pcalbf_{r}^{E}(\bxi_h),\Pcalbf_{r}^{E}(\bet_h)\right)
\,+\, \mathscr{S}_{\phi_h}^{T,E}\left(\bxi_h - \Pcalbf_{r}^{E}(\bxi_h),\bet_h - \Pcalbf_{r}^{E}(\bet_h)\right)\,,\label{eq:defAT.h}\\[2ex]
\mathscr{O}^{S,E}_{h,\phi_h}(\bz_h;\bw_h,\bta_h)&\,:=\, \mathscr{O}_{\phi_h}^{S,E}(\bz_h;\bw_h,\Pcalbb_{r}^{E}(\bta_h))\,,\label{eq:defO.h}\\[2ex]
\mathscr{O}^{T,E}_{h,\phi_h}(\bz_h;\omega_h,\bet_h)&\,:=\, \mathscr{O}^{T,E}_{\phi_h}(\bz_h;\omega_h,\Pcalbf_{r}^{E}(\bet_h))\,,\label{eq:defOT.h}\\[2ex]
\mathscr{B}_{h}^{S,E}(\bta_h ,\vec{\bv}_h )&\,:=\, \disp  \int_{\Omega}\bv_h\cdot\bdiv(\bta_h)+\int_{\Omega}\bom_h:\Pcalbb_{r}^{E}(\bta_h)\,,\label{eq:def.bh}		
\end{align}
\end{subequations}
												
for each $ \bze_h,\bta_h \in \mathbb{X}_r^{E}$ and $ \bxi_h,\bet_h \in \mathbf{X}_r^{E}$.
In the usual way,  for each $(\bz_h,\phi_h)\in \mathbf{Y}_h \times\mathrm{Y}_h$ the global bilinear forms 
 are given by
\begin{subequations}
\begin{align}														\mathscr{A}_{h,\phi_h}^{S}(\cdot,\cdot): \mathbb{X}_h\times\mathbb{X}_h\rightarrow\mathrm{R}& 
	\qquad
	\mathscr{A}_{h,\phi_h}^{S}(\bze_h,\bta_h) \,:=\, \sum_{E\in\Omega_h} \mathscr{A}_{h,\phi_h}^{S,E}( \bze_h,\bta_h)			\,,\label{def-a-h-virtual}\\[1ex]							\mathscr{A}^{T}_{h,\phi_h}(\cdot,\cdot): \mathbf{X}_h\times\mathbf{X}_h\rightarrow\mathrm{R}&\qquad
	\mathscr{A}^{T}_{h,\phi_h}(\bxi_h,\bet_h) \,:=\, \sum_{E\in\Omega_h} \mathscr{A}^{T,E}_{h,\phi_h}( \bxi_h,\bet_h)	\,,\label{def-widetilde-a-h-virtual}\\[1ex]
\mathscr{O}^{S}_{h,\phi_h}(\bz_h;\cdot,\cdot):\mathbf{Y}_{h}\times\mathbb{X}_h\rightarrow\mathrm{R}&\qquad\mathscr{O}^{S}_{h,\phi_h}(\bz_h ;\bw_h,\bta_h) \,:=\, \sum_{E\in\Omega_h}\mathscr{O}^{S,E}_{h,\phi_h}(\bz_h ;\bw_h,\bta_h) \,,\label{def-o-h-virtual}\\[1ex]
\mathscr{O}^{T}_{h,\phi_h}(\bz_h;\cdot,\cdot):\mathrm{Y}_{h}\times\mathbf{X}_h\rightarrow\mathrm{R}&\qquad\mathscr{O}^{T}_{h,\phi_h}(\bz_h ;\omega_h,\bet_h) \,:=\, \sum_{E\in\Omega_h}\mathscr{O}^{T,E}_{h,\phi_h}(\bz_h ;\omega_h,\bet_h) \,,\qan\label{def-ot-h-virtual}\\[1ex]
\mathscr{B}_{h}^{S}(\cdot,\cdot):\mathbb{X}_h \times\mathbf{Y}_h \rightarrow\mathrm{R}&\qquad\mathscr{B}_{h}^{S}(\bta_h ,\vec{\bv}_h )\,:=\, \sum_{E\in\Omega_h}\mathscr{B}_{h}^{S,E}(\bta_h ,\vec{\bv}_h )\,.
\end{align}
\end{subequations}

\medskip
According to the above definitions, our virtual element scheme for \eqref{eq:v1a-eq:v4-a}-\eqref{eq:v1a-eq:v4-d}  reads as follows: 
find $ (\bsi_h, \vec{\bu}_h)\in\mathbb{X}_h\times\mathcal{Z}_h $ and $ (\brho_h, \varphi_h)\in \mathbf{X}_h\times \mathrm{Y}_h$, such that 
\begin{subequations}
\begin{align}
\mathscr{A}_{h,\varphi_h}^{S}(\bsi_h, \bta_h)+\mathscr{O}^{S}_{h,\varphi}(\bu_h;\bu_h,\bta_h)+\mathscr{B}_{h}^{S}(\bta_h ,\vec{\bu}_h)&\,=\,0  
\qquad \forall\,\bta_h\in \mathbb{X}_h \,,\label{eq:dis.ful.v1a-eq:v4-a}\\[1ex]
\mathscr{B}_{h}^{S}(\bsi_h ,\vec{\bv}_h )&\,=\, -\disp\int_{\Omega}\varphi_h\,\bg\cdot\bv_h
 \quad \forall\, \vec{v}_h\in \mathcal{Z}_h \,,\label{eq:dis.ful.v1a-eq:v4-b}\\[1ex]
\mathscr{A}^{T}_{h,\varphi_h}(\brho_h,\bet_h)+\mathscr{O}^{T}_{h,\varphi_h}(\bu_h;\varphi_h,\bet_h)+\mathscr{B}^{T}(\bet_h ,\varphi_h )&\,=\, \mathscr{F}^{T}(\bet_h)  \qquad \forall\, \bet_h\in\mathbf{X}_h \,,\label{eq:dis.ful.v1a-eq:v4-c}\\[1ex]
\mathscr{B}^{T}(\brho_h ,\psi_h )&\,=\, 0  \qquad\forall\, \psi_h\in \mathrm{Y}_h \,,\label{eq:dis.ful.v1a-eq:v4-d}
\end{align}
\end{subequations}

In what follows, we adopt the discrete version of the strategy employed in Section \ref{sec.3} to analyze the solvability of \eqref{eq:dis.ful.v1a-eq:v4-a}-\eqref{eq:dis.ful.v1a-eq:v4-d}. We now let $ \mathscr{L}^S_\mathtt{d}:  \mathbf{Y}_h \times\mathrm{Y}_h\rightarrow \mathbb{X}_h \times \mathcal{Z}_h$ be the operator given by
\begin{equation}\label{eq:dis.opr1}
\mathscr{L}^S_{\mathtt{d}}(\bz_h ,\phi_h)\, =\,(\bsi_{h,\star}\vec{\bu}_{h,\star})\, \qquad\forall\, (\bz_h ,\phi_h)\in \mathbf{Y}_h \times\mathrm{Y}_h\,,
\end{equation}
where $ (\bsi_{h,\star}\vec{\bu}_{h,\star})\in \mathbb{X}_h \times \mathcal{Z}_h $ is the unique solution of \eqref{eq:dis.ful.v1a-eq:v4-a}-\eqref{eq:dis.ful.v1a-eq:v4-b} with $ \mathscr{A}_{h,\phi_h}^{S} $ and $\mathscr{O}^{S}_{h,\phi_h}(\bz_h;\cdot,\cdot)$ instead of $ \mathscr{A}_{h,\varphi_h}^{S} $ and $\mathscr{O}^{S}_{h,\varphi_h}(\bu_h;\cdot,\cdot)$, respectively, that is,
\begin{equation}\label{eq:dis.sub_1}
\begin{array}{rcll}
\mathscr{A}_{h,\phi_h}^{S}(\bsi_{h,\star}, \bta_h)+\mathscr{O}^{S}_{h,\phi}(\bz_h;\bu_{h,\star},\bta_h)+\mathscr{B}_{h}^{S}(\bta_h ,\vec{\bu}_{h,\star})&=&0 & 
\quad \forall\,\bta_h\in \mathbb{X}_h \,,\\[1.5ex]
\mathscr{B}_{h}^{S}(\bsi_{h,\star},\vec{\bv}_h )&=& -\disp\int_{\Omega}\phi_h\,\bg\cdot\bv_h
& \quad \forall\, \vec{\bv}_h\in \mathcal{Z}_h \,.
\end{array}	
\end{equation}
												
or, equivalently, 
\begin{equation}\label{eq:dis.sub_1a}
\mathcal{A}_{h,\phi_h}^{S} \big((\bsi_{h,\star}\,,\vec{\bu}_{h,\star}), (\bta_h,\, \vec{\bv}_h)\big)+\mathscr{O}^{S}_{h,\phi_h}(\bz_h;\bu_{h,\star},\bta_h)\,=\, \mathcal{F}_{\phi_h}^{S}(\bta_h,\,\vec{\bv}_h)\quad \forall\,(\bta_h,\, \vec{\bv}_h)\in \mathbb{X}_h\times\mathcal{Z}_h\,,	
\end{equation}
where $ \mathcal{A}_{h,\phi_h}^{S} $ is the discrete version of $ \mathcal{A}_{\phi_h}^S $ (cf. \eqref{def.A}), that is, 
\begin{equation}\label{def.dis.A}
\mathcal{A}_{h,\phi_h}^{S}\big((\bze_h,\vec{\bw}_h), (\bta_h,\vec{\bv}_h)\big)\,:=\, \mathscr{A}_{h,\phi}^S(\bze_h,\bta_h)+\mathscr{B}_{h}^{S} (\bta_h ,\vec{\bw}_h )+\mathscr{B}_{h}^{S}(\bze_h ,\vec{\bv}_h )\,,
\end{equation}

In turn, we let $ \mathscr{L}^T_{\mathtt{d}}: \mathbf{Y}_h\times\mathrm{Y}_h\rightarrow \mathbf{X}_h\times\mathrm{Y}_h $ be the operator defined by
\begin{equation}\label{eq:dis.opr2}
\mathscr{L}^T_{\mathtt{d}}(\bz_h, \phi_h)\, =\, 
(\brho_{h,\star},\varphi_{h,\star}) \qquad\forall\, (\bz_h, \phi_h)\in \mathbf{Y}_h\times\mathrm{Y}_h \,,
\end{equation}
												
where $ (\brho_{h,\star},\varphi_{h,\star})\in \mathbf{X}_h\times\mathrm{Y}_h $ is the unique solution of \eqref{eq:dis.ful.v1a-eq:v4-c}-\eqref{eq:dis.ful.v1a-eq:v4-d} with $ \mathscr{A}^{T}_{h,\phi_h} $ and $\mathscr{O}^{T}_{h,\phi_h}(\bz_h;\cdot,\cdot)$ instead of $\mathscr{A}^{T}_{h,\varphi_h} $ and $\mathscr{O}^{T}_{h,\varphi_h}(\bu_h;\cdot,\cdot)$, respectively, that is
\begin{equation}\label{eq:dis.sub_2}
\begin{array}{rcll}
\mathscr{A}^{T}_{h,\phi_h}(\brho_{h,\star},\bet_h)+\mathscr{O}^{T}_{h,\phi_h}(\bz_{h};\varphi_{h,\star},\bet_h)+\mathscr{B}^{T}(\bet_h ,\varphi_{h,\star})&=& \mathscr{F}^{T}(\bet_h) & \quad \forall\, \bet_h\in\mathbf{X}_h \,,\\[1.5ex]
\mathscr{B}^{T}(\brho_{h,\star},\psi_h )&=& 0 & \quad\forall\, \psi_h\in \mathrm{Y}_h \,,														\end{array}
\end{equation}
or equivalently, 								
\begin{equation}\label{eq:dis.sub_2a}
	\mathcal{A}_{h,\phi_h}^T \big((\brho_{h,\star}\,,\varphi_{h,\star}), (\bet_h,\, \psi_h)\big)+\mathscr{O}^{T}_{h,\phi_h}(\bz_h;\varphi_{h,\star},\bet_h)\,=\, \mathscr{F}^{T}(\bet_h)\quad \forall\, (\bet_h,\psi_h)\in \mathbf{X}_h\times\mathrm{Y}_h \,,
\end{equation}
where $ \mathcal{A}_{h,\phi_h}^{T} $ is the discrete version of $ \mathcal{A}_{\phi_h}^T $ (cf. \eqref{def.At}), that is,
\begin{equation*}
\mathcal{A}_{h,\phi_h}^{T}\big((\bxi_h,\omega_h), (\bet_h,\psi_h)\big)\,:=\, \mathscr{A}^{T}_{h,\phi_h}(\bxi_h,\bet_h)+\mathscr{B}^{T} (\bet_h ,\omega_h)+\mathscr{B}^{T}(\bxi_h ,\psi_h )\,.
\end{equation*}
												
Then, similarly as in the continuous case, 
defining the operator $ \mathscr{L}_{\mathtt{d}} :\mathbf{Y}_h \times\mathrm{Y}_h\rightarrow \mathbf{Y}_h \times\mathrm{Y}_h$ as
\begin{equation}\label{eq:dis.def.opr.E}
\mathscr{L}_{\mathtt{d}}\big(\bz_h,\phi_h\big)\,:=\, \big(\mathscr{L}_{2,\mathtt{d}}^S(\bz_h,\mathscr{L}_{2,\mathtt{d}}^T(\bz_h,\phi_h)),\, \mathscr{L}_{2,\mathtt{d}}^T(\bz_h,\phi_h)\big)\quad\forall\,(\bz_h,\phi_h)\in \mathbf{Y}_h\times \mathrm{Y}_h\,,
\end{equation}
we find out that solving \eqref{eq:dis.ful.v1a-eq:v4-a}-\eqref{eq:dis.ful.v1a-eq:v4-d} is equivalent to finding a fixed
point of $ 	\mathscr{L}_{\mathtt{d}} $, that is $ (\bu_{h},\varphi_h)\in \mathbf{Y}_h \times\mathrm{Y}_h$ such that
\begin{equation}\label{eq:dis.fixp}
\mathscr{L}_{\mathtt{d}}(\bu_h, \varphi_h )\, =\,(\bu_h, \varphi_h ) \,.
\end{equation}
																		
\section{Solvability Analysis: discrete scheme}\label{sec.5}
In this section we proceed analogously to Section \ref{sec.3} and establish the well-posedness of discrete scheme  \eqref{eq:dis.ful.v1a-eq:v4-a}-\eqref{eq:dis.ful.v1a-eq:v4-d} by means of the solvability study of the equivalent fixed point equation \eqref{eq:dis.fixp}.
In this
regard, we will require
 the stability properties of the discrete forms $\mathscr{A}_{h,\phi_h}^{S}$, $\mathscr{O}_{h,\phi_h}^S(\bz_h;\cdot,\cdot)$, $\mathscr{A}^{T}_{h,\phi_h}$, and $\mathscr{O}^{T}_{h,\phi_h}(\bz_h;\cdot,\cdot)$, for each $ \phi_h\in \mathrm{Y}_h $ and $ \bz_h\in\mathbf{Y}_h $. 

\subsection{Stability properties}\label{sebsec.5.1}

We first recall from a slight
modification of \cite[eqs. (3.36) and (6.2)]{bbmr-M2AN-2016} and \cite[eq. (5.8)]{bfm-M2AN-2014} 
(see also \cite[Lemma 4.5]{cg-IMANUM-2017} and \cite[Lemma 4.1]{gms-M3AS-2018}) that under mesh Assumption $ \mathbf{(A1)} $ and \eqref{eq:bound.m.k}, there exist
positive constants $c_\star$, $c^\star$, $\widetilde c_\star$, and $\widetilde c^\star$, depending only on $C_{E}$,
such that for each $ \bw_h\in \mathbf{Y}_h $, $ \omega_h\in \mathrm{Y}_h $ and $E \in \Omega_h$, there hold
\begin{equation}\label{eq28a-eq28}
	\begin{array}{c}
		\disp
		c_\star\,\mathscr{A}_{\phi_h}^{S,E}(\bta_h,\bta_h) \,\le\,\mathscr{S}_{\phi_h}^{S,E}(\bta_h,\bta_h) \,\le\,c^\star\,\mathscr{A}_{\phi_h}^{S,E}(\bta_h,\bta_h) \qquad\forall\, \bta_h\in \mathbb X_r^{E}\cap\mathrm{Ker}(\Pcalbb_{r}^{E})\,, \\[2ex]
		\disp
		\widetilde c_\star\,\mathscr{A}^{T,E}_{\phi_h}(\bet_h,\bet_h) \,\le\,\mathscr{S}_{\phi_h}^{T,E}(\bet_h,\bet_h) \,\le\,\widetilde c^\star\,\mathscr{A}^{T,E}_{\phi_h}(\bet_h,\bet_h) \qquad\forall\, \bet_h\in \mathbf X_r^{E}\cap\mathrm{Ker}(\Pcalbf_{r}^{E})\,.
	\end{array}
\end{equation}

Then, given $ \phi_h\in \mathrm{Y}_h $, the boundedness properties of $\mathscr{A}_{h,\phi_h}^{S}$ (cf. \eqref{def-a-h-virtual}) and $\mathscr{A}^{T}_{h,\phi_h}$ (cf. \eqref{def-widetilde-a-h-virtual})
are established as follows. 
\begin{lemma}\label{l_cA-boundedness}
	Given $ \phi_h\in \mathrm{Y}_h $, 
	there exist positive constants, denoted $\mathcal{C}_{\mathscr{A}^{S}}$ and $\mathcal{C}_{\mathscr{A}^{T}}$, independent of $h$, such that
	\begin{subequations}
		\begin{align}
			\big|\mathscr{A}_{h,\phi_h}^{S}(\bze_h,\bta_h)\big| \,\le\, \mathcal{C}_{\mathscr{A}^{S}} \, 
			\Vert \bze_h\Vert_{\bdiv_{6/5},\Omega} \, \Vert \bta_h\Vert_{\bdiv_{6/5},\Omega}
			\qquad \forall\, \bze_h, \,\bta_h\in\mathbb{X}_{h}\,,\label{eq:bounda}\\[2ex]
			\big| \mathscr{A}^{T}_{h,\phi_h}(\bxi_h,\bet_h)\big| \,\le\, \mathcal{C}_{\mathscr{A}^{T}} \, 
			\Vert \bxi_h\Vert_{\div_{6/5},\Omega} \, \Vert \bet_h\Vert_{\div_{6/5},\Omega}
			\qquad \forall\, \bxi_h, \, \bet_h\in\mathbf{X}_{h}\,.\label{eq:boundb}
		\end{align}
	\end{subequations}
\end{lemma}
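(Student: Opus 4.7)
The plan is to establish both bounds locally on each element $E\in\Omega_h$ and then sum over the mesh, which is standard for VEM boundedness estimates of this type. I will carry out the argument only for \eqref{eq:bounda}; the proof of \eqref{eq:boundb} is entirely analogous, with $\Pcalbb_r^E$, $\mathscr{A}^{S,E}_{\phi_h}$, $\mathscr{S}^{S,E}_{\phi_h}$, $c^\star$ replaced by $\Pcalbf_r^E$, $\mathscr{A}^{T,E}_{\phi_h}$, $\mathscr{S}^{T,E}_{\phi_h}$, $\widetilde c^{\,\star}$, and with $\mathbb{X}_h$ replaced by $\mathbf{X}_h$.

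Fix $E\in\Omega_h$ and $\bze_h,\bta_h\in\mathbb{X}_r^E$. From the definition \eqref{eq:defAF.h}, a triangle inequality splits $\mathscr{A}_{h,\phi_h}^{S,E}(\bze_h,\bta_h)$ into the consistency term $\mathscr{A}_{\phi_h}^{S,E}(\Pcalbb_r^E\bze_h,\Pcalbb_r^E\bta_h)$ and the stabilization term $\mathscr{S}_{\phi_h}^{S,E}(\bze_h-\Pcalbb_r^E\bze_h,\bta_h-\Pcalbb_r^E\bta_h)$. For the consistency term, I combine the local version of the boundedness estimate \eqref{eq:bound} (with constant $1/\mu_1$) with the uniform continuity of $\Pcalbb_r^E$ as a projector on $\mathbb{L}^2(E)$ (cf. \eqref{eq:bound.Pi}), obtaining
\[
\bigl|\mathscr{A}_{\phi_h}^{S,E}\bigl(\Pcalbb_r^E\bze_h,\Pcalbb_r^E\bta_h\bigr)\bigr|\,\le\, \frac{M_0^2}{\mu_1}\,\|\bze_h\|_{0,E}\,\|\bta_h\|_{0,E}.
\]

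For the stabilization term, I first use the Cauchy--Schwarz inequality with respect to the symmetric positive semidefinite form $\mathscr{S}_{\phi_h}^{S,E}$ to reduce matters to the diagonal quantities $\mathscr{S}_{\phi_h}^{S,E}(\bze_h-\Pcalbb_r^E\bze_h,\bze_h-\Pcalbb_r^E\bze_h)$ and the analogous one in $\bta_h$. Since $\Pcalbb_r^E$ is a projector, $\bze_h-\Pcalbb_r^E\bze_h\in \mathrm{Ker}(\Pcalbb_r^E)$, so the spectral equivalence \eqref{eq28a-eq28} applies and gives
\[
\mathscr{S}_{\phi_h}^{S,E}\bigl(\bze_h-\Pcalbb_r^E\bze_h,\bze_h-\Pcalbb_r^E\bze_h\bigr) \,\le\, c^\star\,\mathscr{A}_{\phi_h}^{S,E}\bigl(\bze_h-\Pcalbb_r^E\bze_h,\bze_h-\Pcalbb_r^E\bze_h\bigr).
\]
Applying once more the continuity of $\mathscr{A}_{\phi_h}^{S,E}$ and the uniform $\mathbb{L}^2(E)$-stability of $I-\Pcalbb_r^E$ (a consequence of \eqref{eq:bound.Pi}) then yields a bound of the form $C\,\|\bze_h\|_{0,E}^2/\mu_1$, and similarly for $\bta_h$. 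Combining the two diagonal bounds through Cauchy--Schwarz provides the desired off-diagonal estimate for the stabilization contribution.

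Summing the resulting local bounds over $E\in\Omega_h$ and applying the discrete Cauchy--Schwarz inequality to the sum then produces
\[
\bigl|\mathscr{A}_{h,\phi_h}^{S}(\bze_h,\bta_h)\bigr|\,\le\,\mathcal{C}_{\mathscr{A}^{S}}\,\|\bze_h\|_{0,\Omega}\,\|\bta_h\|_{0,\Omega}\,\le\,\mathcal{C}_{\mathscr{A}^{S}}\,\|\bze_h\|_{\bdiv_{6/5},\Omega}\,\|\bta_h\|_{\bdiv_{6/5},\Omega},
\]
with $\mathcal{C}_{\mathscr{A}^{S}}$ depending only on $\mu_1$, $M_0$, and $c^\star$; in particular $\mathcal{C}_{\mathscr{A}^{S}}$ is independent of $h$ thanks to the mesh regularity $\mathbf{(A1)}$--$\mathbf{(A2)}$, which guarantees uniform-in-$h$ control of $M_0$ and $c^\star$. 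The step that requires the most care is ensuring that the projector-based manipulations in the stabilization term are justified under only the $\mathbb{L}^2$-regularity available for elements of $\mathbb{X}_r^E$; this is exactly what the kernel-based spectral equivalence \eqref{eq28a-eq28} is designed to provide, so no further regularity is needed.
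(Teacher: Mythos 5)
Your proof is correct and follows the standard consistency-plus-stabilization argument (local splitting via \eqref{eq:defAF.h}, $\mathbb{L}^2$-stability of the projectors, Cauchy--Schwarz for the semidefinite stabilization, and the kernel-based equivalence \eqref{eq28a-eq28}), which is precisely the route the paper relies on when it defers to the proof of Lemma 4.1 in \cite{gg-Pre-2023}. No gaps: in particular your use of \eqref{eq28a-eq28} is justified since $\bze_h-\Pcalbb_r^E(\bze_h)$ indeed lies in $\mathbb{X}_r^E\cap\mathrm{Ker}(\Pcalbb_r^E)$, polynomials being contained in the local virtual space.
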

\begin{proof}
It reduces to a minor modification of the proof of \cite[Lemma 4.1]{gg-Pre-2023}.
\end{proof}
Now, for deriving the coerciveness properties of $\mathscr{A}_{h,\phi_h}^{S}$ and $\mathscr{A}^{T}_{h,\phi_h}$, we first look at the discrete kernel of $ \mathscr{B}_{h}^{S}|_{\mathbb{X}_h\times\mathcal{Z}_h} $ and $ \mathscr{B}^{T}|_{\mathbf{X}_h\times\mathrm{Y}_h} $, that is
\[
\mathrm{K}_h \,:=\,\Big\{ \bta_h \in \mathbb X_{h}: \quad \mathscr{B}_{h}^{S}(\bta_h ,\vec{\bv}_h ) \,=\, 0 \quad\forall\,\vec{\bv}_h \in \mathcal{Z}_h\Big\}\,,
\]
and
\[
\widetilde{\mathrm{K}}_h \,:=\,\Big\{ \bet_h \in \mathbf X_h: \quad \mathscr{B}^{T}(\bet_h ,\psi_h ) \,=\, 0 \quad\forall\,\psi_h \in \mathrm Y_h\Big\}\,.
\]

Bearing in mind the definitions of $ \mathscr{B}_{h}^{S} $ (cf. last row of \eqref{eq:def.bh}) and $ \mathscr{B}^{T} $ (cf. second row of \eqref{def-a-b-tilde-a-tilde-b}), and employing facts that $\bdiv\big(\mathbb X_{h}\big) \,\subseteq\, \mathbf Y_h$ and $\div\big(\mathbf X_{h}\big) \,\subseteq\, \mathrm Y_h$ (cf. first and second columns \eqref{eq:propA}), it follows that
\begin{equation}\label{def-mathbb-V-h-mathbf-V-h}
	\begin{array}{c}
		\disp
		\mathrm{K}_h \,:=\,\Big\{ \bta_h \in \mathbb X_{h}: \quad \bdiv(\bta_h) \,=\, \mathbf 0 \qin \Omega \qan
		\disp\int_{\Omega_h} \bta_h : \bom_h =0\quad\forall\, \bom_h \in \mathbb{Z}_h
		
		\Big\}\,, \qan\\[2.4ex]
		\widetilde{\mathrm{K}}_h \,:=\,\Big\{ \bet_h \in \mathbf X_h: \quad \div(\bet_h) \,=\, 0 \qin \Omega \Big\}\,.
	\end{array}
\end{equation}
Hence, we are in position to state the following result.
\begin{lemma}\label{coerciveness-a-h-widetilde-a-h}
Given $ \phi_h\in \mathrm{Y}_h $,	there exist positive constants $\alpha_{S,\mathtt{d}}$ and $\alpha_{T,\mathtt{d}}$, independent of $h$, such that
	\begin{equation}\label{eq-coerciveness-a-h-widetilde-a-h}
		\begin{array}{l}
			\disp
			\mathscr{A}_{h,\phi_h}^{S}(\bta_h,\bta_h) \,\ge\,\alpha_{S,\mathtt{d}}\,\|\bta_h\|^2_{\bdiv_{6/5},\Omega} \qquad\forall\,\bta_h \in \mathrm K_h \,, \\[2ex]
\mathscr{A}^{T}_{h,\phi_h}(\bet_h,\bet_h) \,\ge\,\alpha_{T,\mathtt{d}}\,\Vert\bet_h\Vert_{\div_{6/5},\Omega}^{2}\qquad\forall\,\bet_h \in \widetilde{\mathrm{K}}_h \,.
		\end{array}
	\end{equation}
\end{lemma}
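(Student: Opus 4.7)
The plan is to emulate, at the discrete level, the continuous kernel-ellipticity argument of Section \ref{sub.sec.3.3.2}: combine the local ellipticities \eqref{eq:ell.a}--\eqref{eq:ell.at}, the norm equivalences \eqref{eq28a-eq28} provided by the stabilizations, and the explicit characterizations of the discrete kernels in \eqref{def-mathbb-V-h-mathbf-V-h} together with \eqref{eq:div.ta}. I write the proof for $\mathscr{A}_{h,\phi_h}^{S}$; the one for $\mathscr{A}^{T}_{h,\phi_h}$ follows the same pattern and is slightly simpler since \eqref{eq:ell.at} already gives the full $\mathbf{L}^{2}(E)$ control rather than merely its deviatoric component.

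\textbf{Step 1 (elementwise lower bound).} Fix $E\in\Omega_h$ and decompose $\bta_h=\Pcalbb_{r}^{E}\bta_h+(\bta_h-\Pcalbb_{r}^{E}\bta_h)$. Using $\mu(\phi_h)\le \mu_2$ (cf.\ \eqref{eq:bound.m.k}) and \eqref{eq:ell.a} on the polynomial piece gives $\mathscr{A}_{\phi_h}^{S,E}(\Pcalbb_{r}^{E}\bta_h,\Pcalbb_{r}^{E}\bta_h)\ge \mu_2^{-1}\|(\Pcalbb_{r}^{E}\bta_h)^{\mathtt{d}}\|_{0,E}^{2}$, while the lower bound of \eqref{eq28a-eq28} combined with the same ellipticity yields $\mathscr{S}_{\phi_h}^{S,E}(\bta_h-\Pcalbb_{r}^{E}\bta_h,\bta_h-\Pcalbb_{r}^{E}\bta_h)\ge c_\star\mu_2^{-1}\|(\bta_h-\Pcalbb_{r}^{E}\bta_h)^{\mathtt{d}}\|_{0,E}^{2}$. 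Since the deviator is self-adjoint in $\mathbb{L}^{2}(E)$ and $\mathbb{P}_{r}(E)$ is stable under it, $\Pcalbb_{r}^{E}$ commutes with $(\cdot)^{\mathtt{d}}$, so the $\mathbb{L}^{2}$-orthogonal Pythagoras identity applied to $\bta_h^{\mathtt{d}}=\Pcalbb_{r}^{E}(\bta_h^{\mathtt{d}})+(\bta_h^{\mathtt{d}}-\Pcalbb_{r}^{E}(\bta_h^{\mathtt{d}}))$ yields $\mathscr{A}_{h,\phi_h}^{S,E}(\bta_h,\bta_h)\ge \frac{\min\{1,c_\star\}}{\mu_2}\|\bta_h^{\mathtt{d}}\|_{0,E}^{2}$.

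\textbf{Step 2 (kernel reduction and the $T$-case).} Summing over $E\in\Omega_h$ gives $\mathscr{A}_{h,\phi_h}^{S}(\bta_h,\bta_h)\ge \frac{\min\{1,c_\star\}}{\mu_2}\|\bta_h^{\mathtt{d}}\|_{0,\Omega}^{2}$. For $\bta_h\in\mathrm K_h$, the first identity in \eqref{def-mathbb-V-h-mathbf-V-h} gives $\bdiv(\bta_h)=\mathbf 0$, so \eqref{eq:div.ta} yields $c_\Omega\|\bta_h\|_{0,\Omega}^{2}\le\|\bta_h^{\mathtt{d}}\|_{0,\Omega}^{2}$ and $\|\bta_h\|_{\bdiv_{6/5},\Omega}^{2}=\|\bta_h\|_{0,\Omega}^{2}$, producing the first bound in \eqref{eq-coerciveness-a-h-widetilde-a-h} with $\alpha_{S,\mathtt{d}}:=c_\Omega\min\{1,c_\star\}/\mu_2$. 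The case of $\mathscr{A}^{T}_{h,\phi_h}$ is obtained by the identical pattern using \eqref{eq:ell.at}, the second inequality in \eqref{eq28a-eq28}, and the projector $\Pcalbf_{r}^{E}$; on $\widetilde{\mathrm K}_h$ one has $\div(\bet_h)=0$ and therefore directly $\|\bet_h\|_{\div_{6/5},\Omega}^{2}=\|\bet_h\|_{0,\Omega}^{2}$, delivering $\alpha_{T,\mathtt{d}}:=\min\{1,\widetilde c_\star\}/\kappa_2$.

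\textbf{Main obstacle.} The only subtlety is that the weight $1/\mu(\phi_h)$ is non-constant on $E$, so one cannot factor it out of $\mathscr{A}_{\phi_h}^{S,E}$ to make the bilinear form literally orthogonal under the $\Pcalbb_{r}^{E}$ splitting. The clean workaround, used above, is to collapse the weight to its uniform lower bound $\mu_2^{-1}$ \emph{before} invoking Pythagoras, at the sole price of a multiplicative constant; then the consistency and stabilization contributions recombine additively and the remainder of the argument reduces to the already-known continuous $\mathrm K$-ellipticity mechanism. No discrete inf-sup condition is required at this stage.
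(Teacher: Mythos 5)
Your proof is correct and follows essentially the same route as the paper: split $\bta_h$ via $\Pcalbb_r^E$, bound the consistency part by the pointwise bounds \eqref{eq:bound.m.k} and the stabilization part by the lower bound in \eqref{eq28a-eq28}, recover $\|\bta_h^{\mathtt{d}}\|_{0,E}$ elementwise, and conclude on $\mathrm K_h$ (resp.\ $\widetilde{\mathrm K}_h$) using the kernel characterization together with \eqref{eq:div.ta}. The only (valid) variation is the recombination step: you prove that $\Pcalbb_r^E$ commutes with the deviator and invoke exact $\mathbb{L}^2$-orthogonality, whereas the paper simply uses $\|\bze^{\mathtt d}\|_{0,E}\le\|\bze\|_{0,E}$ and the elementary inequality $b^2+(a-b)^2\ge \tfrac12 a^2$, so your constant merely avoids the factor $\tfrac12$.
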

\begin{proof}
Given $ \phi_{h}\in \mathrm{Y}_h $ and $E \in \Omega_h$, bearing in mind the definitions of $\mathscr{A}_{h,\phi_h}^{S}$ (cf. \eqref{eq:defAF.h}) and $\mathscr{A}_{\phi_h}^{S}$ (cf. \eqref{def-a-b-tilde-a-tilde-b}), employing the lower bounds in the first row of \eqref{eq28a-eq28} and in the first column of \eqref{eq:bound.m.k}, along with fact that $\|\bta_h\|_{0,\Omega} \ge \|\bta_h^{\tt d}\|_{0,\Omega}$, we obtain
\begin{equation}\label{eq-coerciveness-a-K-h-1}
\begin{array}{c}
\disp
\mathscr{A}_{h,\phi_h}^{S,E}(\bta_h,\bta_h) \,\ge\, \,\dfrac{1}{\mu_1}\,\Big( \big\|\big(\Pcalbb^E_r(\bta_h)\big)^{\tt d}\big\|^2_{0,E}
\,+\, c_\star\,\big\|\bta_h - \Pcalbb^E_r(\bta_h)\big\|^2_{0,E}\Big) \\[2ex]
\disp\qquad
\ge\, \dfrac{1}{\mu_1}\,\Big( \big\|\big(\Pcalbb^E_r(\bta_h)\big)^{\tt d}\big\|^2_{0,E}
\,+\, c_\star\,\big\|\bta_h^{\tt d} - \big(\Pcalbb^E_r(\bta_h)\big)^{\tt d}\big\|^2_{0,E}\Big) \\[2ex]
\disp\qquad
\ge\, \dfrac{1}{2 \mu_1}\,\min\big\{1,c_\star\big\}\, \big\|\bta_h^{\tt d}\big\|^2_{0,E} \,,
\end{array}
\end{equation}
where the inequality $b^2 + (a-b)^2 \,\ge\, \frac12\, a^2$ was used in the last step.
	In this way, summing 
	over all $E \in \Omega_h$ in \eqref{eq-coerciveness-a-K-h-1}, and using \eqref{eq:div.ta}, we conclud the first inequality of \eqref{eq-coerciveness-a-h-widetilde-a-h}
	with setting $ \alpha_{S,\mathtt{d}} \,:=\, \dfrac{c_{\Omega}}{2 \mu_1}\,\min\big\{1,c_\star\big\} $.
	In turn, the proof of second one follows a similar approach, utilizing the definitions of $\mathscr{A}^{T,E}_{h,\phi_h}$ (cf. \eqref{eq:defAT.h}),
		$\mathscr{A}^{T,E}$ (cf. \eqref{def-a-b-tilde-a-tilde-b}), and $\mathscr{A}_{h,\phi_h}^{T}$ (cf. \eqref{def-widetilde-a-h-virtual}), and the lower bounds in the second row of \eqref{eq28a-eq28} and the second column of \eqref{eq:bound.m.k}. Further details are omitted.
\end{proof}
Next, for each $\phi_h\in \mathrm{Y}_h$ and $ \bz_h\in \mathbf{Y}_h $ we establish the boundedness property of $\mathscr{O}^{S}_{h,\phi_h}(\bz_h;\cdot,\cdot)$ and
 $\mathscr{O}^{T}_{h,\phi_h}(\bz_h;\cdot,\cdot)$, which 
coincide with those of $\mathscr{O}_{\phi_h}^S(\bz_h;\cdot,\cdot)$ and
 $\mathscr{O}^{T}_{\phi_h}(\bz_h;\cdot,\cdot)$ (cf. third and last rows of \eqref{eq:bound}), respectively, as
stated next.
\begin{lemma}\label{c-widetilde-c-boundedness}
Given $ \phi_h\in \mathbf{Y}_h $,	denoting $$\mathcal{C}_{\mathscr{O}^{S}} := \frac{|\Omega|^{1/6}}{\mu_1}\,,\qan \mathcal{C}_{\mathscr{O}^{T}} := \frac{|\Omega|^{1/6}}{\kappa_1}\,,$$ there hold 
	\begin{equation}\label{eq:boundCF}
\begin{array}{c}
			\big| \mathscr{O}^{S}_{h,\phi_h}(\bz_h;\bw_h,\bta_h)\big| \,\leq\,\mathcal{C}_{\mathscr{O}^{S}}  \,\Vert \bz_h\Vert_{0,6;\Omega}\,\Vert \bw_h\Vert_{0,6;\Omega}\, \Vert \bta_h\Vert_{0,\Omega}
			\quad\forall\, (\bz_h , \bw_h  , \bta_h ) \in  \mathbf{Y}_h \times \mathbf{Y}_h \times \mathbb{X}_h \,, \\[2ex]
\big| \mathscr{O}^{T}_{h,\phi_h}(\bz_h;\omega_h,\bet_h)\big| \,\leq\, \mathcal{C}_{\mathscr{O}^{T}}  \,\Vert \bz_h\Vert_{0,6;\Omega}\,\Vert \omega_h\Vert_{0,6;\Omega}\, \Vert \bet_h\Vert_{0,\Omega}
			\quad\forall\, (\bz_h , \omega_h  , \bet_h ) \in  \mathbf{Y}_h \times \mathrm{Y}_h \times \mathbf{X}_h \,.
\end{array}
	\end{equation}
\end{lemma}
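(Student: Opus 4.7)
The strategy is to reduce the discrete bounds to their continuous counterparts (third and sixth rows of \eqref{eq:bound}) element by element, exploiting the fact that the local forms $\mathscr{O}^{S,E}_{h,\phi_h}$ and $\mathscr{O}^{T,E}_{h,\phi_h}$ differ from $\mathscr{O}^{S,E}_{\phi_h}$ and $\mathscr{O}^{T,E}_{\phi_h}$ only by the insertion of the polynomial projections $\Pcalbb_r^E$ and $\Pcalbf_r^E$ in their last argument (cf. \eqref{eq:defO.h}, \eqref{eq:defOT.h}). Since the control constants $\mathcal{C}_{\mathscr{O}^S}=|\Omega|^{1/6}/\mu_1$ and $\mathcal{C}_{\mathscr{O}^T}=|\Omega|^{1/6}/\kappa_1$ match those of the continuous forms, the goal will be to absorb the projections without losing any constant.

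First, fixing $E\in\Omega_h$, I would apply directly the continuity of $\mathscr{O}_{\phi_h}^{S,E}$ (i.e.\ the local analogue of the third inequality in \eqref{eq:bound}, which follows from H\"older's inequality combined with the lower bound $\mu(\phi_h)\ge\mu_1$ in \eqref{eq:bound.m.k}) to obtain
\[
\big|\mathscr{O}^{S,E}_{h,\phi_h}(\bz_h;\bw_h,\bta_h)\big|
\,=\,\big|\mathscr{O}_{\phi_h}^{S,E}\big(\bz_h;\bw_h,\Pcalbb_{r}^{E}(\bta_h)\big)\big|
\,\le\,\tfrac{|E|^{1/6}}{\mu_1}\,\|\bz_h\|_{0,6;E}\,\|\bw_h\|_{0,6;E}\,\big\|\Pcalbb_{r}^{E}(\bta_h)\big\|_{0,E}\,,
\]
and analogously for the thermal form. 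The key step is then the $\mathrm L^2$-stability of the projections: taking $s=m=0$ and $p=2$ in \eqref{eq:bound.Pi} yields $\|\Pcalbb_{r}^{E}(\bta_h)\|_{0,E}\le \|\bta_h\|_{0,E}$ and $\|\Pcalbf_{r}^{E}(\bet_h)\|_{0,E}\le \|\bet_h\|_{0,E}$ (indeed, the $\mathrm L^2$ projection is a contraction, so $M_0=1$ in this case), which removes the projection without introducing extra constants.

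Finally, I would sum over all elements $E\in\Omega_h$ and apply the discrete H\"older inequality with exponents $(6,6,2)$, together with the identity $\sum_{E}|E|^{1/6}\big(\cdots\big)\le|\Omega|^{1/6}\big(\cdots\big)$ after using H\"older across elements (equivalently, first bound $|E|^{1/6}\le|\Omega|^{1/6}$ and then reassemble global $\mathrm L^p$-norms). This yields the two desired estimates
\[
\big|\mathscr{O}^{S}_{h,\phi_h}(\bz_h;\bw_h,\bta_h)\big|\,\le\,\mathcal{C}_{\mathscr{O}^S}\,\|\bz_h\|_{0,6;\Omega}\,\|\bw_h\|_{0,6;\Omega}\,\|\bta_h\|_{0,\Omega}\,,
\]
and similarly for $\mathscr{O}^{T}_{h,\phi_h}$, with the announced constants. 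No obstacle is expected: the argument is essentially a localization, an insertion of the continuous bound, and the $\mathrm L^2$-contractivity of the projections; the only care needed is to keep track of the factor $|\Omega|^{1/6}$ when passing from the elementwise to the global estimate, exactly as done for the continuous forms in Section~\ref{sub.sec.3.3.1}.
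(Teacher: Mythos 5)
Your argument is correct and coincides with what the paper intends: the paper's own proof is just a one-line reference to \cite[Lemma 4.4]{gg-Pre-2023}, and the standard argument behind that lemma is precisely yours — write $\mathscr{O}^{S,E}_{h,\phi_h}(\bz_h;\bw_h,\bta_h)=\mathscr{O}^{S,E}_{\phi_h}(\bz_h;\bw_h,\Pcalbb_r^E(\bta_h))$, apply the local H\"older bound with $\mu(\phi_h)\ge\mu_1$ (resp. $\kappa(\phi_h)\ge\kappa_1$), use the $\mathrm L^2$-contractivity of the orthogonal projection to drop $\Pcalbb_r^E$ (resp. $\Pcalbf_r^E$), and reassemble the global norms via discrete H\"older, recovering $\mathcal{C}_{\mathscr{O}^S}=|\Omega|^{1/6}/\mu_1$ and $\mathcal{C}_{\mathscr{O}^T}=|\Omega|^{1/6}/\kappa_1$. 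No gap.
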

\begin{proof}
It is a slight modification of the proof of \cite[Lemma 4.4]{gg-Pre-2023}. We omit further details.
\end{proof}

\subsubsection{Discrete inf-sup condition}
Here, we focus on deriving the discrete inf-sup condition for $\mathscr{B}_{h}^{S}|_{\mathbb{X}_h \times\mathcal{Z}_h }$.
In order to do that, first we recall the abstract result established in (cf. \cite[Theorem 3.1]{hw-NM-2011}, and \cite[Exercise 5.6.4]{D-SIAM-2023}), as essential tool to
be applied to our aim abovementioned.
\begin{lemma}\label{l_recal1}
Let $ u\in U $ be a Hilbert space and $v=(v_1 , v_2)\in V_1 \times V_2$ be a group variable, where $V_1$ and $V_2$ are Hilbert spaces. Consider a composite bilinear form
\[
b(u,v)\,:=\, b_1(u , v_1)+b_{2}(u , v_2) \,,
\]
Define the kernel space
\[
U_{1} \,:=\, \Big\{ u\in U: \quad b_{1}(u , v_1)\,=\,0\quad \forall\, v_1 \in V_1 \Big\}\,,
\]

and assume the following inf-sup conditions
\begin{equation}\label{eq:inf1}
\sup_{u\in U}\dfrac{b_{1}(u , v_1)}{\Vert u\Vert_{U}}\,\geq\, \beta_1 \, \Vert v_1 \Vert_{V_1}\quad \forall\, v_1 \in V_1 \,,
\end{equation}
and
\begin{equation}\label{eq:inf2}
\sup_{u_1 \in U_1}\dfrac{b_2 (u_1 , v_2)}{\Vert u_1 \Vert_{U}}\, \geq\, \beta_2 \, \Vert v_2 \Vert_{V_2}\quad \forall\, v_2 \in V_2 \,,
\end{equation}

then, there exists a constant $\beta$, depending on $\beta_1$, $\beta_2$, $\| b_1\|$ and $\| b_2\|$, such that
\begin{equation}\label{eq:inf3}
\sup_{u\in U}\dfrac{b(u , v)}{\Vert u\Vert_{U}}\,\geq\, \beta \, \Vert v \Vert_{V}\quad \forall\, v \in V \,.
\end{equation}
\end{lemma}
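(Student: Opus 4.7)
The plan is to construct, for any given $v=(v_1,v_2)\in V_1\times V_2$, a single test element $u\in U$ of the form $u=\widetilde u+\delta\,u_1$ which simultaneously controls both bilinear components of $b$. First I would invoke \eqref{eq:inf1} to extract $\widetilde u\in U$ with $b_1(\widetilde u,v_1)\geq \beta_1\,\|v_1\|_{V_1}^{2}$ and $\|\widetilde u\|_U\leq \|v_1\|_{V_1}$ (up to an arbitrarily small slack if the supremum in \eqref{eq:inf1} is not attained). This handles the $v_1$-component of the composite form, but $\widetilde u$ need not interact favourably with $v_2$.

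The crucial observation is that \eqref{eq:inf2} is posed over the kernel subspace $U_1$, so a corresponding maximizer $u_1\in U_1$, normalized to satisfy $b_2(u_1,v_2)\geq \beta_2\,\|v_2\|_{V_2}^{2}$ and $\|u_1\|_U\leq \|v_2\|_{V_2}$, contributes to the $b_2$-part while producing zero when tested through $b_1(\cdot,v_1)$. Writing $u:=\widetilde u+\delta\,u_1$ with $\delta>0$ to be chosen and exploiting that membership $u_1\in U_1$ kills the cross term, I obtain
\[
b(u,v)\;=\;b_1(\widetilde u,v_1)\,+\,b_2(\widetilde u,v_2)\,+\,\delta\,b_2(u_1,v_2)\,.
\]

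The only potentially troublesome contribution is $b_2(\widetilde u,v_2)$, which I would bound from below by $-\|b_2\|\,\|\widetilde u\|_U\,\|v_2\|_{V_2}\geq -\|b_2\|\,\|v_1\|_{V_1}\,\|v_2\|_{V_2}$, and then split via Young's inequality as $\geq -\tfrac{\beta_1}{2}\|v_1\|_{V_1}^{2}-\tfrac{\|b_2\|^{2}}{2\beta_1}\|v_2\|_{V_2}^{2}$. Plugging this into the previous identity preserves half of the good $v_1$ mass; tuning $\delta$ so that $\delta\beta_2\geq \tfrac{\|b_2\|^{2}}{2\beta_1}+\tfrac{\beta_1}{2}$ ensures that the $\|v_2\|_{V_2}^{2}$ coefficient is also at least $\tfrac{\beta_1}{2}$. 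Hence $b(u,v)\geq \tfrac{\beta_1}{2}\bigl(\|v_1\|_{V_1}^{2}+\|v_2\|_{V_2}^{2}\bigr)$ while the triangle inequality gives $\|u\|_U\leq (1+\delta)\,\|v\|_V$, producing \eqref{eq:inf3} with $\beta=\tfrac{\beta_1}{2(1+\delta)}$, a constant depending only on $\beta_1$, $\beta_2$ and $\|b_2\|$.

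The only subtlety I foresee is that the suprema in \eqref{eq:inf1}--\eqref{eq:inf2} may not be attained, so in a fully rigorous write-up I would work with near-maximizers (absorbing a harmless $\varepsilon>0$) or, equivalently, apply the closed-range characterization of inf-sup conditions to obtain bona fide preimages $\widetilde u$ and $u_1$. Beyond this minor bookkeeping, no real obstacle arises: the core of the argument is the cancellation $b_1(u_1,\cdot)=0$ coming from $u_1\in U_1$, followed by Young's inequality and parameter selection.
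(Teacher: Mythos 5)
Your proposal is correct and follows essentially the same route as the paper's proof: you test with a combination $\widetilde u+\delta\,u_1$ whose second component lies in the kernel $U_1$ (so the $b_1$ cross term vanishes), bound the remaining mixed term $b_2(\widetilde u,v_2)$ by $\|b_2\|$, and tune the weight to keep a positive coefficient --- the paper does the same with $\bar u=c_1\widetilde u+c_2\widehat u$, unit-normalized near-maximizers and a direct choice of $c_1,c_2$ instead of your $\|v_i\|$-scaled normalizers and Young's inequality. The only differences are harmless bookkeeping: your $\varepsilon$-slack for non-attained suprema (the paper absorbs it into the factors $\beta_i/2$) and a possible extra factor of $2$ in $\beta$ depending on which product norm is used on $V_1\times V_2$.
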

\begin{proof}
Given $(u, v) \in U \times V$, we note that inf-sup conditions \eqref{eq:inf1} and \eqref{eq:inf2} guarantee the existence of $\widetilde{u} \in U$ and $\widehat{u} \in U_{1}$, such that there hold
\begin{equation}\label{eq:o1}
	\|\widetilde{u}\|_{U}=1 \quad \text { and } \quad b_{1}\left(\widetilde{u},v_{1}\right) \geq \frac{\beta_{1}}{2}\left\|v_{1}\right\|_{V}\,,
\end{equation}
and
\begin{equation}\label{eq:o2}
	\|\widehat{u}\|_{U}=1 \quad \text { and } \quad b_{2}\left(\widehat{u},v_{2}\right) \geq \frac{\beta_{2}}{2}\left\|v_{2}\right\|_{V}\,.
\end{equation}
Now, defining $\bar{u}:=c_{1} \widetilde{u}+c_{2} \widehat{u}$, with positive constants $c_{1}$ and $c_{2}$ to be chosen later on, which yields $\|\bar{u}\|_{U} \leq c_{1}+c_{2}$, and using that $b_{1}\left(\widehat{u},v_{1}\right)=0$ along with \eqref{eq:o1} and \eqref{eq:o2} we have

$$
\begin{gathered}
	\sup _{u \in U} \frac{b(u, v)}{\|u\|_{U}} \geq \frac{|b(\bar{u},v)|}{\|\bar{u}\|_{U}}=\frac{\left|b_{1}\left(\bar{u},v_{1}\right)+b_{2}\left( \bar{u},v_{2}\right)\right|}{\|\bar{u}\|_{U}} \\
	=\frac{\left|c_{1} b_{1}\left(\widetilde{u},v_{1}\right)+c_{2} b_{2}\left(\widehat{u},v_{2}\right)+c_{1} b_{2}\left( \widetilde{u},v_{2}\right)\right|}{\|\bar{u}\|_{U}} \\
	\geq \frac{1}{c_{1}+c_{2}}\left(\frac{c_{1} \beta_{1}}{2}\left\|v_{1}\right\|_{V}+\left(\frac{c_{2} \beta_{2}}{2}-c_{1}\left\|b_{2}\right\|\right)\left\|v_{2}\right\|_{V}\right)\,,
\end{gathered}
$$

from which, choosing $c_{1}$ and $c_{2}$ such that $c_{1} \geq 0$ and $\frac{c_{2} \beta_{2}}{2} \geq c_{1}\left\|b_{2}\right\|$ we arrive at \eqref{eq:inf3} with considering

$$
\beta:=\frac{1}{c_{1}+c_{2}} \min \left\{\frac{c_{1} \beta_{1}}{2},\left(\frac{c_{2} \beta_{2}}{2}-c_{1}\left\|b_{2}\right\|\right)\right\}\,,
$$

and finish the proof.
\end{proof}
Now, we are ready to utilize the results above for establishing discrete inf-sup condition for $\mathscr{B}_{h}^{S} |_{\mathbb{X}_{h}\times\mathcal{Z}_h}$. We begin with define spaces $ U $, $ V_1 $, and $ V_2 $ in Lemma \ref{l_recal1} by
\begin{equation}\label{eq:space.uvw}
U \,:=\,\mathbb{X}_{h} \,, \qquad	V_1 \,:=\,\mathbf{Y}_{h}\,,\qan V_2  \,:=\,\mathbb{Z}_{h}\,,
\end{equation}
and letting
\begin{equation*}
 U_{1}\:=\, \Big\{\bta_h\in \mathbb{X}_h :\quad \mathscr{B}_{h}^{S}(\bta_h ,(\bv_{h},0))\,=\, 0 \quad\forall\, \bv_{h}\in \mathbf{Y}_h  \Big\}\,.
\end{equation*}
In this way, considering the above notations, and according to Lemma \ref{l_recal1}, in order to establish the discrete inf-sup condition for $\mathscr{B}_{h}^{S} |_{\mathbb{X}_{h}\times\mathcal{Z}_h}$, we need to show the discrete inf-sup conditions of $\mathscr{B}_{h}^{S} |_{\mathbb{X}_{h}\times\mathbf{Y}_h}$ and $ \mathscr{B}_{h}^{S}|_{U_{1}\times\mathbb{Z}_h} $. We note that due to the definition of $ \mathscr{B}_{h}^{S} $ (cf. \eqref{eq:def.bh}) and fact that $\bdiv\big(\mathbb X_{h}\big) \,\subseteq\, \mathbf Y_h$ (cf. first column of \eqref{eq:propA}), space $ U_1 $ reduce to
\begin{equation}\label{eq:def.V1}
	 U_{1}\:=\, \Big\{\bta_h\in \mathbb{X}_h :\quad \bdiv(\bta_h)\,=\, \mathbf{0}\qin\Omega\, \Big\}\,.
\end{equation}

The discrete inf-sup condition for the discrete bilinear form  $\mathscr{B}_{h}^{S} |_{\mathbb{X}_{h}\times\mathbf{Y}_h}$, which is a slight modification of the proof of \cite[Section 4.2, Lemma 4.9]{gs-M3AS-2021}, established by the following lemma.
\begin{lemma}\label{l_h1.inf}
	There exists the positive constant $ \beta_{1,\mathtt{d}} $, independent of $ h $, such that
	\begin{equation}\label{eq:inf.bh.c2}
		\sup_{\mathbf{0}\neq \bta_h\in \mathbb{X}_{h}}\dfrac{\mathscr{B}_{h}^{S}(\bta_h ,(\bv_{h},0))}{\Vert\bta_h\Vert_{\bdiv_{6/5},\Omega}}\,\geq\,\beta_{1,\mathtt{d}}\,\Vert\bv_{h}\Vert_{0,6;\Omega}\qquad\forall\, \bv_{h}\in \mathbf{Y}_{h}\,.
	\end{equation}
\end{lemma}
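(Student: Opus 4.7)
The plan is to follow the classical Fortin-operator route: since $\mathscr{B}^{S}_{h}(\bta_h,(\bv_h,0))=\int_{\Omega}\bv_h\cdot\bdiv(\bta_h)$ by the definition \eqref{eq:def.bh}, the claim amounts to the standard ``surjectivity of $\bdiv$'' inf-sup between $\mathbb X_h$ and $\mathbf Y_h=\mathbf P_r(\Omega_h)$. The natural strategy is to start from the continuous inf-sup for $\mathscr{B}^{S}$ between $\mathbb X$ and $\mathbf Y$ (a consequence of \eqref{eq:inf.bi} applied with $\bom=\mathbf 0$) and then carry it to the discrete setting by producing a uniformly bounded interpolation operator $\Pibb_h:\mathbb X\cap\mathbb W^{\epsilon,6/5}(\Omega)\to\mathbb X_h$ whose defining feature is the commuting diagram property
\[
\bdiv\bigl(\Pibb_h(\bta)\bigr)\,=\,\Pcalbf^{h}_{r}\bigl(\bdiv(\bta)\bigr)\qin \Omega\,.
\]

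First I would fix $\bv_h\in\mathbf Y_h$ and invoke the continuous inf-sup (or equivalently the surjectivity of $\bdiv:\mathbb{H}_{0}(\bdiv_{6/5};\Omega)\to\mathbf L^{6/5}(\Omega)/\mathbf R$) to produce $\bta\in\mathbb X$ satisfying
\[
\bdiv(\bta)\,=\,|\bv_h|^{4}\,\bv_h\,-\,\dfrac{1}{|\Omega|}\int_{\Omega}|\bv_h|^{4}\,\bv_h
\qan \|\bta\|_{\bdiv_{6/5},\Omega}\,\le\,C\,\|\bv_h\|_{0,6;\Omega}^{5}\,,
\]
so that $\int_{\Omega}\bv_h\cdot\bdiv(\bta)\,\ge\,c\,\|\bv_h\|_{0,6;\Omega}^{6}$. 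Next I would apply the Fortin operator $\Pibb_h$ and compute, using the commuting property above together with the fact that $\bv_h\in\mathbf P_r(\Omega_h)$ and the definition of $\Pcalbf^h_r$,
\[
\int_{\Omega}\bv_h\cdot\bdiv(\Pibb_h(\bta))\,=\,\int_{\Omega}\bv_h\cdot\Pcalbf^h_r(\bdiv\bta)\,=\,\int_{\Omega}\bv_h\cdot\bdiv(\bta)\,.
\]
Dividing by $\|\Pibb_h(\bta)\|_{\bdiv_{6/5},\Omega}\le C\|\bta\|_{\bdiv_{6/5},\Omega}\le C\|\bv_h\|_{0,6;\Omega}^{5}$ would then yield \eqref{eq:inf.bh.c2} with $\beta_{1,\mathtt d}$ independent of $h$.

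The main obstacle is the construction of the Fortin operator $\Pibb_h$ satisfying the commuting diagram and, above all, uniform stability in the $\mathbb H(\bdiv_{6/5};\Omega)$ norm on polygonal meshes. Following the construction in \cite{gs-M3AS-2021, gg-Pre-2023} for the Banach-space VEM framework, I would define $\Pibb_h$ element by element through the degrees of freedom $\mathbf{D1}$--$\mathbf{D3}$ (cf. \eqref{DOF-m-E.1}--\eqref{DOF-m-E.3}), which by unisolvence automatically forces $\bdiv(\Pibb_h(\bta))=\Pcalbf^E_r(\bdiv \bta)$ on each $E\in\Omega_h$ and makes the normal trace match the polynomial projection of $\bta\bn^E_e$ on each edge. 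Local $\mathrm L^{6/5}$-type scaling arguments on the star-shaped elements (assumption $\mathbf{(A1)}$--$\mathbf{(A2)}$), combined with the approximation estimates of Lemma \ref{l1} and a standard trace/inverse inequality, would deliver
\[
\|\Pibb_h(\bta)\|_{0,E}+\|\bdiv(\Pibb_h(\bta))\|_{0,6/5;E}\,\le\,C\,\bigl(\|\bta\|_{0,E}+h_E^{\epsilon}|\bta|_{\epsilon,E}+\|\bdiv(\bta)\|_{0,6/5;E}\bigr)
\]
with $C$ independent of $h_E$, from which summation over $E\in\Omega_h$ and a density argument give the required uniform bound $\|\Pibb_h(\bta)\|_{\bdiv_{6/5},\Omega}\le C\|\bta\|_{\bdiv_{6/5},\Omega}$. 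This density/regularisation step is delicate because $\mathbb H(\bdiv_{6/5};\Omega)$ elements need not have the pointwise trace required by $\mathbf{D1}$, so I would first regularise $\bta$ to lie in $\mathbb W^{\epsilon,6/5}(\Omega)$ and then pass to the limit, exactly as in \cite[Lemma 4.9]{gs-M3AS-2021}.
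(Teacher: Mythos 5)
Your strategy coincides in substance with the paper's: the paper disposes of this lemma by a one-line citation of \cite[Lemma 4.9]{gs-M3AS-2021}, and that lemma is proved precisely by the Fortin-type argument you outline (an auxiliary problem producing a preimage of the datum under $\bdiv$, the commuting VEM interpolant, and stability through extra regularity). There are, however, two concrete defects in your sketch. The first is the mean-value subtraction in the datum: with $\bdiv(\bta)=|\bv_h|^{4}\bv_h-\frac{1}{|\Omega|}\int_{\Omega}|\bv_h|^{4}\bv_h$ the key lower bound $\int_{\Omega}\bv_h\cdot\bdiv(\bta)\,\ge\,c\,\Vert\bv_h\Vert_{0,6;\Omega}^{6}$ is false in general, because $\bv_h\in\mathbf{Y}_h=\mathbf{P}_r(\Omega_h)$ carries no zero-mean constraint: for a constant $\bv_h$ the corrected datum vanishes identically and the numerator of the inf-sup quotient is $0$. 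The subtraction is also unnecessary, since $\bdiv$ maps $\mathbb{H}_{0}(\bdiv_{6/5};\Omega)$ onto all of $\mathbf{L}^{6/5}(\Omega)$: take $\bta=\boldsymbol{\nabla}\bz$ with $\bz$ solving a vector Poisson-type problem with datum $|\bv_h|^{4}\bv_h\in\mathbf{L}^{6/5}(\Omega)$ (so that $\Vert\bdiv(\bta)\Vert_{0,6/5;\Omega}=\Vert\bv_h\Vert_{0,6;\Omega}^{5}$), and restore the normalization $\int_{\Omega}\tr(\bta)=0$ afterwards by adding a suitable multiple of $\mathbb{I}$, which is divergence-free, belongs to the local virtual spaces, and does not affect the pairing with $(\bv_h,0)$. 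This auxiliary problem moreover hands you $\bta\in\mathbb{W}^{1,6/5}(\Omega)\subset\mathbb{L}^{2}(\Omega)$, i.e. exactly the extra regularity needed to define the interpolant and to bound it.

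The second defect is the closing regularisation-and-limit step. The edge moments $\mathbf{D1}$ are not bounded functionals on $\mathbb{H}(\bdiv_{6/5};E)$, so the interpolation operator does not extend by density to all of $\mathbb{X}$, and the uniform stability bound $\Vert\Pibb_h(\bta)\Vert_{\bdiv_{6/5},\Omega}\le C\,\Vert\bta\Vert_{\bdiv_{6/5},\Omega}$ you invoke for general $\bta$ is simply not available. Fortunately it is not needed: the inf-sup argument only requires interpolating the single, already regular, tensor $\bta$ produced by the auxiliary problem, with a stability bound in terms of its $\mathbb{W}^{1,6/5}$ (or $\mathbb{H}^{\epsilon}$) norm, hence in terms of $\Vert\bv_h\Vert_{0,6;\Omega}^{5}$; combined with the commuting property $\bdiv(\Pibb_h(\bta))=\Pcalbf^{h}_{r}(\bdiv(\bta))$ and the fact that $\bv_h$ is piecewise polynomial, this yields \eqref{eq:inf.bh.c2}. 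With these two corrections your argument becomes exactly the one of \cite[Lemma 4.9]{gs-M3AS-2021} that the paper cites.
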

\begin{proof}
	The proof follows from a direct application of \cite[Lemma 4.9]{gs-M3AS-2021}.
\end{proof}
The following result provides  the discrete inf-sup condition of $ \mathscr{B}_{h}^{S}|_{U_{1}\times\mathbb{Z}_h} $, where $ U_{1} $ is given by \eqref{eq:def.V1}.
\begin{lemma}\label{l_h2.inf}
	There exists the positive constant $ \beta_{2,\mathtt{d}} $, independent of $ h $, such that
	\begin{equation}\label{eq:dis.inf1}
		\sup_{\mathbf{0}\neq \bta_h\in U_1}\dfrac{\mathscr{B}_{h}^{S}(\bta_h ,(0,\bom_h))}{\Vert \bta_h\Vert_{\bdiv_{6/5},\Omega}}\,\geq\,\beta_{2,\mathtt{d}}\,\Vert \bom_h\Vert_{0,\Omega}\,\qquad\forall\, \bom_h\in \mathbb{Z}_{h}\,.
	\end{equation}
\end{lemma}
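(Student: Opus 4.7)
\textbf{Plan for Lemma \ref{l_h2.inf}.}
The plan is to reduce the discrete inf-sup on $U_1\times\mathbb Z_h$ to a continuous inf-sup on a divergence-free subspace, then transfer back via a Fortin-type interpolation. First I would observe that if $\bta_h\in U_1$ then $\bdiv(\bta_h)=\mathbf{0}$, so $\|\bta_h\|_{\bdiv_{6/5},\Omega}=\|\bta_h\|_{0,\Omega}$. Moreover, since $\bom_h|_E\in\mathbb P_r(E)$ on each $E$, the definition \eqref{eq:def.bh} together with the $\mathrm L^2$-orthogonality of $\Pcalbb_r^E$ yields
\begin{equation*}
\mathscr B_h^S(\bta_h,(0,\bom_h)) \,=\, \sum_{E\in\Omega_h}\int_E \bom_h:\Pcalbb_r^E(\bta_h) \,=\, \int_\Omega \bom_h:\bta_h \,.
\end{equation*}
Hence \eqref{eq:dis.inf1} reduces to producing, for each $\bom_h\in\mathbb Z_h$, a nonzero $\bta_h\in U_1$ with $\int_\Omega\bom_h:\bta_h\gtrsim \|\bom_h\|_{0,\Omega}\,\|\bta_h\|_{0,\Omega}$.

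Next, I would invoke the continuous analogue. Writing $\bom_h=\omega_h\,\bigl(\begin{smallmatrix}0&1\\-1&0\end{smallmatrix}\bigr)$ with $\omega_h\in\mathrm L^2(\Omega)$, I pick $(\phi_1,\phi_2)\in\mathbf H^1(\Omega)$ solving $\partial_x\phi_1+\partial_y\phi_2=\omega_h$ (up to a harmless constant) by the Bogovskii right-inverse of the divergence, and set the rows of $\bta$ equal to $(-\partial_y\phi_1,\partial_x\phi_1)$ and $(-\partial_y\phi_2,\partial_x\phi_2)$. By construction $\bdiv(\bta)=\mathbf{0}$ in $\Omega$, $\bta\in\mathbb H_0(\bdiv_{6/5};\Omega)\cap\mathbb H^{\delta}(\Omega)$ for some $\delta>0$ thanks to elliptic regularity of the auxiliary potentials, $\int_\Omega\bom_h:\bta=\|\omega_h\|_{0,\Omega}^2\sim\|\bom_h\|_{0,\Omega}^2$, and $\|\bta\|_{0,\Omega}\leq C\|\bom_h\|_{0,\Omega}$. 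Equivalently, this is exactly the continuous inf-sup invoked in \eqref{eq:inf.bi}, already available from \cite{gors-CMA-2021,cov-CALCOLO-2020}.

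Finally, I would invoke a Fortin-type operator $\Pibb_h:\mathbb H(\bdiv_{6/5};\Omega)\cap\mathbb H^{\delta}(\Omega)\to\mathbb X_h$ built through the degrees of freedom $\mathbf{D1}$–$\mathbf{D3}$, as in \cite{cg-IMANUM-2017,gs-M3AS-2021,gg-Pre-2023}. Its key properties are (i) the commuting diagram $\bdiv(\Pibb_h\bta)=\Pcalbf_r^h(\bdiv\bta)$, so that $\Pibb_h\bta\in U_1$ whenever $\bdiv\bta=\mathbf{0}$; (ii) the identity $\Pcalbb_r^E(\Pibb_h\bta)=\Pcalbb_r^E(\bta)$ on each $E$, obtained by testing the definition of $\Pcalbb_r^E$ against the decomposition $\mathbb P_r(E)=\boldsymbol{\nabla}\mathbf P_{r+1}(E)\oplus\mathbb P_r^{\bot}(E)$, using $\mathbf{D3}$ directly on the second summand and integration-by-parts plus $\mathbf{D1}$–$\mathbf{D2}$ on the first; and (iii) uniform boundedness $\|\Pibb_h\bta\|_{\bdiv_{6/5},\Omega}\le C\|\bta\|_{\mathbb H^{\delta}(\Omega)\cap\mathbb H(\bdiv_{6/5};\Omega)}$. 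Setting $\bta_h:=\Pibb_h\bta\in U_1$, property (ii) (combined with the first paragraph) gives $\int_\Omega\bom_h:\bta_h=\int_\Omega\bom_h:\bta$, while (iii) together with the bound on $\|\bta\|_{0,\Omega}$ gives $\|\bta_h\|_{0,\Omega}\le C'\|\bom_h\|_{0,\Omega}$. Dividing yields \eqref{eq:dis.inf1} with $\beta_{2,\mathtt d}$ depending only on the continuous inf-sup constant and the Fortin norm.

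\textbf{Main obstacle.} The substantive point is property (ii) of the Fortin operator: one must verify that the DoFs $\mathbf{D1}$–$\mathbf{D3}$ determine all $\mathrm L^2$-moments of $\bta_h$ against $\mathbb P_r(E)$ (not only against $\mathbb P_r^{\bot}(E)$), which in turn requires the integration-by-parts identity to absorb the contribution of $\boldsymbol{\nabla}\mathbf P_{r+1}(E)$ using only boundary moments and the div-free constraint; together with the stability in the $\bdiv_{6/5}$-norm on general polygons, this is exactly the ingredient already proved in \cite{gs-M3AS-2021,gg-Pre-2023}, so I would cite it rather than reprove it.
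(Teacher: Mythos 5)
Your reduction $\mathscr{B}_h^{S}(\bta_h,(0,\bom_h))=\int_\Omega\bom_h:\bta_h$ for $\bta_h\in U_1$ is correct, but the argument breaks at property (ii) of your Fortin operator. The degrees of freedom $\mathbf{D1}$--$\mathbf{D3}$ only fix edge moments of $\bta\mathbf n$ against $\mathbf P_r(e)$; when you test $\bta-\Pibb_h(\bta)$ against the gradient part $\boldsymbol{\nabla}\mathbf P_{r+1}(E)$ of the decomposition of $\mathbb P_r(E)$ and integrate by parts, the boundary term involves $\mathbf q|_e$ with $\mathbf q\in\mathbf P_{r+1}(E)$, i.e.\ edge polynomials of degree $r+1$, which $\mathbf{D1}$ does not control --- even though $\bdiv(\bta)=\mathbf 0$ kills the volume term. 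Hence $\Pcalbb^E_r(\Pibb_h(\bta))=\Pcalbb^E_r(\bta)$ does not follow from the DoFs, and no such identity is proved in \cite{gs-M3AS-2021} or \cite{gg-Pre-2023}; without it you cannot conclude $\int_\Omega\bom_h:\bta_h=\int_\Omega\bom_h:\bta$. Replacing the identity by an interpolation-error estimate would only give the inf-sup for $h$ small, and even that requires a uniform bound $\Vert\bta\Vert_{\epsilon,\Omega}\lesssim\Vert\bom_h\Vert_{0,\Omega}$, which your Bogovskii/elliptic-regularity construction does not deliver on a general polygon: the entries of $\bta$ are second derivatives of the auxiliary potential, so an $\mathbb H^{\epsilon}$ bound would need $H^{2+\epsilon}$ regularity with a datum that is only uniformly bounded in $\mathrm L^2$ (an inverse estimate on $\omega_h$ costs $h^{-\epsilon}$). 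Two smaller omissions: the zero-mean-trace constraint implicit in $\mathbb X=\mathbb H_0(\bdiv_{6/5};\Omega)$ is never checked for your candidate, and the mean-value correction needed because $\omega_h$ need not have zero mean is only waved at.

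The paper follows a genuinely different route that avoids any Fortin operator and hence the problematic moment identity: given $\widehat q_h$ it solves an auxiliary \emph{discrete} Stokes problem with the divergence-free Stokes VEM of \cite{blv-MMNA-2017}, sets $\widetilde\bta_h:=\operatorname{\mathbf{curl}}(\mathbf z_h)+\hat c_h\bigl(\begin{smallmatrix}0&1\\0&0\end{smallmatrix}\bigr)$, and verifies directly that this tensor satisfies the defining conditions of the local virtual spaces (piecewise polynomial rot and normal traces, exact divergence-freeness, and the zero trace mean via the $k$-consistency of the discrete form), so that $\widetilde\bta_h\in U_1$ by construction; the pairing $\int_\Omega\widetilde\bta_h:\bom_h=\Vert\widehat q_h\Vert_{0,\Omega}^2$ then comes from the discrete divergence equation and stability from the discrete Stokes a priori estimate. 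To salvage your plan you would either have to enrich the edge DoFs (or prove the missing moment compatibility some other way), or construct the test tensor directly inside the discrete kernel as the paper does.
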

\begin{proof}
	Given $ g\in (L_{0}^{2}(\Omega))' $, we consider $ (\mathbf{z},p)\in \mathbf{H}_{0}^{1}(\Omega)\times L_{0}^{2}(\Omega) $ as the solution of the variational formulation arising from stationary Stokes
	problem, that is,
	\begin{equation}\label{eq:var.St}
		\begin{split}
			\int_{\Omega}\nabla \textbf{z}:\nabla \textbf{w} +\int_{\Omega}p \,\div(\mathbf{w})&\,=\,\mathbf{0}\qquad\forall\, \mathbf{w}\in \mathbf{H}_{0}^{1}(\Omega)\,,\\
			\int_{\Omega}q \,\div(\mathbf{z})&\,=\,g(q)\qquad\forall\, q\in L_{0}^{2}(\Omega)\,.
		\end{split}
	\end{equation}
	Now, considering  $ k\geq 2 $, defining
	\[
	\mathbf{B}_{k}(\partial K)\,:=\,\Big\{\bw\in \mathbf{C}^{0}(K):\quad \bw|_{e}\in \mathbf{P}_{k}(e) \quad\forall\, e\subseteq\partial K  \Big\}\,,
	\]
	and letting $ X_{h} $ and $ \widehat{Y}_{h} $ be the virtual element subspaces of $ \mathbf{H}_{0}^{1}(\Omega) $ and $ L^{2}(\Omega) $, which are defined by (see \cite[eqs.  (11) and (12)]{blv-MMNA-2017})
	\begin{equation}\label{eq:def.Xh}
		\begin{array}{ll}
			X_{h}|_{K}\,:=\,\bigg\{ \bw \in \mathbf{H}_{0}^{1}(K):&\quad \bw\mid_{ \partial K} \in\mathbf{B}_{k}(\partial K)\,,\quad 
			\div (\bw) \in \mathrm{P}_{k-1}(K)\,\qan\\
			&- \boldsymbol{\Delta} \bw-\nabla v \in {\mathbf P}_{k-2,\nabla}(K)\quad \forall \, v \in \mathrm{L}^{2}(K)\bigg\}\,,
		\end{array}
	\end{equation}
	and 
	\begin{equation*}
		\widehat{Y}_{h}|_{K}\,:=\, \mathrm{P}_{k-1}(K)\,,
	\end{equation*}
	
	we find that $ (\mathbf{z}_h,p_h)\in X_{h}\times Y_{h} $, where $ Y_{h}\,=\,\widehat{Y}_{h}\cap L_0^{2}(\Omega) $, is the approximation virtual element solution for problem \eqref{eq:var.St} (see, e.g., \cite[Theorem 4.4]{blv-MMNA-2017}), that is, find $ (\mathbf{z}_h,p_h)\in X_{h}\times Y_{h} $ such that
	\begin{equation}\label{eq:vem.St}
		\begin{split}
			\mathscr{D}^{S}_{h}(\mathbf{z}_h, \mathbf{w}_h)+\int_{\Omega}p_h\div(\mathbf{w}_h)&\,=\,0\qquad\forall\, \mathbf{w}_h\in X_{h}\,,\\
			\int_{\Omega}q_h\div(\mathbf{z}_h)&\,=\,g(q_h)\qquad\forall\, q_h\in Y_{h}\,,
		\end{split}
	\end{equation}
	where the discrete bilinear form $ \mathscr{D}^{S}_{h}: X_{h}\times X_{h}\rightarrow\mathrm{R} $ satisfy the $ k $-consistency property (cf. \cite[equation (3.24)]{blv-MMNA-2017}) given by 
	\begin{equation}\label{eq:k.consi}
		\mathscr{D}^{S}_{h}(\mathbf{z}_h, \mathbf{q})\,=\,\int_{\Omega}\boldsymbol{\nabla} \mathbf{z}_{h}: \boldsymbol{\nabla} \mathbf{q}\qquad \forall\, \mathbf{q}\in\mathbf{P}_{k}(\Omega_h)\,.
	\end{equation} 
	Moreover, there hold the following a priori estimate (cf. \cite[Theorem 4.4]{blv-MMNA-2017})
	\begin{equation}\label{eq:stab.est}
		\Vert\mathbf{z}_h\Vert_{1,\Omega}+\Vert p_h\Vert_{0,\Omega}\,\leq\, \mathcal{C}_{\mathtt{st}}\, \Vert g\Vert\,,
	\end{equation}
	where $ \mathcal{C}_{\mathtt{st}} $ is a positive constant independent of $ h $. Next, proceeding analogously to \cite[Subsection 5.2]{cgm-ESAIM-2020}, we consider $ g $ as the functional induced by a given $ \widehat{q}_h\in \widehat{Y}_{h} $, that is, 
	\[
	g(q_h)\,:=\,\int_{\Omega} \widehat{q}_h \, q_h\qquad \forall\, q_h\in Y_{h}\,.
	\]
	In this way, knowing that $ \mathbf{P}_{k}(\Omega_h)\subseteq X_{h} $ for each $ k\geq 2 $ (cf. \cite[Section 3.1]{blv-MMNA-2017}), we take $ \mathbf{w}_{h}\,:=\,(-x_2,x_1)^{\mathtt{t}} $ in the first equation of \eqref{eq:vem.St} and then employ  \eqref{eq:k.consi}, to arrive at
	\begin{equation}\label{eq:nabla.z}
		\int_{\Omega} \boldsymbol{\nabla}\mathbf{z}_{h}:\left(\begin{matrix}
			0 & -1\\1 & 0
		\end{matrix}\right)\,=\,\int_{\Omega} (-\partial_{x_2} z_{h,1}+\partial_{x_1} z_{h,2})\, =\, 0\,.
	\end{equation}
	Now, we define the discrete tensor as
	\begin{equation}\label{eq:def.tau.dis}
		\widetilde{\bta}_{h}\,:=\,\operatorname{\textbf{curl}}(\mathbf{z}_h)+\hat{c}_{h}\left(\begin{matrix}
			0 & 1\\ 0 &0
		\end{matrix}\right)\,,
	\end{equation}
	where the components $ \operatorname{\textbf{curl}}(\mathbf{z}_h) $ and $ \hat{c}_h $ are given by 
	\[
	\operatorname{\textbf{curl}}(\mathbf{z}_h)\,:=\,\left(\begin{matrix}
		-\partial_{x_2} z_{h,1} & \partial_{x_1} z_{h,1}\\[1mm]
		-\partial_{x_2} z_{h,2} & \partial_{x_1} z_{h,2}
	\end{matrix}\right)\qan \hat{c}_{h}\,:=\,\dfrac{1}{|\Omega|}\int_{\Omega}\widehat{q}_{h}\,.
	\]
	
	Our goal now is to show that $ \widetilde{\bta}_{h} $ belongs to $ U_1 $. To this end, we first apply the rot operator to both sides of \eqref{eq:def.tau.dis}, which gives
	\begin{equation}\label{eq:ga}
		\operatorname{\textbf{rot}}(
		\widetilde{\bta}_{h})\,=\, \operatorname{\textbf{rot}}(\operatorname{\textbf{curl}}(\mathbf{z}_h))\,=\,\boldsymbol{\Delta}\mathbf{z}_{h}\,.
	\end{equation}
Bearing in mind the definition of $ X_h $ (cf. \eqref{eq:def.Xh}) and knowing that $ \mathbf{z}_{h}\in X_h $, we readily see that	
	\begin{equation}\label{eq:g0}
		(\boldsymbol{\Delta}\mathbf{z}_{h}+\nabla v)|_K\in {\mathbf P}_{k-2,\nabla}(K)\subseteq \mathbf{P}_{k-2}(K) \qquad \forall \, v \in L^{2}(K)\,.
	\end{equation}
	In addition, thanks to definition of $ Y_h $ and knowing that $ Y_{h}|_{K}\subseteq L^{2}(K) $, we take $ v\in P_0(K) $, and observe from \eqref{eq:g0} that $ \boldsymbol{\Delta}\mathbf{z}_{h}|_{K}\in \mathbf{P}_{k-2}(K) $, for each $ k\geq 2 $, from which and \eqref{eq:ga} conclude that  
	\begin{equation}\label{eq:e0}
		\operatorname{\textbf{rot}}(\widetilde{\bta}_{h})|_{K}\in \mathbf{P}_{r-1}(K)\,\quad\forall\, r\in \mathrm{N} \,.
	\end{equation}
	In particular, according to the definition of $ X_h $ again (first sentence in \eqref{eq:def.Xh}), it is readily seen that $ (\widetilde{\bta}_{h}\mathbf{n})|_e \in \mathbf{P}_{k-1}(e) $ for any $ e\subseteq\partial K $ and $ k\geq 2 $, or equivalently
	\begin{equation}\label{eq:e1}
		(\widetilde{\bta}_{h}\mathbf{n})|_e \in \mathbf{P}_{r}(e)\qquad\forall\,e\subseteq\partial K \,\quad \forall\, r\geq 1\,.
	\end{equation}
	On the other hand, knowing the curl of a vector has zero divergence, clearly $ \widetilde{\bta}_{h} $ is divergence free, as well implies that  
	\begin{equation}\label{eq:e2}
		\bdiv(\widetilde{\bta}_{h})|_{K}\in \mathbf{P}_{r}(K)\,\quad\forall\, r\in \mathrm{N} \,.
	\end{equation}
	Then, noting from \eqref{eq:def.tau.dis} and using the identity
	in \eqref{eq:nabla.z}, we obtain
	\begin{equation}\label{eq:e3}
		\begin{split}
			\int_{\Omega} \operatorname{tr}(\widetilde{\bta}_{h})\, &= \, 	\int_{\Omega} \widetilde{\bta}_{h}:\mathbb{I}\,\\[1mm]
			& =\, \int_{\Omega} \operatorname{\textbf{curl}}(\mathbf{z}_h): \mathbb{I}\, + \hat{c}_h \int_{\Omega}\left(\begin{matrix}
				0 & 1\\ 0 &0
			\end{matrix}\right): \mathbb{I}\\[1mm]
			\,&=\,\int_{\Omega} (-\partial_{x_2} z_{h,1}+\partial_{x_1} z_{h,2})\, =\, 0\,.
		\end{split}
	\end{equation}
	Finally, bearing in mind the definition of $ U_1 $ (cf. \eqref{eq:def.V1}) and a straightforward application of \eqref{eq:e0}-\eqref{eq:e2} allow to conclude that $ \widetilde{\bta}_{h}\in U_1 $. Hence, considering the particular choice $$ \bom_h\,:=\,\left(\begin{matrix}
		0 & \widehat{q}_h \\ -\widehat{q}_h & 0
	\end{matrix}\right)\in \mathbb{Y}_{h,\mathtt{skw}}\,, $$ 
	employing \eqref{eq:def.tau.dis}, the fact that $ \mathbf{z}_h $ vanishes on boundary, and the second equation of \eqref{eq:vem.St} by taking $ q_h \,:=\,(\widehat{q}_h-\hat{c}_h)\in Y_{h} $, we find that
	\begin{equation}\label{eq:er1}
		\begin{split}
			\int_{\Omega}\widetilde{\bta}_{h}:\bom_h &\,=\,\int_{\Omega}\,\left(\begin{matrix}
				-\partial_{x_2} z_{h,1} & \partial_{x_1} z_{h,1}\\[1mm]
				-\partial_{x_2} z_{h,2} & \partial_{x_1} z_{h,2}
			\end{matrix}\right):\left(\begin{matrix}
				0 & \widehat{q}_h \\ -\widehat{q}_h & 0
			\end{matrix}\right)+\hat{c}_{h}\int_{\Omega}\left(\begin{matrix}
				0 & 1\\ 0 &0
			\end{matrix}\right):\left(\begin{matrix}
				0 & \widehat{q}_h \\ -\widehat{q}_h & 0
			\end{matrix}\right)\\[1mm]	
			&\,=\,\int_{\Omega} \left(\widehat{q}_{h}\div(\bz_h)+\widehat{q}_{h}\hat{c}_h\right)\\[1mm]
			&~=~\int_{\Omega} \left((\widehat{q}_{h}-\hat{c}_h)\div(\mathbf{z}_h)+\widehat{q}_{h}\hat{c}_h\right)\\
			&~=~\int_{\Omega} \left(\widehat{q}_h(\widehat{q}_{h}-c_h)+\widehat{q}_{h}c_h\right)~=~\Vert \widehat{q}_h\Vert_{0,\Omega}^{2}\,.
		\end{split}
	\end{equation} 
	Moreover, an application of fact the divergence free of $ \widetilde{\bta}_h $, definitions of $ \widetilde{\bta}_h $ and $ \hat{c}_h $, and the stability estimate \eqref{eq:stab.est}, yields
	\begin{equation*}
		\Vert \widetilde{\bta}_h\Vert_{\bdiv_{6/5},\Omega}\,=\,\Vert \widetilde{\bta}_h\Vert_{0,\Omega}\,\leq\, \vert\mathbf{z}_h\vert_{1,\Omega}+\Vert \hat{c}_h\Vert_{0,\Omega}\,\leq\, \widetilde{\mathcal{C}}_{\mathtt{st}}\Vert\widehat{q}_h\Vert_{0,\Omega}\,,
	\end{equation*}
	with $ \widetilde{\mathcal{C}}_{\mathtt{st}} $ depending on $ \mathcal{C}_{\mathtt{st}} $ and $ \Omega $. This result jointly with \eqref{eq:er1}, implies
	\begin{equation*}
		\sup_{\textbf{0}\neq \bta_h\in U_1}\dfrac{\mathscr{B}_{h}^{S}( \bta_h ,(0,\bom_h))}{\Vert \bta_h\Vert_{\bdiv_{6/5},\Omega}}\,\geq\,	\dfrac{\mathscr{B}_{h}^{S}(\widetilde{\bta}_h ,(0,\bom_h))}{\Vert \widetilde{\bta}_h\Vert_{\bdiv_{6/5},\Omega}}
		\, = \, \dfrac{\disp\int_{\Omega}\widetilde{\bta}_{h}:\bom_h}{\Vert \widetilde{\bta}_h\Vert_{\bdiv_{6/5}}}	
		\,\geq\, \dfrac{1}{\widetilde{\mathcal{C}}_{\mathtt{st}}}\Vert\widehat{q}_h\Vert_{0,\Omega}\,,
	\end{equation*}
	which, along with fact that $ \Vert\bom_h\Vert_{0,\Omega}^{2}\,=\,2\Vert\widehat{q}_h\Vert_{0,\Omega}^{2} $ concludes \eqref{eq:dis.inf1} by considering $ \beta_{2,\mathtt{d}}\,:=\,1/\sqrt{2}\widetilde{\mathcal{C}}_{\mathtt{st}} $.
\end{proof}
The following result establishes the discrete version of \eqref{eq:inf.bi}-\eqref{eq:inf.bit}.
\begin{lemma}\label{l_dis.inf.b.bt}
There exist a positive constants $ \beta_{S,\mathtt{d}} $ and $ \beta_{T,\mathtt{d}} $, independent of $ h $, such that
\begin{equation}\label{eq:dis.inf.1}
	\sup_{\mathbf{0}\neq \bta_h\in \mathbb{X}_{h}}\dfrac{\mathscr{B}_{h}^{S}(\bta_h ,\vec{\bv}_{h})}{\Vert\bta_h\Vert_{\bdiv_{6/5},\Omega}}\,\geq\,\beta_{S,\mathtt{d}}\,\Vert\vec{\bv}_{h}\Vert\qquad\forall\, \vec{\bv}_{h}\in \mathcal{Z}_{h}\,.
\end{equation}
and
\begin{equation}\label{eq:dis.inf.2}
	\sup_{\mathbf{0}\neq \bet_h\in \mathbf{X}_{h}}\dfrac{\mathscr{B}^{T}(\bet_h ,\psi_{h})}{\Vert\bet_h\Vert_{\div_{6/5},\Omega}}\,\geq\,\beta_{T,\mathtt{d}}\,\Vert\psi_{h}\Vert\qquad\forall\, \psi_{h}\in \mathrm{Y}_{h}\,.
\end{equation}
\end{lemma}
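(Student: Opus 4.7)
My plan is to treat the two inequalities separately, both leaning on tools already developed in the preceding subsection.

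For \eqref{eq:dis.inf.1}, I would invoke the composite inf-sup abstraction of Lemma \ref{l_recal1} with the choices $U := \mathbb{X}_h$, $V_1 := \mathbf{Y}_h$, $V_2 := \mathbb{Z}_h$, and the natural splitting
\[
\mathscr{B}_h^S\big(\bta_h,\vec{\bv}_h\big) \,=\, \underbrace{\int_\Omega \bv_h\cdot\bdiv(\bta_h)}_{=:\,b_1(\bta_h,\bv_h)} \,+\, \underbrace{\int_\Omega \bom_h:\Pcalbb_r^h(\bta_h)}_{=:\,b_2(\bta_h,\bom_h)}\,,
\]
for $\vec{\bv}_h=(\bv_h,\bom_h)\in \mathcal{Z}_h$. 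Since $\bdiv(\mathbb{X}_h)\subseteq\mathbf{Y}_h$ (cf. \eqref{eq:propA}), the kernel of $b_1(\cdot,\cdot)$ coincides precisely with the space $U_1$ from \eqref{eq:def.V1}. Lemmas \ref{l_h1.inf} and \ref{l_h2.inf} already furnish the two partial inf-sup conditions \eqref{eq:inf1} and \eqref{eq:inf2} with $h$-independent constants $\beta_{1,\mathtt{d}}$ and $\beta_{2,\mathtt{d}}$, whereas the boundedness $|b_2(\bta_h,\bom_h)|\leq \|\bom_h\|_{0,\Omega}\|\bta_h\|_{0,\Omega}$ is immediate from the stability of $\Pcalbb_r^h$. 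A direct invocation of Lemma \ref{l_recal1} then yields \eqref{eq:dis.inf.1} with $\beta_{S,\mathtt{d}}$ depending only on $\beta_{1,\mathtt{d}}$, $\beta_{2,\mathtt{d}}$ and $\|b_2\|$.

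For \eqref{eq:dis.inf.2}, I would follow a Fortin-type argument enabled by the VEM construction of $\mathbf{X}_h$. Given $\psi_h\in\mathrm{Y}_h$, the continuous inf-sup condition \eqref{eq:inf.bit} produces $\bet\in\mathbf{X}$ with $\|\bet\|_{\div_{6/5};\Omega}\leq C$ such that $\int_\Omega \psi_h\,\div(\bet)\geq\beta_T\|\psi_h\|_{0,6;\Omega}$. I would then consider the canonical VEM interpolant $\bet_I\in\mathbf{X}_h$ associated with the degrees of freedom $\widetilde{\mathbf{D1}}$--$\widetilde{\mathbf{D3}}$ (cf. \eqref{DOF-m-E.1}-\eqref{DOF-m-E.3}), which by design satisfies the elementwise commuting property $\div(\bet_I)|_E=\Pcal_r^E(\div(\bet))$ for each $E\in\Omega_h$. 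Since $\psi_h|_E\in\mathrm{P}_r(E)$, the $L^2$-orthogonality of the elementwise projection gives
\[
\int_\Omega \psi_h\,\div(\bet_I) \,=\, \int_\Omega \psi_h\,\div(\bet)\,,
\]
which, combined with a uniform stability estimate $\|\bet_I\|_{\div_{6/5};\Omega}\leq\widetilde C\,\|\bet\|_{\div_{6/5};\Omega}$, produces \eqref{eq:dis.inf.2} with $\beta_{T,\mathtt{d}}$ depending only on $\beta_T$, $C$ and $\widetilde C$.

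The main obstacle I anticipate lies in rigorously justifying the Fortin stability bound in the present Banach ($\mathrm{L}^{6/5}$-based) framework, while respecting the Neumann constraint on $\Gamma_N$. The canonical VEM interpolant demands slightly more regularity than bare $\mathbf{H}(\div_{6/5};\Omega)$ affords on the edge traces, so one must either lift $\bet$ to $\mathbf{W}^{\delta,6/5}(\Omega)$ through a density/regularization argument before interpolating, or transplant the explicit stability analysis carried out for the Banach-space mixed-VEM in \cite{gs-M3AS-2021, gg-Pre-2023}. These adaptations are expected to be technical but essentially cosmetic when moved from the pseudostress--velocity formulation of Navier--Stokes to the scalar energy equation with mixed Dirichlet--Neumann boundary considered here.
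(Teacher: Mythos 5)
Your treatment of \eqref{eq:dis.inf.1} coincides exactly with the paper's: the splitting of $\mathscr{B}_h^{S}$ into the divergence part and the projected vorticity part, the identification of the kernel of $b_1$ with $U_1$ via $\bdiv(\mathbb{X}_h)\subseteq\mathbf{Y}_h$, and the direct application of Lemma \ref{l_recal1} together with Lemmas \ref{l_h1.inf} and \ref{l_h2.inf} is precisely how the paper concludes that inequality, so that part needs no change.

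For \eqref{eq:dis.inf.2}, however, your Fortin-type plan has a genuine gap that your own closing remarks only partially defuse. The element $\bet$ furnished by the continuous inf-sup condition \eqref{eq:inf.bit} lies merely in $\mathbf{H}_N(\div_{6/5};\Omega)$, whose normal traces live in a negative-order space, so the edge moments $\widetilde{\mathbf{D1}}$ are not even defined and the canonical interpolant cannot be applied to it. More importantly, the uniform stability bound you rely on, $\|\bet_I\|_{\div_{6/5};\Omega}\leq\widetilde C\,\|\bet\|_{\div_{6/5};\Omega}$, is not available for the canonical VEM interpolant in the bare $\mathbf{H}(\div_{6/5})$ norm: its $\mathrm{L}^2$-stability requires some extra fractional Sobolev regularity, with constants that degenerate as that regularity index tends to zero, so the density/regularization lifting you propose would not produce an $h$-independent constant. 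The paper sidesteps this entirely: it treats \eqref{eq:dis.inf.2} as the scalar version of Lemma \ref{l_h1.inf}, i.e.\ it repeats the argument of \cite[Lemma 4.9]{gs-M3AS-2021}, in which one does not interpolate a generic competitor but constructs a specific one by solving an auxiliary elliptic boundary value problem whose datum is built from the (piecewise polynomial, hence $\mathrm{L}^p$-regular for all $p$) function $\psi_h$; elliptic regularity then supplies exactly the extra smoothness needed to interpolate, and the commuting property $\div(\bet_I)=\Pcal_r^h(\div\bet)$ together with the $\mathrm{L}^2$-orthogonality of the projection yields the desired lower bound, with the Neumann condition on $\Gamma_N$ built into the auxiliary problem. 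In short, your second remedy (transplanting the explicit analysis of \cite{gs-M3AS-2021,gg-Pre-2023}) is in fact the paper's proof, and it is the only one of your two options that closes the argument; as written, the Fortin sketch starting from \eqref{eq:inf.bit} does not.
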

\begin{proof}
The proof of \eqref{eq:dis.inf.1} is a direct consequence of Lemmas \ref{l_recal1}-\ref{l_h2.inf}, whereas second one, being actually a scalar version of \eqref{eq:inf.bh.c2} (cf. Lemma \ref{l_h1.inf}), follows by similar arguments.
\end{proof}
\subsection{Well-definedness of the operators \texorpdfstring{$ \mathscr{L}^S_{\mathtt{d}} $}{Lg} and \texorpdfstring{$ \mathscr{L}_{\mathtt{d}}^T $}{Lg}}

We begin by addressing the well-definedness of $ \mathscr{L}^S_{\mathtt{d}} $ and $ \mathscr{L}^T_{\mathtt{d}} $,  equivalently, the discrete schemes \eqref{eq:dis.sub_1a} and \eqref{eq:dis.sub_2a}, respectively, with following similar processes to Section \ref{sec.3}.
 Indeed, using the stability properties of discrete bilinear froms given in Section \ref{sebsec.5.1} (see Lemmas \ref{l_cA-boundedness}, \ref{coerciveness-a-h-widetilde-a-h}, \ref{l_dis.inf.b.bt}), and applying
 \cite[Proposition 2.36]{eg-SPRINGER-2004} it is not difficult to see that for each $ \phi_h\in \mathrm{Y}_h $ the discrete bilinear forms $ \mathcal{A}_{h,\phi_{h}}^S  $ and $ \mathcal{A}_{h,\phi_{h}}^T  $ satisfy global discrete inf-sup conditions on $ \mathbb{X}_h\times\mathcal{Z}_h $ and $ \mathbf{X}_h \times\mathrm{Y}_h $, respectively. More precisely, there exist positive constants $ \alpha_{\mathcal{A}^S,\mathtt{d}} $ and $ \alpha_{\mathcal{A}^T,\mathtt{d}} $, depending on $ \mathcal{C}_{\mathscr{A}^S}$, $ ||\mathscr{B}^S|| $, $ \alpha_{S,\mathtt{d}} $, $ \beta_{S,\mathtt{d}} $, and $ \mathcal{C}_{\mathscr{A}^T}$, $ ||\mathscr{B}^{T}|| $, $ \alpha_{T,\mathtt{d}} $, $ \beta_{T,\mathtt{d}} $, respectively, and hence independent of $ h $, such that
	\begin{equation}\label{eq:dis.inf.A}
	\sup\limits_{\mathbf{0}\neq (\bta_h,\vec{\bv}_h)\in \mathbb{X}_h\times\mathcal{Z}_h}\dfrac{\mathcal{A}_{h,\phi_{h}}^S 
		\big((\bze_h, \vec{\bw}_h), (\bta_h,\vec{\bv}_h)\big)}{\Vert (\bta_h,\vec{\bv}_h)\Vert}\,\geq\,\alpha_{\mathcal{A}^S,\mathtt{d}}\,\Vert(\bze_h, \vec{\bw}_h)\Vert\qquad\forall\, (\bze_h, \vec{\bw}_h)\in \mathbb{X}_h\times\mathcal{Z}_h \,,
\end{equation}
and
\begin{equation}\label{eq:dis.inf.At}
	\sup\limits_{\mathbf{0}\neq (\bet_h, \psi_h)\in \mathbf{X}_h\times\mathrm{Y}_h}\dfrac{\mathcal{A}^{T}_{h,\phi_{h}}\big((\bxi_h, \omega_h), (\bet_h, \psi_h)\big)}{\Vert (\bet_h, \psi_h)\Vert}\,\geq\,\alpha_{\mathcal{A}^{T},\mathtt{d}}\,\Vert(\bxi_h, \omega_h)\Vert\qquad\forall\, (\bxi_h, \omega_h)\in \mathbf{X}_h\times\mathrm{Y}_h \,,
\end{equation}
Thus, for each $ \phi_{h}\in \mathrm{Y}_h $, using the boundedness property of $ \mathscr{O}^{S}_{h,\phi_h} $ and $ \mathscr{O}^{T}_{h,\phi_h} $ (cf. first and second rows of \eqref{eq:boundCF}), similarly as for the derivations \eqref{eq:inf.A.1} and \eqref{eq:inf.At.1}, we deduce that under the assumptions 
\begin{equation}\label{eq:dis.as.z1}
	\Vert\bz_h\Vert_{0,6;\Omega}\, \leq \, r_{1,\mathtt{d}}\quad \text{with}\quad r_{1,\mathtt{d}} \,:=\, \dfrac{\mu_1\alpha_{\mathcal{A}^S,\mathtt{d}}}{2|\Omega|^{1/6}}\,,
\end{equation}
and
\begin{equation}\label{eq:dis.as.z2}
	\Vert\bz_h\Vert_{0,6;\Omega}\, \leq \, r_{2,\mathtt{d}} \quad\text{with}\quad r_{2,\mathtt{d}}\,:=\, \dfrac{\kappa_1\alpha_{\mathcal{A}^{T},\mathtt{d}}}{2|\Omega|^{1/6}}\,,
\end{equation}
there holds
	\begin{equation}\label{eq:dis.inf.A.o}
	\sup\limits_{\mathbf{0}\neq (\bta_h,\vec{\bv}_h)\in \mathbb{X}_h\times\mathcal{Z}_h}\dfrac{\mathcal{A}_{h,\phi_{h}}^S 
		\big((\bze_h, \vec{\bw}_h), (\bta_h,\vec{\bv}_h)\big)+\mathscr{O}^{S}_{h,\phi_{h}}(\bz_{h};\bw_h,\bta_{h})}{\Vert (\bta_h,\vec{\bv}_h)\Vert}\,\geq\,\dfrac{\alpha_{\mathcal{A}^S,\mathtt{d}}}{2}\,\Vert(\bze_h, \vec{\bw}_h)\Vert  \,,
\end{equation}
for all $ (\bze_h, \vec{\bw}_h)\in \mathbb{X}_h\times\mathcal{Z}_h $
and
\begin{equation}\label{eq:dis.inf.At.ot}
	\sup\limits_{\mathbf{0}\neq (\bet_h, \psi_h)\in \mathbf{X}_h\times\mathrm{Y}_h}\dfrac{\mathcal{A}^{T}_{h,\phi_{h}}\big((\bxi_h, \omega_h), (\bet_h, \psi_h)\big)+\mathscr{O}^{T}_{h,\phi_{h}}(\bz_{h};\omega_h,\bet_h)}{\Vert (\bet_h, \psi_h)\Vert}\,\geq\,\dfrac{\alpha_{\mathcal{A}^{T},\mathtt{d}}}{2}\,\Vert(\bxi_h, \omega_h)\Vert \,,
\end{equation}
for all $ (\bxi_h, \omega_h)\in \mathbf{X}_h\times\mathrm{Y}_h $.
According to the above and the fact that right-hand of \eqref{eq:dis.sub_1a} and \eqref{eq:dis.sub_2a} are a linear and bounded functional,
a straightforward application of the Banach-Nečas-Babuška Theorem allows to conclude the following result, which is the discrete analogue of Lemma \ref{l_wel.S.T}.
\begin{lemma}\label{l_dis.wel.S.St}
	For each $\big(\mathbf{z}_h,\phi_h\big) \in\mathbf{Y}_{h} \times \mathrm Y_h$, satisfying \eqref{eq:dis.as.z1} and \eqref{eq:dis.as.z2}
	there exist unique $(\bsi_{h,\star},\vec{\bu}_{h,\star})\in\mathbb{X}_{h}\times\mathcal{Z}_h$ and
	$(\brho_{h,\star},\varphi_{h,\star})\in\mathbf{X}_h \times\mathrm{Y}_h$ solutions to \eqref{eq:dis.sub_1a} and \eqref{eq:dis.sub_2a}, 
	respectively, so that one can define 
	\[
	\mathscr{L}_{\mathtt{d}}^S\big(\bz_h,\phi_h\big) \,:=\, (\bsi_{h,\star},\vec{\bu}_{h,\star})  \qan \mathscr{L}_{\mathtt{d}}^T(\bz_h , \phi_{h}) \,:=\, (\brho_{h,\star},\varphi_{h,\star})  \,.
	\]
	In addition, there hold the following a priori estimates
	\begin{equation}\label{eq:dis.apr1}
		\Vert\mathscr{L}^S_{\mathtt{d}}(\bz_h,\phi_h)\Vert\,=\,\Vert\bsi_{h,\star}\Vert_{\bdiv_{6/5},\Omega}+
		\Vert\vec{\bu}_{h,\star}\Vert_{0,6;\Omega}\,\leq\,  \dfrac{2}{\alpha_{\mathcal{A}^S,\mathtt{d}}}\,\Vert\phi_h\Vert_{0,6;\Omega}\,\Vert\bg\Vert_{0,3/2;\Omega}\,,
	\end{equation}
	and
	\begin{equation}\label{eq:dis.apr2}
		\Vert\mathscr{L}^T_{\mathtt{d}}(\bz_h,\phi_h)\Vert\,=\,\Vert\brho_{h,\star}\Vert_{\div_{6/5},\Omega}+\Vert\varphi_h\Vert_{0,6;\Omega}\,\leq\, \dfrac{2}{\alpha_{\mathcal{A}^{T},\mathtt{d}}}\, \Vert\varphi_{D}\Vert_{1/2,\Gamma_{D}}\,,
	\end{equation}
\end{lemma}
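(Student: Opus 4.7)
The plan is to mirror the continuous argument of Lemma \ref{l_wel.S.T} one ingredient at a time, now with each continuous stability result replaced by the discrete counterpart already assembled in Section \ref{sebsec.5.1}. The overall strategy is a single invocation of the Banach--Nečas--Babuška theorem (cf. \cite[Theorem 2.6]{eg-SPRINGER-2004}) applied separately to \eqref{eq:dis.sub_1a} and \eqref{eq:dis.sub_2a}.

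First I would fix an arbitrary $(\bz_h,\phi_h)\in \mathbf{Y}_h\times \mathrm{Y}_h$ with $\bz_h$ obeying \eqref{eq:dis.as.z1} (respectively \eqref{eq:dis.as.z2}) and verify the boundedness of the perturbed bilinear form $\mathcal{A}_{h,\phi_h}^S(\cdot,\cdot)+\mathscr{O}^S_{h,\phi_h}(\bz_h;\cdot,\cdot)$ on $(\mathbb{X}_h\times\mathcal{Z}_h)^2$ (respectively $\mathcal{A}^T_{h,\phi_h}(\cdot,\cdot)+\mathscr{O}^T_{h,\phi_h}(\bz_h;\cdot,\cdot)$ on $(\mathbf{X}_h\times\mathrm{Y}_h)^2$). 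This step is immediate: the three contributions are controlled by Lemma \ref{l_cA-boundedness} (for $\mathscr{A}_{h,\phi_h}^S$ and $\mathscr{A}^T_{h,\phi_h}$), by the obvious bound on $\mathscr{B}_h^S$ and $\mathscr{B}^T$, and by Lemma \ref{c-widetilde-c-boundedness} (for the convective forms).

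The heart of the argument is the global discrete inf-sup condition. This has essentially already been done in the paragraph preceding the lemma: estimates \eqref{eq:dis.inf.A.o} and \eqref{eq:dis.inf.At.ot} furnish, precisely under the smallness conditions \eqref{eq:dis.as.z1} and \eqref{eq:dis.as.z2}, uniform-in-$h$ inf-sup bounds with constants $\alpha_{\mathcal{A}^S,\mathtt{d}}/2$ and $\alpha_{\mathcal{A}^T,\mathtt{d}}/2$, respectively. Since the composite forms $\mathcal{A}^S_{h,\phi_h}+\mathscr{O}^S_{h,\phi_h}(\bz_h;\cdot,\cdot)$ and $\mathcal{A}^T_{h,\phi_h}+\mathscr{O}^T_{h,\phi_h}(\bz_h;\cdot,\cdot)$ satisfy these inf-sup estimates in the trial argument, the injectivity condition demanded by BNB follows at once in finite dimensions.

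Next I would bound the right-hand sides. Hölder's inequality directly gives $|\mathcal{F}_{\phi_h}^S(\bta_h,\vec{\bv}_h)|\le \Vert\bg\Vert_{0,3/2;\Omega}\,\Vert\phi_h\Vert_{0,6;\Omega}\,\Vert\bv_h\Vert_{0,6;\Omega}$, while the continuity of the normal trace in $\mathbf{H}(\div_{6/5};\Omega)$ yields $|\mathscr{F}^T(\bet_h)|\le \Vert\varphi_D\Vert_{1/2,\Gamma_D}\,\Vert\bet_h\Vert_{\div_{6/5},\Omega}$. Plugging these into the BNB stability bound (cf. \cite[Theorem 2.6, eq. (2.5)]{eg-SPRINGER-2004}) with the inf-sup constants $\alpha_{\mathcal{A}^S,\mathtt{d}}/2$ and $\alpha_{\mathcal{A}^T,\mathtt{d}}/2$ produces \eqref{eq:dis.apr1} and \eqref{eq:dis.apr2}, and simultaneously establishes the existence and uniqueness that legitimise the definitions $\mathscr{L}_{\mathtt{d}}^S(\bz_h,\phi_h):=(\bsi_{h,\star},\vec{\bu}_{h,\star})$ and $\mathscr{L}_{\mathtt{d}}^T(\bz_h,\phi_h):=(\brho_{h,\star},\varphi_{h,\star})$. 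I do not anticipate any genuine obstacle, since the really delicate work — in particular, the uniform discrete inf-sup bound for $\mathscr{B}_h^S$ on $\mathbb{X}_h\times\mathcal{Z}_h$ (carried out through the auxiliary Stokes construction of Lemma \ref{l_h2.inf}) — has already been performed; the present lemma is merely the packaging step.
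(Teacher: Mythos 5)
Your proposal is correct and follows essentially the same route as the paper: the paper likewise obtains the lemma by combining the discrete boundedness results, the global inf-sup conditions \eqref{eq:dis.inf.A.o}--\eqref{eq:dis.inf.At.ot} valid under \eqref{eq:dis.as.z1}--\eqref{eq:dis.as.z2}, and the boundedness of the right-hand-side functionals, followed by a direct application of the Banach--Nečas--Babuška theorem, with the a priori bounds \eqref{eq:dis.apr1}--\eqref{eq:dis.apr2} read off from the corresponding stability estimate. Your additional remark that the adjoint injectivity condition is automatic in the finite-dimensional setting is a harmless clarification the paper leaves implicit.
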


\subsection{Solvability analysis of the discrete fixed point}
Here we provide the main result of this section, namely, the existence of 
the discrete solution to scheme \eqref{eq:dis.ful.v1a-eq:v4-a}-\eqref{eq:dis.ful.v1a-eq:v4-d}.
More precisely, in a manner analogous to the continuous case, we proceed to demonstrate that $ \mathscr{L}_{\mathtt{d}} $ satisfies the hypotheses of the Banach theorem.
 First, we note that using now the a priori estimates \eqref{eq:dis.apr1} and \eqref{eq:dis.apr2}, for each $ (\bz_{h},\phi_{h})\in \mathbf{Y}_h \times\mathrm{Y}_h $ we easily obtain the discrete analogue of \eqref{eq:apr.E} as follow:
\begin{equation*}
	\Vert\mathscr{L}_{\mathtt{d}}(\bz_{h},\phi_{h})\Vert	
	\,  \leq \, \left(1+\dfrac{2}{\alpha_{\mathcal{A}^S,\mathtt{d}}}\Vert\bg\Vert_{0,3/2;\Omega}\right)\dfrac{2}{\alpha_{\mathcal{A}^{T},\mathtt{d}}}\,\Vert\varphi_{D}\Vert_{1/2,\Gamma_{D}}\,,
\end{equation*}
Hence, the discrete counterpart of Lemma \ref{l_cons} reads as follows.
\begin{lemma}\label{l_dis.cons}
Let 
 $ r_{\mathtt{d}} \,:=\, \min\{r_{1,\mathtt{d}},r_{2,\mathtt{d}}\} $
	and $ \mathbf{W}_{\mathtt{d}} $ be the closed ball in $ \mathbf{Y}_h \times\mathrm{Y}_h $ with center at the origin and radius $ r_{\mathtt{d}} $, and
assume that the data satisfy 
\begin{equation}\label{eq:as.data.3}
 \left(1+\dfrac{2}{\alpha_{\mathcal{A}^S,\mathtt{d}}}\Vert\bg\Vert_{0,3/2;\Omega}\right)\dfrac{2}{\alpha_{\mathcal{A}^{T},\mathtt{d}}}\,\Vert\varphi_{D}\Vert_{1/2,\Gamma_{D}}\, \leq \, r_{\mathtt{d}}\,.
\end{equation} 
Then, there holds $ \mathscr{L}_{\mathtt{d}}(\mathbf{W}_{\mathtt{d}}) \subseteq \mathbf{W}_{\mathtt{d}}$.
\end{lemma}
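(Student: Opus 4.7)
The plan is to mimic, almost line for line, the proof of the continuous counterpart (Lemma \ref{l_cons}), since the discrete analogue \eqref{eq:dis.def.opr.E} of the composite operator $\mathscr{L}$ has exactly the same structure and the discrete a priori bounds \eqref{eq:dis.apr1}--\eqref{eq:dis.apr2} (with constants independent of $h$) are already in place. The only novelty is to verify that for every $(\bz_h,\phi_h)\in \mathbf{W}_{\mathtt{d}}$ the resolvents $\mathscr{L}_{\mathtt{d}}^S$ and $\mathscr{L}_{\mathtt{d}}^T$ are actually defined at the points where we need to evaluate them.

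First I would fix an arbitrary $(\bz_h,\phi_h)\in \mathbf{W}_{\mathtt{d}}$ and observe that, since $r_{\mathtt{d}}=\min\{r_{1,\mathtt{d}},r_{2,\mathtt{d}}\}$, the defining inequality $\|\bz_h\|_{0,6;\Omega}+\|\phi_h\|_{0,6;\Omega}\le r_{\mathtt{d}}$ automatically implies both smallness requirements \eqref{eq:dis.as.z1} and \eqref{eq:dis.as.z2}. Hence Lemma \ref{l_dis.wel.S.St} applies and $(\brho_{h,\star},\varphi_{h,\star}) := \mathscr{L}_{\mathtt{d}}^T(\bz_h,\phi_h)$ is well-defined, with
\begin{equation*}
\|\varphi_{h,\star}\|_{0,6;\Omega}\,\le\,\frac{2}{\alpha_{\mathcal{A}^T,\mathtt{d}}}\,\|\varphi_D\|_{1/2,\Gamma_D}.
\end{equation*}
The same bound shows $\|\varphi_{h,\star}\|_{0,6;\Omega}\le r_{\mathtt{d}}$ provided the data are small enough, and in particular $\bz_h$ satisfies \eqref{eq:dis.as.z1} again, so that $\mathscr{L}_{\mathtt{d}}^S(\bz_h,\varphi_{h,\star})$ is well-defined as well.

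Next I would invoke \eqref{eq:dis.apr1} with $\phi_h$ replaced by $\varphi_{h,\star}$, add the bound for $\|\varphi_{h,\star}\|_{0,6;\Omega}$ above, and collect the two estimates as in the continuous derivation of \eqref{eq:apr.E}. This produces the discrete analogue
\begin{equation*}
\bigl\|\mathscr{L}_{\mathtt{d}}(\bz_h,\phi_h)\bigr\|
\,\le\,\left(1+\dfrac{2}{\alpha_{\mathcal{A}^S,\mathtt{d}}}\|\bg\|_{0,3/2;\Omega}\right)\dfrac{2}{\alpha_{\mathcal{A}^T,\mathtt{d}}}\,\|\varphi_{D}\|_{1/2,\Gamma_{D}},
\end{equation*}
which is precisely the expression bounded by $r_{\mathtt{d}}$ in hypothesis \eqref{eq:as.data.3}. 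Therefore $\mathscr{L}_{\mathtt{d}}(\bz_h,\phi_h)\in\mathbf{W}_{\mathtt{d}}$, and the conclusion follows.

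I do not anticipate any genuine obstacle: all the ingredients (well-posedness of the uncoupled discrete subproblems, the two a priori estimates, and the structural identity \eqref{eq:dis.def.opr.E}) are already established in the preceding lemmas, and the discrete inf-sup theory of Section \ref{sebsec.5.1} guarantees that the relevant constants $\alpha_{\mathcal{A}^S,\mathtt{d}}$ and $\alpha_{\mathcal{A}^T,\mathtt{d}}$ are independent of $h$, so the same data smallness encoded in \eqref{eq:as.data.3} closes the argument uniformly in the mesh parameter.
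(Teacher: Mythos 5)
Your proposal is correct and follows essentially the same route as the paper: membership in $\mathbf{W}_{\mathtt{d}}$ guarantees the smallness conditions \eqref{eq:dis.as.z1}--\eqref{eq:dis.as.z2} so that Lemma \ref{l_dis.wel.S.St} applies, and combining the a priori bounds \eqref{eq:dis.apr1}--\eqref{eq:dis.apr2} yields exactly the discrete analogue of \eqref{eq:apr.E}, which together with \eqref{eq:as.data.3} gives $\mathscr{L}_{\mathtt{d}}(\mathbf{W}_{\mathtt{d}})\subseteq\mathbf{W}_{\mathtt{d}}$. (Only a minor remark: well-definedness of $\mathscr{L}^S_{\mathtt{d}}(\bz_h,\varphi_{h,\star})$ needs only the bound on $\|\bz_h\|_{0,6;\Omega}$, so the aside about $\|\varphi_{h,\star}\|_{0,6;\Omega}\le r_{\mathtt{d}}$ is unnecessary, though harmless.)
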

Now, the Lipschitz continuity of $ \mathscr{L}_{\mathtt{d}} $ will be addressed by the fact that both $ \mathscr{L}^S_{\mathtt{d}} $ and $ \mathscr{L}^T_{\mathtt{d}} $ satisfy this property, as in the continuous case. Indeed, we will provide discrete analogues to Lemmas \ref{l_lip.S} and \ref{l_lip.T} in what follows.
\begin{lemma}\label{l_dis.lip.S}
	There exists a positive constant $ \mathcal{L}_{\mathscr{L}^S,\mathtt{d}} $, depending on $ \mathcal{L}_{\mu} $, $ \mu_1 $, $ r_{1,\mathtt{d}} $, $ \alpha_{\mathcal{A}^S,\mathtt{d}} $, $ c_{\star} $, $ c^{\star} $, such that
	\begin{equation}\label{eq:dis.lip.S}
		\begin{array}{l}
			\big\Vert \mathscr{L}^S_{\mathtt{d}}(\bz_h,\phi_h)-\mathscr{L}^S_{\mathtt{d}}(\by_h,\varpi_h)\big\Vert_{\mathbb{X}\times\mathcal{Z}}\, \leq \, \mathcal{L}_{\mathscr{L}^S,\mathtt{d}}\, \Big\{\Vert\mathscr{L}_{2,\mathtt{d}}^S(\by_h,\varpi_h)\Vert_{0,6;\Omega}\Vert\bz_h-\by_h\Vert_{0,6;\Omega}
			\\[2ex]
			\,+\,\Big(\Vert\bg\Vert_{0,3/2;\Omega}(1+\Vert\mathscr{S}_{1,\mathtt{d}}^S(\by_h,\varpi_h)\Vert_{0,3;\Omega})+\Vert\mathscr{S}_{2,\mathtt{d}}^S(\by_h,\varpi_h)\Vert_{0,6;\Omega}\Big)\Vert\phi_h-\varpi_h\Vert_{0,6;\Omega}	
			\Big\}\,.
		\end{array}
	\end{equation}
\end{lemma}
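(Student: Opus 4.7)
The plan is to replicate the strategy of Lemma \ref{l_lip.S} in the discrete setting, substituting the global discrete inf-sup condition \eqref{eq:dis.inf.A.o} for \eqref{eq:inf.A.1} and exploiting the local decomposition \eqref{eq:defAF.h} of the virtual bilinear form. I would set $(\bsi_{h,\star},\vec{\bu}_{h,\star}) := \mathscr{L}^S_{\mathtt{d}}(\bz_h,\phi_h)$ and $(\bsi_{h,\circ},\vec{\bu}_{h,\circ}) := \mathscr{L}^S_{\mathtt{d}}(\by_h,\varpi_h)$, write the two instances of \eqref{eq:dis.sub_1a}, subtract them after adding and subtracting $\mathcal{A}^S_{h,\phi_h}((\bsi_{h,\circ},\vec{\bu}_{h,\circ}),\cdot)$ and $\mathscr{O}^S_{h,\phi_h}(\bz_h;\bu_{h,\circ},\cdot)$, and thus arrive at the discrete analogue of \eqref{eq:re1}. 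The right-hand side consists of four terms: $(\mathscr{A}^S_{h,\varpi_h}-\mathscr{A}^S_{h,\phi_h})(\bsi_{h,\circ},\bta_h)$, $(\mathcal{F}^S_{\phi_h}-\mathcal{F}^S_{\varpi_h})(\bta_h,\vec{\bv}_h)$, $\mathscr{O}^S_{h,\varpi_h}(\by_h-\bz_h;\bu_{h,\circ},\bta_h)$, and $(\mathscr{O}^S_{h,\varpi_h}-\mathscr{O}^S_{h,\phi_h})(\bz_h;\bu_{h,\circ},\bta_h)$. Applying \eqref{eq:dis.inf.A.o} at $(\bze_h,\vec{\bw}_h):=(\bsi_{h,\star}-\bsi_{h,\circ},\vec{\bu}_{h,\star}-\vec{\bu}_{h,\circ})$ then bounds $\Vert(\bsi_{h,\star}-\bsi_{h,\circ},\vec{\bu}_{h,\star}-\vec{\bu}_{h,\circ})\Vert$ by a supremum that must now be estimated term-by-term.

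The three easier pieces are handled in the spirit of \eqref{eq3a:l.lip.S}--\eqref{eq4:l.lip.S}. Specifically, I would bound the data-difference contribution $(\mathcal{F}^S_{\phi_h}-\mathcal{F}^S_{\varpi_h})(\bta_h,\vec{\bv}_h)$ via Hölder's inequality and the $L^{3/2}$ assumption on $\bg$. For $\mathscr{O}^S_{h,\varpi_h}(\by_h-\bz_h;\bu_{h,\circ},\bta_h)$, I would invoke the boundedness \eqref{eq:boundCF} of the discrete convective form. For $(\mathscr{O}^S_{h,\varpi_h}-\mathscr{O}^S_{h,\phi_h})(\bz_h;\bu_{h,\circ},\bta_h)$, I would use the definition \eqref{eq:defO.h}, the Lipschitz continuity of $1/\mu$ (cf. \eqref{eq:lip.mu.kap}), the boundedness of $\Pcalbb_r^E$ (cf. \eqref{eq:bound.Pi}), and the smallness assumption $\Vert\bz_h\Vert_{0,6;\Omega}\le r_{1,\mathtt{d}}$ from \eqref{eq:dis.as.z1}.

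The genuine obstacle is the bilinear-form difference $(\mathscr{A}^S_{h,\varpi_h}-\mathscr{A}^S_{h,\phi_h})(\bsi_{h,\circ},\bta_h)$: unlike in the continuous case, no regularity shift such as \eqref{eq:reg.t} is available for $\bsi_{h,\circ}$, so the trick of choosing $2p=\epsilon^\star$ is unavailable. I would instead use the decomposition \eqref{eq:defAF.h} element-by-element and split the contribution into a \emph{consistency part}, involving $\mathscr{A}^{S,E}_{\varpi_h}(\Pcalbb_r^E(\bsi_{h,\circ}),\Pcalbb_r^E(\bta_h))-\mathscr{A}^{S,E}_{\phi_h}(\Pcalbb_r^E(\bsi_{h,\circ}),\Pcalbb_r^E(\bta_h))$, and a \emph{stabilization part}. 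For the consistency part, since $\Pcalbb_r^E(\bsi_{h,\circ})$ is polynomial and therefore lives in every $L^p$, I would apply the Lipschitz bound on $1/\mu$ together with Hölder's inequality using exponents $(6,3,2)$, producing locally a factor $\Vert\phi_h-\varpi_h\Vert_{0,6;E}\,\Vert\Pcalbb_r^E(\bsi_{h,\circ})\Vert_{0,3;E}\,\Vert\Pcalbb_r^E(\bta_h)\Vert_{0,E}$; summing over elements and using the $L^3$-boundedness of $\Pcalbb_r^E$ yields the $\Vert\mathscr{S}^S_{1,\mathtt{d}}(\by_h,\varpi_h)\Vert_{0,3;\Omega}$ factor in \eqref{eq:dis.lip.S}. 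For the stabilization part, I would apply the Lipschitz bound on $1/\mu$ at the piecewise averages $\mathcal{P}^E_0(\phi_h)$ and $\mathcal{P}^E_0(\varpi_h)$, then use the upper equivalence in \eqref{eq28a-eq28} together with the Cauchy--Schwarz inequality to transfer the stabilization seminorm back to $\mathscr{A}^{S,E}_{\phi_h}$-type terms on $(I-\Pcalbb_r^E)(\bsi_{h,\circ})$ and $(I-\Pcalbb_r^E)(\bta_h)$, which are themselves controlled in $L^2$ by $\Vert\bsi_{h,\circ}\Vert_{0,\Omega}$ and $\Vert\bta_h\Vert_{\bdiv_{6/5},\Omega}$.

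Gathering all contributions, recalling $\bsi_{h,\circ}=\mathscr{L}^S_{1,\mathtt{d}}(\by_h,\varpi_h)$ and $\bu_{h,\circ}=\mathscr{L}^S_{2,\mathtt{d}}(\by_h,\varpi_h)$, and identifying $\Vert\mathscr{S}^S_{i,\mathtt{d}}(\by_h,\varpi_h)\Vert$ with the corresponding norms of these quantities (as dictated by the $L^3$ bound from the consistency-part estimate), delivers the asserted inequality \eqref{eq:dis.lip.S} with $\mathcal{L}_{\mathscr{L}^S,\mathtt{d}}$ depending only on $\mathcal{L}_\mu$, $\mu_1$, $r_{1,\mathtt{d}}$, $\alpha_{\mathcal{A}^S,\mathtt{d}}$, $c_\star$, and $c^\star$, as claimed.
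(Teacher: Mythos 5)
Your proposal is correct and follows essentially the same route as the paper: the global discrete inf-sup condition \eqref{eq:dis.inf.A.o} applied to the difference of the two discrete solutions, with the regularity-based continuous estimate \eqref{eq3:l.lip.S} replaced by the $\mathrm{L}^{6}$--$\mathbb{L}^{3}$--$\mathbb{L}^{2}$ H\"older argument on the consistency/stabilization splitting of $\mathscr{A}_{h,\cdot}^{S}$ via \eqref{eq:defAF.h} and \eqref{eq28a-eq28}, exactly as in the paper's estimate \eqref{eq:lip.A}, and with the remaining terms handled as in Lemma \ref{l_lip.S} using \eqref{eq:boundCF}. Your identification of the $\Vert\cdot\Vert_{0,3;\Omega}$ factor with $\bsi_{h,\circ}=\mathscr{L}^S_{1,\mathtt{d}}(\by_h,\varpi_h)$ matches the paper's conclusion.
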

\begin{proof}
Given $ (\bz_{h},\phi_h)\in \mathbf{Y}_h \times\mathrm{Y}_h, (\by_h,\varpi_h)\in \mathbf{Y}_h \times\mathrm{Y}_h $, we let $ \mathscr{L}^S_{\mathtt{d}}(\bz_h,\phi_h):=(\bsi_{h,\star},\vec{\bu}_{h,\star}) \in \mathbb{X}_h \times\mathcal{Z}_h$ and $ \mathscr{L}^S_{\mathtt{d}}(\by_h,\varpi_h):=(\bsi_{h,\circ},\vec{\bu}_{h,\circ}) \in \mathbb{X}_h \times\mathcal{Z}_h$, where $ (\bsi_{h,\star},\vec{\bu}_{h,\star})\in \mathbb{X}_h \times\mathcal{Z}_h $ and
$ (\bsi_{h,\circ},\vec{\bu}_{h,\circ})\in \mathbb{X}_h \times\mathcal{Z}_h $ are the respective solutions of \eqref{eq:dis.sub_1a}. Then, the proof of \eqref{eq:dis.lip.S}, starting now from
the global
discrete inf-sup condition of $ \mathcal{A}_{h,\phi_h}^S+\mathscr{O}^{S}_{h,\phi_{h}}(\bz_{h};\cdot,\cdot) $ (cf. \eqref{eq:dis.inf.A.o}) for each  $ (\bz_{h},\phi_h)\in \mathbf{Y}_h \times\mathrm{Y}_h $,
is very closely to the proof of Lemma \ref{l_lip.S}. 
Nevertheless, since the regularity assumption $ (\textbf{RA}) $ is not applicable in this discrete context, we proceed by using $ \mathrm{L}^{6}-\mathbb{L}^{3}-\mathbb{L}^{2} $ argument for deriving \eqref{eq:dis.lip.S}.
 Indeed, instead of proceeding as in \eqref{eq3:l.lip.S}, we use the definition of $ \mathscr{A}_{h,\phi_h}^{S} $, the lower and uper bounds in the first row of \eqref{eq28a-eq28} and the Lipschitz continuity of $ \mu $ (cf. first column of \eqref{eq:lip.mu.kap}) to
obtain
\begin{equation}\label{eq:lip.A}
	\begin{array}{c}
	\Big|\big(\mathscr{A}_{h,\phi_h}^{S}-\mathscr{A}_{h,\varpi_h}^S\big)(\bsi_{h,\circ},\bta_h)\Big|
	\,\leq \,
	\disp\sum\limits_{E\in \Omega_h}\big(\mathscr{A}_{\phi_h}^{S,E}-\mathscr{A}_{\varpi_h}^{S,E}\big)\left(\Pcalbb_{r}^{E}(\bsi_{h,\circ}),\Pcalbb_{r}^{E}(\bta_h)\right)\\[2ex]
	\,+\, \disp\sum\limits_{E\in \Omega_h} \big(\mathscr{S}_{\phi_h}^{S,E}-\mathscr{S}_{\varpi_h}^{S,E}\big)\left(\bsi_{h,\circ} - \Pcalbb_{r}^{E}(\bsi_{h,\circ}),\bta_{h} - \Pcalbb_{r}^{E}(\bta_{h})\right)\\[3ex]
	\,\leq \, \disp\sum\limits_{E\in \Omega_h}\dfrac{\mathcal{L}_{\mu}}{\mu_1^{2}}\,\Vert \phi_h-\varpi_h\Vert_{0,6,E}\, \big\Vert(\Pcalbb_{r}^{E}(\bsi_{h,\circ}))^{\mathtt{d}}\big\Vert_{0,3;E}\, \big\Vert (\Pcalbb_{r}^{E}(\bta_h))^{\mathtt{d}}\big\Vert_{0,E}\\[2.5ex]
	\,+\, \disp\sum\limits_{E\in \Omega_h} \Big\{\big(\mathscr{S}_{\phi_h}^{S,E}-\mathscr{S}_{\varpi_h}^{S,E}\big)\left(\bsi_{h,\circ} - \Pcalbb_{r}^{E}(\bsi_{h,\circ}),\bsi_{h,\circ} - \Pcalbb_{r}^{E}(\bsi_{h,\circ})\right)\Big\}^{1/2}\\[2ex]
	\,\times\,\Big\{\big(\mathscr{S}_{\phi_h}^{S;E}-\mathscr{S}_{\varpi_h}^{S,E}\big)\left(\bsi_{h,\circ} - \Pcalbb_{r}^{E}(\bta_{h}),\bta_{h} - \Pcalbb_{r}^{E}(\bta_{h})\right)\Big\}^{1/2}\\[2.5ex]	
	 \, \leq\, \dfrac{\mathcal{L}_{\mu}}{\mu_1^{2}}\, \big(1+\max\{c_\star,c^{\star}\}\big)\, \Vert \phi_h-\varpi_h \Vert_{0,6;\Omega}\,\Vert \bsi_{h,\circ} \Vert_{0,3;\Omega} \Vert\bta_{h}\Vert_{0,\Omega}\,,
	\end{array}
\end{equation}
and instead of \eqref{eq3a:l.lip.S}, employing the first row of \eqref{eq:boundCF} we have
\begin{equation*}
	\begin{array}{c}
	\big|\mathscr{O}^{S}_{h,\varpi_h}(\bz_h-\by_h;\bu_{h,\circ},\bta_h)\big|\,\leq\, \dfrac{1}{\mu_1}\,\Vert	\bz_h-\by_h\Vert_{0,6;\Omega}\,\Vert\bu_{h,\circ}\Vert_{0,6;\Omega}\,\Vert\bta_h\Vert_{0,\Omega}\,,
	\end{array}
\end{equation*}
The rest of the estimates are similar to those in the proof of Lemma \ref{l_lip.S}, and hence further details are omitted.
\end{proof}
\begin{lemma}\label{l_dis.lip.St} 
		There exists a positive constant $ \mathcal{L}_{\mathscr{L}^T,\mathtt{d}} $, depending on $ \mathcal{L}_{\kappa} $, $ \kappa_1 $, $ r_{2,\mathtt{d}} $, $ \alpha_{\mathcal{A}^T,\mathtt{d}} $, $ \tilde{c}_{\star} $, $ \tilde{c}^{\star} $, such that
	\begin{equation}\label{eq:dis.lip.S.t}
		\begin{array}{l}
			\big\Vert\mathscr{L}^T_{\mathtt{d}}(\bz_h,\phi_h)-\mathscr{L}^T_{\mathtt{d}}(\by_h,\varpi_h)\big\Vert\\[2ex]
			\hspace{-.25cm}\, \leq\, \mathcal{L}_{\mathscr{L}^T,{\mathtt{d}}}\, \Big\{\big\Vert\mathscr{L}^T_{2,\mathtt{d}}(\by_h,\varpi_h)\big\Vert_{0,6;\Omega}\,\Vert\bz_h-\by_h\Vert_{0,6;\Omega}\,+\,\left(1+\big\Vert\mathscr{L}^T_{1,\mathtt{d}}(\by_h,\varpi_h)\big\Vert_{0,3;\Omega}\right)\Vert\phi_h-\varpi_h\Vert_{0,6;\Omega}\Big\}\,.
		\end{array}
	\end{equation}
\end{lemma}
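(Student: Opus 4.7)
The plan is to mirror the proof of Lemma \ref{l_lip.T} but, because the regularity assumption $(\widetilde{\textbf{RA}})$ is unavailable at the discrete level, to replace the $\mathrm L^6\cdot\mathrm L^{2p}$ argument by an $\mathrm L^6\cdot\mathrm L^3\cdot\mathrm L^2$ Hölder splitting, exactly as done in Lemma \ref{l_dis.lip.S}. First, given $(\bz_h,\phi_h),(\by_h,\varpi_h)\in\mathbf{Y}_h\times\mathrm{Y}_h$ with $\bz_h$ and $\by_h$ satisfying \eqref{eq:dis.as.z2}, write
$\mathscr{L}^T_{\mathtt{d}}(\bz_h,\phi_h)=:(\brho_{h,\star},\varphi_{h,\star})$ and $\mathscr{L}^T_{\mathtt{d}}(\by_h,\varpi_h)=:(\brho_{h,\circ},\varphi_{h,\circ})$. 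Subtracting the two instances of \eqref{eq:dis.sub_2a} and using the definition of $\mathcal{A}_{h,\phi_h}^T$, I would derive, proceeding exactly as in \eqref{eq0:lip.T}, the error identity
\begin{equation*}
\begin{array}{c}
\mathcal{A}_{h,\phi_h}^T\big((\brho_{h,\star}-\brho_{h,\circ},\varphi_{h,\star}-\varphi_{h,\circ}),(\bet_h,\psi_h)\big)\,+\,\mathscr{O}^{T}_{h,\phi_h}(\bz_h;\varphi_{h,\star}-\varphi_{h,\circ},\bet_h) \\[1.5ex]
\,=\,\big(\mathscr{A}^{T}_{h,\varpi_h}-\mathscr{A}^{T}_{h,\phi_h}\big)(\brho_{h,\circ},\bet_h)+\mathscr{O}^{T}_{h,\varpi_h}(\by_h-\bz_h;\varphi_{h,\circ},\bet_h)+\big(\mathscr{O}^{T}_{h,\varpi_h}-\mathscr{O}^{T}_{h,\phi_h}\big)(\bz_h;\varphi_{h,\circ},\bet_h).
\end{array}
\end{equation*}
An application of the global discrete inf-sup condition \eqref{eq:dis.inf.At.ot} to the left-hand side then reduces the task to bounding the three residual terms uniformly in $(\bet_h,\psi_h)$.

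For the convective residuals, I would invoke the boundedness of $\mathscr{O}^{T}_{h,\varpi_h}(\bz_h;\cdot,\cdot)$ (cf. second row of \eqref{eq:boundCF}) to get
\begin{equation*}
\big|\mathscr{O}^{T}_{h,\varpi_h}(\by_h-\bz_h;\varphi_{h,\circ},\bet_h)\big|\,\le\,\tfrac{1}{\kappa_1}\,\Vert\bz_h-\by_h\Vert_{0,6;\Omega}\,\Vert\varphi_{h,\circ}\Vert_{0,6;\Omega}\,\Vert\bet_h\Vert_{0,\Omega},
\end{equation*}
and, by a straightforward discrete analogue of the second inequality in \eqref{eq:hh} combined with $\Vert\bz_h\Vert_{0,6;\Omega}\le r_{2,\mathtt{d}}$,
\begin{equation*}
\big|\big(\mathscr{O}^{T}_{h,\varpi_h}-\mathscr{O}^{T}_{h,\phi_h}\big)(\bz_h;\varphi_{h,\circ},\bet_h)\big|\,\le\,\tfrac{\mathcal{L}_{\kappa}}{\kappa_1^{2}}\,r_{2,\mathtt{d}}\,\Vert\phi_h-\varpi_h\Vert_{0,6;\Omega}\,\Vert\varphi_{h,\circ}\Vert_{0,6;\Omega}\,\Vert\bet_h\Vert_{0,\Omega}.
\end{equation*}

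The principal obstacle is the first residual $\big(\mathscr{A}^{T}_{h,\varpi_h}-\mathscr{A}^{T}_{h,\phi_h}\big)(\brho_{h,\circ},\bet_h)$, because its definition involves both the consistent part on $\Pcalbf_{r}^{E}(\brho_{h,\circ})$ and the stabilization $\mathscr{S}_{\phi_h}^{T,E}$. Proceeding element-wise as in \eqref{eq:lip.A}, I would split the form into these two contributions. On the consistent part, the Lipschitz bound on $1/\kappa$ (cf. second column of \eqref{eq:lip.mu.kap}) together with Hölder's inequality with exponents $6,3,2$ gives a local bound by $\tfrac{\mathcal{L}_{\kappa}}{\kappa_1^{2}}\Vert\phi_h-\varpi_h\Vert_{0,6;E}\Vert\Pcalbf_{r}^{E}(\brho_{h,\circ})\Vert_{0,3;E}\Vert\Pcalbf_{r}^{E}(\bet_h)\Vert_{0,E}$. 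On the stabilization part, Cauchy–Schwarz combined with the upper estimate in the second row of \eqref{eq28a-eq28} reduces the control to $\mathscr{A}^{T,E}_{\phi_h}$- and $\mathscr{A}^{T,E}_{\varpi_h}$-type quantities on the projection residuals, to which the same $\mathrm L^6\cdot\mathrm L^3\cdot\mathrm L^2$ Hölder argument can be applied. Summing over $E\in\Omega_h$ and using the continuity of $\Pcalbf_{r}^{E}$ (cf. \eqref{eq:bound.Pi}), I would obtain
\begin{equation*}
\big|\big(\mathscr{A}^{T}_{h,\varpi_h}-\mathscr{A}^{T}_{h,\phi_h}\big)(\brho_{h,\circ},\bet_h)\big|\,\le\,\tfrac{\mathcal{L}_{\kappa}}{\kappa_1^{2}}\big(1+\max\{\widetilde{c}_{\star},\widetilde{c}^{\star}\}\big)\,\Vert\phi_h-\varpi_h\Vert_{0,6;\Omega}\,\Vert\brho_{h,\circ}\Vert_{0,3;\Omega}\,\Vert\bet_h\Vert_{0,\Omega}.
\end{equation*}

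Finally, collecting the three bounds into the inf-sup inequality and recalling that $\brho_{h,\circ}=\mathscr{L}^T_{1,\mathtt{d}}(\by_h,\varpi_h)$ and $\varphi_{h,\circ}=\mathscr{L}^T_{2,\mathtt{d}}(\by_h,\varpi_h)$, I would absorb the multiplicative constants into a single $\mathcal{L}_{\mathscr{L}^T,\mathtt{d}}$ depending on $\mathcal{L}_{\kappa}$, $\kappa_1$, $r_{2,\mathtt{d}}$, $\alpha_{\mathcal{A}^{T},\mathtt{d}}$, $\widetilde{c}_{\star}$, $\widetilde{c}^{\star}$, which yields the announced estimate \eqref{eq:dis.lip.S.t}. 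The only delicate point in the grouping is that the $\Vert\varphi_{h,\circ}\Vert_{0,6;\Omega}$ factor from the convective residuals is dominated by $1$ up to constants via the a priori bound \eqref{eq:dis.apr2}, thereby producing the factor $1+\Vert\mathscr{L}^T_{1,\mathtt{d}}(\by_h,\varpi_h)\Vert_{0,3;\Omega}$ in front of $\Vert\phi_h-\varpi_h\Vert_{0,6;\Omega}$.
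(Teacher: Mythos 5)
Your proposal is correct and follows essentially the same route as the paper, whose proof is only a pointer: repeat the argument of Lemma \ref{l_lip.T} without the regularity assumption $(\widetilde{\textbf{RA}})$, replacing the $\mathrm L^{2q}$--$\mathrm L^{2p}$ duality by the $\mathrm L^{6}$--$\mathrm L^{3}$--$\mathrm L^{2}$ H\"older splitting of \eqref{eq:lip.A}, which is exactly what you carry out (error identity, discrete inf-sup \eqref{eq:dis.inf.At.ot}, consistency-plus-stabilization splitting with \eqref{eq28a-eq28}). Your absorption of the residual $\Vert\varphi_{h,\circ}\Vert_{0,6;\Omega}$ factor via \eqref{eq:dis.apr2} matches the paper's own (implicit) treatment in the continuous analogue \eqref{eq2:lip.St}, so no further comment is needed.
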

\begin{proof}
The result similarly follows the arguments presented in the proof of Lemma \ref{l_lip.T}, albeit without taking into account the regularity assumption $ (\widetilde{ \mathbf{RA}}) $, which is not authentic in the discrete case at hand.
 For that purpose, we
use arguments utilized in inequality \eqref{eq:lip.A} given by Lemma \ref{l_dis.lip.S}, and hence we omit the corresponding details.
\end{proof}
Hence, according to the definition of $ \mathscr{L}_{\mathtt{d}} $ (cf. \eqref{eq:dis.def.opr.E}), and utilizing the a prior estimate \eqref{eq:dis.lip.S}, we conclude that
\begin{equation*}
	\begin{array}{c}
	\big\Vert	\mathscr{L}_{\mathtt{d}}\big(\bz_h,\phi_h\big)-\mathscr{L}_{\mathtt{d}}\big(\by_h,\varpi_h\big)\big\Vert\,
	\,\leq\, \mathcal{L}_{\mathscr{L}^S,\mathtt{d}}\, \big\Vert\mathscr{L}_{2,\mathtt{d}}^S(\by_h,\mathscr{L}_{2,\mathtt{d}}^T(\by_h,\varpi_h))\big\Vert_{0,6;\Omega}\Vert\bz_h-\by_h\Vert_{0,6;\Omega}
	\\[2ex]
	\,+\,\Big\{ \mathcal{L}_{\mathscr{L}^S,\mathtt{d}}\Big(\Vert\bg\Vert_{0,3/2;\Omega}\,\big(1+\Vert\mathscr{L}_{1,\mathtt{d}}^S\big(\by_h,\mathscr{L}_{2,\mathtt{d}}^T(\by_h,\varpi_h)\big)\Vert_{0,3;\Omega}\big)\\[2ex]
	+\Vert\mathscr{L}_{2,\mathtt{d}}^S\big(\by_h,\mathscr{L}_{2,\mathtt{d}}^T(\by_h,\varpi_h)\big)\Vert_{0,6;\Omega}\Big)+1\Big\}\,\big\Vert \mathscr{L}_{2,\mathtt{d}}^T(\bz_h,\phi_h)-\mathscr{L}_{2,\mathtt{d}}^T(\by_h,\varpi_h)\big\Vert_{0,6;\Omega}	\\[2.5ex]
	\,\leq\, 
 \mathcal{L}_{\mathscr{L}^S,\mathtt{d}}\, \big\Vert\mathscr{L}_{2,\mathtt{d}}^S(\by_h,\mathscr{L}_{2,\mathtt{d}}^T(\by_h,\varpi_h))\big\Vert_{0,6;\Omega}\Vert\bz_h-\by_h\Vert_{0,6;\Omega}
	\\[2ex]
	\,+\,\mathcal{L}_{\mathscr{L}^T,\mathtt{d}}\,\Big\{ \mathcal{L}_{\mathscr{L}^S,\mathtt{d}}\,\Big(\Vert\bg\Vert_{0,3/2;\Omega}\,\big(1+\big\Vert\mathscr{L}^S_{1,\mathtt{d}}(\by_h,\mathscr{L}^T_{2,\mathtt{d}}(\by_h,\varpi_h))\big\Vert_{0,3;\Omega}\big)\\[2ex]
	\,+\,\big\Vert\mathscr{L}^S_{2,\mathtt{d}}(\by_h,\mathscr{L}^T_{2,\mathtt{d}}(\by_h,\varpi_h))\big\Vert_{0,6;\Omega}\Big)+1\Big\}\\[2ex]
	\, \times\,\Big(\big\Vert \mathscr{L}^T_{2,\mathtt{d}}(\by_h,\varpi_h)\big\Vert_{0,6;\Omega}\,\Vert\bz_h-\by_h\Vert_{0,6;\Omega}\,+\,\big(1+\big\Vert \mathscr{L}^T_{1,\mathtt{d}}(\by_h,\varpi_h)\big\Vert_{0,3;\Omega}\big)\Vert\phi_h-\varpi_h\Vert_{0,6;\Omega}\Big)\,,
	\end{array}
\end{equation*}
from which, applying Lemma \ref{l_dis.wel.S.St}, we find that
\begin{equation}\label{eq:lip.con.Th}
	\begin{array}{c}
		\big\Vert	\mathscr{L}_{\mathtt{d}}\big(\bz_h,\phi_h\big)-\mathscr{L}_{\mathtt{d}}\big(\by_h,\varpi_h\big)\big\Vert
		\, \leq \,  \mathcal{L}_{\mathscr{L},\mathtt{d}} \, \Big\{
	\Big(1+\Vert\varphi_{D}\Vert_{1/2,\Gamma_{D}}\big\Vert\mathscr{L}^S_{1,\mathtt{d}}(\by_h,\mathscr{L}^T_{2,\mathtt{d}}(\by_h,\varpi_h))\big\Vert_{0,3;\Omega}\\[2ex]
	\,+\,\Vert\varphi_{D}\Vert_{1/2,\Gamma_{D}}\Big)\, \Vert\bg\Vert_{0,3/2;\Omega}\, \Vert\bz_{h}-\by_h\Vert_{0,6;\Omega}\\[2ex]
	\,+\, \Vert\bg\Vert_{0,3/2;\Omega}\,\Big(1+\big\Vert\mathscr{L}^S_{1,\mathtt{d}}(\by_h,\mathscr{L}^T_{2,\mathtt{d}}(\by_h,\varpi_h))\big\Vert_{0,3;\Omega}
	\,+\,\Vert\varphi_{D}\Vert_{1/2,\Gamma_{D}}\Big)
	\, \Big(1\\[2ex]
	\,+\,\big\Vert\mathscr{L}_{1,\mathtt{d}}^T(\by_h,\varpi_h)\big\Vert_{0,3;\Omega}\Big)\Vert\phi_h-\varpi_h\Vert_{0,6;\Omega}
		\Big\}\,,
	\end{array}
\end{equation}

where $ \mathcal{L}_{\mathscr{L},\mathtt{d}} $ is the positive constant depending on $ \mathcal{L}_{\mathscr{L}^S,\mathtt{d}} $, $ \mathcal{L}_{\mathscr{L}^T,\mathtt{d}} $, $ \alpha_{\mathcal{A}^S,\mathtt{d}}$ and $ \alpha_{\mathcal{A}^{T},\mathtt{d}} $.
We remark that the absence of control over terms $ \big\Vert\mathscr{L}^S_{1,\mathtt{d}}(\by_h,\mathscr{L}^T_{2,\mathtt{d}}(\by_h,\varpi_h))\big\Vert_{0,3;\Omega} $ and $ \big\Vert\mathscr{L}_{1,\mathtt{d}}^T(\by_h,\varpi_h)\big\Vert_{0,3;\Omega} $ prevents us from deducing Lipschitz continuity, and therefore, the operator's contractivity cannot be established.

We end this section by establishing the following main result.
\begin{theorem}\label{t:dis.exis}
Assume that data satisfy \eqref{eq:as.data.3}. Then, the operator $ \mathscr{L}_{\mathtt{d}} $ has at least one fixed point $ (\bu_h , \varphi_h) \in \mathbf{W}_{\mathtt{d}} $. Equivalently, the virtual scheme \eqref{eq:dis.ful.v1a-eq:v4-a}-\eqref{eq:dis.ful.v1a-eq:v4-d} has at least one solution $ (\bsi_h, \vec{\bu}_h)\in\mathbb{X}_h\times\mathcal{Z}_h $ and $ (\brho_h, \varphi_h)\in \mathbf{X}_h\times \mathrm{Y}_h$. Moreover, there holds
	\begin{equation}\label{eq:dis.apr.prob1}
	\Vert\bsi_{h}\Vert_{\bdiv_{6/5},\Omega}+
	\Vert\vec{\bu}_{h}\Vert_{0,6;\Omega}\,\leq\,  \dfrac{4}{\alpha_{\mathcal{A}^S,\mathtt{d}}\,\alpha_{\mathcal{A}^S,\mathtt{d}}}\, \Vert\varphi_{D}\Vert_{1/2,\Gamma_{D}}\,\Vert\bg\Vert_{0,3/2;\Omega}\,,
\end{equation}
and
\begin{equation}\label{eq:dis.apr.prob2}
\Vert\brho_{h}\Vert_{\div_{6/5},\Omega}+\Vert\varphi_h\Vert_{0,6;\Omega}\,\leq\, \dfrac{2}{\alpha_{\mathcal{A}^{T},\mathtt{d}}}\, \Vert\varphi_{D}\Vert_{1/2,\Gamma_{D}}\,,
\end{equation}
\end{theorem}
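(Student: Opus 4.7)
The plan is to apply the Brouwer fixed--point theorem rather than the Banach one, since, as pointed out right before the statement, the estimate \eqref{eq:lip.con.Th} involves the norms $\big\|\mathscr{L}^S_{1,\mathtt{d}}(\by_h,\mathscr{L}^T_{2,\mathtt{d}}(\by_h,\varpi_h))\big\|_{0,3;\Omega}$ and $\big\|\mathscr{L}^T_{1,\mathtt{d}}(\by_h,\varpi_h)\big\|_{0,3;\Omega}$, which cannot be bounded uniformly by the radius $r_{\mathtt{d}}$ of $\mathbf{W}_{\mathtt{d}}$, so contractivity is out of reach. On the other hand, $\mathbf{W}_{\mathtt{d}}$ is a closed ball of the finite--dimensional space $\mathbf{Y}_h\times \mathrm Y_h$, hence a convex, compact subset, so Brouwer's theorem only requires self--mapping and continuity.

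The self--mapping property is exactly Lemma~\ref{l_dis.cons}: under the smallness hypothesis \eqref{eq:as.data.3} one has $\mathscr{L}_{\mathtt{d}}(\mathbf{W}_{\mathtt{d}}) \subseteq \mathbf{W}_{\mathtt{d}}$. For the continuity of $\mathscr{L}_{\mathtt{d}}$ on $\mathbf{W}_{\mathtt{d}}$, I would argue as follows. By Lemma~\ref{l_dis.wel.S.St}, $\mathscr{L}^S_{\mathtt{d}}$ and $\mathscr{L}^T_{\mathtt{d}}$ are well--defined on $\mathbf{W}_{\mathtt{d}}$, and Lemmas~\ref{l_dis.lip.S} and \ref{l_dis.lip.St} provide explicit modulus--of--continuity bounds for each of them in which every factor multiplying $\|\bz_h-\by_h\|_{0,6;\Omega}$ or $\|\phi_h-\varpi_h\|_{0,6;\Omega}$ is a norm of $\mathscr{L}^S_{\mathtt{d}}$ or $\mathscr{L}^T_{\mathtt{d}}$ evaluated at $(\by_h,\varpi_h)$. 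Since all involved norms on the finite--dimensional subspaces $\mathbb{X}_h$, $\mathbf{X}_h$, $\mathcal{Z}_h$ and $\mathrm Y_h$ are equivalent, these factors are finite for each fixed $(\by_h,\varpi_h) \in \mathbf{W}_{\mathtt{d}}$, yielding the continuity of $\mathscr{L}^S_{\mathtt{d}}$ and $\mathscr{L}^T_{\mathtt{d}}$, and therefore of the composed operator $\mathscr{L}_{\mathtt{d}}$ in \eqref{eq:dis.def.opr.E}. Brouwer's theorem then ensures the existence of at least one fixed point $(\bu_h,\varphi_h)\in \mathbf{W}_{\mathtt{d}}$, which by the equivalence established at the end of Sec.~\ref{sec.4} is tantamount to the existence of a solution $\big((\bsi_h,\vec{\bu}_h),(\brho_h,\varphi_h)\big)\in (\mathbb{X}_h\times\mathcal{Z}_h)\times(\mathbf{X}_h\times\mathrm{Y}_h)$ of \eqref{eq:dis.ful.v1a-eq:v4-a}--\eqref{eq:dis.ful.v1a-eq:v4-d}.

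Finally, the a priori bounds \eqref{eq:dis.apr.prob1}--\eqref{eq:dis.apr.prob2} follow by substituting $(\bz_h,\phi_h)=(\bu_h,\varphi_h)$ into the linearised estimates of Lemma~\ref{l_dis.wel.S.St}. Specifically, \eqref{eq:dis.apr2} applied to $\mathscr{L}^T_{\mathtt{d}}(\bu_h,\varphi_h)=(\brho_h,\varphi_h)$ directly gives \eqref{eq:dis.apr.prob2}, whence $\|\varphi_h\|_{0,6;\Omega}\le \tfrac{2}{\alpha_{\mathcal{A}^{T},\mathtt{d}}}\|\varphi_{D}\|_{1/2,\Gamma_{D}}$, and inserting this into \eqref{eq:dis.apr1} applied to $\mathscr{L}^S_{\mathtt{d}}(\bu_h,\varphi_h)=(\bsi_h,\vec{\bu}_h)$ yields \eqref{eq:dis.apr.prob1}.

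The main obstacle, and the reason why only existence is obtained, lies precisely in that step of the argument that ruled out Banach: the bound \eqref{eq:lip.con.Th} is not a genuine Lipschitz estimate with a small constant because the factors $\|\mathscr{L}^S_{1,\mathtt{d}}(\cdot,\cdot)\|_{0,3;\Omega}$ and $\|\mathscr{L}^T_{1,\mathtt{d}}(\cdot,\cdot)\|_{0,3;\Omega}$, lacking the additional regularity \textbf{(RA)}--$\widetilde{\textbf{RA}}$ available in Sec.~\ref{sec.3}, are not known to remain small with the data. Uniqueness of the discrete fixed point therefore does not come for free, and no attempt will be made here to recover it.
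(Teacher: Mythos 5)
Your proof is correct and follows essentially the same route as the paper: self-mapping of $\mathbf{W}_{\mathtt{d}}$ from Lemma~\ref{l_dis.cons}, continuity of $\mathscr{L}_{\mathtt{d}}$ via \eqref{eq:lip.con.Th}, a fixed-point argument yielding existence only, and the a priori bounds \eqref{eq:dis.apr.prob1}--\eqref{eq:dis.apr.prob2} obtained by evaluating \eqref{eq:dis.apr1}--\eqref{eq:dis.apr2} at the fixed point. Your explicit appeal to Brouwer's theorem (convex compact ball in the finite-dimensional space $\mathbf{Y}_h\times\mathrm{Y}_h$ plus continuity) is in fact the correct reading of the paper's argument, which nominally says ``Banach fixed-point theorem'' but cites Ciarlet's Theorem 9.9-2 (Brouwer) and, as you observe, cannot claim contractivity, hence only existence.
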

\begin{proof}
The first conclusion of this theorem is acquired by Lemma \ref{l_dis.cons}, the continuity of $\mathscr{L}_{\mathtt{d}}$ (cf. 
\eqref{eq:lip.con.Th}), the equivalence between \eqref{eq:dis.fixp} and
\eqref{eq:dis.ful.v1a-eq:v4-a}-\eqref{eq:dis.ful.v1a-eq:v4-d}, and a direct application of Banach fixed-point theorem (cf. \cite[Theorem 9.9-2]{c-SIAM-2013}).
Next, noting that $ \mathscr{L}_{\mathtt{d}}^S(\bu_{h},\varphi_h)=(\bsi_{h}, \vec{\bu}_{h}) $ and $ \mathscr{L}_{\mathtt{d}}^T(\bu_{h},\varphi_h)=(\bsi_{h},\varphi_h) $ priori
estimates \eqref{eq:dis.apr.prob1} and \eqref{eq:dis.apr.prob2} follows from \eqref{eq:dis.apr1} and \eqref{eq:dis.apr2}, respectively.
\end{proof}
\newpage	
\section{Error estimates}\label{sec.6}
		
In this section, considering the product spaces 
\[
\mathcal{V}\,:=\, \mathbb{X}\times\mathcal{Z}\,,\quad \mathcal{V}_h \,:=\, \mathbb{X}_h\times\mathcal{Z}_h \qan  \mathcal{W}\,:=\, \mathbf{X}\times\mathrm{Y}\quad \mathcal{W}_h\,:=\, \mathbf{X}_h\times\mathrm{Y}_h \,,
\]
 equipped with the norms
 \[
 \Vert (\bta,\vec{\bv})\Vert_{\mathcal{V}}\,:=\, \Vert\bta\Vert_{\bdiv_{6/5};\Omega}+\Vert\vec{\bv}\Vert \qan 
 \Vert (\bet,\psi)\Vert_{\mathcal{W}}\,:=\, \Vert\bet\Vert_{\div_{6/5};\Omega}+\Vert\psi\Vert_{0,6;\Omega}\,,
 \]
we derive the Céa estimate for the global error
$$
\|(\bsi, \vec{\bu}) - (\bsi_h, \vec{\bu}_h)\|_{\mathcal{V}} \,+\, \|(\brho , \varphi) - (\brho_h , \varphi_h)\|_{\mathcal{W}}\,,
$$%
		
where $\big((\bsi, \vec{\bu}),(\brho , \varphi)\big) \in \mathcal{V}\times\mathcal{W}$, with $(\bu,\varphi) \in \mathbf W$ (cf. \eqref{de:ball}),
is the unique solution of \eqref{eq:v1a-eq:v4-a}-\eqref{eq:v1a-eq:v4-d}, and $\big((\bsi_h, \vec{\bu}_h),(\brho_h , \varphi_h)\big) \in \mathcal{V}_h\times\mathcal{W}_h$, with $(\bu_h , \varphi_h) \in \mathbf W_{\mathtt{d}}$ (cf. Lemma \ref{l_dis.cons}),
is the solution of \eqref{eq:dis.ful.v1a-eq:v4-a}-\eqref{eq:dis.ful.v1a-eq:v4-d}. 
As a result of this, we also obtain the a priori estimate for pressure, considering that the discrete pressure calculated according to the postprocessing formula given by \eqref{eq:postpr}, that is,
\begin{equation}\label{eq:dis.postpr}
p_h \,=\,-\dfrac{1}{n}\,\Big\{ \tr(\Pcalbb_{r}^{h}\bsi_h)+\tr(\bu_h\otimes\bu_h)\Big\}
\qin \Omega \,.
\end{equation}
To do that, we will utilize known Strang-type estimates to the pairs of corresponding continuous and discrete schemes resulting from \eqref{eq:v1a-eq:v4-a}-\eqref{eq:v1a-eq:v4-d} and \eqref{eq:dis.ful.v1a-eq:v4-a}-\eqref{eq:dis.ful.v1a-eq:v4-d} after they have been divided into the two decoupled problems.

\subsection{The main Céa estimate}
We begin the analysis with considering \eqref{eq:v1a-eq:v4-a}-\eqref{eq:v1a-eq:v4-b} and \eqref{eq:dis.ful.v1a-eq:v4-a}-\eqref{eq:dis.ful.v1a-eq:v4-b}, which can be rewritten as
\begin{equation*}
	\begin{array}{c}
\mathcal{A}_{h,\phi}^S \big((\bsi\,,\vec{\bu}), (\bta,\, \vec{\bv})\big)+\mathscr{O}_{h,\phi}^{S}(\bz;\bu,\bta)\,=\, \mathcal{F}_{\phi}^S(\bta,\,\vec{\bv})\qquad\forall\,(\bta , \vec{\bv})\in \mathcal{V}\, \qan\\[2ex]
\mathcal{A}_{h,\phi_h}^{S} \big((\bsi_{h}\,,\vec{\bu}_{h}), (\bta_h,\, \vec{\bv}_h)\big)+\mathscr{O}^{S}_{h,\phi}(\bz_h;\bu_{h},\bta_h)\,=\, \mathcal{F}_{\phi_h}^{S}(\bta_h, \vec{\bv}_h)\qquad\forall\,(\bta_h , \vec{\bv}_h)\in \mathcal{W}_h\,,
	\end{array}
\end{equation*}
where $ \mathcal{A}_{\phi}^S $ and $ \mathcal{A}_{h,\phi_h}^{S} $ are defined in \eqref{def.A} and \eqref{def.dis.A}, respectively.


Now, utilizing the a priori error estimate presented in \cite[Lemma 6.1]{cgm-ESAIM-2020}, and subsequently appropriately bounding the resulting consistency estimate, represented by 
\[
\Vert \mathcal{A}^S_{\varphi}\big((\bsi,\vec{\bu}),(\cdot,\cdot)\big)-\mathcal{A}^S_{h,\varphi_h}\big((\bsi,\vec{\bu}),(\cdot,\cdot)\big)\Vert_{\mathcal{V}_h^{'}}\,,
\]
and
\[
\Vert \mathscr{O}_{\varphi}^{S}(\bu;\bu,\cdot)- \mathscr{O}^{S}_{h,\varphi_h}(\bu_h;\bu,\cdot)\Vert_{\mathbb{X}_h^{'}}\,,
\]
we infer the existence of a constant $ \mathcal{C}_{\mathtt{st}}^S >0$, depending only on $ \| \mathcal{A}^S\| $, $ \mathcal{C}_{\mathcal{A}^{S}} $, $ \mathcal{C}_{\mathcal{O}^{S}} $, and $ \alpha_{\mathcal{A}^S,\mathtt{d}} $, and thus, easily shown to be independent of $ h $, such that
\begin{equation}\label{eq0:est.prob1}
	\begin{array}{c}
\Vert (\bsi,\vec{\bu})-(\bsi_h,\vec{\bu}_h)\Vert_{\mathcal{V}}\,\leq\, \mathcal{C}_{\mathtt{st}}^S\, \Big\{\dist\big((\bsi,\vec{\bu}),\mathcal{V}_h\big) + \Vert\mathcal{F}_{\varphi}^S-\mathcal{F}_{\varphi_h}^S\Vert_{\mathcal{V}_h^{'}}\\[2ex]
\,+\,\big\Vert \big(\mathscr{A}_{\varphi}^{S}-\mathscr{A}_{h,\varphi_h}^{S}\big)(\bze_h,\cdot)\big\Vert_{\mathbb{X}_h^{'}}+\big\Vert\big(\mathscr{B}^S-\mathscr{B}_{h}^{S}\big)(\bze_h,\cdot)\big\Vert_{\mathcal{Z}_{h}^{'}}
 +\big\Vert \big(\mathscr{B}^S-\mathscr{B}_{h}^{S}\big)(\cdot,\vec{\bw}_h)\big\Vert_{\mathbb{X}_h^{'}}\\[2ex]
 \,+\,\big\Vert \mathscr{O}_{\varphi}^{S}(\bu;\bw_h,\cdot)- \mathscr{O}^{S}_{h,\varphi_h}(\bu_h;\bw_h,\cdot)\big\Vert_{\mathbb{X}_h^{'}}
 \Big\}\,.
	\end{array}
\end{equation}

Our objective now is to bound each of the consistency terms that appear on the right-hand side of \eqref{eq0:est.prob1}.
First, following the same procedure used to derive \eqref{eq4:l.lip.S}, we easily obtain
\begin{equation}\label{eq:k1}
\Vert\mathcal{F}_{\varphi}^S-\mathcal{F}_{\varphi_h}^S\Vert_{\mathcal{V}_h^{'}}\, \leq \,	\Vert\bg\Vert_{0,3/2;\Omega}\, \Vert \varphi-\varphi_h\Vert_{0,6;\Omega}\,.
\end{equation}

On the other hand, regarding terms $ \Vert \big(\mathscr{A}_{\varphi}^{S}-\mathscr{A}_{h,\varphi_h}^{S}\big)(\bsi,\cdot)\Vert_{\mathbb{X}_h^{'}} $ and $ \Vert \mathscr{O}_{\varphi}^{S}(\bu;\bu,\cdot)- \mathscr{O}^{S}_{h,\varphi_h}(\bu_h;\bu,\cdot)\Vert_{\mathbb{X}_h^{'}} $, we add and subtract the following terms $$ \mathscr{A}_{\varphi}^{S}(\bsi,\bta_{h})\,,\,\, \mathscr{A}_{h,\varphi_h}^{S}(\bsi,\bta_{h}) \qan \mathscr{O}_{\varphi}^{S}(\bu;\bu,\bta_{h}) \,,\,\, \mathscr{O}^{S}_{h,\varphi_h}(\bu_h;\bu,\bta_{h}) \,,\,\, \mathscr{O}^{S}_{h,\varphi_h}(\bu;\bu,\bta_{h})\,, $$
separately to each ones, that for all $ \bta_{h}\in \mathbb{X}_h $, yields, respectively,
\begin{equation}\label{eq:p1}
	\begin{array}{c}
		\big(\mathscr{A}_{\varphi}^{S}-\mathscr{A}_{h,\varphi_h}^{S}\big)(\bze_h,\bta_h)\,=\, \mathscr{A}_{\varphi}^{S}(\bze_h-\bsi, \bta_{h}) +\mathscr{A}_{h,\varphi_h}^{S}(\bsi-\bze_h,\bta_{h})+ \big(\mathscr{A}_{\varphi}^{S}-\mathscr{A}_{h,\varphi_h}^{S}\big)(\bsi, \bta_{h})\,,
	\end{array}
\end{equation}
and
\begin{equation}\label{eq:p2}
\begin{array}{c}
		\mathscr{O}_{\varphi}^{S}(\bu;\bw_h,\bta_h)- \mathscr{O}^{S}_{h,\varphi_h}(\bu_h;\bw_h,\bta_h)\,=\,\mathscr{O}_{\varphi}^{S}(\bu;\bw_h-\bu,\bta_h)+\mathscr{O}^{S}_{h,\varphi_h}(\bu_h;\bu-\bw_h,\bta_h)\\[2ex] \,+\, \mathscr{O}^{S}_{h,\varphi_h}(\bu-\bu_h;\bu,\bta_h)+\big(\mathscr{O}_{\varphi}^{S}-\mathscr{O}^{S}_{h,\varphi_h}\big)(\bu;\bu,\bta_{h})\,.
\end{array}
\end{equation}

In this regard, employing the boundedness of $ \mathscr{A}_{\varphi}^{S} $, $ \mathscr{A}_{h,\varphi_h}^{S} $, and $ \mathscr{O}_{\varphi}^{S} $, $ \mathscr{O}^{S}_{h,\varphi_h} $ with respective constants $$ \| \mathscr{A}^S\| =1/\mu_1 \,, \,\, \mathcal{C}_{\mathcal{A}^{S}} = (1+c^\star)/\mu_{1} \qan \| \mathscr{O}^S\| =|\Omega|^{1/6}/\mu_1  \,,\,\, \mathcal{C}_{\mathcal{O}^{S}} =|\Omega|^{1/6}/\mu_1 \,,  $$
 we get
\begin{equation}\label{eq:k2}
	\begin{array}{c}
		\big|\mathscr{A}_{\varphi}^{S}(\bze_h-\bsi, \bta_{h})\big|\, \leq \, \| \mathscr{A}^S\|\, \Vert \bze_h-\bsi\Vert_{0,\Omega}\, \Vert \bta_{h}\Vert_{0,\Omega}\,,\\[2ex]
		\big|\mathscr{A}_{h,\varphi_h}^{S}(\bsi-\bze_h,\bta_{h})\big|\, \leq \, \mathcal{C}_{\mathcal{A}^{S}}\, \Vert \bze_h-\bsi\Vert_{0,\Omega}\, \Vert \bta_{h}\Vert_{0,\Omega}\,,\\[2ex]
		\big|\mathscr{O}_{\varphi}^{S}(\bu;\bw_h-\bu,\bta_h)\big|\,\leq\, \|  \mathscr{O}^S\| \Vert\bu\Vert_{0,6;\Omega}\, \Vert\bw_h-\bu\Vert_{0,6;\Omega}\, \Vert \bta_{h}\Vert_{0,\Omega}\,,\\[2ex]
		\big|\mathscr{O}^{S}_{h,\varphi_h}(\bu_h;\bu-\bw_h,\bta_h)\big|\,\leq\, \mathcal{C}_{\mathcal{O}^{S}}\, \Vert\bu_h\Vert_{0,6;\Omega}\, \Vert\bw_h-\bu\Vert_{0,6;\Omega}\, \Vert \bta_{h}\Vert_{0,\Omega}\,,\\[2ex]
		\big|\mathscr{O}^{S}_{h,\varphi_h}(\bu-\bu_h;\bu,\bta_h)\big|\,\leq\, \mathcal{C}_{\mathcal{O}^{S}}\,\Vert \bu-\bu_h\Vert_{0,6;\Omega}\, \Vert\bu\Vert_{0,6;\Omega}\, \Vert \bta_{h}\Vert_{0,\Omega}\,.
	\end{array}
\end{equation}

In addition, for last terms in \eqref{eq:p1} and \eqref{eq:p2}, by adding and subtracting some suitable terms, there hold
\begin{equation}\label{eq:p4}
	\begin{array}{c}
	\big(\mathscr{A}_{\varphi}^{S}-\mathscr{A}_{h,\varphi_h}^{S}\big)(\bsi, \bta_{h})\, =\, \big(\mathscr{A}_{h,\varphi}^S-\mathscr{A}_{h,\varphi_h}^{S}\big)(\bsi, \bta_{h})+\big(\mathscr{A}_{\varphi}^{S}-\mathscr{A}_{h,\varphi}^S\big)(\bsi, \bta_{h})\,,\qan\\[2ex]
	\big(\mathscr{O}_{\varphi}^{S}-\mathscr{O}^{S}_{h,\varphi_h}\big)(\bu;\bu,\bta_{h})\,=\, \big(\mathscr{O}^{S}_{h,\varphi}-\mathscr{O}^{S}_{h,\varphi_h}\big)(\bu;\bu,\bta_{h})+\big(\mathscr{O}_{\varphi}^{S}-\mathscr{O}^{S}_{h,\varphi}\big)(\bu;\bu,\bta_{h})\,.
	\end{array}
\end{equation}

Concerning the first terms mentioned above, utilizing the Lipschitz continuity of $ \mu $ (cf. first column of \eqref{eq:lip.mu.kap}), along with Cauchy-Schwarz and H\"older's inequalities, followed by the regularity estimate \eqref{eq:reg.t} and considering that the norm of $ \varphi $ is bounded by the radius $ r $ of the ball $ \mathbf{W} $ (cf. Theorem \ref{t:exis}), we easily get
\begin{equation}\label{eq:j1}
	\begin{array}{c}
		\big|\big(\mathscr{A}_{h,\varphi}^S-\mathscr{A}_{h,\varphi_h}^{S}\big)(\bsi, \bta_{h})\big|
		\,\leq \,
		\disp\sum\limits_{E\in \Omega_h}\big(\mathscr{A}_{\varphi}^{S,E}-\mathscr{A}_{\varphi}^{S,E}\big)\left(\Pcalbb_{r}^{E}(\bsi),\Pcalbb_{r}^{E}(\bta_h)\right)\\[2ex]
		\,+\, \disp\sum\limits_{E\in \Omega_h} \big(\mathscr{L}_{\varphi}^{S,E}-\mathscr{S}_{\varphi_h}^{S,E}\big)\left(\bsi - \Pcalbb_{r}^{E}(\bsi),\bta_{h} - \Pcalbb_{r}^{E}(\bta_{h})\right)\\[2.8ex]	
		\, \leq\, \dfrac{\mathcal{L}_{\mu}}{\mu_1^{2}}\,\mathcal{C}_{\epsilon}\| i_\epsilon\|\, \big(1+\max\{c_\star,c^{\star}\}\big)\, \Vert \varphi-\varphi_h \Vert_{0,6;\Omega}\,\Vert \bsi \Vert_{\epsilon,\Omega} \Vert\bta_{h}\Vert_{0,\Omega}\\[2.5ex]
		\, \leq\, \widehat{\mathcal{C}}_{\epsilon}\, \Vert \varphi-\varphi_h \Vert_{0,6;\Omega}\,\Vert\bta_{h}\Vert_{0,\Omega}\,,
	\end{array}
\end{equation}
where $ \widehat{\mathcal{C}}_{\epsilon} $ is constant depending on $ \mathcal{L}_{\mu} $, $ \mu_{1} $, $ \mathcal{C}_{\epsilon} $, $ c_\star $, $ c^{\star} $, $ \| \bg\| $ and $ r $,
whereas an application of definition of $ \mathscr{O}^{S}_{h,\varphi} $, the Lipschitz continuity of $ \mu $ (cf. first column of \eqref{eq:lip.mu.kap}), the H\"older's inequality
 guarantee that
\begin{equation}\label{eq:j2}
	\big|\big(\mathscr{O}^{S}_{h,\varphi}-\mathscr{O}^{S}_{h,\varphi_h}\big)(\bu;\bu,\bta_{h})\big|\,\leq \, \dfrac{\mathcal{L}_{\mu}}{\mu_1^{2}}\,\Vert \varphi-\varphi_h \Vert_{0,6;\Omega}\,\Vert\bu\Vert_{0,6;\Omega}^{2}\,  \Vert\bta_{h}\Vert_{0,\Omega}\,.
\end{equation}

To control the expressions involving $ \big(\mathscr{A}_{\varphi}^{S}-\mathscr{A}_{h,\varphi}^S\big) $ and $ \big(\mathscr{O}_{\varphi}^{S}-\mathscr{O}^{S}_{h,\varphi}\big) $ in the first and second rows of \eqref{eq:p4}, respectively, as well as the term $ (\mathscr{B}^S-\mathscr{B}_{h}^{S}) $ in \eqref{eq0:est.prob1}, we state the next result.
\begin{lemma}\label{l_A-l_C}
	There exist positive constants $\ell_{\mathscr{A}^{S}}$ and $\ell_{\mathscr{B}^{S}}$, independent of $h$, such that for each $ \phi\in \mathrm{Y}_h $ there hold
	\begin{subequations}
		\begin{align}
			\big|\big(\mathscr{A}_{\phi}^{S}-\mathscr{A}_{h,\phi}^S\big)(\bze_h,\bta_{h})\big|\, \leq\, \ell_{\mathscr{A}^{S}}\, \|\bze_h - \Pcalbb^h_r(\bze_h)\|_{0,\Omega}\,\|\bta_h\|_{0,\Omega}\quad\forall\,(\bze_h,\bta_{h})\in \mathbb{X}_h \times\mathbb{X}_h \,, \label{eq:aah}\\[1ex]
			\big| (\mathscr{B}^S - \mathscr{B}_{h}^{S})(\bta_h ,\vec{\bv}_h )\big| \,\le\, \ell_{\mathscr{B}^{S}}\,
			\|\bom_h - \Pcalbb^h_r(\bom_h)\|_{0,\Omega} \,\|\bta_h\|_{0,\Omega} \quad\forall\, (\bta_h,\vec{\bv}_h)\in \mathbb{X}_h \times\mathcal{Z}_h \,, \label{eq:bbh.a}\\[1ex]
			\big| (\mathscr{B}^S - \mathscr{B}_{h}^{S})(\bta_h ,\vec{\bv}_h )\big| \,\le\, \ell_{\mathscr{B}^{S}}\, \|\bta_h - \Pcalbb^h_r(\bta_h)\|_{0,\Omega}\,\Vert\bom_h\Vert_{0,\Omega}\quad\forall\, (\bta_h,\vec{\bv}_h)\in \mathbb{X}_h \times\mathcal{Z}_h \,,\label{eq:bbh.b} 
		\end{align}
	\end{subequations}
In addition, there hold
\begin{equation}\label{eq:ooh}
\big|\big(\mathscr{O}_{\phi}^{S}-\mathscr{O}^{S}_{h,\phi}\big)(\bz_h;\bw_h,\bta_{h})\big|\, \leq\, \dfrac{1}{\mu_{1}}\, \Vert (\bz_{h}\otimes\bw_h)-\Pcalbb^h_r(\bz_{h}\otimes\bw_h)\Vert_{0,\Omega}\,\|\bta_h\|_{0,\Omega} \,,
\end{equation}
for all $ (\bz_h,\bw_h,\bta_{h})\in \mathbf{Y}_h \times\mathbf{Y}_h \times\mathbb{X}_h $.
\end{lemma}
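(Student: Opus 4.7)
The four estimates share a common skeleton: I would work element-by-element, decompose each quantity using the tensor projection $\Pcalbb_r^E$ and its complement $\bullet-\Pcalbb_r^E(\bullet)$, and then exploit (i) the $L^2$-orthogonality $\int_{E}\mathbf{p}:(\bta_h-\Pcalbb_r^E(\bta_h))=0$ for every $\mathbf{p}\in\mathbb{P}_r(E)$, (ii) the stabilization equivalence \eqref{eq28a-eq28}, (iii) the boundedness of $\Pcalbb_r^E$ given by \eqref{eq:bound.Pi}, and (iv) the deviator identity $\mathbf{A}^{\mathtt{d}}\!:\!\mathbf{B}=\mathbf{A}\!:\!\mathbf{B}^{\mathtt{d}}$ together with the commutation $(\Pcalbb_r^E(\bta_h))^{\mathtt{d}}=\Pcalbb_r^E(\bta_h^{\mathtt{d}})$. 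After element-wise estimates I finish with Cauchy--Schwarz across the partition. This is the standard VEM consistency pattern used in \cite[Lemma 4.1]{gg-Pre-2023}, adapted to the variable-coefficient forms arising here.

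For \eqref{eq:aah} I would decompose, writing $e_{\bze}:=\bze_h-\Pcalbb_r^E(\bze_h)$ and $e_{\bta}:=\bta_h-\Pcalbb_r^E(\bta_h)$,
\[
(\mathscr{A}_{\phi}^{S,E}-\mathscr{A}_{h,\phi}^{S,E})(\bze_h,\bta_h)
= \mathscr{A}_{\phi}^{S,E}(e_{\bze},\bta_h)
+ \mathscr{A}_{\phi}^{S,E}(\Pcalbb_r^E(\bze_h),e_{\bta})
- \mathscr{S}_{\phi}^{S,E}(e_{\bze},e_{\bta}).
\]
The first summand is controlled by Cauchy--Schwarz and the pointwise bound $1/\mu\leq 1/\mu_1$, giving $\mu_1^{-1}\|e_{\bze}\|_{0,E}\|\bta_h\|_{0,E}$. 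For the middle summand I would replace $1/\mu(\phi)$ by the element-wise constant $1/\mu(\Pcal_0^E(\phi))$: the remaining constant-coefficient integral vanishes by the $L^2$-orthogonality of $e_{\bta}$ against the polynomial $(\Pcalbb_r^E(\bze_h))^{\mathtt{d}}\in\mathbb{P}_r(E)$, and the residual variable-coefficient piece is bounded pointwise by $2/\mu_1$, yielding again a factor $\|e_{\bta}\|_{0,E}\leq(1+M_r)\|\bta_h\|_{0,E}$. The stabilization summand is dispatched via Cauchy--Schwarz on $\mathscr{S}_{\phi}^{S,E}$ and the upper bound in \eqref{eq28a-eq28} by $c^{\star}/\mu_1 \,\|e_{\bze}\|_{0,E}\|e_{\bta}\|_{0,E}$. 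Summing over $E$ and applying the discrete Cauchy--Schwarz produces \eqref{eq:aah} with $\ell_{\mathscr{A}^S}$ depending only on $\mu_1$, $c^{\star}$ and $M_r$. Estimates \eqref{eq:bbh.a} and \eqref{eq:bbh.b} are immediate from the element-wise identity $(\mathscr{B}^S-\mathscr{B}_h^S)(\bta_h,\vec{\bv}_h)|_E=\int_E \bom_h:(\bta_h-\Pcalbb_r^E(\bta_h))$: for \eqref{eq:bbh.b} one applies Cauchy--Schwarz directly, while for \eqref{eq:bbh.a} one first subtracts $\Pcalbb_r^E(\bom_h)\in\mathbb{P}_r(E)$ inside the integrand (which leaves the integral unchanged by $L^2$-orthogonality) and then invokes Cauchy--Schwarz combined with $\|\bta_h-\Pcalbb_r^E(\bta_h)\|_{0,E}\leq(1+M_r)\|\bta_h\|_{0,E}$.

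The genuinely delicate step is \eqref{eq:ooh}, where I expect the main obstacle to lie in moving the projection error from the test function $\bta_h$ onto the tensor product $\bz_h\otimes\bw_h$ without generating additional $\mu$-dependent contributions. Starting from
\[
(\mathscr{O}_{\phi}^{S,E}-\mathscr{O}^{S,E}_{h,\phi})(\bz_h;\bw_h,\bta_h)
= \int_{E}\tfrac{1}{\mu(\phi)}(\bz_h\otimes\bw_h)^{\mathtt{d}}:(\bta_h-\Pcalbb_r^E(\bta_h)),
\]
I would apply the deviator identity to move the deviator from $(\bz_h\otimes\bw_h)$ onto $\bta_h$, then use the commutation $(\Pcalbb_r^E(\bta_h))^{\mathtt{d}}=\Pcalbb_r^E(\bta_h^{\mathtt{d}})$, and finally invoke the self-adjointness of $\Pcalbb_r^E$ in $L^2(E)$ to recast the difference as
\[
\int_{E}\bigl\{\tfrac{1}{\mu(\phi)}(\bz_h\otimes\bw_h)-\Pcalbb_r^E\bigl(\tfrac{1}{\mu(\phi)}(\bz_h\otimes\bw_h)\bigr)\bigr\}:\bta_h^{\mathtt{d}}.
\]
A Cauchy--Schwarz estimate, together with the best-approximation property of $\Pcalbb_r^E$ applied with the polynomial test $\frac{1}{\mu(\Pcal_0^E(\phi))}\Pcalbb_r^E(\bz_h\otimes\bw_h)\in\mathbb{P}_r(E)$, reduces the resulting projection error to $\mu_1^{-1}\|(\bz_h\otimes\bw_h)-\Pcalbb_r^E(\bz_h\otimes\bw_h)\|_{0,E}$ (the variable-coefficient remainder being absorbed by the uniform bound $|1/\mu(\phi)-1/\mu(\Pcal_0^E(\phi))|\leq 2/\mu_1$). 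Summing over $E$ and using $\|\bta_h^{\mathtt{d}}\|_{0,\Omega}\leq\|\bta_h\|_{0,\Omega}$ yields \eqref{eq:ooh} with the stated constant $1/\mu_1$.
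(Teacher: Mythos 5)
Your treatment of \eqref{eq:bbh.a}--\eqref{eq:bbh.b} is correct and coincides with the paper's argument (element-wise identity, insertion of $\Pcalbb_r^E(\bom_h)$ by orthogonality, then Cauchy--Schwarz), and for \eqref{eq:aah} your three-term decomposition, the bound for its first summand, and the stabilization bound via \eqref{eq28a-eq28} are fine. The gap lies precisely where the variable coefficient $1/\mu(\phi)$ must be handled: the middle summand of \eqref{eq:aah} and the last step of \eqref{eq:ooh}. After freezing the coefficient at $1/\mu(\mathcal P_0^E(\phi))$, orthogonality does kill the constant-coefficient part, but the residual is $\int_E\bigl(\tfrac{1}{\mu(\phi)}-\tfrac{1}{\mu(\mathcal P_0^E(\phi))}\bigr)\,(\Pcalbb_r^E(\bze_h))^{\mathtt d}:(\bta_h-\Pcalbb_r^E(\bta_h))^{\mathtt d}$, and bounding the coefficient difference pointwise by $2/\mu_1$ leaves a term of size $\tfrac{2}{\mu_1}\,\|\Pcalbb_r^E(\bze_h)\|_{0,E}\,\|\bta_h\|_{0,E}$: the test-function factor is controlled, but the companion factor is $\|\Pcalbb_r^E(\bze_h)\|_{0,E}\le\|\bze_h\|_{0,E}$, not the projection error $\|\bze_h-\Pcalbb_r^E(\bze_h)\|_{0,E}$ demanded by the right-hand side of \eqref{eq:aah}. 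In particular your bound does not vanish when $\bze_h$ is piecewise polynomial of degree $\le r$, whereas the claimed right-hand side does, so \eqref{eq:aah} cannot follow from the argument as written. The identical problem appears in \eqref{eq:ooh}: the ``variable-coefficient remainder'' $\bigl\|\bigl(\tfrac{1}{\mu(\phi)}-\tfrac{1}{\mu(\mathcal P_0^E(\phi))}\bigr)\Pcalbb_r^E(\bz_h\otimes\bw_h)\bigr\|_{0,E}$ is not ``absorbed'' by the uniform bound $2/\mu_1$; it is an $O(\|\bz_h\otimes\bw_h\|_{0,E})$ quantity, not an $O(\|(\bz_h\otimes\bw_h)-\Pcalbb_r^E(\bz_h\otimes\bw_h)\|_{0,E})$ one, so the stated constant $1/\mu_1$ and the stated factor do not come out.

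The paper proceeds differently at exactly these two points: it deduces \eqref{eq:aah} and \eqref{eq:ooh} from the constant-coefficient consistency bounds of \cite[Lemmas 4.4 and 4.6]{gs-M3AS-2021} combined with the Lipschitz continuity of $\mu$ (cf. \eqref{eq:lip.mu.kap}) and the first estimate of \eqref{eq28a-eq28}, which is why its constant reads $\ell_{\mathscr{A}^S}=\tfrac{1}{\mu_1}\bigl(\tfrac{\mathcal L_\mu}{\mu_1}+c^\star\bigr)$; the absence of $\mathcal L_\mu$ from your list of dependencies ($\mu_1$, $c^\star$, $M_r$) is a symptom that the mechanism carrying the variable-coefficient contribution has been lost. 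To repair your route you would need to estimate the residual through $\bigl|\tfrac{1}{\mu(\phi)}-\tfrac{1}{\mu(\mathcal P_0^E(\phi))}\bigr|\le\tfrac{\mathcal L_\mu}{\mu_1^2}\,|\phi-\mathcal P_0^E(\phi)|$ and genuinely control the resulting oscillation factor, rather than replacing the coefficient difference by the crude bound $2/\mu_1$; as written, the middle summand of \eqref{eq:aah} and the final absorption step of \eqref{eq:ooh} do not yield the stated inequalities.
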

\begin{proof}
	Firstly, using \cite[Lemmas 4.4 and 4.6]{gs-M3AS-2021}, the first estimate of \eqref{eq28a-eq28} and the the Lipschitz continuity of $ \mu $ (cf. first column of \eqref{eq:lip.mu.kap}), we get \eqref{eq:aah} with $ \ell_{\mathscr{A}^{S}}\,:=\, \frac{1}{\mu_1}\big(\frac{\mathcal{L}_\mu}{\mu_1}+c^*\big) $, and \eqref{eq:ooh}. In turn, in order to prove \eqref{eq:bbh.a}, employing the definitions of $ \mathscr{B}^S $ and $ \mathscr{B}_{h}^{S} $, and the orthogonality property of $ \Pcalbb^E_r $, we get
	\begin{equation*}
		\begin{array}{l}
			\mathscr{B}^{S}(\bta_h,\vec{\bv}_h )-\mathscr{B}_{h}^{S}(\bta_h ,\vec{\bv}_h )\,=\,
			\disp
			\sum_{E\in\Omega_h}\int_{E}\left(\boldsymbol{\tau}_h-\Pcalbb^E_r(\bta_h)\right):\bom_h \\[2mm]
			\disp
			\,=\,\sum_{E\in\Omega_h}\int_{E}\left(\bta_h-\Pcalbb^E_r(\bta_h)\right):\left(\bom_h-\Pcalbb^E_r(\bom_h)\right)\,,
		\end{array}
	\end{equation*}
	from which, applying Cauchy--Schwarz's inequaltiy and the estimate \eqref{eq_Q0} with setting $ m=s=0 $ and $ p=2 $, and then summing over all $ E\in\Omega_h $, we arrive at \eqref{eq:bbh.a} and \eqref{eq:bbh.b} by taking $ \ell_{\mathscr{B}^{S}} = 1 $.
\end{proof}

\medskip
In order to use inequalities \eqref{eq:bbh.a} and \eqref{eq:bbh.b} for bounding terms $ (\mathscr{B}^S-\mathscr{B}_{h}^{S})(\bze_{h},\vec{\bv}_h) $ and $ (\mathscr{B}^S-\mathscr{B}_{h}^{S})(\bta_{h},\vec{\bw}_h) $ in terms of $ \bsi $ and $ \vec{\bu} $, we need to add and subtract some suitable terms, which is done 
next. More precisely, adding and subtracting the pairs terms of $(\bsi, \bgam)$ and $(\Pcalbb^h_r(\bsi), \Pcalbb^h_r(\bgam))$, and using the continuity property of $ \Pcalbb^h_r $, we easily find that
\[
\big\|\bze_h - \Pcalbb^h_r(\bze_h)\big\|_{0,\Omega} \,\le\, \big\|\bsi - \Pcalbb^h_r(\bsi)\big\|_{0,\Omega}
\,+\, 2\,\|\bsi - \bze_h\|_{0,\Omega}\,,\qan
\]
\[
\big\|\bdel_h - \Pcalbb^h_r(\bdel_h)\big\|_{0,\Omega}\,\le\, \big\|\bgam - \Pcalbb^h_r(\bgam)\big\|_{0,\Omega}
\,+\, 2\,\|\bgam - \bdel_h\|_{0,\Omega}\,,
\]
which yields
\begin{equation}\label{a-a-h-bze-h-bta-h}
	\begin{array}{l}
		\big|(\mathscr{B}^S - \mathscr{B}_{h}^{S})(\bze_h ,\vec{\bv}_h )\big| \,\le\, \ell_{\mathscr{B}^{S}}\,
		\Big\{\big\|\bsi - \Pcalbb^h_r(\bsi)\big\|_{0,\Omega}
		\,+\, 2\,\|\bsi - \bze_h\|_{0,\Omega}\Big\} \,\|\bom_h\|_{0,\Omega} \,,\qan\\[3mm]
		\big|(\mathscr{B}^S - \mathscr{B}_{h}^{S})(\bta_h ,\vec{\bw}_h )\big| \,\le\, \ell_{\mathscr{B}^{S}}\,\Big\{
		\big\|\bgam - \Pcalbb^h_r(\bgam)\big\|_{0,\Omega}
		\,+\, 2\,\|\bgam - \bdel_h\|_{0,\Omega}\Big\}\, \Vert\bta_h\Vert_{0,\Omega}\,.
	\end{array}
\end{equation}

On the other hand, by replacing inequalities \eqref{eq:aah} and \eqref{eq:ooh} in the first and second rows of \eqref{eq:p4}, along with \eqref{eq:j1} and \eqref{eq:j2}, respectively, we obtain 
\begin{equation}\label{eq:k3}
\begin{array}{c}
	\big|\big(\mathscr{A}_{\varphi}^{S}-\mathscr{A}_{h,\varphi_h}^{S}\big)(\bsi, \bta_{h})\big|\,\leq\, \widehat{\mathcal{C}}_{\epsilon}\, \Vert \varphi-\varphi_h \Vert_{0,6;\Omega}\,\Vert\bta_{h}\Vert_{0,\Omega} + 
\ell_{\mathscr{A}^{S}}\, \big\|\bsi - \Pcalbb^h_r(\bsi)\big\|_{0,\Omega}\,\|\bta_h\|_{0,\Omega}\qan\\[2ex]
\big|\big(\mathscr{O}_{\varphi}^{S}-\mathscr{O}^{S}_{h,\varphi_h}\big)(\bu;\bu,\bta_{h})\big|\,\leq\,  \Big(\dfrac{\mathcal{L}_{\mu}}{\mu_1^{2}}\,\Vert \varphi-\varphi_h \Vert_{0,6;\Omega}\,\Vert\bu\Vert_{0,6;\Omega}^{2}\\[2ex]
\qquad\qquad+\,\dfrac{1}{\mu_{1}}\, \Vert (\bu\otimes\bu)-\Pcalbb^h_r(\bu\otimes\bu)\Vert_{0,\Omega}\Big)\,  \Vert\bta_{h}\Vert_{0,\Omega}\,.
\end{array}
\end{equation}

Finally, utilizing \eqref{eq:k1}, \eqref{eq:k2}, \eqref{a-a-h-bze-h-bta-h} and \eqref{eq:k3} back into \eqref{eq0:est.prob1}, and bounding $ \Vert\bu\Vert_{0,6;\Omega} $ and $ \Vert\bu_h\Vert_{0,6;\Omega} $ by $ r $ and $ r_{\mathtt{d}} $, respectively, we deduce the existence of a positive constant $ \mathcal{C}_{1,\mathtt{st}} $, depending only on $ \mathcal{C}_{\mathtt{st}}^S $, $ \| \mathscr{A}^{S}\| $, $ \mathcal{C}_{\mathcal{A}^{S}} $, $ \| \mathscr{O}^{S}\| $, $ \mathcal{C}_{\mathcal{O}^{S}} $, $ \ell_{\mathscr{A}^{S}} $, $ \ell_{\mathscr{B}^{S}} $, $ r $, $ r_{\mathtt{d}} $, $ \widehat{C}_{\epsilon} $, such that
\begin{equation}\label{eq:est.prob1}
	\begin{array}{c}
		\Vert (\bsi,\vec{\bu})-(\bsi_h,\vec{\bu}_h)\Vert_{\mathcal{V}}\,\leq\, \mathcal{C}_{1,\mathtt{st}}\, \Big\{\dist\big((\bsi,\vec{\bu}),\mathcal{V}_h\big) +\big\Vert\bsi-\Pcalbb^h_r(\bsi)\big\Vert_{0,\Omega}+\big\Vert\bgam-\Pcalbb^h_r(\bgam)\big\Vert_{0,\Omega}\\[2ex]
		\,+\,\big\Vert (\bu\otimes\bu)-\Pcalbb^h_r(\bu\otimes\bu)\big\Vert_{0,\Omega}
		+\Vert\bu\Vert_{0,6;\Omega}\, \Vert\bu-\bu_h\Vert_{0,6;\Omega}\\[2ex]
		\,+\,\left(1+\Vert\bg\Vert_{0,3/2;\Omega}+\Vert\bu\Vert_{0,6;\Omega}^{2}\right)\Vert\varphi-\varphi_h\Vert_{0,6;\Omega}
		\Big\}\,.
	\end{array}
\end{equation}

Our next step is to employ the Strang error estimate from \cite[Proposition 2.1, Corollary 2.3, and Theorem 2.3]{bcm-SIAMNA-1988} to the context provided by \eqref{eq:v1a-eq:v4-c}-\eqref{eq:v1a-eq:v4-d} and \eqref{eq:dis.ful.v1a-eq:v4-c}-\eqref{eq:dis.ful.v1a-eq:v4-d}, where each term that involves $ \mathscr{A}^{T}_{\varphi} $ and $ \mathscr{O}^{T}_{\varphi} $ is considered part of the respective functional on the right side.
Thus,
we infer the existence of a constant $ \mathcal{C}_{\mathtt{st}}^T >0$ depending only $ \| \mathscr{A}^{T}\| $, $ \| \mathscr{O}^{T}\|,\, \Vert\bu\Vert_{0,6;\Omega}  $, $ \mathcal{C}_{\mathcal{O}^{T}},\, \Vert\bu_h\Vert_{0,6;\Omega}  $, $ \| \mathscr{B}^{T}\| $ with reminding that $ \Vert\bu\Vert_{0,6;\Omega} $ and $ \Vert\bu_h\Vert_{0,6;\Omega} $ are bounded by $ r $ and $ r_{\mathtt{d}} $, espectively, and hence,  independent of $ h $,
\begin{equation}\label{eq0:est.prob2}
	\begin{array}{c}
		\big\Vert (\brho,\varphi)-(\brho_h,\varphi_h)\big\Vert_{\mathcal{W}}\,\leq\, \mathcal{C}_{\mathtt{st}}^T\, \Big\{\dist\big((\brho,\varphi),\mathcal{W}_h\big)\\[2ex]
		\,+\,\big\Vert \big(\mathscr{A}^{T}_{\varphi}-\mathscr{A}^{T}_{h,\varphi_h}\big)(\bxi_h,\cdot)\big\Vert_{\mathbf{X}_h^{'}}
		+\big\Vert \mathscr{O}^{T}_{\varphi}(\bu;\varpi_h,\cdot)- \mathscr{O}^{T}_{h,\varphi_h}(\bu_h;\varpi_h,\cdot)\big\Vert_{\mathbf{X}_h^{'}}
		\Big\}\,,
	\end{array}
\end{equation}

In turn, for the second term above by processing analogously to the previous case for derivations \eqref{eq:p1} and \eqref{eq:p4}, and making use of triangle inequality and boundedness property of $ \mathscr{A}^{T}_{\varphi} $ and $ \mathscr{A}^{T}_{h,\varphi} $, we find that
\begin{equation}\label{eq:l1}
\begin{array}{c}
		\big|\big(\mathscr{A}^{T}_{\varphi}-\mathscr{A}^{T}_{h,\varphi_h}\big)(\bxi_h,\bet_h)\big|\,\leq\, \big|\mathscr{A}^{T}_{\varphi}(\bxi_h-\brho, \bet_{h})\big| +\big|\mathscr{A}^{T}_{h,\varphi_h}(\brho-\bxi_h,\bet_{h})\big|\\[2ex]
		\,+\, \big|\big(\mathscr{A}^{T}_{\varphi}-\mathscr{A}^{T}_{h,\varphi_h}\big)(\brho, \bet_{h})\big|\\[2ex]
		\, \leq\, \left(\| \mathscr{A}^{T}\| + \mathcal{C}_{\mathcal{A}^{T}}\right)\, \Vert \bxi_h-\brho\Vert_{0,\Omega}\, \Vert \bet_{h}\Vert_{0,\Omega}\\[2ex]
		\,+\,  \big|\big(\mathscr{A}^{T}_{h,\varphi}-\mathscr{A}^{T}_{h,\varphi_h}\big)(\brho, \bet_{h})\big|+\big|\big(\mathscr{A}^{T}_{\varphi}-\mathscr{A}^{T}_{h,\varphi}\big)(\brho, \bet_{h})\big|
		\,,
\end{array}
\end{equation}

whereas for the third term, adding and subtracting $ \varphi $ in the second component of both $ \mathscr{O}^{T}_{\varphi}(\bu;\varpi_h,\cdot) $ and $ \mathscr{O}^{T}_{h,\varphi_h}(\bu_h;\varpi_h,\cdot) $, as
well as adding and subtracting $ \mathscr{O}^{T}_{h,\varphi_h}(\bu;\varpi,\cdot) $ to the resulting terms, and employing the boundedness properties of $ \mathscr{O}^{T}_{\varphi} $ and $ \mathscr{O}^{T}_{h,\varphi_h} $ with respective constants $ \| \mathscr{O}^{T}\| $ and $\mathcal{C}_{\mathcal{O}^{T}} $, yields
\begin{equation}\label{eq:i2}
	\begin{array}{c}
	\big|\mathscr{O}^{T}_{\varphi}(\bu;\varpi_h,\bet_h)- \mathscr{O}^{T}_{h,\varphi_h}(\bu_h;\varpi_h,\bet_h)\big|	\,\leq\, \big|\mathscr{O}^{T}_{\varphi}(\bu;\varpi_h -\varphi,\bet_h)\big|+\big|\mathscr{O}^{T}_{h,\varphi_h}(\bu_h;\varphi-\varpi_h,\bet_{h})\big|\\[2ex]
	\,+\,\big|\mathscr{O}^{T}_{h,\varphi_h}(\bu-\bu_h ;\varphi,\bet_{h})\big|+\big|\big(\mathscr{O}^{T}_{\varphi}-\mathscr{O}^{T}_{h,\varphi_h}\big)(\bu;\varphi,\bet_{h})\big|\\[2ex]
	\,\leq\, \Big(\big(\| \mathscr{O}^{T}\|\, \Vert\bu\Vert_{0,6;\Omega}+\mathcal{C}_{\mathcal{O}^{T}}\,\Vert\bu_h\Vert_{0,6;\Omega}\big)\, \Vert\varpi_h-\varphi\Vert_{0,6;\Omega}\\[2ex]
	\,+\,\mathcal{C}_{\mathcal{O}^{T}}\,\Vert\varphi\Vert_{0,6;\Omega}\,\Vert\bu-\bu_h\Vert_{0,6;\Omega}\Big)\Vert\bet_h\Vert_{0,\Omega}\\[2ex]
	\,+\,\big|\big(\mathscr{O}^{T}_{h,\varphi}-\mathscr{O}^{T}_{h,\varphi_h}\big)(\bu;\varphi,\bet_{h})\big|+\big|\big(\mathscr{O}^{T}_{\varphi}-\mathscr{O}^{T}_{h,\varphi}\big)(\bu;\varphi,\bet_{h})\big|\,.
	\end{array}
\end{equation}

Regarding the second term on the right hand of \eqref{eq:l1}, that is, $ \big(\mathscr{A}^{T}_{h,\varphi}-\mathscr{A}^{T}_{h,\varphi_h}\big) $,  we employ the definition of $ \mathscr{A}^{T}_{h,\varphi} $ (cf. \eqref{eq:defAT.h}), the Lipschitz continuity of $ \kappa $ (cf. second column of \eqref{eq:lip.mu.kap}), the Cauchy-Schwarz and H\"older's inequalities, and the reugularity assumption $ (\widetilde{\mathbf{RA}}) $ (cf. \eqref{eq:reg.ze}), to arrive at
\begin{equation}\label{eq:i1}
	\begin{array}{c}
		\big|\big(\mathscr{A}^{T}_{h,\varphi}-\mathscr{A}^{T}_{h,\varphi_h}\big)(\brho, \bet_{h})\big|
		\,\leq \,
		\disp\sum\limits_{E\in \Omega_h}\big(\mathscr{A}^{T,E}_{\varphi}-\mathscr{A}^{T,E}_{\varphi}\big)\left(\Pcalbf_{r}^{E}(\brho),\Pcalbf_{r}^{E}(\bet_h)\right)\\[2ex]
		\,+\, \disp\sum\limits_{E\in \Omega_h} \big(\mathscr{L}_{\varphi}^{T,E}-\mathscr{S}_{\varphi_h}^{T,E}\big)\left(\brho - \Pcalbf_{r}^{E}(\brho),\bta_{h} - \Pcalbf_{r}^{E}(\bet_{h})\right)\\[2.8ex]	
		\, \leq\, \dfrac{\mathcal{L}_{\kappa}}{\kappa_1^{2}}\,\widetilde{\mathcal{C}}_{\epsilon}\| i_\epsilon\|\, \big(1+\max\{\widetilde{c}_\star,\widetilde{c}^{\star}\}\big)\, \Vert \varphi-\varphi_h \Vert_{0,6;\Omega}\,\Vert \brho \Vert_{\epsilon,\Omega} \Vert\bet_{h}\Vert_{0,\Omega}\\[2.5ex]
		\, \leq\, \bar{\mathcal{C}}_{\epsilon}\,\Vert\varphi_{D}\Vert_{1/2+\epsilon,\Gamma_{D}}\, \Vert \varphi-\varphi_h \Vert_{0,6;\Omega}\,\Vert\bet_{h}\Vert_{0,\Omega}\,,
	\end{array}
\end{equation}

where $ \bar{\mathcal{C}}_{\epsilon} $ is a constant, depending on $ \mathcal{L}_{\kappa} $, $ \kappa_1 $, $ \widetilde{\mathrm{C}}_{\epsilon} $, $ \widetilde{c}_{\star} $, $ \widetilde{c}^{\star} $.
In addition, the term $ (\mathscr{O}^{T}_{h,\varphi}-\mathscr{O}^{T}_{h,\varphi_h}) $ in \eqref{eq:i2} can be estimated by bearing in mind the definition of $ \mathscr{O}^{T}_{h,\varphi} $,  and utilizing the H\"older's inequality, 
 as:
\begin{equation}\label{eq:i3}
	\begin{array}{c}
	\big|\big(\mathscr{O}^{T}_{h,\varphi}-\mathscr{O}^{T}_{h,\varphi_h}\big)(\bu;\varphi,\bet_{h})\big|\, \leq\, \dfrac{\mathcal{L}_{\kappa}}{\kappa_1^{2}}\, \Vert\varphi-\varphi_h\Vert_{0,6;\Omega}\, \Vert\bu\Vert_{0,6;\Omega}\, \Vert\varphi\Vert_{0,6;\Omega}\, \Vert\bet_{h}\Vert_{0,\Omega}\,.
	\end{array}
\end{equation}

Hence, replacing \eqref{eq:i1} and \eqref{eq:i3} back into \eqref{eq:l1} and \eqref{eq:i2}, respectivey, give
\begin{equation}\label{eq:i4}
	\begin{array}{c}
		\big|\big(\mathscr{A}^{T}_{\varphi}-\mathscr{A}^{T}_{h,\varphi_h}\big)(\bxi_h,\bet_h)\big|\,\leq\,
		 \left(\| \mathscr{A}^{T}\| + \mathcal{C}_{\mathcal{A}^{T}}\right)\, \Vert \bxi_h-\brho\Vert_{0,\Omega}\, \Vert \bet_{h}\Vert_{0,\Omega}\\[2ex]
		 \,+\,\bar{\mathcal{C}}_{\epsilon}\,\Vert\varphi_{D}\Vert_{1/2+\epsilon,\Gamma_{D}} \Vert \varphi-\varphi_h \Vert_{0,6;\Omega}\,\Vert\bet_{h}\Vert_{0,\Omega}\\[2ex]
		 \,+\,\big|\big(\mathscr{A}^{T}_{\varphi}-\mathscr{A}^{T}_{h,\varphi}\big)(\brho, \bet_{h})\big|\,,
	\end{array}
\end{equation}
and
\begin{equation}\label{eq:i5}
	\begin{array}{c}
		\big|\mathscr{O}^{T}_{\varphi}(\bu;\varpi_h,\bet_h)- \mathscr{O}^{T}_{h,\varphi_h}(\bu_h;\varpi_h,\bet_h)\big|\\[2ex]
			\,\leq\, \big(\| \mathscr{O}^{T}\|\, \Vert\bu\Vert_{0,6;\Omega}+\mathcal{C}_{\mathcal{O}^{T}}\,\Vert\bu_h\Vert_{0,6;\Omega}\big)\, \Vert\varpi_h-\varphi\Vert_{0,6;\Omega}\Vert\bet_{h}\Vert_{0,\Omega}\\[2ex]
		\,+\,\mathcal{C}_{\mathcal{O}^{T}}\,\Vert\varphi\Vert_{0,6;\Omega}\,\Vert\bu-\bu_h\Vert_{0,6;\Omega}\Vert\bet_{h}\Vert_{0,\Omega}+\dfrac{\mathcal{L}_{\kappa}}{\kappa_1^{2}}\, \Vert\varphi-\varphi_h\Vert_{0,6;\Omega}\, \Vert\bu\Vert_{0,6;\Omega}\, \Vert\varphi\Vert_{0,6;\Omega}\, \Vert\bet_{h}\Vert_{0,\Omega}\\[2.4ex]
		\,+\,\big|\big(\mathscr{O}^{T}_{\varphi}-\mathscr{O}^{T}_{h,\varphi}\big)(\bu;\varphi,\bet_{h})\big|\,.
	\end{array}
\end{equation}

We continue the analysis by providing the proper upper bounds for last terms involving in \eqref{eq:i4} and \eqref{eq:i5}. In fact, by following the strategies used in proof of Lemma \ref{l_A-l_C} (cf. \eqref{eq:aah} and \eqref{eq:ooh}), we conclude the existence of constants $ \ell_{\mathscr{A}^{T}}>0 $ and $ \ell_{\mathscr{O}^{T}}:=\dfrac{1}{\kappa_1} $,  such that there hold
\begin{equation*}
	\begin{array}{c}
		\big|\big(\mathscr{A}^{T}_{\varphi}-\mathscr{A}^{T}_{h,\varphi}\big)(\brho, \bet_{h})\big|\,\leq\, \ell_{\mathscr{A}^{T}}\, \|\brho - \Pcalbf^h_r(\brho)\|_{0,\Omega}\,\|\bet_h\|_{0,\Omega}\quad\forall\,\bet_h\in \mathbf{X}_h \,,\\[2ex]
		\big|\big(\mathscr{O}^{T}_{\varphi}-\mathscr{O}^{T}_{h,\varphi}\big)(\bu;\varphi,\bet_{h})\big|\,\leq\, \ell_{\mathscr{O}^{T}}\, \Vert (\bu\,\varphi)-\Pcalbf^h_r(\bu\,\varphi)\Vert_{0,\Omega}\,\|\bet_h\|_{0,\Omega}\quad\forall\,\bet_h\in \mathbf{X}_h  \,,
	\end{array}
\end{equation*}

which, replaced back in \eqref{eq:i4} and \eqref{eq:i5}, respectively, then combining the result estimate with \eqref{eq0:est.prob2}, and bounding terms $ \Vert\bu\Vert_{0,6;\Omega} $ and $ \Vert\bu_h\Vert_{0,6;\Omega} $ with $ r $ and $ r_{\mathtt{d}} $, respectively, yields 
\begin{equation}\label{eq:est.prob2}
		\begin{array}{c}
		\big\Vert (\brho,\varphi)-(\brho_h,\varphi_h)\big\Vert_{\mathcal{W}}\,\leq\, \mathcal{C}_{2,\mathtt{st}}\, \Big\{\dist\big((\brho,\varphi),\mathcal{W}_h\big)\\[2ex]
		\,+\,\left(\Vert\varphi_{D}\Vert_{1/2+\epsilon,\Gamma_{D}}+\Vert\bu\Vert_{0,6;\Omega}\,\Vert\varphi\Vert_{0,6;\Omega}\right)\Vert\varphi - \varphi_h\Vert_{0,6;\Omega}+\Vert\varphi\Vert_{0,6;\Omega}\,\Vert\bu-\bu_h\Vert_{0,6;\Omega}\\[2ex]
		\,+\,\|\brho - \Pcalbf^h_r(\brho)\|_{0,\Omega}\,+\,\Vert (\bu\,\varphi)-\Pcalbf^h_r(\bu\,\varphi)\Vert_{0,\Omega}
		\Big\}\,,
	\end{array}
\end{equation}
where $ \mathcal{C}_{2,\mathtt{st}} $ is a constant depending on $ \widetilde{\mathrm{C}}_{\mathtt{st}} $, $ \| \mathscr{A}^{T}\| $, $ \mathcal{C}_{\mathcal{A}^{T}} $, $ \| \mathscr{O}^{T}\| $, $ \mathcal{C}_{\mathcal{O}^{T}} $, $ \ell_{\mathscr{A}^{T}} $, $ \ell_{\mathscr{O}^{T}} $, $ r $, $ r_{\mathtt{d}} $, $ \bar{C}_{\epsilon} $.

For the rest of the analysis, we suitably combine the estimates \eqref{eq:est.prob1} and \eqref{eq:est.prob2}.
More precisely,
multiplying \eqref{eq:est.prob1} by $ \frac{1}{4 \mathcal{C}_{1,\mathtt{st}}} $, adding the resulting inequality and \eqref{eq:est.prob2}, and conducting some algebraic computations, yield
\begin{equation}\label{eq:l2}
	\begin{array}{c}
\dfrac{1}{4 \mathcal{C}_{1,\mathtt{st}}}	\big\Vert (\bsi,\vec{\bu})-(\bsi_h,\vec{\bu}_h)\big\Vert_{\mathcal{V}}+\big\Vert (\brho,\varphi)-(\brho_h,\varphi_h)\big\Vert_{\mathcal{W}}\\[2ex]
\,\leq\,\dfrac{1}{4 \mathcal{C}_{1,\mathtt{st}}}\,\dist\big((\bsi,\vec{\bu}),\mathcal{V}_h\big)+\mathcal{C}_{2,\mathtt{st}}\,\dist\big((\brho,\varphi),\mathcal{W}_h\big)\\[2ex]
\,+\,\dfrac{1}{4}\Big(\big\Vert\bsi-\Pcalbb^h_r(\bsi)\big\Vert_{0,\Omega}+\big\Vert\bgam-\Pcalbb^h_r(\bgam)\big\Vert_{0,\Omega}
\,+\,\big\Vert (\bu\otimes\bu)-\Pcalbb^h_r(\bu\otimes\bu)\big\Vert_{0,\Omega}\Big)\\[2.5ex]
\,+\,\mathcal{C}_{2,\mathtt{st}}\,\Big(\|\brho - \Pcalbf^h_r(\brho)\|_{0,\Omega}\,+\,\Vert (\bu\,\varphi)-\Pcalbf^h_r(\bu\,\varphi)\Vert_{0,\Omega}\Big)\\[2ex]
\,+\,\Big(\dfrac{1}{4}\Vert\bu\Vert_{0,6;\Omega}+\mathcal{C}_{2,\mathtt{st}}\,\Vert\varphi\Vert_{0,6;\Omega}\Big)\, \Vert\bu-\bu_h\Vert_{0,6;\Omega}\\[2ex]
\,+\,\Big(\dfrac{1}{4}\,\left(1+\Vert\bg\Vert_{0,3/2;\Omega}+\Vert\bu\Vert_{0,6;\Omega}^{2}\right)\\[2ex]
\,+\,\mathcal{C}_{2,\mathtt{st}}\,\left(\Vert\varphi_{D}\Vert_{1/2+\epsilon,\Gamma_{D}}+\Vert\bu\Vert_{0,6;\Omega}\,\Vert\varphi\Vert_{0,6;\Omega}\right)\Big)\Vert\varphi-\varphi_h\Vert_{0,6;\Omega}\,.
	\end{array}
\end{equation}
Consequently, thanks to \eqref{eq:l2} we are in position to establish the announced Céa estimate.
\begin{theorem}\label{theorem-Cea}
	Assume that the data satisfy
	\begin{equation}\label{eq:as1.conv}
		\begin{array}{c}
			\dfrac{1}{4}\Big(1+\dfrac{16}{\alpha_{\mathcal{A}^S}^{2}\alpha_{\mathcal{A}^T}^{2}}\Vert\bg\Vert_{0,3/2;\Omega}\,\Vert\varphi_{D}\Vert_{1/2,\Gamma_{D}}^{2}\Big)\Vert\bg\Vert_{0,3/2;\Omega}\\[2ex]
			\,+\,\mathcal{C}_{2,\mathtt{st}}\,\Big(\Vert\varphi_{D}\Vert_{1/2+\epsilon,\Gamma_{D}}+\dfrac{8}{\alpha_{\mathcal{A}^S}\alpha_{\mathcal{A}^T}^{2}}\Vert\bg\Vert_{0,3/2;\Omega}\Vert\varphi_{D}\Vert_{1/2,\Gamma_{D}}^{2}\Big)\, \leq\, \dfrac{1}{4}\,,
		\end{array}
	\end{equation}
and
\begin{equation}\label{eq:as2.conv}	\dfrac{1}{\alpha_{\mathcal{A}^S}\alpha_{\mathcal{A}^T}}\,\Vert\varphi_{D}\Vert_{1/2,\Gamma_{D}}\Vert\bg\Vert_{0,3/2;\Omega}\,+\,\mathcal{C}_{2,\mathtt{st}}\,\dfrac{1}{\alpha_{\mathcal{A}^T}}\,\Vert\varphi_{D}\Vert_{1/2,\Gamma_{D}}\,\leq\, \dfrac{1}{8\mathcal{C}_{1,\mathtt{st}}}\,.
\end{equation}
Then, there exists a constant $ \widehat{\mathcal{C}}_{\mathtt{st}} >0$, depending only on $ \mathcal{C}_{1,\mathtt{st}} $  and $ \mathcal{C}_{2,\mathtt{st}} $, and hence, independent of $ h $, such that
\begin{equation*}
	\begin{array}{c}
	\big\Vert (\bsi,\vec{\bu})-(\bsi_h,\vec{\bu}_h)\big\Vert_{\mathcal{V}}+\big\Vert (\brho,\varphi)-(\brho_h,\varphi_h)\big\Vert_{\mathcal{W}}\\[2ex]
	\, \leq\, \widehat{\mathcal{C}}_{\mathtt{st}}\, \bigg\{\dist\big((\bsi,\vec{\bu}),\mathcal{V}_h\big)\,+\,\dist\big((\brho,\varphi),\mathcal{W}_h\big) \\[2ex]
	\,+\,\Vert\bsi-\Pcalbb^h_r(\bsi)\Vert_{0,\Omega}+\Vert\bgam-\Pcalbb^h_r(\bgam)\Vert_{0,\Omega}
	\,+\,\Vert (\bu\otimes\bu)-\Pcalbb^h_r(\bu\otimes\bu)\Vert_{0,\Omega}\\[2ex]
	\,+\,\|\brho - \Pcalbf^h_r(\brho)\|_{0,\Omega}\,+\,\Vert (\bu\,\varphi)-\Pcalbf^h_r(\bu\,\varphi)\Vert_{0,\Omega}
	 \bigg\}\,.
	\end{array}
\end{equation*}
\end{theorem}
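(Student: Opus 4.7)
The starting point is estimate \eqref{eq:l2}, which already collects together all the consistency, best-approximation, and projection-error contributions on the right-hand side, at the cost of retaining the two spurious terms proportional to $\Vert\bu-\bu_h\Vert_{0,6;\Omega}$ and $\Vert\varphi-\varphi_h\Vert_{0,6;\Omega}$ on the right. The plan is therefore to absorb these two terms into the left-hand side by exploiting the smallness assumptions \eqref{eq:as1.conv}--\eqref{eq:as2.conv} together with the a priori bounds for the exact solution provided by Theorem \ref{t:exis}.

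First, I would use the elementary trivial bounds
\[
\Vert\bu-\bu_h\Vert_{0,6;\Omega}\,\le\,\Vert(\bsi,\vec{\bu})-(\bsi_h,\vec{\bu}_h)\Vert_{\mathcal{V}}
\qan
\Vert\varphi-\varphi_h\Vert_{0,6;\Omega}\,\le\,\Vert(\brho,\varphi)-(\brho_h,\varphi_h)\Vert_{\mathcal{W}}\,,
\]
which follow at once from the definitions of the product norms on $\mathcal{V}$ and $\mathcal{W}$. Then I would substitute the a priori estimates \eqref{eq:apr1.prob1} and \eqref{eq:apr2.prob2} to control the factors $\Vert\bu\Vert_{0,6;\Omega}\le\frac{4}{\alpha_{\mathcal{A}^S}\alpha_{\mathcal{A}^T}}\Vert\varphi_D\Vert_{1/2,\Gamma_D}\Vert\bg\Vert_{0,3/2;\Omega}$ and $\Vert\varphi\Vert_{0,6;\Omega}\le\frac{2}{\alpha_{\mathcal{A}^T}}\Vert\varphi_D\Vert_{1/2,\Gamma_D}$ appearing in the coefficients on the right of \eqref{eq:l2}. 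After this substitution, the coefficient multiplying $\Vert\bu-\bu_h\Vert_{0,6;\Omega}$ is bounded above exactly by the left-hand side of hypothesis \eqref{eq:as2.conv}, while the coefficient multiplying $\Vert\varphi-\varphi_h\Vert_{0,6;\Omega}$ is bounded above exactly by the left-hand side of hypothesis \eqref{eq:as1.conv}.

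Next, invoking \eqref{eq:as2.conv}, the term involving $\Vert\bu-\bu_h\Vert_{0,6;\Omega}$ is bounded by $\frac{1}{8\mathcal{C}_{1,\mathtt{st}}}\Vert(\bsi,\vec{\bu})-(\bsi_h,\vec{\bu}_h)\Vert_{\mathcal{V}}$, which can be absorbed into the leading $\frac{1}{4\mathcal{C}_{1,\mathtt{st}}}\Vert(\bsi,\vec{\bu})-(\bsi_h,\vec{\bu}_h)\Vert_{\mathcal{V}}$ on the left of \eqref{eq:l2}, leaving at least $\frac{1}{8\mathcal{C}_{1,\mathtt{st}}}\Vert(\bsi,\vec{\bu})-(\bsi_h,\vec{\bu}_h)\Vert_{\mathcal{V}}$. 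Analogously, using \eqref{eq:as1.conv}, the term $\Vert\varphi-\varphi_h\Vert_{0,6;\Omega}$ contributes at most $\frac{1}{4}\Vert(\brho,\varphi)-(\brho_h,\varphi_h)\Vert_{\mathcal{W}}$, which is absorbed into the unit coefficient of the corresponding norm on the left, leaving at least $\frac{3}{4}\Vert(\brho,\varphi)-(\brho_h,\varphi_h)\Vert_{\mathcal{W}}$.

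Finally, multiplying through by $\max\{8\mathcal{C}_{1,\mathtt{st}},\tfrac{4}{3}\}$ and collecting constants yields the desired estimate with a constant $\widehat{\mathcal{C}}_{\mathtt{st}}$ that depends only on $\mathcal{C}_{1,\mathtt{st}}$ and $\mathcal{C}_{2,\mathtt{st}}$, and is therefore independent of $h$. No step in this argument is delicate; the only point that requires care is the bookkeeping that checks hypotheses \eqref{eq:as1.conv}--\eqref{eq:as2.conv} are precisely calibrated (after inserting the a priori bounds of Theorem \ref{t:exis}) so that the two absorption steps succeed simultaneously. The heart of the work has already been done in deriving \eqref{eq:l2}.
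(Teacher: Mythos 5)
Your proposal is correct and follows essentially the same route as the paper's own proof: start from \eqref{eq:l2}, bound $\Vert\bu\Vert_{0,6;\Omega}$ and $\Vert\varphi\Vert_{0,6;\Omega}$ by the a priori estimates \eqref{eq:apr1.prob1} and \eqref{eq:apr2.prob2}, use $\Vert\bu-\bu_h\Vert_{0,6;\Omega}\le\Vert(\bsi,\vec{\bu})-(\bsi_h,\vec{\bu}_h)\Vert_{\mathcal V}$ and $\Vert\varphi-\varphi_h\Vert_{0,6;\Omega}\le\Vert(\brho,\varphi)-(\brho_h,\varphi_h)\Vert_{\mathcal W}$, and absorb the two remaining terms into the left-hand side via \eqref{eq:as1.conv} and \eqref{eq:as2.conv}. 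The only (harmless) inaccuracy is the word ``exactly'': the $\varphi$-coefficient also carries the standalone $\tfrac14$ coming from $\tfrac14\big(1+\Vert\bg\Vert_{0,3/2;\Omega}+\Vert\bu\Vert_{0,6;\Omega}^{2}\big)$, so after \eqref{eq:as1.conv} it is bounded by $\tfrac12$ rather than $\tfrac14$, and the $\bu$-coefficient becomes $\alpha_{\mathcal{A}^S}^{-1}\alpha_{\mathcal{A}^T}^{-1}\Vert\varphi_{D}\Vert_{1/2,\Gamma_{D}}\Vert\bg\Vert_{0,3/2;\Omega}+2\,\mathcal{C}_{2,\mathtt{st}}\,\alpha_{\mathcal{A}^T}^{-1}\Vert\varphi_{D}\Vert_{1/2,\Gamma_{D}}$, which differs by a factor $2$ in the second summand from the left-hand side of \eqref{eq:as2.conv} --- a calibration issue already present in the paper's terse proof --- but the absorption mechanism and the final constant depending only on $\mathcal{C}_{1,\mathtt{st}}$ and $\mathcal{C}_{2,\mathtt{st}}$ are unchanged.
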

\begin{proof}
The result follows straightly from \eqref{eq:l2} after bounding $ \Vert\bu\Vert_{0,6;\Omega} $ and $ \Vert\varphi\Vert_{0,6;\Omega} $ by \eqref{eq:apr1.prob1} and \eqref{eq:apr2.prob2}, respectively, and realizing that the last term on the right-hand side can be subtracted from the second term on the left-hand side under assumption \eqref{eq:as1.conv}, while the analogous procedure can be applied to the corresponding fifth and first terms under \eqref{eq:as2.conv}.
\end{proof}
\subsection{The rates of convergence}
 \subsubsection{Interpolation estimates}
In order to define an interpolation operators in the local spaces $ \mathbf X_r(E) $ and $ \mathbb X_r(E) $, for each element $ E\in \Omega_{h} $ we denote by
$ \widetilde{\bchi}_i^{E} $ and $ \bchi_i^{E} $ the operator associated to the $ i $-th local degree of freedom, $ i=1,\cdots, \widetilde n^E_r $ and $ i=1,\cdots, n^E_r $, respectively. From the
definition of these spaces, it is easily seen that for every smooth enough function  $\bet \in \mathbf W^{1,1}(E)$ and for each $\bta \in \mathbb W^{1,1}(E)$ there exists a unique elements $ \bet_{I}^{E}\in \mathbf X_r(E)  $ and $ \bta_{I}^{E}\in \mathbb X_r(E)  $ such that
\begin{equation}\label{def-interpolation-bf-Pi-K-r-bb-Pi-K-r}
	\begin{array}{c}
		\widetilde{\bchi}_i^{E}\big(\bet-\bet_{I}^{E}\big)\,=\,0\quad\forall\, i \,=\,1,\cdots, \widetilde n^E_r \qan
		\bchi_i^{E}\big(\bta-\bta_{I}^{E}\big)\,=\,0\quad\forall\, i \,=\,1,\cdots, n^E_r \,,
	\end{array}
\end{equation}
Let $ C $ (or $ \widetilde{C} $, $ C_{1} $, $ \widetilde{C}_1 $) denote a generic constant independent of the mesh parameter, the values of which may be different at different places. Following the discussion of \cite{cg-IMANUM-2017} (also
see \cite{gs-M3AS-2021,cgs-SIAM.NA-2018}), we have the following interpolation error estimates
\begin{equation}\label{ap-Pibf-0-r}
	\|\bet - \bet_{I}^{E}\|_{0,E} \,\le\, C \, h^s_E\,|\bet|_{s,E} \qquad\forall\,\bet \in \mathbf H^s(E)\,,
\end{equation}
and
\begin{equation}\label{ap-Pibb-0-r}
	\|\bta - \bta_I^{E}\|_{0,E} \,\le\, \widetilde C\, h^s_E\,|\bta|_{s,E} \qquad\forall\,\bta \in \mathbb H^s(E)\,, 
\end{equation}

In addition, from Lemma \ref{l1} and the following commutative properties
\begin{equation}\label{eq:comm.prop}
	\begin{array}{c}
		\div(\bet_{I}^{E})\,=\, \Pcalbf_{r}^E (\div(\bet)) \quad \forall\,\bet \in \mathbf W^{1,1}(E) \qan \\[2ex]
		\bdiv(\bta_{I}^{E})\,=\, \Pcalbb_{r}^E (\bdiv(\bta))\quad \forall\,\bta \in \mathbb W^{1,1}(E)\,,
	\end{array}
\end{equation}

we deduce, for each $ \bet \in \mathbf W^{1,1}(E) $ and $ \bta \in \mathbb W^{1,1}(E) $ such that $ \div(\bet)\in \mathrm{W}^{s,t}(E) $ and $ \bdiv(\bta)\in\mathbf{W}^{s,q}(E) $, with $s\in [0,r+1]$, there holds (see, e.g., \cite[eq. (3.14)]{gs-M3AS-2021})
\begin{equation}\label{ap-div-Pibf-0-r}
	\|\div\big(\bet - \bet_I^{E}\big)\|_{0,t;E} \,\le\, C_{1}\, h^s_E\,|\div(\bet)|_{s,t;E} \,,
\end{equation}
and 
\begin{equation}\label{ap-bdiv-Pibb-0-r}
	\|\bdiv\big(\bta - \bta_I^{E}\big)\|_{0,q;E} \,\le\, \widetilde C_{1}\, h^s_E\,|\bdiv(\bta)|_{s,q;E} \,.
\end{equation}

Then, given a subspace $V_h$ of an arbitrary Banach space $\big(V,\|\cdot\|_V\big)$, we set 
from now on
\begin{equation*}\label{dist-z-Z-h}
	\mathrm{dist}(v,V_h) \,:=\, \inf_{v_h\in V_h} \|v - v_h\|_V \qquad \forall\, v\in V \,.
\end{equation*}
Hence, according to the local approximation properties provided by the estimates 
\eqref{ap-Pibf-0-r} up to \eqref{ap-bdiv-Pibb-0-r}, and Lemma \ref{l1}, we easily 
derive the following global ones:

\begin{itemize}[leftmargin=.7in]
	\item[$(\mathbf{AP}_{h}^{\bsi})$]
	for each integer $s\in [1,r+1]$ there exists a positive constant $C$, independent of $h$, such that
	\begin{equation*}
		\text{dist}(\bta,\mathbb{X}_{h}) \,\le\, \|\bta - \bta_I\|_{\bdiv_{6/5};\Omega}\,\le\,
		C \, h^{s} \, \Big\{|\bta|_{s,\Omega} + |\bdiv(\bta)|_{s,6/5;\Omega}\Big\}\,,
	\end{equation*}
	for all $\bta \in \mathbb X$ such that $\bta \in \mathbb{H}^{s}(\Omega)$ and 
	$\bdiv(\bta) \in \mathbf{W}^{s,6/5}(\Omega)$,
	
	\item[$(\mathbf{AP}_{h}^{\bu})$]
	for each integer $s\in [0,r+1]$ there exists a positive constant $C$, independent of $h$, such that
	\begin{equation*}
		\text{dist}(\boldsymbol{\bv},\mathbf{Y}_{h}) \,\le\, \|\bv - \Pcalbf_r^h(\bv)\|_{0,6;\Omega}\,\le\,
		C \, h^{s} \, |\bv|_{s,6;\Omega}\,,
	\end{equation*}
	for all $\bv \in \mathbf Y$ such that $\bv \in \mathbf W^{s,6}(\Omega)$,
	
	\item[$(\mathbf{AP}_{h}^{\bgam})$]
	for each integer $s\in [0,r+1]$ there exists a positive constant $C$, independent of $h$, such that
	\begin{equation*}
		\text{dist}(\bom,\mathbb{Z}_{h}) \,\le\, \|\bom - \Pcalbb_r^h(\bom)\|_{0,\Omega}\,\le\,
		C \, h^{s} \, |\bom|_{s,\Omega}\,,
	\end{equation*}
	for all $\bom \in \mathbb Z$ such that $\bom \in \mathbb H^{s}(\Omega)$,
	
	\item[$(\mathbf{AP}_{h}^{\brho})$]
	for each integer $s\in [1,r+1]$ there exists a positive constant $C$, independent of $h$, such that
	\begin{equation*}
		\text{dist}(\bet,\mathbf{X}_{h}) \,\le\, \|\bet - \bet_I\|_{\div_{6/5};\Omega}\,\le\,
		C \, h^{s} \, \Big\{|\bet|_{s,\Omega} + |\div(\bet)|_{s,6/5;\Omega}\Big\}\,,
	\end{equation*}
	for all $\bet \in \mathbf X$ such that $\bet \in \mathbf{H}^{s}(\Omega)$ and 
	$\div(\bet) \in \mathrm{W}^{s,6/5}(\Omega)$, and
	
	\item[ $(\mathbf{AP}_{h}^{\varphi})$]
	for each integer $s\in [0,r+1]$ there exists a positive constant $C$, independent of $h$, such that
	\begin{equation*}
		\text{dist}(\psi,\mathrm{Y}_{h}) \,\le\, \|\psi - \Pcal_r^h(\psi)\|_{0,6;\Omega}\,\le\,
		C \, h^{s} \, |\varphi|_{s,6;\Omega}\,,
	\end{equation*}
	for all $\psi \in \mathrm Y$ such that $\psi \in \mathrm W^{s,6}(\Omega)$.
\end{itemize}

We are now in a position to provide the rates of convergence of the virtual scheme \eqref{eq:dis.ful.v1a-eq:v4-a}-\eqref{eq:dis.ful.v1a-eq:v4-d}.
\begin{theorem}\label{theorem-rates-of-convergence}
	Assume that the data satisfy \eqref{eq:as1.conv} and \eqref{eq:as2.conv}, and let $ (\bsi,\vec{\bu})\in \mathbb{X}\times\mathcal{Z} $, $ (\brho,\varphi)\in \mathbf{X}\times\mathrm{Y} $ and 
	$ (\bsi_h,\vec{\bu}_h)\in \mathbb{X}_h\times\mathcal{Z}_h $, $ (\brho_h,\varphi_h)\in \mathbf{X}_h\times\mathrm{Y}_h $ be the solutions of \eqref{eq:v1a-eq:v4-a}-\eqref{eq:v1a-eq:v4-d} and \eqref{eq:dis.ful.v1a-eq:v4-a}-\eqref{eq:dis.ful.v1a-eq:v4-d}, respectively, with $ (\bu,\varphi)\in \mathbf{W}_r $ and $ (\bu_h,\varphi_h)\in \mathbf{W}_{r_{\mathtt{d}}} $, whose existences are guaranteed by Theorems \ref{t:exis} and \ref{t:dis.exis}, respectively.
	Furthermore, given an integer $ r\geq 0 $, assume that there exists $k \in [0,r+1]$ and $s \in [1,r+1]$ such that
	\[
	\bsi \in \mathbb H^s(\Omega),\quad \bdiv(\bsi) \in \mathbf W^{s,6/5}(\Omega), \quad  \bu \in \mathbf W^{k,6}(\Omega),\quad\bu \otimes \bu \in \mathbb H^k(\Omega),\quad
	\bgam\in \mathbb H^k(\Omega)\,,
	\]
	and
	\[
	\brho \in \mathbf H^s(\Omega), \quad\div(\brho) \in \mathrm W^{s,6/5}(\Omega), \quad\varphi \in \mathrm W^{k,6}(\Omega), \quad\varphi\,\bu \in \mathbf H^k(\Omega)\,. 
	\]
	Then, there exists a positive constant $C$, independent of $h$, such that
	\begin{equation*}\label{eq-rates-of-convergence}
		\begin{array}{c}
			\disp
			\|(\bsi,\vec{\bu}) - (\bsi_h,\vec{\bu}_h)\|_{\mathcal V} + \|(\brho,\varphi) - (\brho_h , \varphi_h)\|_{\mathcal W}
			\,\le\, C\,h^{\min\{k,s\}} \,\Big\{ |\bsi|_{s,\Omega} \,+\, |\bdiv(\bsi)|_{s,6/5;\Omega} 
			\,+\,  |\bu|_{k,6;\Omega}\,+\, |\bgam|_{k,\Omega} \\[2ex]
			\disp
			+ \,\, |\brho|_{s,\Omega} \,+ \, |\div(\brho)|_{s,6/5;\Omega} \,+\, |\varphi|_{s,6;\Omega} 
			\,+\, |\bu \otimes \bu|_{k,\Omega} \,+\, |\varphi\,\bu|_{k,\Omega}\Big\} \,.
		\end{array}
	\end{equation*} 
\end{theorem}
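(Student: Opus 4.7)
The plan is to invoke the global Céa estimate of Theorem \ref{theorem-Cea} and then bound each of the seven terms on its right-hand side via the approximation properties $(\mathbf{AP}_h^\bsi)$--$(\mathbf{AP}_h^\varphi)$ together with the projection estimates in Lemma \ref{l1}. No new stability argument is needed; the work reduces to collecting the various rates and observing that $h^s$ and $h^k$ can both be replaced by $h^{\min\{k,s\}}$ under the given regularity hypotheses.

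First I would split the Céa bound into the ``distance'' contributions and the ``projection error'' contributions. For the distances, applying $(\mathbf{AP}_h^\bsi)$ together with $(\mathbf{AP}_h^\bu)$ and $(\mathbf{AP}_h^{\bgam})$ yields
\begin{equation*}
\dist\big((\bsi,\vec{\bu}),\mathcal{V}_h\big) \,\le\, C\,\big\{h^{s}\,|\bsi|_{s,\Omega}+h^{s}\,|\bdiv(\bsi)|_{s,6/5;\Omega}+h^{k}\,|\bu|_{k,6;\Omega}+h^{k}\,|\bgam|_{k,\Omega}\big\}\,,
\end{equation*}
while $(\mathbf{AP}_h^\brho)$ together with $(\mathbf{AP}_h^\varphi)$ gives
\begin{equation*}
\dist\big((\brho,\varphi),\mathcal{W}_h\big) \,\le\, C\,\big\{h^{s}\,|\brho|_{s,\Omega}+h^{s}\,|\div(\brho)|_{s,6/5;\Omega}+h^{k}\,|\varphi|_{k,6;\Omega}\big\}\,.
\end{equation*}

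Next I would treat the remaining projection-error terms. Applying the first inequality of \eqref{eq_Q0} in Lemma \ref{l1} element-by-element with $m=0$ and $p=2$, and then summing over $E\in\Omega_h$, one gets the global bounds
\begin{equation*}
\|\bsi-\Pcalbb^h_r(\bsi)\|_{0,\Omega}\,\le\, C\,h^{s}\,|\bsi|_{s,\Omega}\,, \qquad
\|\bgam-\Pcalbb^h_r(\bgam)\|_{0,\Omega}\,\le\, C\,h^{k}\,|\bgam|_{k,\Omega}\,,
\end{equation*}
\begin{equation*}
\|(\bu\otimes\bu)-\Pcalbb^h_r(\bu\otimes\bu)\|_{0,\Omega}\,\le\, C\,h^{k}\,|\bu\otimes\bu|_{k,\Omega}\,,
\end{equation*}
\begin{equation*}
\|\brho-\Pcalbf^h_r(\brho)\|_{0,\Omega}\,\le\, C\,h^{s}\,|\brho|_{s,\Omega}\,, \qquad
\|(\varphi\,\bu)-\Pcalbf^h_r(\varphi\,\bu)\|_{0,\Omega}\,\le\, C\,h^{k}\,|\varphi\,\bu|_{k,\Omega}\,.
\end{equation*}
Here the regularity hypotheses $\bu\otimes\bu\in\mathbb{H}^k(\Omega)$ and $\varphi\,\bu\in\mathbf{H}^k(\Omega)$ are exactly what is needed to legitimize the last two estimates.

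Finally I would plug all of the above into the Céa bound of Theorem \ref{theorem-Cea}, factor out $h^{\min\{k,s\}}$ (using $h\le\mathrm{diam}(\Omega)$ to absorb the difference of exponents into the constant), and relabel the resulting multiplicative constant as $C$. Since every step is an application of a previously established estimate, there is no real obstacle beyond careful bookkeeping; the only subtlety worth checking is that the ranges $k\in[0,r+1]$ and $s\in[1,r+1]$ declared in the theorem are compatible with the admissible ranges in $(\mathbf{AP}_h^{\bsi})$--$(\mathbf{AP}_h^{\varphi})$ and in Lemma \ref{l1}, which they are by construction.
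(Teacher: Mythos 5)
Your proposal follows essentially the same route as the paper: the paper's proof consists precisely of applying Lemma \ref{l1} (eq.~\eqref{eq_Q0} with $m=0$) to obtain the projection-error bounds for $\bsi$, $\bgam$, $\bu\otimes\bu$, $\brho$ and $\varphi\,\bu$, and then inserting these together with the approximation properties $(\mathbf{AP}_h^{\bsi})$--$(\mathbf{AP}_h^{\varphi})$ into the C\'ea estimate of Theorem \ref{theorem-Cea}. Your bookkeeping is correct (your $h^{k}\,|\varphi|_{k,6;\Omega}$ for the temperature distance is in fact the natural bound given the hypothesis $\varphi\in \mathrm W^{k,6}(\Omega)$), so the argument matches the paper's.
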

\begin{proof}
First, an application of approximation properties given by \eqref{eq_Q0} (cf. Lemma \ref{l1}) with taking $m = 0$, gives
	\begin{equation}\label{varias}
		\begin{array}{rcl}
			\disp
			\|\bsi - \Pcalbb^h_r(\bsi)\|_{0,\Omega} &\le& C_r\, h^s\,|\bsi|_{s,\Omega} \,, \\[1ex]
			\disp
			\|\bgam - \Pcalbb^h_r(\bgam)\|_{0,\Omega} &\le& C_r\, h^k\,|\bgam|_{k,\Omega} \,, \\[1ex]
			\disp
			\|(\bu \otimes \bu) - \Pcalbb^h_r(\bu \otimes \bu)\|_{0,\Omega}  &\le& C_r\, h^k\, |\bu\otimes\bu|_{k,\Omega} \,, \\[1ex]
			\disp
			\|\brho - \Pcalbf^h_r(\brho)\|_{0,\Omega} &\le& C_r\,h^s\, |\brho|_{s,\Omega} \,, \qanf \\[1ex]
			\disp
			\|(\varphi \, \bu) - \Pcalbf^h_r(\varphi \, \bu)\|_{0,\Omega} &\le&  C_r\,h^k\, |\varphi\,\bu|_{k,\Omega}\,.
		\end{array}
	\end{equation}
Thus, the proof proceeds directly from Theorem \ref{theorem-Cea} by involving the approximation properties of ({\bf AP}$^\bsi_h$), ({\bf AP}$^\bu_h$), ({\bf AP}$^\bgam_h$),
	({\bf AP}$^\brho_h$), and ({\bf AP}$^\varphi_h$) (cf. Section \ref{section32}), along with the estimates provided by \eqref{varias}.
\end{proof}
		\newpage					
	\section{Numerical Results}\label{sec.7}
	In this section, we present several numerical experiments in order to verify the theoretical
	results and illustrate the performance of the proposed method.
	The implementation
	of the numerical method produced by an
	in-house MATLAB code, using sparse factorisation as linear solver. A Newton-Raphson algorithm with a
	fixed tolerance tol 1e-6, is used for the resolution of the nonlinear problem \eqref{eq:dis.ful.v1a-eq:v4-a}-\eqref{eq:dis.ful.v1a-eq:v4-d}. As usual, the
	iterative method is finished when the $\ell^2$-norm of the global incremental discrete 
	solutions drop below that value.
	In addition, errors between exact and approximate solutions relevant to the norms used in the analysis of
	Section \ref{sec.6} are denoted as

\begin{equation*}
		\begin{array}{c}
		{\tt e}(\bsi) \,:=\, \Vert\bsi-\widehat{\bsi}_h
		\Vert_{\bdiv_{6/5};\Omega}\,, \,\,\,
		{\tt e}(\bu) \,:=\, \Vert\bu-\bu_h\Vert_{0,6;\Omega}\,,\,\,\,
		{\tt e}(\bgam) \,:=\, \Vert \bgam-\bgam_h\Vert_{0,\Omega}\,,
		 \,\,\,
		{\tt e}(p) \,:=\, \Vert p-\widehat{p}_h\Vert_{0,\Omega}, \\[2ex]	
		{\tt e}(\brho) \,:=\, \Vert\brho-\widehat{\brho}_h\Vert_{\div_{6/5};\Omega}\,, \qan {\tt e}(\varphi) \,:=\, \Vert\varphi-\varphi_h\Vert_{0,6;\Omega}
		\,.
	\end{array}
\end{equation*}

	In turn, for all $ \star\in \{\boldsymbol{\sigma},\mathbf{u},\bgam, p,\boldsymbol{\rho},\varphi\} $, 
	we let ${\tt r}(\star)~:=~\dfrac{\text{log}({\tt e}(\star)/{\tt e}'(\star))}{\text{log}(h/h')}$ be the experimental 
	rates of convergence, where $ h $ and $ h^{'} $ denote two consecutive mesh sizes with 
	errors ${\tt e}(\star)$ and ${\tt e}^{'}(\star)$, respectively. 
	
	\subsection{Example 1 (\cite{ag-CMAM-2020}): smooth exact solution}
\begin{figure}[t!]
	\begin{tabular}{ccc}
		\centering
		\hspace{-1.6cm}\includegraphics[width=.45\textwidth]{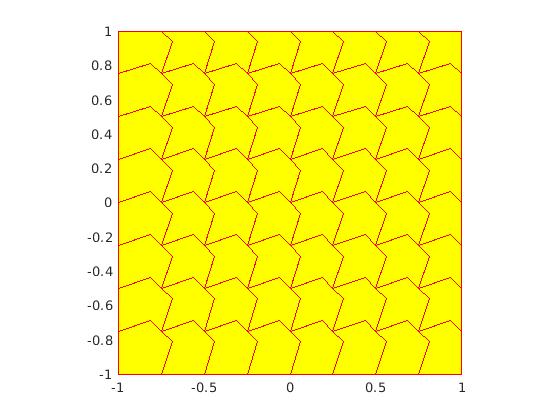} 
		\hspace{-1.4cm}\includegraphics[width=.45\textwidth]{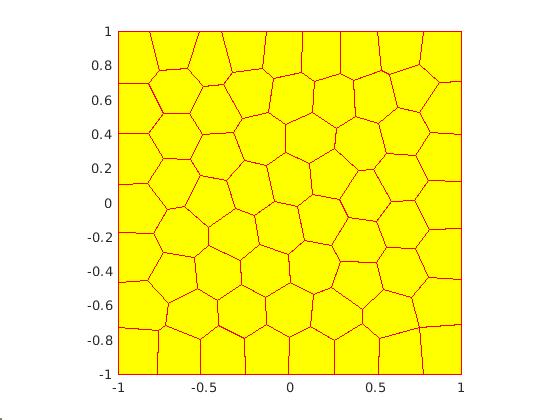} 
		\hspace{-1.4cm}\includegraphics[width=.45\textwidth]{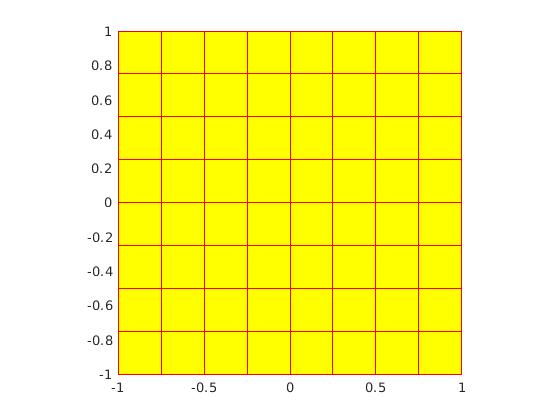}
	\end{tabular}
	\caption{Example 1, illustration of the meshes used: non-convex mesh (left panel), hexagon mesh (central panel)
		and  quadrilateral mesh (right panel).}\label{figure-1}
\end{figure}
In this test, we corroborate the rates of convergence for our problem with a smooth exact solution. For this purpose, we choose the viscosity, thermal conductivity, and body force given by
	\[
	\mu(\varphi)\, =\, \exp(-0.25 \,\varphi)\,, \quad \kappa(\varphi)\, =\, \exp(0.25\,\varphi)\,\qan \bg \, =\, (0,1)^{t}\,,
	\]
	
	and consider the following closed-form exact solution to the problem \eqref{eq:prob1}-\eqref{eq:prob5a} 
	\[
	\mathbf{u}(\textbf{x}) \,:=\, 
	\begin{pmatrix} 
		2y\sin(\pi x)\sin(\pi y)(x^2-1)+\pi \sin(\pi x)\cos(\pi y)(x^2-1)(y^2-1)
		\\[2mm]
		-2x\sin(\pi x)\sin(\pi y)(y^2-1)-\pi \sin(\pi y)\cos(\pi x)(x^2-1)(y^2-1) \end{pmatrix} \,, \quad
	p(\textbf{x}) \,:=\,  x^{2}-y^{2} \,,
	\]
	and
	\[
	\varphi(\textbf{x})\, = \, (x^2-1)(y^2-1)\,,
	\]
	defined over the computational domain $ \textbf{x} \,:=\, (x , y)^{\mathtt{t}}\in\Omega:=(-1,1)^{2} $.
	Table \ref{Tab1} shows
	the convergence history for sequences of successively refined meshes made of non-convex, hexagons, quadrilaterals elements (see Fig. \ref{figure-1}). The experiments confirm the theoretical rate of convergence
	$O(h^{r+1})$ for $r\in \big\{0,1\big\}$, provided by Theorem \ref{theorem-rates-of-convergence} for all the unknowns. 
One can be seen from Table \ref{Tab1} that errors of all unknowns for the second mesh (i.e., hexagon case) are more accurate than other cases.
	Furthermore, in order to illustrate the accurateness of the discrete scheme, in Figure \ref{fig1} we 
	display some components of the approximate solution obtained with the polynomial degree 
	$r = 0$.
		\begin{sidewaystable}
			\caption{Example 1, history of convergence using  non-convex, hexagons, and quadrilaterals meshes denoted by $ \mathcal{T}_h $, $ \mathcal{K}_h $ and $ \mathcal{R}_h$, respectively.}\label{Tab1}
			\bigskip	
			\centerline{
				{\footnotesize\begin{tabular}{|cccccccccccccc|}
						\hline\hline
						$r$ & $h$ & $ \mathtt{e}(\bsi) $ & $ {\tt r}(\bsi) $ & $\mathtt{e}(\bu)$ & ${\tt r}(\bu)$ & 
						$\mathtt{e}(\bgam)$ & ${\tt r}(\bgam)$ & $\mathtt{e}(p)$ & ${\tt r}(p) $ &
						$\mathtt{e}(\brho)$ & ${\tt r}(\brho)$ &
						$\mathtt{e}(\varphi)$ & ${\tt r}(\varphi)$ \\
						\hline\hline
						&\cellcolor{purple} $\mathcal{T}_{h} $& & &	&&&&&&&&& \\[1mm]
						\hline
						{}& {2.500e-01}& {6.972e-01}& {-}& {7.601e-01}& {-}& {8.174e-01}& {-}& {2.510e+00}& {-}& {5.519e-01}& {-}& {4.190e-01}& {-}\\[1mm]
						
						{}& {1.250e-01}& {4.066e-01}& {0.777}& {4.238e-01}& {0.842}& {4.832e-01}& {\cellcolor{yellow}0.758}& {1.353e+00}& {0.891}& {3.032e-01}& {0.863}& {1.768e-01}& {1.244}\\[1mm]	
						
						{0}& {6.250e-02}& {2.054e-01}& {0.985}& {2.141e-01}& {0.984}& {2.367e-01}& {1.029}& {6.326e-01}& {1.096}& {1.465e-01}& {1.049}& {8.478e-02}& {1.060}\\[1mm]	
						
						{}& {3.125e-02}& {1.001e-01}& {1.037}& {1.061e-01}& {1.013}& {1.148e-01}& {1.043}& {2.299e-01}& {1.460}& {7.092e-02}& {1.046}& {4.183e-02}& {1.019}\\[1mm]					
						\hline
						
						{}& {2.500e-01}& {3.928e-01}& {-}& {3.940e-01}& {-}& {4.286e-01}& {-}& {2.843e+00}& {-}& {2.544e-01}& {-}& {7.559e-02}& {-}\\[1mm]
						
						{}& {1.250e-01}& {1.166e-01}& {1.751}& {1.028e-01}& {1.938}& {1.183e-01}& {1.857}& {4.405e-01}& {2.690}& {6.928e-02}& {1.876}& {1.749e-02}& {2.111}\\[1mm]
						
						{1}& {6.250e-02}& {3.106e-02}& {1.908}& {2.624e-02}& {1.969}& {3.249e-02}& {1.864}& {1.071e-01}& {2.040}& {1.872e-02}& {1.888}& {4.180e-03}& {2.064}\\[1mm]
						
						{}& {3.125e-02}& {7.924e-03}& {1.970}& {6.589e-03}& {1.993}& {8.601e-03}& {\cellcolor{cyan}1.917}& {2.685e-02}& {1.996}& {4.805e-03}& {1.961}& {1.030e-03}& {2.020}\\[1mm]
						\hline
						&\cellcolor{purple} $\mathcal{K}_{h} $& & &	&&&&&&&&& \\[1mm]
						\hline
						{}& {2.500e-01}& {\cellcolor{orange}6.912e-01}& {-}& {7.733e-01}& {-}& {8.021e-01}& {-}& {1.826e+00}& {-}& {5.205e-01}& {-}& {4.151e-01}& {-}\\[1mm]
						
						{}& {1.250e-01}& {\cellcolor{orange}3.816e-01}& {0.856}& {4.119e-01}& {0.908}& {4.879e-01}& {\cellcolor{yellow}0.717}& {1.056e+00}& {0.790}& {2.707e-01}& {0.942}& {1.702e-01}& {1.286}\\[1mm]	
						
						{0}& {6.250e-02}& {\cellcolor{orange}1.919e-01}& {0.992}& {2.048e-01}& {1.008}& {2.265e-01}& {1.107}& {4.460e-01}& {1.243}& {1.357e-01}& {0.997}& {8.148e-02}& {1.062}\\[1mm]	
						
						{}& {3.125e-02}& {\cellcolor{orange}9.403e-02}& {1.028}& {1.012e-01}& {1.017}& {1.104e-01}& {1.037}& {1.527e-01}& {1.546}& {6.733e-02}& {1.010}& {3.994e-02}& {1.028}\\[1mm]
						
						\hline
						{}& {2.500e-01}& {3.505e-01}& {-}& {3.679e-01}& {-}& {3.956e-01}& {-}& {2.449e+00}& {-}& {2.262e-01}& {-}& {7.799e-02}& {-}\\[1mm]
						
						{}& {1.250e-01}& {9.628e-02}& {1.863}& {8.854e-02}& {2.054}& {9.571e-02}& {2.047}& {2.603e-01}& {3.234}& {5.409e-02}& {2.064}& {1.525e-02}& {2.354}\\[1mm]
						
						{1}& {6.250e-02}& {2.454e-02}& {1.972}& {2.195e-02}& {2.011}& {2.363e-02}& {2.017}& {4.025e-02}& {2.693}& {1.368e-02}& {1.983}& {3.509e-03}& {2.119}\\[1mm]
						
						{}& {3.125e-02}& {6.013e-03}& {2.029}& {5.389e-03}& {2.026}& {5.745e-03}& {\cellcolor{cyan}2.040}& {5.895e-03}& {2.771}& {3.371e-03}& {2.020}& {8.423e-04}& {2.058}\\[1mm]
						
						\hline
						&\cellcolor{purple} $\mathcal{R}_{h} $& & &	&&&&&&&&& \\[1mm]
						\hline
						{}& {2.500e-01}& {8.811e-01}& {-}& {1.327e+00}& {-}& {2.840e+00}& {-}& {1.531e+01}& {-}& {5.328e-01}& {-}& {5.203e-01}& {-}\\[1mm]
						
						{}& {1.250e-01}& {4.496e-01}& {0.970}& {5.498e-01}& {1.270}& {6.958e-01}& {\cellcolor{yellow}2.029}& {4.507e+00}& {1.764}& {2.798e-01}& {0.929}& {1.941e-01}& {1.422}\\[1mm]	
						
						{0}& {6.250e-02}& {2.053e-01}& {1.130}& {2.297e-01}& {1.259}& {2.568e-01}& {1.437}& {1.288e+00}& {1.807}& {1.381e-01}& {1.018}& {8.479e-02}& {1.194}\\[1mm]	
						
						{}& {3.125e-02}& {9.735e-02}& {1.076}& {1.059e-01}& {1.117}& {1.145e-01}& {1.165}& {3.570e-01}& {1.850}& {6.811e-02}& {1.020}& {4.086e-02}& {1.053}\\[1mm]
						
						\hline
						{}& {2.500e-01}& {3.633e-01}& {-}& {3.845e-01}& {-}& {4.250e-01}& {-}& {2.514e+00}& {-}& {2.381e-01}& {-}& {1.005e-01}& {-}\\[1mm]
						
						{}& {1.250e-01}& {1.006e-01}& {1.853}& {9.814e-02}& {1.970}& {1.164e-01}& {1.868}& {2.630e-01}& {3.256}& {5.887e-02}& {2.016}& {1.951e-02}& {2.364}\\[1mm]
						
						{1}& {6.250e-02}& {2.579e-02}& {1.962}& {2.429e-02}& {2.014}& {2.853e-02}& {2.028}& {5.183e-02}& {2.343}& {1.516e-02}& {1.957}& {4.314e-03}& {2.176}\\[1mm]
						
						{}& {3.125e-02}& {6.502e-03}& {1.987}& {5.987e-03}& {2.020}& {6.780e-03}& {\cellcolor{cyan}2.073}& {1.104e-02}& {2.231}& {3.758e-03}& {2.011}& {9.860e-04}& {2.129}\\[1mm]
						
						\hline
			\end{tabular}}}
		\end{sidewaystable}
	\begin{figure}[t!]
		\centering
		\begin{tabular}{ccc}
			\hspace{-1.5cm}	\includegraphics[width=.41\textwidth]{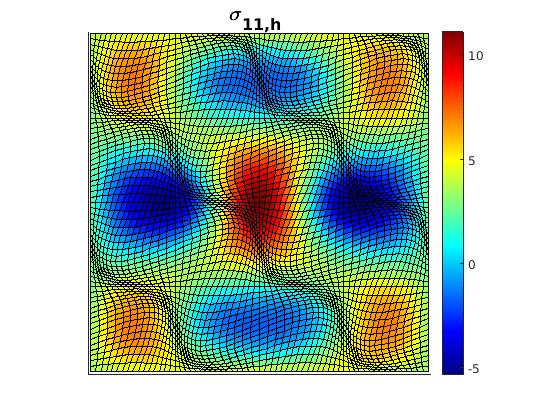}
			\hspace{-1.1cm}	\includegraphics[width=.41\textwidth]{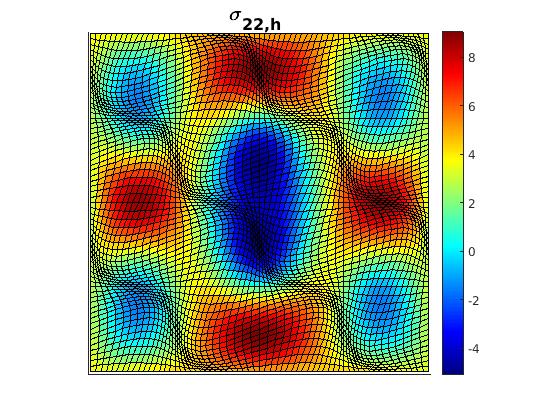}
			\hspace{-1.1cm}	\includegraphics[width=.41\textwidth]{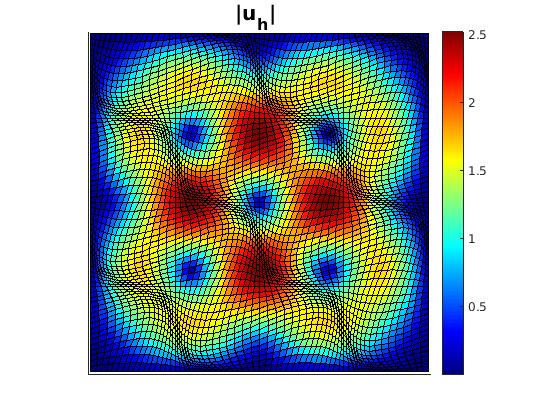}\\
			\hspace{-1.5cm} \includegraphics[width=.41\textwidth]{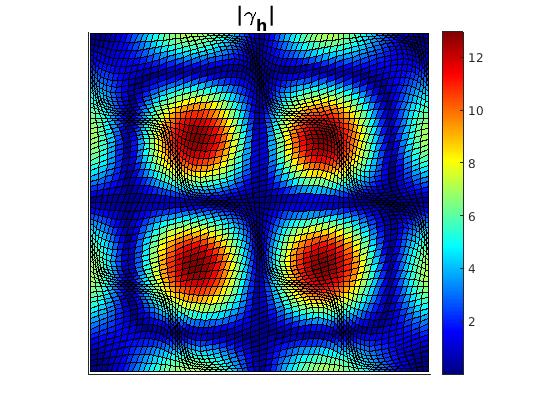}
			\hspace{-1.1cm}	\includegraphics[width=.41\textwidth]{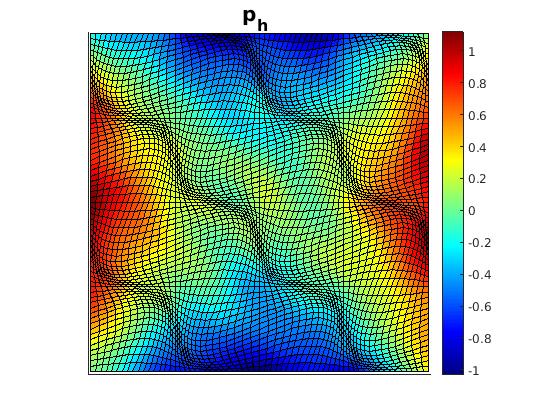}
			\hspace{-1.1cm}	\includegraphics[width=.41\textwidth]{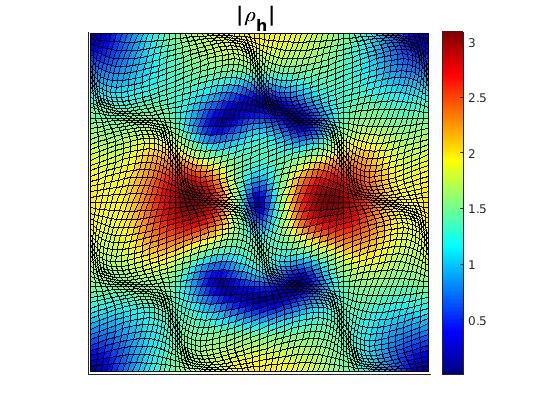}\\
			\hspace{-1.5cm}	\includegraphics[width=.41\textwidth]{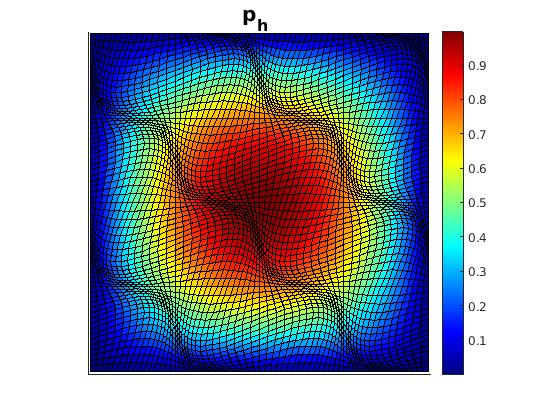}
		\end{tabular}
		\vspace{-.6cm}
		\caption{Example 1, snapshots of the first and fourth components of numerical stress, and velocity magnitude 
			(first row, left to right), vorticity magnitude,  pressure and pseudoheat magnitude (second row, left to right), 
			and temperature (third row), computed with $r = 0$
			in the mesh made of distortion quadrilaterals  with $h = 1/32$. }\label{fig1}
	\end{figure}

\subsection{Example 2: natural convection in a square cavity}
In the second example, we study the natural convection of a fluid within a square cavity featuring differentially heated walls. This phenomenon has been extensively investigated under various boundary conditions (see, e.g. \cite{bma-INMF-1994,dd-NHT-2006,d-INMF-1983}).
Here, we recall from \cite{dd-NHT-2006} the modified problem with dimensionless numbers: seek $ (\mathbf{u},p,\varphi) $ such that
\begin{equation}\label{eq:NC}
	\begin{array}{rcll}
		-\mathrm{Pr}\, \operatorname{\textbf{div}} (2\mu(\varphi)\textbf{e}(\mathbf{u}))+ (\boldsymbol{\nabla}\mathbf{u})\mathbf{u}+\nabla p -\mathrm{Ra}~\mathrm{Pr}~ \varphi\, \mathbf{g}&\,=\,&\mathbf{0} &\quad \qin\Omega\,,\\[1ex]
		\operatorname{div}\mathbf{u}&\,=\,&0 &\quad \qin\Omega\,,\\[1ex]
		- \operatorname{div}(\kappa(\varphi)\nabla\varphi)+\mathbf{u}\cdot\nabla\varphi &\,=\,&0 &\quad \qin\Omega\,,
	\end{array}
\end{equation}

where $\mathrm{Pr} $ represents the Prandtl number, denoting the ratio of momentum diffusivity to thermal diffusivity, while $\mathrm{Ra} $ signifies the Rayleigh number, which is defined as the ratio of buoyancy forces to viscosity forces multiplied by the Prandtl number. Therefore, we determine the cavity as $ \Omega =(0,1)^2 $ and consider the Prandtl and Rayleigh numbers as $\mathrm{Pr}=0.5 $ and $\mathrm{Ra}=4000 $, respectively.
Furthermore, the viscosity, thermal conductivity, and body force will be provided as follows:
\[
\mu(\varphi)\,=\,\exp(-\varphi),\qquad\kappa(\varphi)\,=\,\exp(\varphi)\,\qquad\mathbf{g}\,=\,(0,1)^{\mathtt{t}} \,,
\]
and the boundary conditions will be adopted as described in \cite{dd-NHT-2006}, that is
\[
\mathbf{u}_{D}(\mathbf{x})\,=\,\mathbf{0}\qan \varphi_{D}(\mathbf{x})\,=\,\dfrac{1}{2}(1-\cos(2\pi x_{1}))(1-x_{2})\,.
\]
The numerical experiments are performed with $r = 0$. In Fig. \ref{fig2} we have portrayed the approximate solutions, generated with the first-order mixed virtual
element family. All plots are in concordance with those obtained in \cite{ago-Calcolo-2018} and what is expected to be
observed from the physical point of view, in accordance to \cite{dd-NHT-2006}.

\begin{figure}[t!]
	\centering
	\begin{tabular}{ccc}
		\hspace{-1.5cm}	\includegraphics[width=.42\textwidth]{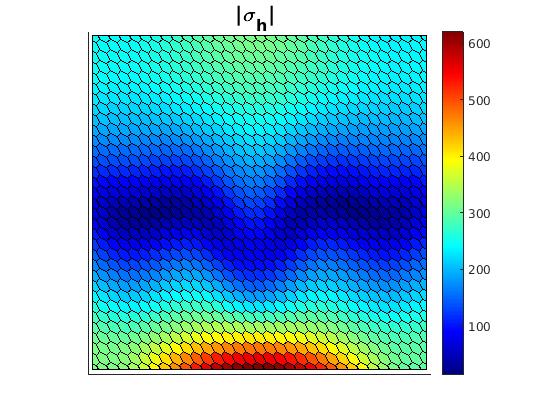}
		\hspace{-1.3cm}	\includegraphics[width=.42\textwidth]{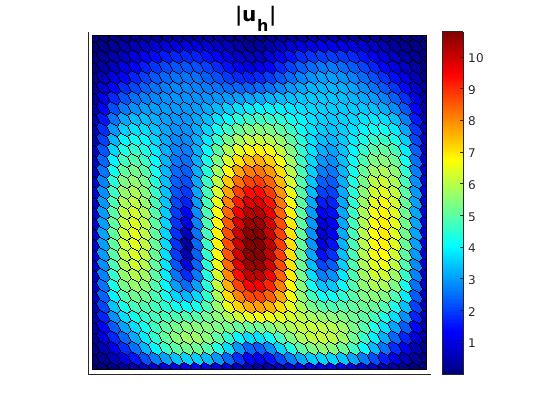}
		\hspace{-1.3cm}	\includegraphics[width=.42\textwidth]{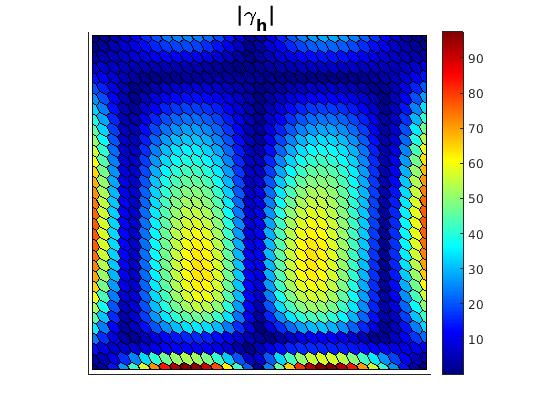}\\
		\hspace{-1.5cm}
		\includegraphics[width=.42\textwidth]{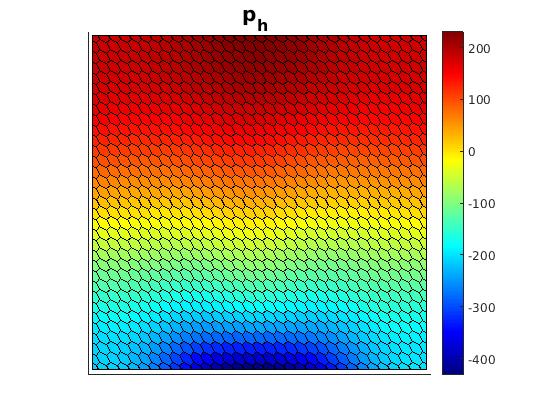}
		\hspace{-1.3cm}	\includegraphics[width=.42\textwidth]{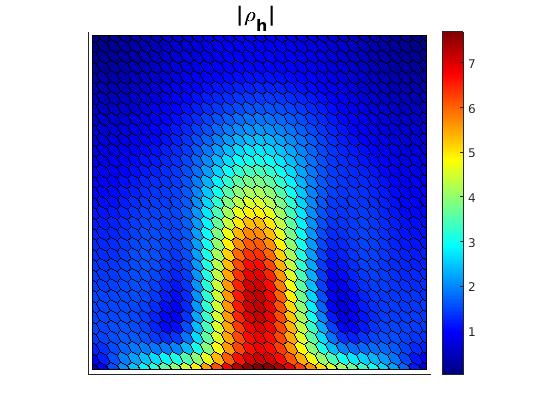}
		\hspace{-1.3cm}	\includegraphics[width=.42\textwidth]{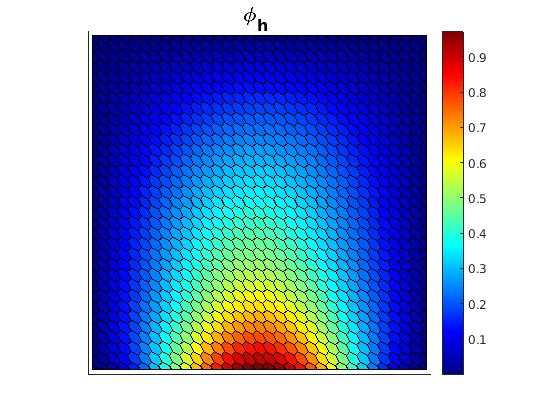}
	\end{tabular}
	\vspace{-.6cm}
	\caption{Example 2, snapshots of the stress, velocity, and vorticity magnitudes (first row, left to right) and the pressure and heat-flux vector magnitudes, and  temperature (second row, left to right) computed with $r = 0$
		in a mesh made of polygonals with $h = 3.030$e-2.}\label{fig2}
\end{figure}

\subsection{Example 3: natural convection driven phase change in non-convex disk-shaped geometry}
Here, we consider the steady regime of the phase change of a material adopting a 2D slice of a shell-and-tube geometry configuration, which is commonly used in thermal energy storage
systems. More precisely, the numerical experiments
are performed with $ r = 0 $ for the disk-shaped geometry domain $ \Omega $,
considering the problem \eqref{eq:NC} equipped to the mixed
boundary conditions for the energy equation that are described next. More precisely,
the domain includes four circles cavities with radius $ \frac{1}{8} $. The inner tubes are kept
hot with $ \varphi_{D}=1 $ and the outer shell is kept cold $ \varphi_{D}=-0.01 $.
 For the flow equations, all boundaries are equipped with no-slip velocity conditions. Fig. \ref{fig4} show images of the stress magnitude, velocity and temperature for parameters $ \mathrm{Pr}=1 $ and $ \mathrm{Ra}=100 $ on the refined polygonal mesh. No closed-form solution is available for this problem but all fields exhibit a well resolved behavior.

\begin{figure}[t!]
\centering
\begin{tabular}{cc}
\includegraphics[width=.42\textwidth]{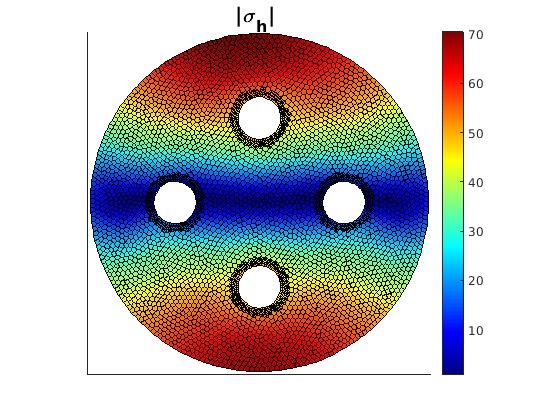}
\includegraphics[width=.42\textwidth]{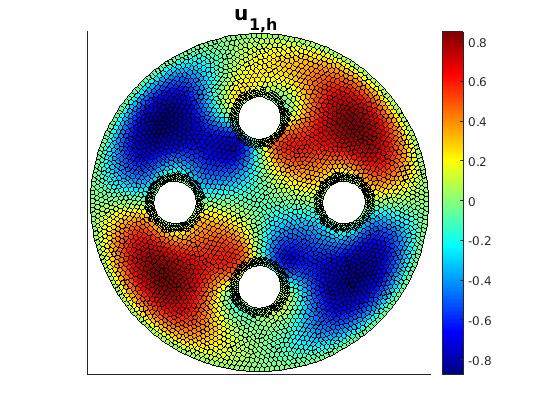}\\
\includegraphics[width=.42\textwidth]{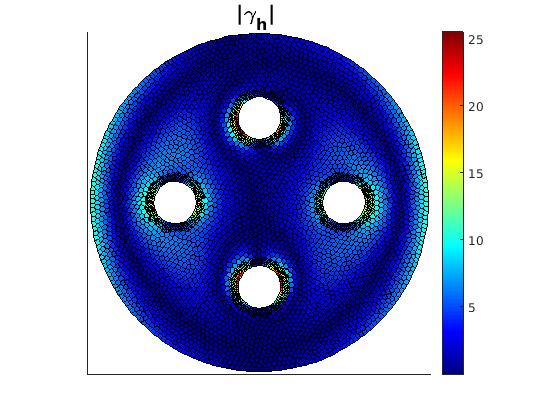}
\includegraphics[width=.42\textwidth]{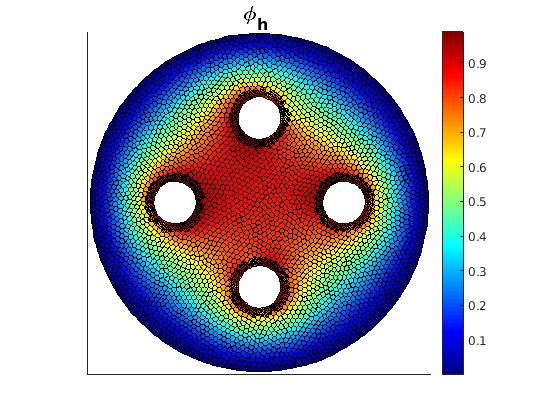}
\end{tabular}
\caption{Example 3, snapshots of stress, first component of velocity, vorticity and  
temperature for values $\mathrm{Ra}=1$e2, computed with $r = 0$ in a polgonal mesh with $h = 1.39$e-2.}\label{fig4}
\end{figure}

\bigskip

%
%
%
	

\end{document}